\newtheorem{thm}{Theorem}[section]
\newtheorem{lemma}[thm]{Lemma}
\newtheorem{corollary}[thm]{Corollary}
\newtheorem{prop}[thm]{Proposition}
\newtheorem{defn}[thm]{Definition}
\newtheorem{example}[thm]{Example}
\newtheorem{remark}[thm]{Remark}
\newtheorem{assumption}[thm]{Assumption}
\newtheorem{setting}[thm]{Setting}
\newtheorem{construction}[thm]{Construction}
\numberwithin{equation}{section}
\newcommand{\der}{\mathrm{d}}
\newcommand{\pairing}[2]{\left( #1\, , \, #2 \right)}
\newcommand{\nat}{\mathbb{N}}
\newcommand{\integer}{\mathbb{Z}}
\newcommand{\rat}{\mathbb{Q}}
\newcommand{\real}{\mathbb{R}}
\newcommand{\N}{\mathbb{N}}
\newcommand{\Z}{\mathbb{Z}}
\newcommand{\Q}{\mathbb{Q}}
\newcommand{\R}{\mathbb{R}}
\newcommand{\C}{\mathbb{C}}
\newcommand{\cpx}{\mathbb{C}}
\newcommand{\bb}{\boldsymbol{b}}
\newcommand{\bx}{\boldsymbol{x}}
\newcommand{\bX}{\mathcal{X}}
\newcommand{\unu}{\underline{\nu}}
\newcommand{\conste}{\mathbf{e}}
\newcommand{\WT}[1]{\widetilde{#1}}
\newcommand{\UL}[1]{\underline{#1}}
\newcommand{\proj}{\mathbb{P}}
\newcommand{\CM}{\mathcal{M}}
\newcommand{\bD}{\mathcal{D}}
\newcommand{\pt}{\mathrm{pt}}
\begin{document}

\title[Gross fibrations, SYZ, and open GW for toric CY orbifolds]{Gross fibrations, SYZ mirror symmetry, \\and open Gromov-Witten invariants\\ for toric Calabi-Yau orbifolds}
\author[Chan]{Kwokwai Chan}
\address{Department of Mathematics\\ The Chinese University of Hong Kong\\ Shatin\\ Hong Kong}
\email{kwchan@math.cuhk.edu.hk}
\author[Cho]{Cheol-Hyun Cho}
\address{Department of Mathematical Sciences, Research institute of Mathematics\\ Seoul National University\\ San 56-1, Shinrimdong\\ Gwanakgu \\Seoul 47907\\ Korea}
\email{chocheol@snu.ac.kr}
\author[Lau]{Siu-Cheong Lau}
\address{Department of Mathematics\\ Harvard University\\ One Oxford Street\\ Cambridge \\ MA 02138\\ USA}
\email{s.lau@math.harvard.edu}
\author[Tseng]{Hsian-Hua Tseng}
\address{Department of Mathematics\\ Ohio State University\\ 100 Math Tower, 231 West 18th Ave. \\ Columbus \\ OH 43210\\ USA}
\email{hhtseng@math.ohio-state.edu}

%\date{\today}

\begin{abstract}
For a toric Calabi-Yau (CY) orbifold $\bX$ whose underlying toric variety is semi-projective, we construct and study a non-toric Lagrangian torus fibration on $\bX$, which we call the Gross fibration. We apply the Strominger-Yau-Zaslow (SYZ) recipe to the Gross fibration of $\bX$ to construct its mirror with the instanton corrections coming from genus 0 open orbifold Gromov-Witten (GW) invariants, which are virtual counts of holomorphic orbi-disks in $\bX$ bounded by fibers of the Gross fibration.

We explicitly evaluate all these invariants by first proving an open/closed equality and then employing the toric mirror theorem for suitable toric (partial) compactifications of $\bX$. Our calculations are then applied to
\begin{itemize}
\item[(1)]
prove a conjecture of Gross-Siebert on a relation between genus 0 open orbifold GW invariants and mirror maps of $\bX$ -- this is called the open mirror theorem, which leads to an enumerative meaning of mirror maps, and
\item[(2)]
demonstrate how open (orbifold) GW invariants for toric CY orbifolds change under toric crepant resolutions -- an open analogue of Ruan's crepant resolution conjecture.
\end{itemize}
\end{abstract}

\maketitle

%\tableofcontents

\section{Introduction}

\subsection{SYZ mirror construction}
In 1996, Strominger-Yau-Zaslow \cite{syz96} proposed an intrinsic and geometric way to understand mirror symmetry for Calabi-Yau (CY) manifolds via $T$-duality. Roughly speaking, the Strominger-Yau-Zaslow (SYZ) conjecture asserts that a mirror pair of CY manifolds $X$ and $\check{X}$ admit fiberwise dual special Lagrangian torus fibrations.
%\begin{equation*}
%\xymatrix{
%X\ar[dr]_{}& {} & {\check{X}}\ar[ld]^{}\\
%{} & {B} &{}
%}
%\end{equation*}

Mathematical approaches to SYZ mirror symmetry have since been extensively studied by many researchers including Kontsevich-Soibelman \cite{kontsevich00, kontsevich-soibelman04}, Leung-Yau-Zaslow \cite{LYZ}, Leung \cite{leung01}, Gross-Siebert \cite{Gross-Siebert03, Gross-Siebert06, Gross-Siebert10, gross07}, Auroux \cite{auroux07,auroux09}, Chan-Leung \cite{Chan-Leung, Chan-Leung2}, Chan-Lau-Leung \cite{CLL} and Abouzaid-Auroux-Katzarkov \cite{AAK12}.

A very important application of the SYZ conjecture is to provide a geometric construction of mirrors: Given a CY manifold $X$, a mirror $\check{X}$ can be obtained by finding a (special) Lagrangian torus fibration $X\to B$ and suitably modifying the complex structure of the total space of the fiberwise dual by instanton corrections. For toric CY manifolds, Gross \cite{gross_examples} (and independently Goldstein \cite{goldstein}) constructed such a special Lagrangian torus fibration which we call the {\em Gross fibration}. In \cite{CLL}, the SYZ construction was applied to the Gross fibration to produce an instanton-corrected mirror family of a toric CY manifold, following the Floer-theoretic approach pioneered by Auroux \cite{auroux07, auroux09}.

In this paper we consider the SYZ construction for toric CY {\em orbifolds}. A toric CY orbifold is a (necessarily non-compact) Gorenstein toric orbifold $\bX$ whose canonical line bundle $K_\bX$ is trivial. We also assume that the coarse moduli space of $\bX$ is a {\em semi-projective} toric variety, or equivalently, that $\bX$ is as in Setting \ref{setting:toricCY}.

Following \cite{gross_examples}, we define in Definition \ref{defn:gross_fibration} a special Lagrangian torus fibration
$\mu: \bX \to B$
which we again call the Gross fibration of $\bX$. As in the manifold case, the discriminant locus $\Gamma\subset B$ can be described explicitly. $\Gamma$ is a real codimension 2 subset contained in a hyperplane which we call the {\em wall} in the base $B$. The wall divides the smooth locus $B_0 = B \setminus \Gamma$ into two chambers $B_+$ and $B_-$. Over $B_0$, the fibration $\mu$ restricts to a torus bundle $\mu: \bX_0\to B_0$, and the dual torus bundle
$\check{\mu}: \check{\bX}_0\to B_0$
admits a natural complex structure, producing the so-called {\em semi-flat mirror} of $\bX$.

This does not give the genuine mirror for $\bX$ because the semi-flat complex structure {\em cannot} be extended further to any partial compactification of $\check{\bX}_0$, due to nontrivial monodromy of the affine structure around the discriminant locus $\Gamma$. According to the SYZ proposal, we should deform the semi-flat complex structure by instanton corrections so that it becomes extendable. More concretely, what we do is to modify the gluing between the complex charts over the chambers $B_+$ and $B_-$ by {\em wall-crossing formulas} for genus 0 open orbifold GW invariants of $\bX$ (cf. the manifold case \cite{auroux07, auroux09, CLL, AAK12}). The latter are virtual counts of holomorphic orbi-disks in $\bX$ with boundary lying on regular fibers of $\mu$. A suitable partial compactification then yields the following instanton-corrected mirror, or {\em SYZ mirror}, of $\bX$:

\begin{thm}[See Section \ref{subsec:mirror}]
Let $\bX$ be a toric CY orbifold as in Setting \ref{setting:toricCY} and equipped with the Gross fibration in Definition \ref{defn:gross_fibration}. Then the SYZ mirror of $\bX$ (with a hypersurface removed) is the family of non-compact CY manifolds
\begin{equation*}
\check{\bX}:=\{(u, v, z_1, \ldots, z_{n-1})\in \C^2\times (\C^\times)^{n-1} \mid uv = g(z_1, \ldots, z_{n-1})\},
\end{equation*}
where the defining equation $uv=g$ is given by
\begin{equation*}
uv = (1+\delta_0) + \sum_{j=1}^{n-1}(1+\delta_j)z_j + \sum_{j=n}^{m-1}(1+\delta_j)q_j z^{\bb_j}
+ \sum_{\nu\in \mathrm{Box}'(\Sigma)^{\mathrm{age}=1}} (\tau_\nu+\delta_\nu)q^{-D^{\vee}_\nu} z^\nu.
\end{equation*}
Here $1+\delta_j$ and $\tau_\nu+\delta_\nu$ are generating functions of orbi-disk invariants of $(\bX, F_r)$ (see Section \ref{sec:instanton_corrections} for the reasons why the generating functions are of these forms).
\end{thm}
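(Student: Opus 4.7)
The plan is to apply the SYZ recipe to the Gross fibration $\mu:\bX \to B$, realizing the mirror as the instanton-corrected gluing of two semi-flat pieces over the chambers $B_+$ and $B_-$. The corrections are computed by wall-crossing for generating functions of Maslov-index-two holomorphic (orbi-)disks with boundary on a moment-map fiber.

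First I would set up the semi-flat mirror. Over each chamber $B_\pm$ the dual torus bundle $\check{\mu}:\check{\bX}_\pm\to B_\pm$ carries canonical complex coordinates built from affine coordinates on $B_\pm$ paired with dual fiber coordinates. One expects $n-1$ coordinates $z_1,\ldots,z_{n-1}$ that extend across the wall, together with a ``normal'' coordinate $w^+$ on $\check{\bX}_+$ and $w^-$ on $\check{\bX}_-$ that fail to glue because of nontrivial monodromy of the affine structure around $\Gamma$.

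Second, I would invoke the Floer-theoretic wall-crossing formula in the style of Auroux \cite{auroux07,auroux09} and \cite{CLL}, adapted to the orbifold setting. When a fiber $F_r$ crosses the wall the set of Maslov-index-two (orbi-)disk classes it bounds jumps, and the SYZ-corrected gluing reads
\[
w^+ \;=\; w^- \cdot \sum_\alpha n_\alpha(F_r)\, z^{\partial \alpha},
\]
where $\alpha$ runs over Maslov-index-two basic (orbi-)disk classes and $n_\alpha(F_r)$ is the genus 0 open orbifold GW invariant of $(\bX,F_r)$ in class $\alpha$. Third, I would classify all such disk classes using the orbifold extension of Cho-Oh's classification of holomorphic disks in toric targets: every Maslov-index-two class is a basic smooth disk class $\beta_j$ (one for each toric ray, $j=0,1,\ldots,m-1$) or a basic orbi-disk class $\beta_\nu$ indexed by a box element $\nu\in\mathrm{Box}'(\Sigma)^{\mathrm{age}=1}$, possibly plus effective sphere classes. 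Reading off their boundary data in the affine base, the $\beta_j$ classes contribute the monomials $1$, $z_j$ and $q_j z^{\bb_j}$, while the $\beta_\nu$ classes contribute $q^{-D^\vee_\nu} z^\nu$, with the twisted-sector prefactor $\tau_\nu$ arising from the age/orbifold Maslov index computation. The coefficients $1+\delta_j$ and $\tau_\nu+\delta_\nu$ are then defined as the full generating functions summing $n_{\beta+\alpha}$ over all effective sphere classes $\alpha$.

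Finally, I would partially compactify. Adjoining the divisors $\{w^\pm=0\}$ reinstates the fibers over the deleted piece of $B$ containing $\Gamma$; relabeling $u:=w^-$ and letting $v$ be its compactified counterpart, the instanton-corrected gluing relation becomes exactly $uv = g(z_1,\ldots,z_{n-1})$, cutting out the hypersurface in $\C^2\times(\C^\times)^{n-1}$ asserted in the statement, which is visibly a non-compact CY by the presence of the holomorphic volume form $\tfrac{du}{u}\wedge\tfrac{dz_1}{z_1}\wedge\cdots\wedge\tfrac{dz_{n-1}}{z_{n-1}}$.

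The main obstacle is the third step. In the orbifold regime one must (i) establish the classification of holomorphic orbi-disks with boundary on $F_r$ together with the regularity needed to define $n_\alpha$ virtually, (ii) verify that \emph{only} the basic (orbi-)disk classes (decorated by sphere bubbles) contribute Maslov index two, and (iii) correctly account for twisted sectors via $\mathrm{Box}'(\Sigma)^{\mathrm{age}=1}$ and the shift $q^{-D^\vee_\nu}$ coming from the age grading. These orbifold refinements of the manifold argument in \cite{CLL} carry the bulk of the technical weight; once they are in hand, the wall-crossing and partial compactification steps proceed as in the smooth case.
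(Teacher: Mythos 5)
Your plan follows the paper's own route: semi-flat mirror over $B_0$ with coordinates $z_1,\ldots,z_{n-1}$ and a normal coordinate, instanton corrections by Fourier transform of the CW-Maslov-index-two (orbi-)disk generating functions, the wall-crossing asymmetry between $B_-$ (where, by the maximum principle, only $\beta_0$ contributes with invariant $1$) and $B_+$ (where the invariants agree with those of a moment-map fiber, classified via Cho--Poddar), and a gluing/partial compactification yielding $uv=g$ followed by a coordinate change that produces the $q_j$ and $q^{-D^{\vee}_\nu}$ factors -- so this is essentially the same argument. One small correction: the prefactor $\tau_\nu$ does not come from the age/Maslov computation (the age-one condition is what gives $\mu_{CW}(\beta_\nu)=2$); it enters as the bulk-deformation parameter, the coefficient of $\mathbf{1}_\nu$ in $\tau\in H^2_{\mathrm{CR}}(\bX)$, and the generating functions $1+\delta_j$, $\tau_\nu+\delta_\nu$ sum over numbers of such interior orbifold insertions as well as over effective sphere classes $\alpha$ with $c_1\cdot\alpha=0$.
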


\begin{remark}
\hfill
\begin{enumerate}
\item
The SYZ mirror of the toric CY orbifold $\bX$, without removing a hypersurface, is given by the {\em Landau-Ginzburg model} $(\check{\bX},W)$ where $W:\check{\bX} \to \C$ is the holomorphic function $W := u$; this is exactly like the manifold case as discussed in \cite[Section 4.6]{CLL} and \cite[Section 7]{AAK12}.

\item
Section \ref{sec:examples_mirror} contains several explicit examples. For instance, let $\kappa_j$ be explicitly given by \eqref{eqn:kappas}. Then the mirror of $\bX=[\cpx^2/\Z_m]$ is given by the equation
$uv = \prod_{j=0}^{m-1} ( z - \kappa_j)$.
\end{enumerate}
\end{remark}

To the best of our knowledge, this is the first time the SYZ construction is applied systematically to construct mirrors for {\em orbifolds}.

\subsection{Orbi-disk invariants}
To demonstrate that $\check{\bX}$ is indeed mirror to $\bX$, we would like to show that the family $\check{\bX}$ is written in {\em canonical coordinates}. This can be rephrased as the conjecture that the SYZ map, defined in terms of orbi-disk invariants, is inverse to the toric mirror map of $\bX$ (cf. \cite[Conjecture 0.2]{gross07}, \cite[Conjecture 1.1]{CLL} and \cite[Conjecture 2]{CLT11}). To prove this, knowledge about the orbi-disk invariants is absolutely crucial.

One major advance of this paper is the {\em complete} calculation of these orbi-disk invariants, or genus 0 open orbifold GW invariants, for moment-map Lagrangian torus fibers in toric CY orbifolds. Our calculation is based on the following {\em open/closed equality}:

\begin{thm}[See Theorem \ref{thm:open_closed_equality} and Equation \eqref{eqn:open_closed_equality}]
Let $\bX$ be a toric CY orbifold as in Setting \ref{setting:toricCY} and equipped with a toric K\"ahler structure. Let $L \subset \bX$ be a Lagrangian torus fiber of the moment map of $\bX$, and let $\beta\in \pi_2(\bX, L)$ be a holomorphic (orbi-)disk class of Chern-Weil (CW) Maslov index 2. Let $\bar{\bX}$ be the toric partial compactification of $\bX$ constructed in Construction \ref{construction:compactification} which depends on $\beta$. Then we have the following equality between genus 0 open orbifold GW invariants of $(\bX,L)$ and closed orbifold GW invariants of $\bar{\bX}$:
\begin{equation}\label{eqn:open_closed_equality_intro}
n_{1,l,\beta}^\bX([\mathrm{pt}]_L; \mathbf{1}_{\nu_1},\ldots,\mathbf{1}_{\nu_l})=\langle[\mathrm{pt}], \mathbf{1}_{\bar{\nu}_1},\ldots,\mathbf{1}_{\bar{\nu}_l} \rangle_{0,1+l, \bar{\beta}}^{\bar{\bX}}.
\end{equation}
\end{thm}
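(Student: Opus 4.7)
The plan is to establish a bijection between the moduli of holomorphic orbi-disks in $(\bX, L)$ of class $\beta$ and the moduli of holomorphic orbi-spheres in the partial compactification $\bar{\bX}$ of class $\bar{\beta}$, and then upgrade it to an equality of virtual fundamental cycles paired with the stated insertions. This is the orbifold analogue of the open/closed equality of Chan--Lau--Leung \cite{CLL}, combined with the Cho--Poddar classification of basic holomorphic orbi-disks on toric orbifolds.

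First, I would exploit the way $\bar{\bX}$ is tailored to $\beta$ in Construction \ref{construction:compactification}: the added toric divisor $D_\infty$ meets every representative of $\bar{\beta}$ transversally at a single smooth point, and $L$, being a torus orbit in $\bX$, sits as the unit-circle fiber of the normal $\C$-bundle to $D_\infty$. Consequently, every holomorphic orbi-sphere in class $\bar{\beta}$ is canonically cut by $L$ into a basic orbi-disk in $(\bX, L)$ of class $\beta$, together with a small holomorphic cap lying in the normal $\C$-fiber to $D_\infty$ that is rigid modulo $S^1$-rotation. Using Cho--Poddar's classification adapted to $\bar{\bX}$, this cutting/capping operation yields a bijection between the moduli spaces: orbifold interior markings $\nu_j$ are untouched and pair with the corresponding twisted sectors $\bar{\nu}_j$ under the inclusion of the inertia stack of $\bX$ into that of $\bar{\bX}$, while the boundary marked point on the disk side becomes an extra interior marked point on the sphere side lying on the cap.

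Next I would match the Kuranishi data. Since the cap is rigid up to $S^1$, obstructions on the sphere side split as obstructions of the orbi-disk part plus a trivial contribution from the cap; this matches the Fukaya--Oh--Ohta--Ono obstruction complex on the open side in its orbifold incarnation due to Cho--Poddar. The boundary evaluation against $[\mathrm{pt}]_L \in L$ is then identified with the extra interior evaluation against $[\mathrm{pt}] \in \bar{\bX}$ as follows: the $S^1$-family of caps over a fixed orbi-disk sweeps, via the extra evaluation map, a disk in the normal $\C$-fiber meeting $L$ along a circle; pinning this evaluation to a chosen $[\mathrm{pt}] \in \bar{\bX}$ lying on the generic such fiber simultaneously fixes the rotation of the cap and the point on $L$ through which the disk boundary passes, reproducing the open boundary constraint exactly once. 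A dimension count then shows the two pairings compute the same number.

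The main obstacle is the compatibility of the capping construction with the Kuranishi/virtual framework in the orbifold setting: one must verify that the isotropy groups at interior orbifold marked points are preserved under the bijection, that the induced perturbations extend consistently across the cap, and above all that no bubbling into $D_\infty$ produces spurious boundary of the open moduli. The last point is controlled by the CW Maslov index $2$ assumption on $\beta$ together with the semi-projectivity hypothesis in Setting \ref{setting:toricCY}, which jointly forbid a sub-orbi-disk of smaller Maslov index from combining with an additional sphere bubble landing in $D_\infty$; this is the orbifold refinement of the corresponding exclusion argument in \cite{CLL}. Once these compatibilities are in place, the two virtual fundamental cycles agree under the bijection, and Equation~\eqref{eqn:open_closed_equality_intro} follows.
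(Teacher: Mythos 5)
Your overall strategy --- comparing the open moduli for $(\bX,L)$ with the closed moduli for $\bar{\bX}$ and upgrading a bijection to an identification of virtual data --- is the right one, but two of your key steps do not hold as stated. First, the exclusion of bubbling into $D_\infty$ cannot be deduced from the CW Maslov index $2$ hypothesis together with semi-projectivity. Since $-K_{\bar{\bX}}$ is only nef, $D_\infty$ (and the noncompact divisors $D_j$, $\bb_j\in J$) may contain rational curves of Chern number zero, and a configuration consisting of the Maslov index $2$ (orbi-)disk component plus such Chern-zero sphere bubbles inside $D_\infty$ has total index exactly $2$, so no index count rules it out. The paper's proof of part (a) of Theorem \ref{thm:open_closed_equality} requires a genuinely geometric argument: write the class $B$ of the part mapped into $D_\infty\cup\bigcup_{\bb_j\in J}D_j$ as an effective combination of torus-invariant curve classes $b_kB_k$ with $c_1(\bar{\bX})\cdot(b_kB_k)=0$, invoke \cite[Lemma 4.5]{G-I11} to conclude that $b_kB_k$ pairs to zero with every toric prime divisor outside the minimal face $F(\sigma_k)$, and use that these divisors span $H^2(\bar{\bX})$ to force $B=0$. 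Without an argument of this type, your bijection between stable disks in $\bX$ and stable disks in $\bar{\bX}$ (and likewise the claim that closed stable maps have no components in $D_\infty$ beyond the distinguished one) is unproved.

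Second, your capping mechanism is inaccurate and too weak to identify Kuranishi structures. The ``cap'' is not a small holomorphic disk in a normal $\C$-fiber of $D_\infty$ that is rigid modulo $S^1$: what closes up the basic disk class $\beta'$ is the basic Maslov-index-two disk class $\beta_\infty$, and the paper never cuts the sphere along $L$ at all. Instead it uses Lemma \ref{unique sphere}: through a generic point $p$ (taken on $L$, the same $p$ for both fiber products) there is a \emph{unique} Chern-number-two holomorphic sphere $C_0$, and it has trivial normal bundle, hence is unobstructed. A stable disk through $p$ is $u_0+C'$ with $u_0$ a regular basic (orbi-)disk by \cite{CP}, while a stable sphere through $p$ is $C_0+C'$; swapping $u_0\leftrightarrow C_0$ identifies the two moduli with all deformation/obstruction data concentrated on the common Chern-zero curve part $C'$, and the boundary point constraint matches the interior point constraint automatically because both are fiber products over the same $p$. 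No $S^1$-pinning or evaluation-sweeping analysis is needed, and such a counting heuristic would not by itself yield the isomorphism of Kuranishi spaces that the statement requires.
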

The proof is by showing that the relevant moduli space of stable (orbi-)disks in $\bX$ is isomorphic to the relevant moduli space of stable maps to $\bar{\bX}$ {\em as Kuranishi spaces}. The key geometric ingredients underlying the proof are that the toric compactification $\bar{\bX}$ is constructed so that (orbi-)disks in $\bX$ can be ``capped off'' in $\bar{\bX}$ to obtain (orbi-)spheres, and that the deformations and obstructions of the two moduli problems can naturally be identified.

The closed orbifold GW invariants of $\bar{\bX}$ in \eqref{eqn:open_closed_equality_intro} are encoded in the $J$-function of $\bar{\bX}$. Evaluating these invariants via the toric mirror theorem requires extra care since $\bar{\bX}$ may be noncompact. Fortunately, $\bar{\bX}$ is {\em semi-Fano} (see Definition \ref{defn:sF}) and semi-projective, so the {\em equivariant} toric mirror theorem of \cite{CCIT_toricDM} still applies to give an explicit formula for the equivariant $J$-function of $\bar{\bX}$. Extracting the relevant equivariant closed orbifold GW invariants from this formula and taking non-equivariant limits then yield explicit formulas for the genus 0 open orbifold GW invariants of $\bX$ and hence the generating functions which appear in the defining equation of the SYZ mirror $\check{\bX}$:

\begin{thm}[See Theorems \ref{thm:sm_disk_gen_function} and \ref{thm:orbi_disk_gen_function}]\label{thm:disk_calc_intro}
Let $\bX$ be a toric CY orbifold as in Setting \ref{setting:toricCY}. Let $F_r$ be a Lagrangian torus fiber of the Gross fibration of $\bX$ lying above a point $r$ in the chamber $B_+ \subset B_0$. Let the functions $A^\bX_i(y)$'s be given explicitly in \eqref{eqn:toric_mirror_map_revised1}.
\begin{enumerate}
\item
Let $1+\delta_i$ be the generating function of genus 0 open orbifold GW invariants of $\bX$ in classes $\beta_i(r)+\alpha$, with $\alpha\in H_2^\textrm{eff}(\bX)$ satisfying $c_1(\bX)\cdot \alpha=0$ and $\beta_i(r)\in \pi_2(\bX, F_r)$ the basic smooth disk class corresponding to the primitive generator $\bb_i$ of a ray in $\Sigma$. Then
$$1+\delta_i=\exp\left(-A^\bX_i(y)\right),$$
after inverting the toric mirror map \eqref{eqn:toric_mirror_map_X}.

\item
Let $\tau_\nu+\delta_\nu$ be the generating function of genus 0 open orbifold GW invariants of $\bX$ in classes $\beta_\nu(r)+\alpha$, with $\alpha\in H_2^\textrm{eff}(\bX)$ satisfying $c_1(\bX)\cdot\alpha=0$ and $\beta_\nu(r)\in \pi_2(\bX, F_r)$ the basic orbi-disk class corresponding to a Box element $\nu$ of age $1$. Then
$$\tau_\nu+\delta_\nu=y^{D_\nu^\vee}\exp\left(-\sum_{i\notin I_\nu}c_{\nu i}A^\bX_i(y)\right),$$
after inverting the toric mirror map \eqref{eqn:toric_mirror_map_X}.
\end{enumerate}
\end{thm}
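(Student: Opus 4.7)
The plan is to convert the open orbi-disk invariants into closed orbifold GW invariants via the open/closed equality, then compute the latter by means of the equivariant toric mirror theorem of \cite{CCIT_toricDM} applied to the partial compactification $\bar{\bX}$. The two parts of the theorem have parallel proofs, differing only in which basic disk class ($\beta_i(r)$ vs.\ $\beta_\nu(r)$) is used in Construction \ref{construction:compactification}; below I treat them in parallel.

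First I would fix a basic disk class $\beta_0\in\{\beta_i(r),\beta_\nu(r)\}$ and let $\bar{\bX}=\bar{\bX}(\beta_0)$ be the corresponding toric partial compactification. By the preceding open/closed equality, for every $\alpha\in H_2^\textrm{eff}(\bX)$ with $c_1(\bX)\cdot\alpha=0$, the genus $0$ open orbifold GW invariant counting orbi-disks in class $\beta_0+\alpha$ equals a closed orbifold GW invariant of $\bar{\bX}$ with an insertion of a point class. Summing over $\alpha$ with appropriate Kähler weights $q^\alpha$, the generating function $1+\delta_i$ (resp.\ $\tau_\nu+\delta_\nu$) becomes a specific coefficient of the $J$-function $J^{\bar{\bX}}$ of $\bar{\bX}$. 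More precisely, I would extract the coefficient of the point class (with the correct twisted sector $\mathbf{1}_{-\nu}$ in the orbifold case) in the insertion and read off the generating function attached to $\bar{\beta}_0$.

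Next, since $\bar{\bX}$ is semi-projective and semi-Fano, the equivariant toric mirror theorem of \cite{CCIT_toricDM} gives an explicit hypergeometric $I$-function $I^{\bar{\bX}}$ such that $J^{\bar{\bX}}$ equals $I^{\bar{\bX}}$ after inverting the toric mirror map of $\bar{\bX}$. The core computation is then to extract from $I^{\bar{\bX}}$ the coefficient corresponding to $\bar{\beta}_0$, recognize it as an exponential of the combinations $A^{\bX}_i(y)$ (these are essentially the nontrivial parts of the toric mirror map of $\bX$ defined in \eqref{eqn:toric_mirror_map_revised1}), and pass to the non-equivariant limit. For the smooth disk class $\beta_i(r)$ this yields $\exp(-A^\bX_i(y))$; for the orbi-disk class $\beta_\nu(r)$ there is an additional monomial prefactor $y^{D^\vee_\nu}$ reflecting the age-$1$ twisted sector, and the exponent involves only those $A^\bX_i(y)$ with $i\notin I_\nu$, because the compactifying rays added in Construction \ref{construction:compactification} cancel the contributions indexed by $i\in I_\nu$.

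The main obstacle I expect is twofold: (i) checking that the non-equivariant limit of the equivariant GW invariants is well-defined, which requires a dimension count showing that the equivariant parameters drop out of the relevant coefficient of $I^{\bar{\bX}}$ (this uses crucially that $\bar{\bX}$ is semi-Fano and that $\beta_0$ has Chern-Weil Maslov index $2$, so that the virtual dimension of the moduli space matches the degree of the insertion); and (ii) identifying the toric mirror map of the \emph{open} target $\bX$ with the relevant piece of the toric mirror map of the \emph{closed} compactification $\bar{\bX}$, so that inverting the former gives the stated formulas. Once these bookkeeping matters are resolved, comparing $I$-function coefficients with the definition of $A^\bX_i(y)$ in \eqref{eqn:toric_mirror_map_revised1} yields the two claimed identities.
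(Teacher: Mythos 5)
Your proposal follows essentially the same route as the paper: apply the open/closed equality to transfer the orbi-disk counts to closed orbifold GW invariants of the $\beta$-dependent compactification $\bar{\bX}$, invoke the equivariant toric mirror theorem (valid since $\bar{\bX}$ is semi-Fano and semi-projective), extract the $H^0$-valued $1/z^2$-coefficient $y^{d_\infty}$ of the $I$-function, take non-equivariant limits, and identify the toric mirror map of $\bX$ inside that of $\bar{\bX}$, which produces the factor $\exp(-A^\bX_{i_0})$ in the smooth case and $y^{D^\vee_\nu}\exp(-\sum_{i\notin I_\nu}c_{\nu i}A^\bX_i)$ in the orbifold case (the latter because $d_\infty-\bar{\beta}'=D^\vee_\nu$ and $\bar{\beta}'$ has coefficients $c_{\nu i}$ only for $i\notin I_\nu$, rather than any cancellation by the compactifying ray). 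The two bookkeeping points you flag are exactly the ones the paper addresses, via compactness of the relevant moduli spaces for the non-equivariant limit and an explicit computation of the mirror map of $\bar{\bX}$ showing $A^{\bar{\bX}}_\infty=0$ and $A^{\bar{\bX}}_j=A^\bX_j$.
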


These generalize results in \cite{CLT11} to all semi-projective toric CY orbifolds, including the toric CY 3-fold $\bX = K_{\mathbb{F}_2}$ which cannot be handled by \cite{CLT11} (see Example (4) in Section \ref{sec:examples_mirror}).

\subsection{Applications}
%We discuss two major applications of our explicit calculations of orbi-disk invariants in this paper.
Our explicit calculations in Theorem \ref{thm:disk_calc_intro} has two major applications.

\subsubsection{Open mirror theorems}
The first application, as we mentioned above, is to show that the SYZ mirror family $\check{\bX}$ is written in canonical coordinates. This concerns the comparison of several mirror maps for a toric CY orbifold $\bX$. More precisely, the SYZ construction yields what we call the {\em SYZ map} $\mathcal{F}^\mathrm{SYZ}$, defined in terms of genus 0 open orbifold GW invariants (see the precise definition in \eqref{eqn:SYZ_map}). In closed GW theory, the toric mirror theorem of \cite{CCIT_toricDM} involves a combinatorially defined {\em toric mirror map} $\mathcal{F}^\mathrm{mirror}$ (see Section 2.4 and \eqref{eqn:toric_mirror_map_X}). We prove the following {\em open mirror theorem}:
\begin{thm}[Open mirror theorem for toric CY orbifolds - Version 1]\label{thm:open_mirror}
For a toric CY orbifold $\bX$ as in Setting \ref{setting:toricCY}, the SYZ map is inverse to the toric mirror map, i.e. we have
\begin{equation*}
\mathcal{F}^{\mathrm{SYZ}} = \left(\mathcal{F}^\mathrm{mirror}\right)^{-1}
\end{equation*}
near the large volume limit $(q, \tau)=0$ of $\bX$. In particular, this holds for a semi-projective toric CY manifold.
\end{thm}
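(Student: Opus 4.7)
The plan is to invoke Theorem \ref{thm:disk_calc_intro} directly, after reformulating the SYZ map in terms of the disk generating functions that already appear as coefficients in the defining equation of $\check{\bX}$. Inspecting the equation $uv=g(z_1,\ldots,z_{n-1})$, the components of $\mathcal{F}^{\mathrm{SYZ}}$ can be read off from the monomials $(1+\delta_j)q_j$ (attached to the smooth basic disk classes $\beta_j$, producing the corrected complex coordinates on $\check{\bX}$) and $(\tau_\nu+\delta_\nu)q^{-D_\nu^\vee}$ (attached to the basic orbi-disk classes $\beta_\nu$ for age-$1$ Box elements $\nu$). Together these encode the change of coordinates between the complex moduli of $\check{\bX}$ and the complexified Kähler moduli of $\bX$, which is precisely the content of the SYZ map \eqref{eqn:SYZ_map}.

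Next, I would substitute the explicit formulas supplied by Theorem \ref{thm:disk_calc_intro},
\[
1+\delta_i \;=\; \exp\bigl(-A^{\bX}_i(y)\bigr), \qquad
\tau_\nu+\delta_\nu \;=\; y^{D_\nu^\vee}\exp\Bigl(-\sum_{i\notin I_\nu} c_{\nu i}A^{\bX}_i(y)\Bigr),
\]
both valid \emph{after} inverting the toric mirror map \eqref{eqn:toric_mirror_map_X}. This substitution expresses $\mathcal{F}^{\mathrm{SYZ}}\circ \mathcal{F}^{\mathrm{mirror}}$ component by component as the identity: the logarithmic corrections $A^{\bX}_i$ that enter $\mathcal{F}^{\mathrm{mirror}}$ via \eqref{eqn:toric_mirror_map_X} are cancelled exactly by the exponentials $\exp(-A^{\bX}_i)$ entering $\mathcal{F}^{\mathrm{SYZ}}$, and the prefactors $y^{D_\nu^\vee}$ absorb the monomial shifts in the twisted directions. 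A short bookkeeping of exponents then confirms the identity.

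Two points require care. First, the direction of inversion in Theorem \ref{thm:disk_calc_intro} must be tracked precisely, so that the comparison is between $\mathcal{F}^{\mathrm{SYZ}}$ and $(\mathcal{F}^{\mathrm{mirror}})^{-1}$ rather than between the two maps themselves; this also pins down sign conventions throughout. Second, the Kähler moduli of $\bX$ decompose into ordinary parameters $q$ (dual to a basis of $H_2(\bX)$) and twisted parameters $\tau$ (indexed by age-$1$ Box elements), and both blocks of the SYZ map must be matched against the corresponding blocks of the mirror map. Each block is handled by the corresponding item of Theorem \ref{thm:disk_calc_intro}, and formal invertibility of the two maps near $(q,\tau)=0$ follows from the fact that each is a unipotent perturbation of the identity at the large volume limit, so that composition and inversion make sense as formal power series.

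The main obstacle is not in this final algebraic comparison but in the inputs: the open/closed equality \eqref{eqn:open_closed_equality_intro} and the evaluation of the disk generating functions via the equivariant toric mirror theorem of \cite{CCIT_toricDM}, both already packaged into Theorem \ref{thm:disk_calc_intro}. The semi-projective toric CY manifold case is recovered by deleting all Box contributions, leaving the argument otherwise unchanged. From this viewpoint, Theorem \ref{thm:open_mirror} should be regarded as a natural repackaging of those computations into the statement that the SYZ map is exactly the coordinate change needed to present the mirror family $\check{\bX}$ in canonical coordinates.
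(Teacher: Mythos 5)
Your proposal is correct and follows essentially the same route as the paper: the paper's proof of Theorem \ref{thm:open_mirror} is exactly the substitution of the formulas \eqref{eqn:formula_for_generating_function_sm_disk_F_r} and \eqref{eqn:formula_for_generating_function_orb_disk_F_r} (i.e.\ Theorems \ref{thm:sm_disk_gen_function} and \ref{thm:orbi_disk_gen_function}) into the definition \eqref{eqn:SYZ_map} of $\mathcal{F}^{\mathrm{SYZ}}$, followed by a componentwise verification that each coordinate equals $y_a$ under the inverse toric mirror map. The only caveat is that what you call a ``short bookkeeping of exponents'' is, for the extended directions $a=r'+1,\ldots,r$, the bulk of the paper's argument — it hinges on the observation that the $(r-r')\times(r-r')$ matrices $(Q_{ja})$ and $(\langle p_a, D^\vee_j\rangle)$ are mutually inverse, which drives the cancellation of the $A^\bX_i$ terms — but this is a matter of detail, not of approach.
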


We remark that an open mirror theorem was proved for compact semi-Fano toric manifolds in \cite{CLLT12} and some examples of compact semi-Fano toric orbifolds in \cite{CCLT12}. On the other hand, open mirror theorems for 3-dimensional toric CY geometries relative to Aganagic-Vafa type Lagrangian branes were proved in various degrees of generality in \cite{graber-zaslow01, brini-cavalieri11, fang-liu, fang-liu-tseng}.

By combining the above open mirror theorem together with the analysis of relations between {\em period integrals} and the {\em GKZ hypergeometric system} associated to $\bX$ done in \cite{CLT11}, we obtain another version of the open mirror theorem, linking the SYZ map to period integrals:
\begin{thm}[Open mirror theorem for toric CY orbifolds - Version 2]\label{thm:period_mirror}
For a toric CY orbifold $\bX$ as in Setting \ref{setting:toricCY}, there exists a collection $\{\Gamma_1,\ldots,\Gamma_r\} \subset H_n(\check{\bX};\C)$ of linearly independent cycles such that
\begin{equation*}
\begin{split}
q_a & = \exp\left(-\int_{\Gamma_a}\check{\Omega}_{\mathcal{F}^\mathrm{SYZ}(q,\tau)}\right), \quad a = 1,\ldots,r',\\
\tau_{\bb_j} & = \int_{\Gamma_{j-m+r'+1}}\check{\Omega}_{\mathcal{F}^\mathrm{SYZ}(q,\tau)}, \quad j = m,\ldots,m'-1,
\end{split}
\end{equation*}
where $q_a$'s and $\tau_{\bb_j}$'s are the K\"ahler and orbifold parameters in the extended complexified K\"ahler moduli space of $\bX$.
\end{thm}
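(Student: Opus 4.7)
The plan is to deduce Version 2 directly from Version 1 (Theorem \ref{thm:open_mirror}) by appealing to the period-integral interpretation of the toric mirror map worked out in \cite{CLT11}. The combinatorially defined toric mirror map $\mathcal{F}^{\mathrm{mirror}}$ is, on the mirror $\check{\bX}$ side, a distinguished set of solutions of the (extended) GKZ hypergeometric system associated to the toric data of $\bX$. The results of \cite{CLT11} identify these solutions with period integrals of the holomorphic volume form $\check{\Omega}$ over suitable middle-dimensional cycles on the hypersurface-type mirror family; more precisely, they produce cycles $\Gamma_1,\ldots,\Gamma_r\subset\check{\bX}$ (one for each K\"ahler/orbifold coordinate) whose periods recover the logarithms of the K\"ahler parameters and the orbifold parameters occurring in $\mathcal{F}^{\mathrm{mirror}}$.

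With that dictionary in hand, the first step is to parametrize the mirror family $\check{\bX}$ by the SYZ coordinates $(q,\tau)$ using the explicit disk-generating-function formulas of Theorem \ref{thm:disk_calc_intro}, so that $\check{\Omega}$ becomes $\check{\Omega}_{\mathcal{F}^{\mathrm{SYZ}}(q,\tau)}$. Next, write down the period expressions of \cite{CLT11}: after inverting the toric mirror map, these have the shape
\begin{equation*}
\log\check{q}_a \;=\; -\int_{\Gamma_a}\check{\Omega},\qquad \check{\tau}_{\bb_j} \;=\; \int_{\Gamma_{j-m+r'+1}}\check{\Omega},
\end{equation*}
where $(\check{q},\check{\tau})$ are the complex-structure parameters on $\check{\bX}$. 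Finally, invoke Version 1 of the open mirror theorem: it says precisely that the complex parameters $(\check{q},\check{\tau})$ evaluated at $\mathcal{F}^{\mathrm{SYZ}}(q,\tau)$ coincide with the K\"ahler parameters $q$ and orbifold parameters $\tau$ of $\bX$, since $\mathcal{F}^{\mathrm{SYZ}}=(\mathcal{F}^{\mathrm{mirror}})^{-1}$ near the large-volume limit. Substituting this identification into the above period formulas yields the claimed equalities.

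The main obstacle is bookkeeping on the orbifold/extended part of the moduli: one must make sure that the cycles $\Gamma_{j-m+r'+1}$ produced by the GKZ analysis of \cite{CLT11} correctly correspond to the age-one Box elements indexing the orbifold parameters $\tau_{\bb_j}$, and that they remain linearly independent in $H_n(\check{\bX};\C)$. This requires matching the stratification of the secondary fan by twisted sectors with the cycle classes coming from the GKZ $D$-module, and verifying that the non-equivariant limit used to extract genus $0$ open orbifold GW invariants in Theorem \ref{thm:disk_calc_intro} is compatible with the residue calculation that turns the $I$-function coefficients into periods. Once this compatibility is in place, the rest of the argument is formal: it is just the composition of the isomorphism $\mathcal{F}^{\mathrm{SYZ}}=(\mathcal{F}^{\mathrm{mirror}})^{-1}$ with the period/mirror-map dictionary already available from \cite{CLT11}.
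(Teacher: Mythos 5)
Your overall route is the same as the paper's: Theorem \ref{thm:period_mirror} is deduced from Theorem \ref{thm:open_mirror} together with the fact (from \cite[Proposition 14]{CLT11}) that the period integrals $\int_\Gamma\check{\Omega}_y$, $\Gamma\in H_n(\check{\bX}_y;\Z)$, form a $\C$-basis of solutions of the GKZ system \eqref{eq:GKZ}. However, there is a genuine gap in how you close the argument: you attribute to \cite{CLT11} the statement that the components of the toric mirror map --- in particular the twisted-sector components $\tau_{\bb_j}=A^\bX_j(y)$, $j=m,\ldots,m'-1$ --- are themselves GKZ solutions, hence (complex combinations of) periods. But \cite{CLT11} only treats toric CY manifolds (in fact $K_Y$ for toric Fano $Y$), where there are no extra vectors and no orbifold parameters; in the present generality this identification is precisely the new content the paper has to supply, namely Lemma \ref{lem:GKZ_mirror_map}. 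Its proof is short but not a citation: one uses Iritani's result (\cite[Lemma 4.6]{iritani09}) that the $I$-function of the semi-Fano toric compactification $\bar{\bX}$ satisfies the GKZ-type equations $\Box_d\Psi=0$ for $d\in\bar{\mathbb{L}}$, and then checks that for $d\in\mathbb{L}\oplus 0\subset\bar{\mathbb{L}}$ (using $\langle D_\infty,d\rangle=0$ and $y_a=\prod_i\check{C}_i^{Q_{ia}}$) these operators agree, up to a factor $\prod_{i:\langle D_i,d\rangle>0}\check{C}_i^{-\langle D_i,d\rangle}$, with the classical GKZ operators in \eqref{eq:GKZ}, so that the components $\log q_a(y)$ and $A^\bX_j(y)$ of \eqref{eqn:toric_mirror_map_X} are honest GKZ solutions. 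Without this lemma your "dictionary" has nothing to apply to in the orbifold case.

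Relatedly, the obstacle you single out --- matching individual cycles $\Gamma_{j-m+r'+1}$ to age-one Box elements via the secondary fan, and compatibility of non-equivariant limits with a residue computation --- is not where the work lies and is not needed. Once each mirror-map component is known to solve \eqref{eq:GKZ}, one simply expands it in the $\C$-basis of periods, which produces cycles with \emph{complex} coefficients (this is exactly why the theorem only asserts $\Gamma_a\in H_n(\check{\bX};\C)$ rather than integral cycles, the stronger integral statement being the open conjecture of \cite{CLL}); linear independence of the $\Gamma_a$ follows from the linear independence of the functions $\log q_a(y)$ and $\tau_{\bb_j}(y)$, e.g.\ from their distinct leading terms. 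The final substitution $y=\mathcal{F}^{\mathrm{SYZ}}(q,\tau)$ via Theorem \ref{thm:open_mirror} is then formal, as you say.
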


We deduce the following relation between disk invariants and period integrals in the manifold case:
\begin{corollary}[Open mirror theorem for toric CY manifolds]\label{cor:period_mirror_manifold}
For a semi-projective toric CY manifold $\bX$, there exists a collection $\{\Gamma_1,\ldots,\Gamma_r\} \subset H_n(\check{\bX};\C)$ of linearly independent cycles such that
\begin{equation*}
q_a = \exp\left(-\int_{\Gamma_a}\check{\Omega}_{\mathcal{F}^\mathrm{SYZ}(q,\tau)}\right), \quad a = 1,\ldots,r,
\end{equation*}
where $q_a$'s are the K\"ahler parameters in the complexified K\"ahler moduli space of $\bX$, and $\mathcal{F}^\mathrm{SYZ}(q)$ is the SYZ map in Definition~\ref{defn:SYZ_map}, now defined in terms of the generating functions $1+\delta_i$ of genus 0 open GW invariants $n^\bX_{1,l,\beta_i+\alpha}([\mathrm{pt}]_L)$.
\end{corollary}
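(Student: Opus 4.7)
The plan is to derive this corollary as a direct specialization of Theorem \ref{thm:period_mirror} to the case where the toric CY orbifold $\bX$ is in fact smooth. A semi-projective toric CY manifold is precisely a toric CY orbifold as in Setting \ref{setting:toricCY} whose underlying stacky structure is trivial: its stacky fan has no ray with nontrivial generic isotropy, and the set $\mathrm{Box}'(\Sigma)^{\mathrm{age}=1}$ of age-one twisted sectors is empty. In this situation $m' = m$, so the extended complexified K\"ahler moduli space collapses to the ordinary complexified K\"ahler moduli space parametrized by $q_1,\ldots,q_r$ alone, and there are no orbifold parameters $\tau_{\bb_j}$ to account for.

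Under this specialization, the second family of equations in Theorem \ref{thm:period_mirror} is vacuous, while the first family reads exactly as the statement of the corollary. Correspondingly, the defining equation $uv = g(z_1,\ldots,z_{n-1})$ of the SYZ mirror $\check{\bX}$ loses the summand indexed by $\mathrm{Box}'(\Sigma)^{\mathrm{age}=1}$, so $\check{\bX}$ is just the instanton-corrected conic fibration studied in \cite{CLL}. In parallel, the SYZ map of Definition \ref{defn:SYZ_map} specializes to a map of the K\"ahler parameters $q_a$ alone, defined purely in terms of the generating functions $1+\delta_i$ of the smooth-disk invariants $n^\bX_{1,l,\beta_i+\alpha}([\mathrm{pt}]_L)$, precisely as asserted in the corollary statement.

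With these identifications in place, I would invoke Theorem \ref{thm:period_mirror} essentially verbatim: the cycles $\Gamma_1,\ldots,\Gamma_r \in H_n(\check{\bX};\C)$ are constructed from the GKZ hypergeometric analysis of \cite{CLT11}, which was carried out first in the manifold case, and the identity $q_a = \exp\bigl(-\int_{\Gamma_a} \check{\Omega}_{\mathcal{F}^{\mathrm{SYZ}}(q)}\bigr)$ is simply the restriction of the orbifold identity of Theorem \ref{thm:period_mirror} to the subspace where all orbifold parameters vanish. No new enumerative or analytic input is required beyond the manifold case of Theorem \ref{thm:open_mirror} (already contained in its statement) and the period computations of \cite{CLT11}.

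The main, and essentially only, point requiring real care is verifying that the normalization of the holomorphic volume form $\check{\Omega}$ and the indexing conventions on the primitive generators $\bb_1,\ldots,\bb_{m-1}$ of the rays of $\Sigma$ used in the orbifold setup of Theorem \ref{thm:period_mirror} specialize consistently to those used in the manifold setup of \cite{CLL, CLT11}. This is a bookkeeping check rather than a conceptual difficulty; once it is settled, the corollary follows from Theorem \ref{thm:period_mirror} with no further work.
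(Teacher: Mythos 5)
Your proposal is correct and matches the paper's own argument: the paper deduces Corollary \ref{cor:period_mirror_manifold} precisely by specializing Theorem \ref{thm:period_mirror} to the manifold case, where $m'=m$, $r=r'$, the age-one Box elements are absent, so the second family of identities is vacuous and the first family (with the SYZ map built only from the $1+\delta_i$) is exactly the stated corollary. No gap; the only remaining checks you flag (conventions matching \cite{CLL,CLT11}) are indeed routine.
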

Our results provide an enumerative meaning to period integrals, as conjectured by Gross and Siebert in \cite[Conjecture 0.2 and Remark 5.1]{gross07}. One difference between our results and their conjecture is that we use holomorphic disks while they considered {\em tropical} disks.  In the case of toric Calabi-Yau manifolds, our symplectic construction was proved in \cite{Lau14} to be equivalent to the Gross-Siebert tropical construction by using our explicit formula for open Gromov-Witten invariants given in Theorem \ref{thm:disk_calc_intro}.  On the other hand, their conjecture is much more general and expected to hold even when $\bX$ is a {\em compact} CY manifold. See \cite[Conjecture 1.1]{CLL} (also \cite[Conjecture 2]{CLT11}) for a more precise formulation of the Gross-Siebert conjecture in the case of toric CY manifolds.

Corollary \ref{cor:period_mirror_manifold} proves a weaker form of \cite[Conjecture 1.1]{CLL}, which concerns periods over {\em integral} cycles in $\check{\bX}$ (while here the cycles $\Gamma_1,\ldots,\Gamma_r$ are allowed to have complex coefficients), for {\em all} semi-projective toric CY manifolds. The case when $\bX$ is the total space of the canonical line bundle of a toric Fano manifold was previous proved in \cite{CLT11}.\footnote{As explained in \cite[Section 5.2]{CLT11}, to prove the original stronger form of the conjecture, we need {\em integral} cycles whose periods have specific logarithmic terms. Such cycles have been constructed by Doran and Kerr in \cite[Section 5.3 and Theorem 5.1]{Doran-Kerr11} when $\bX$ is the total space of the canonical line bundle $K_Y$ over a toric del Pezzo surface $Y$. Doran suggested to us that it should not be difficult to extend their construction to general toric CY varieties. Hence the stronger form of the conjecture should follow from Corollary \ref{cor:period_mirror_manifold} and their construction; cf. the discussion in \cite[Section 4]{Doran-Kerr13}. In the recent paper \cite{Ruddat-Siebert14}, Ruddat and Siebert gave yet another construction of such integral cycles by tropical methods. Though they worked only in the compact CY case, Ruddat pointed out that the method can be generalized to handle the toric CY case as well.}

\subsubsection{Open crepant resolution conjecture}
The second main application concerns how genus 0 open (orbifold) GW invariants change under crepant birational maps. String theoretic considerations suggest that GW theory should remain unchanged as the target space changes under a crepant birational map. This is known as the {\em crepant resolution conjecture} and has been intensively studied in closed GW theory; see e.g. \cite{Ruan06, Bryan-Graber09, CIT09, Coates09, Coates-Ruan} and references therein.

In \cite{CCLT12}, a conjecture on how generating functions of genus 0 open GW invariants behave under crepant resolutions was formulated and studied for compact Gorenstein toric orbifolds. In this paper, we apply our calculations to prove an analogous result for toric CY orbifolds (see Section \ref{sec:openCRC} for details):

\begin{thm}[See Theorem \ref{thm:openCR_toric_CY}]
Let $\bX$ be a toric CY orbifold as in Setting \ref{setting:toricCY}, and let $\bX'$ be a toric orbifold which is a toric crepant partial resolution of $\bX$ (such an $\bX'$ will automatically be as in Setting \ref{setting:toricCY}). Then we have
\begin{equation*}
\mathcal{F}^{\mathrm{SYZ}}_{\mathcal{X}} = \mathcal{F}^{\mathrm{SYZ}}_{\bX'},
\end{equation*}
after analytic continuation and a change of variables.
\end{thm}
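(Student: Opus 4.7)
The plan is to reduce the open crepant resolution statement to a statement about \emph{toric mirror maps} by invoking the open mirror theorem (Theorem~\ref{thm:open_mirror}), and then to deduce the equality of mirror maps from the invariance of the GKZ hypergeometric system under crepant partial resolution.

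First, I would apply Theorem~\ref{thm:open_mirror} to both sides: it gives
\begin{equation*}
\mathcal{F}^{\mathrm{SYZ}}_{\bX} = \bigl(\mathcal{F}^{\mathrm{mirror}}_{\bX}\bigr)^{-1}, \qquad
\mathcal{F}^{\mathrm{SYZ}}_{\bX'} = \bigl(\mathcal{F}^{\mathrm{mirror}}_{\bX'}\bigr)^{-1},
\end{equation*}
near the respective large volume limits. Hence it suffices to show that the two toric mirror maps $\mathcal{F}^{\mathrm{mirror}}_{\bX}$ and $\mathcal{F}^{\mathrm{mirror}}_{\bX'}$ agree after analytic continuation and a linear change of variables dictated by the identification of their extended K\"ahler moduli. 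Note that Theorem~\ref{thm:open_mirror} applies to $\bX'$ because any toric crepant partial resolution of a toric CY orbifold as in Setting~\ref{setting:toricCY} is again of this form.

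Second, I would analyze the toric mirror maps via the equivariant $I$-function. The toric mirror map of \cite{CCIT_toricDM} is extracted from the $I$-function by reading off the coefficients of $1$ and $\mathbf{1}_\nu$ in its small-parameter expansion. The crucial point is that a toric crepant partial resolution $\bX' \to \bX$ changes the stacky fan only by subdividing cones while keeping the underlying set of ray generators together with any Box elements of age $1$ that are used to define the extended K\"ahler moduli; in particular, the \emph{GKZ hypergeometric system} attached to the combinatorial data (generators and lattice relations) is the same for $\bX$ and $\bX'$. The $I$-functions of $\bX$ and $\bX'$ are thus two (possibly different) bases of solutions to the same GKZ system living in different chambers of the secondary fan, and are therefore related by analytic continuation together with a linear change of the K\"ahler/orbifold coordinates corresponding to the change of basis of the Mori cones.

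Third, I would conclude by reading off the mirror map coefficients. Since the mirror map is determined by the $I$-function and the $I$-functions agree after analytic continuation and the aforementioned change of variables, the same is true of $\mathcal{F}^{\mathrm{mirror}}_{\bX}$ and $\mathcal{F}^{\mathrm{mirror}}_{\bX'}$; inverting then gives the desired equality of SYZ maps.

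The main obstacle I anticipate is the bookkeeping in step two: one must verify that the analytic continuation matches the mirror-map data (that is, the components indexed by the ray generators $\bb_j$ and by the age-$1$ Box elements $\nu$) correctly between the two chambers of the secondary fan, and that the change of variables is the one induced by the natural identification of $H^2_{CR}(\bX)$ with $H^2_{CR}(\bX')$ arising from the crepant relation. This is essentially a careful chamber-by-chamber analysis of the $I$-function, and should proceed along the lines of the closed crepant resolution arguments in \cite{CIT09, Coates-Ruan} adapted to the semi-projective toric CY orbifold setting. No genuinely new geometric input is needed beyond Theorem~\ref{thm:open_mirror} and the equivariant toric mirror theorem already invoked in the proof of Theorem~\ref{thm:disk_calc_intro}.
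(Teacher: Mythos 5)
Your opening move --- invoking Theorem~\ref{thm:open_mirror} to replace both SYZ maps by inverses of the toric mirror maps, so that the statement reduces to analytically continuing and matching mirror maps, with the change of variables defined by composition --- is exactly how the paper's proof begins. The divergence, and the gap, is in your second step.

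Saying that the $I$-functions (equivalently the mirror-map components) of $\bX$ and $\bX'$ solve the same GKZ system in different chambers of the secondary fan and are ``therefore related by analytic continuation together with a linear change of coordinates'' is not a proof: that ``therefore'' is precisely what Theorem~\ref{thm:openCR_toric_CY} asserts and what must be established. Concretely, one has to show that the hypergeometric components $A^{\bX'}_j(y')$, which are a priori series supported on $\overline{NE}(\bX')$, admit a continuation across the wall and can be re-expanded near the large-volume limit of $\bX$ as convergent series whose exponents lie in $\overline{NE}(\bX)$; only then is the composed coordinate change $(Q(q,\tau),\mathcal{T}(q,\tau))$ holomorphic on a domain of the form $(\Delta(\epsilon)\setminus\R_{\leq 0})^r$ with values in $(\C^\times)^r$, as the theorem requires. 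Regular holonomicity of the GKZ system gives continuation along paths avoiding the singular locus, but by itself gives neither the expansion at the new limit point nor the explicit domain statement. The paper supplies exactly this missing content by (i) factoring the crepant birational map into steps each crossing a single wall of the secondary fan (adding one new ray at a time), a reduction absent from your plan, and (ii) performing an explicit Mellin--Barnes continuation of each $A_j$ in Appendix~\ref{app:analy_conti}, where the key verification is that the continued series is supported on the Mori cone of the adjacent chamber. Finally, your description of the change of variables as linear and ``induced by the natural identification of $H^2_{\mathrm{CR}}(\bX)$ with $H^2_{\mathrm{CR}}(\bX')$'' is inaccurate: the change of variables is the composition of the mirror map of $\bX$ with the continued inverse mirror map of $\bX'$; at best the flat coordinates are related by an affine-linear connection matrix after continuation (as in the $[\cpx^2/\Z_m]$ example computed in \cite{CCIT09}), and the induced map on the K\"ahler/orbifold parameters $(q,\tau)\mapsto(Q,\mathcal{T})$ is genuinely nonlinear.
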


Open versions of the crepant resolution conjecture for Aganagic-Vafa type Lagrangian branes in 3-dimensional toric CY orbifolds have been considered in recent works of Brini, Cavalieri and Ross \cite{Cavalieri-Ross11, BCR13-2}, and of Ke and Zhou \cite{Ke-Zhou}.

\subsection{Organization}
The rest of the paper is organized as follows. Section \ref{sec:toric_orbifolds} contains a review on the basic materials about toric orbifolds that we need. The (equivariant) mirror theorem for toric orbifolds is discussed in Section \ref{sec:toric_mirror_theorem}. In Section \ref{sec:review_disk_inv} we give a summary on the theory of genus 0 open orbifold GW invariants for toric orbifolds. In Section \ref{sec:orb_Gross_fib} we define and study the Gross fibration of a toric CY orbifold. In Section \ref{sec:SYZ_mirror} we construct the instanton-corrected mirror of a toric CY orbifold by applying the SYZ recipe to the Gross fibration of a suitable toric modification. The genus 0 open orbifold GW invariants which are relevant to the SYZ mirror construction are computed in Section \ref{sec:computation_open_GW} via an open/closed equality and an equivariant toric mirror theorem applied to various toric (partial) compactifications. In Section \ref{sec:open_mirror_thms} we apply our calculation to deduce the open mirror theorems which relate various mirror maps associated to a toric CY orbifold. Our calculation is also applied in Section \ref{sec:openCRC} to prove a relationship between genus 0 open orbifold GW invariants of a toric CY orbifold and those of its toric crepant (partial) resolutions. %Appendix \ref{app:maslov} discusses some useful facts about Maslov indices.
Appendix \ref{app:analy_conti} contains the technical discussions on the analytic continuations of mirror maps.

\subsection{Acknowledgment}
We are grateful to N. C. Leung for continuous encouragement and related collaborations. We thank L. Borisov, S. Hosono, Y. Konishi and S. Minabe for very enlightening and useful discussions on GKZ systems and period integrals, and C. Doran and H. Ruddat for explaining their constructions of integral cycles. H.-H. Tseng also thanks T. Coates, A. Corti, and H. Iritani for related collaborations and discussions. We thank the referees for very helpful corrections, comments and suggestions.

The research of K. Chan was substantially supported by a grant from the Research Grants Council of the Hong Kong Special Administrative Region, China (Project No. CUHK404412).
C.-H. Cho was supported in part by the National Research Foundation of Korea (NRF) grant funded by the Korea Government (MEST) (No. 2013042157 and No. 2012R1A1A2003117).
S.-C. Lau was supported by Harvard University.
H.-H. Tseng was supported in part by Simons Foundation Collaboration Grant.

\section{Preliminaries on toric orbifolds}\label{sec:toric_orbifolds}

In this section we review the construction and basic properties of toric orbifolds. We also describe the closed mirror theorem for toric orbifolds due to \cite{CCIT_toricDM}. See \cite{BCS, Jiang08} for more details on toric orbifolds, and see \cite{iritani09, fang-liu-tseng, CCIT_toricDM} for mirror theorems for toric orbifolds.

\subsection{Construction}\label{sec:toric_orb_defn}
A {\em toric orbifold}, as introduced in \cite{BCS}, is associated to a set of combinatorial data called a {\em stacky fan}: $(\Sigma, \bb_0,\ldots, \bb_{m-1}),$
where $\Sigma$ is a simplicial fan contained in the $\mathbb{R}$-vector space $N_\R := N\otimes_\Z\R$ associated to a rank $n$ lattice $N$, and
$\{\bb_i \mid 0\leq i\leq m-1\}$ are integral generators of 1-dimensional cones (rays) in $\Sigma$. We call $\bb_i$ the {\em stacky vectors}. Denote by $|\Sigma|\subset N_\R$ the support of $\Sigma$.

Let $\bb_{m}, \ldots, \bb_{m'-1}\in N\cap |\Sigma|$ be additional vectors such that the set $\{\bb_i\}_{i=0}^{m-1}\cup \{\bb_j\}_{j=m}^{m'-1}$ generates $N$ over $\integer$. Following \cite{Jiang08}, the data
$(\Sigma, \{\bb_i\}_{i=0}^{m-1}\cup \{\bb_j\}_{j=m}^{m'-1})$
is called an {\em extended stacky fan}, and $\{\bb_j\}_{j=m}^{m'-1}$ are called {\em extra vectors}. The flexibility of choosing extra vectors is important in the toric mirror theorem, see Section \ref{sec:toric_mirror_theorem}.

We describe the construction of toric orbifolds from extended stacky fans. The {\em fan map},
$$\phi: \widetilde{N}:=\bigoplus_{i=0}^{m'-1}\Z e_i\to N, \quad \phi(e_i):= \bb_i\textrm{ for $i=0,\ldots,m'-1$},$$
which is a surjective group homomorphism, gives an exact sequence (the ``fan sequence'')
\begin{equation}\label{eqn:fan_seq}
0\longrightarrow \mathbb{L}:=\mathrm{Ker}(\phi)\overset{\psi}{\longrightarrow}\widetilde{N}\overset{\phi}{\longrightarrow} N\longrightarrow 0.
\end{equation}
Note that $\mathbb{L}\simeq \Z^{m'-n}$. Tensoring with $\mathbb{C}^\times$ gives the following exact sequence:
%Let $G(\Sigma):=\{v_i\in N \vert i=0,\ldots,m-1\}$ be the collection of primitive generators of the rays in $\Sigma$. A subset $\CP=\{v_{i_1},\ldots,v_{i_p}\}\subset G(\Sigma)$ is called a {\em primitive collection} if
%\begin{itemize}
%\item
%$\{v_{i_1},\ldots,v_{i_p}\}$ does not span a $p$-dimensional cone in $\Sigma$;
%\item
%any $k$-element subset of $\CP$ for $0 \leq k < p$,  spans a $k$-dimensional cone in $\Sigma$.
%\end{itemize}
%For a primitive collection $\CP = \{v_{i_1},\ldots, v_{i_p}\}\subset G(\Sigma)$, define
%$$\BA(\CP) = \{(z_1,\ldots,z_m)\in\C^m \mid z_{i_1}= \cdots=z_{i_p}=0\}.$$
%Let $Z(\Sigma)$ be the union of all $\BA(\CP)$ as $\CP$ runs over every primitive collections in $G(\Sigma)$. Define $U(\Sigma):= \C^m\setminus Z(\Sigma).$
%Consider the {\em fan map} $\beta:\Z^m\to N$ which sends the standard basis vectors $e_i$ to $\bb_i$ for $i=1,\ldots,m$. Tensoring with $\R$ and $\mathbb{C}^*$ give the following exact sequences:
%\begin{equation}\label{kexact2}
% 0 \longrightarrow \mathfrak{k} \longrightarrow \R^m \stackrel{\beta}{\longrightarrow} N_\R \to 0.
%\end{equation}
\begin{equation}\label{kexact4}
0 \longrightarrow G:=\mathbb{L}\otimes_\Z \C^\times \longrightarrow \widetilde{N}\otimes_\Z \C^\times \simeq (\C^\times)^{m'} \stackrel{\phi_{\C^\times}}{\longrightarrow} \mathbb{T}:= N\otimes_\Z \C^\times \to 0.
\end{equation}
%Here the map $\phi_{\C^\times}$ is given by $$\phi_{\C^\times}(\lambda_1,\ldots,\lambda_{m'}) = \left(\prod_j \lambda_j^{b_{j1}},\ldots, \prod_j \lambda_j^{b_{jn}}\right).$$

Consider the set of {\em ``anti-cones''},
\begin{equation}\label{defn:anticone}
\mathcal{A}:=\left\{I\subset \{0, 1, \ldots, m'-1\} \mid \sum_{i\notin I}\R_{\geq 0} \bb_i \text{ is a cone in } \Sigma\right\}.
\end{equation}
For $I\in \mathcal{A}$, let $\C^I\subset \C^{m'}$ be the subvariety defined by the ideal in $\C[Z_0, \ldots, Z_{m'-1}]$ generated by $\{Z_i \mid i\in I\}$. Put
$$U_\mathcal{A}:=\C^{m'}\setminus \bigcup_{I \notin \mathcal{A}}\C^I.$$
The algebraic torus $G$ acts on $\C^{m'}$ via the map $G\to (\C^\times)^{m'}$ in \eqref{kexact4}. Since $N$ is torsion-free, the induced $G$-action on $U_\mathcal{A}$  is effective and has finite isotropy groups. The global quotient
\begin{equation*}
\mathcal{X}_\Sigma := [U_\mathcal{A}/G]
\end{equation*}
%don't erase this(cho)
is called the {\em toric orbifold} associated to $(\Sigma, \{\bb_i\}_{i=0}^{m-1}\cup \{\bb_j\}_{j=m}^{m'-1})$.  By construction, the standard $(\C^\times)^{m'}$-action on $U_\mathcal{A}$ induces a $\mathbb{T}$-action on $\bX_\Sigma$.

%\begin{remark}
%Let $\{v_i\in N \mid i=0,\ldots,m-1\}$ be the collection of primitive generators of the rays in $\Sigma$. In general, for $0\leq i\leq m-1$, we have $\bb_i = c_i v_i$ for some positive integer $c_i \in \Z_{>0}$. If $c_i=1$ for all $0\leq i\leq m-1$, then the coarse moduli space of $\bX_\Sigma$ is a simplicial toric variety and in this case we call $\bX_\Sigma$ a simplicial toric orbifold. Such a toric orbifold has orbifold structures in at least codimension $2$.
%\end{remark}

\begin{defn}\label{defn:semi_proj}
Let $X_\Sigma$ be the toric variety which is the coarse moduli space of a toric orbifold $\bX_\Sigma$. We say that $X_\Sigma$ is {\em semi-projective} if $X_\Sigma$ admits a $\mathbb{T}$-fixed point, and the natural map $X_\Sigma\to \text{Spec }H^0(X_\Sigma, \mathcal{O}_{X_\Sigma})$ is projective.
\end{defn}

Toric orbifolds appearing in this paper all have semi-projective coarse moduli spaces. We refer to \cite[Section 7.2]{CLS_toricbook} for more detailed discussions on semi-projective toric varieties.

%\begin{remark}
%By abuse of terminology, we say that the toric orbifold $\bX_\Sigma$ is (semi-)projective when the underlying toric variety $X_\Sigma$ is (semi-)projective.
%\end{remark}

\subsection{Twisted sectors} \label{sect:TwSec}
For a $d$-dimensional cone $\sigma\in \Sigma$ generated by $\bb_\sigma=(\bb_{i_1}, \ldots, \bb_{i_d})$, put
$$\mathrm{Box}_{\bb_\sigma} :=\left\{\nu \in N \mid \nu=\sum_{k=1}^d t_k \bb_{i_k},\ t_k \in [0,1)\cap\rat\right\}.$$
Let $N_{\bb_\sigma}\subset N$ be the submodule generated by $\{ \bb_{i_1}, \ldots, \bb_{i_d}\}$. Then $\mathrm{Box}_{\bb_\sigma}$ is in bijection with the finite group $G_{\bb_{\sigma}} = N/N_{\bb_{\sigma}}$. It is easy to see that if $\tau \prec \sigma$, then $\mathrm{Box}_{\bb_\tau}\subset \mathrm{Box}_{\bb_\sigma}$. Define
\begin{equation*}
\mathrm{Box}_{\bb_\sigma}^{\circ} := \mathrm{Box}_{\bb_\sigma} - \bigcup_{\tau \precneqq \sigma} \mathrm{Box}_{\bb_\tau},\quad \mathrm{Box}(\Sigma) := \bigcup_{\sigma \in \Sigma^{(n)}} \mathrm{Box}_{\bb_\sigma} = \bigsqcup_{\sigma \in \Sigma} \mathrm{Box}_{\bb_\sigma}^{\circ}
\end{equation*}
where $\Sigma^{(n)}$ is the set of $n$-dimensional cones in $\Sigma$. We set $\mathrm{Box}'(\Sigma)=\mathrm{Box}(\Sigma)\setminus\{0\}$.

By \cite{BCS}, $\mathrm{Box}'(\Sigma)$ is in bijection with the {\em twisted sectors}, i.e. non-trivial connected components of the inertia orbifold of $\bX_\Sigma$. For $\nu\in\mathrm{Box}(\Sigma)$, denote by $\bX_\nu$ the corresponding twisted sector of $\bX$. Note that $\bX_0=\bX$ as orbifolds. See Figure \ref{fig:fan for C^2 mod Z_m} for an example of $\mathrm{Box}'(\Sigma)$.

The {\em Chen-Ruan orbifold cohomology} $H^*_\mathrm{CR}(\mathcal{X};\rat)$ of a toric obifold $\bX$, defined in \cite{CR04}, is
$$H^d_\mathrm{CR}(\mathcal{X};\rat)=\bigoplus_{\nu\in\mathrm{Box}}H^{d-2\mathrm{age}(\nu)}(\mathcal{X}_\nu;\rat),$$
where $\mathrm{age}(\nu)$ is the {\em degree shifting number} or {\em age} of the twisted sector $\mathcal{X}_\nu$ and the cohomology groups on the right hand side are singular cohomology groups. If we write $\nu=\sum_{k=1}^d t_k \bb_{i_k} \in \mathrm{Box}(\Sigma)$ where $\{\bb_{i_1},\ldots,\bb_{i_d}\}$ generates a cone in $\Sigma$, then $\mathrm{age}(\nu) = \sum_{k=1}^d t_k \in \rat_{\geq0}.$

The $\mathbb{T}$-action on $\bX$ induces $\mathbb{T}$-actions on twisted sectors. This allows one to define the {\em $\mathbb{T}$-equivariant Chen-Ruan orbifold cohomology} $H^*_{\mathrm{CR}, \mathbb{T}}(\mathcal{X};\rat)$ as
$$H^d_{\mathrm{CR}, \mathbb{T}}(\mathcal{X};\rat)=\bigoplus_{\nu\in\mathrm{Box}}H^{d-2\mathrm{age}(\nu)}_{\mathbb{T}}(\mathcal{X}_\nu;\rat),$$
where $H^*_\mathbb{T}(-)$ denotes $\mathbb{T}$-equivariant cohomology. The trivial $\mathbb{T}$-bundle over a point $\text{pt}$ defines a map $\text{pt}\to B\mathbb{T}$, inducing a map $H^*_\mathbb{T}(\text{pt}, \mathbb{Q})=H^*(B\mathbb{T}, \mathbb{Q})\to H^*(\text{pt})$. Let $Y$ be a space with a $\mathbb{T}$-action. By construction the $\mathbb{T}$-equvariant cohomology of $Y$ admits a map $H^*_\mathbb{T}(\text{pt})\to H^*_\mathbb{T}(Y, \mathbb{Q})$. This defines a natural map
$$H^*_\mathbb{T}(Y, \mathbb{Q})\to H^*_\mathbb{T}(Y, \mathbb{Q})\otimes_{H^*_\mathbb{T}(\text{pt})} H^*(\text{pt})\simeq H^*(Y, \mathbb{Q}).$$
For a class $C\in H^*_\mathbb{T}(Y, \mathbb{Q})$, its image under this map, which is a class in $H^*(Y, \mathbb{Q})$, is called the {\em non-equivariant limit} of $C$. In Section \ref{sec:computation_open_GW}, we will need to consider non-equivariant limits of certain classes in $H^*_{\mathrm{CR}, \mathbb{T}}(\mathcal{X};\rat)$.

\subsection{Toric divisors, K\"ahler cones, and Mori cones}\label{sec:div_cone_etc}

Let $\bX$ be a toric orbifold defined by an extended stacky fan $(\Sigma, \{\bb_i\}_{i=0}^{m-1}\cup \{\bb_j\}_{j=m}^{m'-1})$. Let $\mathcal{A}$ be the set of anticones given in \eqref{defn:anticone}. Applying $\mathrm{Hom}_\Z(-,\Z)$ to the fan sequence \eqref{eqn:fan_seq} gives the following exact sequence:
\begin{equation*}
0\longrightarrow M \overset{\phi^\vee}{\longrightarrow} \widetilde{M} \overset{\psi^\vee}{\longrightarrow} \mathbb{L}^\vee \longrightarrow 0,
\end{equation*}
called the ``divisor sequence''. Here $M := N^\vee = \mathrm{Hom}(N, \Z)$, $\widetilde{M} := \widetilde{N}^\vee = \mathrm{Hom}(\widetilde{N},\Z)$ and $\mathbb{L}^\vee = \mathrm{Hom}(\mathbb{L},\Z)$ are dual lattices. The map $\psi^\vee: \widetilde{M}\to \mathbb{L}^\vee$ is surjective since $N$ is torsion-free.

By construction, line bundles on $\bX$ correspond to $G$-equivariant line bundles on $\mathcal{U}_\mathcal{A}$. Because of (\ref{kexact4}), $\mathbb{T}$-equivariant line bundles on $\bX$ correspond to $(\mathbb{C}^\times)^{m'}$-equivariant line bundles on $\mathcal{U}_\mathcal{A}$. Because $\cup_{I\notin \mathcal{A}} \mathbb{C}^I\subset \mathbb{C}^{m'}$ is of codimension at least $2$, we have the following descriptions of the Picard groups: $$Pic(\bX)\simeq Hom(G, \mathbb{C}^\times)\simeq \mathbb{L}^\vee, Pic_\mathbb{T}(\bX)\simeq Hom((\mathbb{C}^\times)^{m'}, \mathbb{C}^\times)\simeq \widetilde{N}^\vee=\widetilde{M}.$$
Moreover, the natural map $Pic_\mathbb{T}(\bX)\to Pic(\bX)$ is identified with  $\psi^\vee: \widetilde{M}\to \mathbb{L}^\vee$.

Let $\{e_i^\vee | i=0, 1, \ldots, m'-1 \}\subset \widetilde{M}$ be the basis dual to $\{e_i | i=0, 1, \ldots, m'-1\}\subset \widetilde{N}$. For $i=0, 1, \ldots, m'-1$, we denote by $D_i^\mathbb{T}$ the $\mathbb{T}$-equivariant line bundle on $\bX$ corresponding to $e_i^\vee$ under the identification $Pic_\mathbb{T}(\bX)\simeq \widetilde{M}$. Also put
$$D_i:=\psi^\vee(e_i^\vee)\in \mathbb{L}^\vee.$$
The collection $\{D_i \mid 0\leq i\leq m-1\}$ are toric prime divisors corresponding to the generators $\{\bb_i \mid 0\leq i\leq m-1\}$ of rays in $\Sigma$, and $\{D^\mathbb{T}_i \mid 0\leq i\leq m-1\}$ are their $\mathbb{T}$-equivariant lifts. There is a natural commutative diagram and isomorphisms
\begin{equation*}
\xymatrix{
\widetilde{M}\otimes \mathbb{Q}\ar[d]\ar[r]^{\psi^\vee\otimes \mathbb{Q}} & \mathbb{L}^\vee\otimes \mathbb{Q}\ar[d]\\
\left(\widetilde{M}\otimes\mathbb{Q}\right) \Big/ \left(\sum_{j=m}^{m'-1}\mathbb{Q}D^\mathbb{T}_j\right)\simeq H_\mathbb{T}^2(\bX, \mathbb{Q})\ar[r] & H^2(\bX, \mathbb{Q}) \simeq \left(\mathbb{L}^\vee\otimes\mathbb{Q}\right) \Big/ \left(\sum_{j=m}^{m'-1}\mathbb{Q}D_j\right).
}
\end{equation*}
%Also, there are isomorphisms
%$$H^2(\bX; \mathbb{Q})\simeq \left(\mathbb{L}^\vee\otimes\mathbb{Q}\right) \Big/ \left(\sum_{j=m}^{m'-1}\mathbb{Q}D_j\right), \quad H_\mathbb{T}^2(\bX; \mathbb{Q})\simeq \left(\widetilde{M}\otimes\mathbb{Q}\right) \Big/ \left(\sum_{j=m}^{m'-1}\mathbb{Q}D^\mathbb{T}_j\right).$$

%\begin{remark}
%By (standard) abuse of notation, we use $D_i$ (resp. $D_i^\mathbb{T}$) to %denote
%\begin{enumerate}
%\item
%an element in $\mathbb{L}^\vee$ (resp. $\widetilde{M}$);

%\item
%the corresponding line bundle on $\bX$ (resp. $\mathbb{T}$-equivariant line bundle on $\bX$);

%\item
%the corresponding divisor in $\bX$ (resp. $\mathbb{T}$-equivariant divisor in $\bX$);

%\item
%the first Chern class of the line bundle on $\bX$, taking values in $H^2(\bX)$ (resp. $H_\mathbb{T}^2(\bX)$).
%\end{enumerate}
%\end{remark}

As explained in \cite[Section 3.1.2]{iritani09}, there is a canonical splitting of the quotient map $\mathbb{L}^\vee\otimes \mathbb{Q}\to H^2(\bX; \mathbb{Q})$, which we now describe. For $m\leq j \leq m'-1$, $\bb_j$ is contained in a cone in $\Sigma$. Let $I_j\in \mathcal{A}$ be the anticone of the cone containing $\bb_j$. Then we can write
$\bb_j=\sum_{i\notin I_j} c_{ji}\bb_i$ for $c_{ji}\in\rat_{\geq 0}$.

By the fan sequence \eqref{eqn:fan_seq} tensored with $\mathbb{Q}$, there exists a unique $D_j^\vee\in \mathbb{L}\otimes \mathbb{Q}$ such that
\begin{equation}\label{eqn:dual_of_D_j}
\langle D_i, D_j^\vee\rangle
=\left\{
\begin{array}{lll}
1 & \textrm{ if } i=j,\\
-c_{ji} &  \textrm{ if } i\notin I_j,\\
0 & \textrm{ if } i\in I_j\setminus \{j\}.
\end{array}\right.
\end{equation}
Here and henceforth $\langle -,-\rangle$ denotes the natural pairing between $\mathbb{L}^\vee$ and $\mathbb{L}$ (or relevant extensions of scalars). This defines a decomposition
\begin{equation}\label{eqn:splitting_L_dual}
\mathbb{L}^\vee\otimes \mathbb{Q} =\mathrm{Ker}\left(\left(D_{m}^\vee, \ldots, D_{m'-1}^\vee \right): \mathbb{L}^\vee\otimes \mathbb{Q}\to \mathbb{Q}^{m'-m}\right)\oplus \bigoplus_{j=m}^{m'-1}\mathbb{Q}D_j.
\end{equation}
Moreover, the term $\mathrm{Ker}\left(\left(D_{m}^\vee, \ldots, D_{m'-1}^\vee \right): \mathbb{L}^\vee\otimes \mathbb{Q}\to \mathbb{Q}^{m'-m}\right)$ is naturally identified with $H^2(\bX; \mathbb{Q})$ via the quotient map $\mathbb{L}^\vee\otimes\mathbb{Q}\to H^2(\bX; \mathbb{Q})$, which allows us to regard $H^2(\bX; \mathbb{Q})$ as a subspace of $\mathbb{L}^\vee\otimes\mathbb{Q}$.

The {\em extended K\"ahler cone} of $\bX$ is defined to be
$\widetilde{C}_\bX := \bigcap_{I\in \mathcal{A}}\left(\sum_{i\in I}\mathbb{R}_{>0}D_i \right)\subset \mathbb{L}^\vee\otimes\mathbb{R}.$
The genuine K\"ahler cone $C_\bX$ is the image of $\widetilde{C}_\bX$ under the quotient map $\mathbb{L}^\vee\otimes \mathbb{R}\to H^2(\bX; \R)$. The splitting \eqref{eqn:splitting_L_dual} of $\mathbb{L}^\vee\otimes\mathbb{Q}$ induces a splitting of the extended K\"ahler cone in $\mathbb{L}^\vee\otimes \R$:
$\widetilde{C}_\bX = C_\bX+\sum_{j=m}^{m'-1} \mathbb{R}_{>0}D_j.$

Recall that the rank of $\mathbb{L}^\vee$ is $r:=m'-n$ while the rank of $H_2(\bX; \mathbb{Z})$ is given by $r':=r-(m'-m)=m-n$. We choose an integral basis
$\{p_1, \ldots, p_r\}\subset \mathbb{L}^\vee$
such that $p_a$ is in the closure of $\widetilde{C}_\bX$ for all $a$ and $p_{r'+1}, \ldots, p_r \in \sum_{i=m}^{m'-1}\mathbb{R}_{\geq 0}D_i$. Then the images $\{\bar{p}_1,\ldots,\bar{p}_{r'}\}$ of $\{p_1,\ldots,p_{r'}\}$ under the quotient map $\mathbb{L}^\vee\otimes \Q\to H^2(\bX; \Q)$ gives a nef basis for $H^2(\bX;\Q)$ and $\bar{p}_a = 0$ for $r'+1\leq a\leq r$.

Choose $\{p_1^\mathbb{T}, \ldots, p_r^\mathbb{T}\}\subset \widetilde{M}\otimes \mathbb{Q}$ such that $\psi^\vee(p_a^\mathbb{T})=p_a$ for all $a$, and $\bar{p}_a^{\mathbb{T}}=0$ for $a=r'+1,...,r$. Here, for $p\in \widetilde{M}\otimes \mathbb{Q}$, denote by $\bar{p}\in H_\mathbb{T}^2(\bX, \mathbb{Q})$ the image of $p$ under the natural map $\widetilde{M}\otimes \mathbb{Q}\to H_\mathbb{T}^2(\bX, \mathbb{Q})$.  By construction, for $a=1,...,r'$, $\bar{p}_a$ is the non-equivariant limit of $\bar{p}_a^\mathbb{T}$.

Define a matrix $(Q_{ia})$ by
$D_i = \sum_{a=1}^r Q_{ia} p_a,\quad Q_{ia}\in\mathbb{Z}.$
Denote by $\bar{D}_i$ the image of $D_i$ under $\mathbb{L}^\vee\otimes \Q\to H^2(\bX; \Q)$. Then for $i=0,\ldots,m-1$, the class $\bar{D}_i$ of the toric prime divisor $D_i$ and its equivariant lift $\bar{D}^\mathbb{T}_i$ are given by
$$\bar{D}_i = \sum_{a=1}^{r'} Q_{ia}\bar{p}_a, \quad \bar{D}^\mathbb{T}_i = \sum_{a=1}^{r'} Q_{ia}\bar{p}^\mathbb{T}_a+\lambda_i, \text{ where }\lambda_i\in H^2(B\mathbb{T}; \mathbb{Q}),$$
and for $i=m,\ldots, m'-1$, $\bar{D}_i=0$ in $H^2(\bX; \R)$, $\bar{D}^\mathbb{T}_i=0$.

Let ${\bf 1}\in H^0(\bX, \rat)$ be the fundamental class. For $\nu\in \text{Box}$ with $\text{age}(\nu)=1$, let ${\bf 1}_\nu\in H^0(\bX_\nu, \rat)$ be the fundamental class. It is then straightforward to see that $$H^0_{\mathrm{CR}, \mathbb{T}}(\bX, K_\mathbb{T})=K_\mathbb{T}{\bf 1}, \quad H^2_{\mathrm{CR}, \mathbb{T}}(\bX, K_\mathbb{T})=\bigoplus_{a=1}^{r'} K_\mathbb{T} \bar{p}_a^{\mathbb{T}}\oplus \bigoplus_{\nu\in \text{Box}, \text{age}(\nu)=1} K_\mathbb{T} {\bf 1}_\nu,$$
where $K_\mathbb{T}$ is the field of fractions of $H_\mathbb{T}^*(\text{pt}, \mathbb{Q})$, and $H^*_\mathbb{T}(-, K_\mathbb{T}):=H^*_\mathbb{T}(-, \mathbb{Q})\otimes_{H_\mathbb{T}^*(\text{pt}, \mathbb{Q})}K_\mathbb{T}$.

The dual basis of $\{p_1, \ldots, p_r\}\subset \mathbb{L}^\vee$ is given by $\{\gamma_1, \ldots, \gamma_r\} \subset \mathbb{L}$ where
$\gamma_a = \sum_{i=0}^{m'-1} Q_{ia}e_i \in \widetilde{N}.$
Then $\{\gamma_1,\ldots,\gamma_{r'}\}$ provides a basis of $H_2^\textrm{eff}(\bX;\Q)$. In particular, we have $Q_{ia} = 0$ when $m\leq i\leq m'-1$ and $1\leq a\leq r'$.

Set
\begin{align*}
\mathbb{K} & :=\{d\in\mathbb{L}\otimes\rat \mid \{j\in\{0, 1,\ldots,m'-1\}\mid\langle D_j,d\rangle\in\Z\}\in\mathcal{A}\},\\
\mathbb{K}_\mathrm{eff} & :=\{d\in\mathbb{L}\otimes\rat \mid \{j\in\{0,1,\ldots,m'-1\}\mid\langle D_j,d\rangle\in\Z_{\geq0}\}\in\mathcal{A}\},
\end{align*}
Roughly speaking $\mathbb{K}_\mathrm{eff}$ is the set of effective curve classes. In particular, the intersection $\mathbb{K}_\mathrm{eff} \cap H_2(\bX;\R)$ consists of classes of stable maps $\proj(1,m) \to \bX$ for some $m \in \Z_{\geq0}$. See e.g. \cite[Section 3.1]{iritani09} for more details.

For a real number $\lambda\in\R$, let $\lceil \lambda \rceil$, $\lfloor \lambda \rfloor$ and $\{\lambda\}$ denote the ceiling, floor and fractional part of $\lambda$ respectively. Now for $d\in\mathbb{K}$, define
\begin{equation}\label{eqn:reduction_func}
\nu(d):=\sum_{i=0}^{m'-1}\lceil\langle D_i,d\rangle\rceil\bb_i\in N,
\end{equation}
and let $I_d := \{j\in\{0, 1,\ldots,m'-1\}\mid\langle D_j,d\rangle\in\Z\} \in \mathcal{A}$. Then since we can rewrite
$$\nu(d) = \sum_{i=0}^{m'-1}(\{-\langle D_i,d\rangle\}+\langle D_i,d\rangle)\bb_i
= \sum_{i=0}^{m'-1} \{-\langle D_i,d\rangle\}\bb_i
= \sum_{i\notin I_d} \{-\langle D_i,d\rangle\}\bb_i,$$
we have $\nu(d)\in\mathrm{Box}$, and hence $\nu(d)$, if nonzero, corresponds to a twisted sector $\bX_{\nu(d)}$ of $\bX$.

\subsection{The $I$-function}
We now define the following combinatorial object.
\begin{defn}\label{defn:equiv_I_function}
The {\em $\mathbb{T}$-equivariant $I$-function} of a toric orbifold $\bX$ is an $H^*_{\mathrm{CR}, \mathbb{T}}(\bX)$-valued power series defined by
\begin{align*}
I_{\bX, \mathbb{T}}(y,z) = \conste^{\sum_{a=1}^r\bar{p}^\mathbb{T}_a\log y_a/z}\left(\sum_{d\in\mathbb{K}_\mathrm{eff}}y^d
\prod_{i=0}^{m'-1} \frac{\prod_{k=\lceil\langle D_i,d\rangle\rceil}^\infty(\bar{D}^\mathbb{T}_i+(\langle D_i,d\rangle-k)z)}
{\prod_{k=0}^\infty(\bar{D}^\mathbb{T}_i+(\langle D_i,d\rangle-k)z)}\mathbf{1}_{\nu(d)}\right),
\end{align*}
where $y^d=y_1^{\langle p_1,d\rangle}\cdots y_{r}^{\langle p_{r},d\rangle}$ and $\mathbf{1}_{\nu(d)}\in H^0(\bX_{\nu(d)})\subset H^{2\mathrm{age}(\nu(d))}_\mathrm{CR}(\bX)$ is the fundamental class of the twisted sector $\bX_{\nu(d)}$. The {\em $I$-function} of $\bX$ is an $H^*_\mathrm{CR}(\bX)$-valued power series $I_\bX(y,z)$ defined by the above equation with $\bar{p}^\mathbb{T}_a$ (resp. $\bar{D}^\mathbb{T}_i$) replaced by $\bar{p}_a$ (resp. $\bar{D}_i$). Clearly the non-equivariant limit of $I_{\bX, \mathbb{T}}$ is $I_\bX$.
\end{defn}

%\begin{defn}\label{defn:I_function}
%The {\em $I$-function} of a toric orbifold $\bX$ is an $H^*_\mathrm{CR}(\bX)$-valued power series defined by
%\begin{align*}
%I_\bX(y,z) = \conste^{\sum_{a=1}^r\bar{p}_a\log y_a/z}\left(\sum_{d\in\mathbb{K}_\mathrm{eff}}y^d
%\prod_{i=0}^{m'-1} \frac{\prod_{k=\lceil\langle D_i,d\rangle\rceil}^\infty(\bar{D}_i+(\langle D_i,d\rangle-k)z)}
%{\prod_{k=0}^\infty(\bar{D}_i+(\langle D_i,d\rangle-k)z)}\mathbf{1}_{\nu(d)}\right),
%\end{align*}
%where $y^d=y_1^{\langle p_1,d\rangle}\cdots y_{r}^{\langle p_{r},d\rangle}$ and $\mathbf{1}_{\nu(d)}\in H^0(\bX_{\nu(d)})\subset H^{2\mathrm{age}(\nu(d))}_\mathrm{CR}(\bX)$ is the fundamental class of the twisted sector $\bX_{\nu(d)}$.
%\end{defn}
%\begin{remark}
%It is clear from definitions that the non-equivariant limit of $I_{\bX, \mathbb{T}}$ is $I_\bX$.
%\end{remark}

\begin{defn} \label{defn:sF}
A toric orbifold $\bX$ is said to be {\em semi-Fano} if $\hat{\rho}(\bX) := \sum_{i=0}^{m'-1}D_i$ is contained in the closure of the extended K\"ahler cone $\widetilde{C}_\bX$ in $\mathbb{L}^\vee\otimes \R$.
\end{defn}
We remark that this condition {\em depends} on the choice of the extra vectors $\bb_m,\ldots,\bb_{m'-1}$. It holds if and only if the first Chern class $c_1(\bX) \in H^2(\bX; \rat)$ of $\bX$ is contained in the closure of the K\"ahler cone $C_\bX$ (i.e. the anticanonical divisor $-K_\bX$ is nef) and $\textrm{age}(\bb_j):=\sum_{i\notin I_j} c_{ji}\leq 1$ for $m\leq j\leq m'-1$, because $\hat{\rho}(\bX) = c_1(\bX) + \sum_{j=m}^{m'-1} (1-\textrm{age}(\bb_j)) D_j$ by \cite[Lemma 3.3]{iritani09}. In particular, when $\bX$ is a toric manifold, the condition is equivalent to requiring the anticanonical divisor $-K_\bX$ to be nef.

The orbifolds that we consider in this paper satisfy the following assumption.
\begin{assumption}\label{assumption}
The set $\{\bb_0,\ldots,\bb_{m-1}\} \cup \{\nu\in\mathrm{Box}(\Sigma) \mid \textrm{age}(\nu)\leq1\}$ generates the lattice $N$ over $\mathbb{Z}$.
\end{assumption}
Under this assumption, we choose the extra vectors $\bb_m,\ldots,\bb_{m'-1}\in \{\nu\in\mathrm{Box}(\Sigma) \mid \textrm{age}(\nu)\leq1\}$ so that $\{\bb_0,\ldots,\bb_{m'-1}\}$ generates $N$ over $\integer$. Then the fan sequence \eqref{eqn:fan_seq} determines the elements $D_0,\ldots,D_{m'-1}$ and $\hat{\rho}(\bX) = D_0+\cdots+D_{m'-1}$ holds (see \cite[Remark 3.4]{iritani09}). Furthermore, we can then identify $\mathbb{L}^\vee\otimes \cpx$ with the subspace $H^2(\bX)\oplus\bigoplus_{j=m}^{m'-1} H^0(\bX_{\bb_j}) \subset H^{\leq2}_\mathrm{CR}(\bX).$

If $\bX$ is semi-Fano, then its $I$-function is a convergent power series in $y_1,\ldots,y_r$ by \cite[Lemma 4.2]{iritani09}. Moreover, it can be expanded as $I_\bX(y,z) = 1+\frac{\tau(y)}{z}+O(z^{-2}),$
where $\tau(y)$ is a (multi-valued) function with values in $H^{\leq2}_\mathrm{CR}(\bX)$ which expands as
$$\tau(y) = \sum_{a=1}^{r'} \bar{p}_a \log y_a + \sum_{j=m}^{m'-1} y^{D^\vee_j}\mathbf{1}_{\bb_j} + \textrm{higher order terms}.$$
We call $q(y)=\exp \tau(y)$ the {\em toric mirror map}, and it defines a local embedding near $y=0$ (it is a local embedding if we further assume that $\{\bb_m,\ldots,\bb_{m'-1}\} = \{\nu\in\mathrm{Box}(\Sigma) \mid \textrm{age}(\nu)\leq1\}$); see \cite[Section 4.1]{iritani09} for more details. Similar discussion is valid for equivariant $I$-functions.

\subsection{Equivariant GW invariants}\label{sec:equiv_GW_inv}
In this subsection we discuss the construction of equivariant GW invariants. We refer to \cite{CR01} and \cite{AGV08} for the basics of GW theory of orbifolds, and to e.g. \cite{givental_imrn96} and \cite{liu_HIM} for generalities on equivariant GW theory.

The $\mathbb{T}$-action on $\bX$ induces $\mathbb{T}$-actions on moduli spaces of stable maps to $\bX$. It is well-known that in this situation we can define {\em $\mathbb{T}$-equivariant GW invariants} of $\bX$ as integrals against $\mathbb{T}$-equivariant virtual fundamental classes of these moduli spaces.

Let $\mathcal{M}^{cl}_n(\bX, d)$ be the moduli space of $n$-pointed genus $0$ orbifold stable maps to $\bX$ of degree $d\in H_2(\bX; \mathbb{Q})$. For $i=1,...,n$, we have an evaluation map $ev_i: \mathcal{M}^{cl}_n(\bX, d)\to I\bX$, and a complex line bundle $L_i\to \mathcal{M}^{cl}_n(\bX, d)$ whose fibers are cotangent lines at the $i$-th marked point of the coarse domain curves.
Suppose $\mathcal{M}^{cl}_n(\bX, d)$ is compact. Then there is a virtual fundamental class $[\mathcal{M}^{cl}_n(\bX, d)]_{virt}\in H_*(\mathcal{M}^{cl}_n(\bX, d), \mathbb{Q})$. For cohomology classes $\phi_1,...,\phi_n\in H_{\mathrm{CR}}^*(\bX, \mathbb{Q})$ and integers $k_1,...,k_n\geq 0$, genus $0$ closed orbifold GW invariants of $\bX$ are defined as
\begin{equation}\label{defn:GW_inv_X}
\left\langle \phi_1\psi_1^{k_1},...,\phi_n\psi_n^{k_n} \right\rangle^\bX_{0, n, d}:=\int_{[\mathcal{M}^{cl}_n(\bX, d)]_{virt}}\prod_{i=1}^n (ev_i^*\phi_i\cup \psi_i^{k_i})\in \mathbb{Q},
\end{equation}
where $\psi_i:=c_1(L_i)\in H^2(\mathcal{M}^{cl}_n(\bX, d), \mathbb{Q})$.

The $\mathbb{T}$-action on $\bX$ induces a $\mathbb{T}$-action on $\mathcal{M}^{cl}_n(\bX, d)$. When $\mathcal{M}^{cl}_n(\bX, d)$ is compact, there is a $\mathbb{T}$-equivariant virtual fundamental class $[\mathcal{M}^{cl}_n(\bX, d)]_{virt, \mathbb{T}}\in H_{*, \mathbb{T}}(\mathcal{M}^{cl}_n(\bX, d), \mathbb{Q})$. For cohomology classes $\phi_{1, \mathbb{T}},...,\phi_{n, \mathbb{T}}\in H_{\mathrm{CR}, \mathbb{T}}^*(\bX, \mathbb{Q})$ and integers $k_1,...,k_n\geq 0$, $\mathbb{T}$-equivariant genus $0$ closed orbifold GW invariants of $\bX$ are defined as
\begin{equation}\label{defn:equiv_GW_inv_X}
\left\langle \phi_{1, \mathbb{T}}\psi_1^{k_1},...,\phi_{n, \mathbb{T}}\psi_{n}^{k_n} \right\rangle^{\bX, \mathbb{T}}_{0, n, d}:=\int_{[\mathcal{M}^{cl}_n(\bX, d)]_{virt, \mathbb{T}}}\prod_{i=1}^n (ev_i^*\phi_{i, \mathbb{T}}\cup \psi_i^{k_i})\in H^*_\mathbb{T}(\text{pt}, \mathbb{Q}),
\end{equation}
where $\psi_i:=c^\mathbb{T}_1(L_i)\in H_\mathbb{T}^2(\mathcal{M}^{cl}_n(\bX, d), \mathbb{Q})$ are $\mathbb{T}$-equivariant first Chern classes.

Suppose again that $\mathcal{M}^{cl}_n(\bX, d)$ is compact. Also suppose that $\phi_1,...,\phi_n\in H_{\mathrm{CR}}^*(\bX, \mathbb{Q})$ are non-equivariant limits of $\phi_{1, \mathbb{T}},...,\phi_{n, \mathbb{T}}\in H_{\mathrm{CR}, \mathbb{T}}^*(\bX, \mathbb{Q})$. Then by construction of virtual fundamental classes, the non-equivariant limit of $\left\langle \phi_{1, \mathbb{T}}\psi_1^{k_1},...,\phi_{n, \mathbb{T}}\psi_{n}^{k_n} \right\rangle^{\bX, \mathbb{T}}_{0, n, d}$, i.e. its image under the natural map $H^*_\mathbb{T}(\text{pt}, \mathbb{Q})\to H^*(\text{pt})=\mathbb{Q}$, is equal to $\left\langle \phi_1\psi_1^{k_1},...,\phi_n\psi_n^{k_n} \right\rangle^\bX_{0, n, d}$.

If $\mathcal{M}^{cl}_n(\bX, d)$ is noncompact but the locus $\mathcal{M}^{cl}_n(\bX, d)^\mathbb{T}\subset \mathcal{M}^{cl}_n(\bX, d)$ of $\mathbb{T}$-fixed points is compact, then the $\mathbb{T}$-equivariant invariant $\left\langle \phi_{1, \mathbb{T}}\psi_1^{k_1},...,\phi_{n, \mathbb{T}}\psi_{n}^{k_n} \right\rangle^{\bX, \mathbb{T}}_{0, n, d}$ can still be defined by (\ref{defn:equiv_GW_inv_X}), with the integration $\int_{[\mathcal{M}^{cl}_n(\bX, d)]_{virt, \mathbb{T}}}$ defined by the virtual localization formula \cite{graber_pand}:
$$\int_{[\mathcal{M}^{cl}_n(\bX, d)]_{virt, \mathbb{T}}}(-):=\sum_{F\subset \mathcal{M}^{cl}_n(\bX, d)^\mathbb{T}}\int_{[F]_{virt}}\frac{\iota_F^*(-)}{e_\mathbb{T}(N^{virt}_{F})}\in K_\mathbb{T},$$
where $F$ runs through all connected components of $\mathcal{M}^{cl}_n(\bX, d)^\mathbb{T}$, $\iota_F: F\to \mathcal{M}^{cl}_n(\bX, d)^\mathbb{T}$ is the inclusion, $[F]_{virt}$ is the natural virtual fundamental class on $F$, and $e_\mathbb{T}(N^{virt}_F)$ is the $\mathbb{T}$-equivariant Euler class of the virtual normal bundle $N^{virt}_F$ of $F\subset \mathcal{M}^{cl}_n(\bX, d)$. It follows easily from the virtual localization formula that if both  $\mathcal{M}^{cl}_n(\bX, d)^\mathbb{T}$ and $\mathcal{M}^{cl}_n(\bX, d)$ are compact, the two definitions of $\mathbb{T}$-equivariant invariants agree.

\begin{remark}
If $\bX$ is projective, then $\mathcal{M}^{cl}_n(\bX, d)$ is compact. If $\bX$ is not projective but semi-projective, then it is easy to show that the locus $\mathcal{M}^{cl}_n(\bX, d)^\mathbb{T}\subset \mathcal{M}^{cl}_n(\bX, d)$ of $\mathbb{T}$-fixed points is compact. In this case, $\mathbb{T}$-equvariant GW invariants are still defined.
\end{remark}

\subsection{Toric mirror theorem}\label{sec:toric_mirror_theorem}
We give a review of the mirror theorem for toric orbifolds proven in \cite{CCIT_toricDM} in the case of semi-Fano toric orbifolds. Our exposition follows \cite{iritani09} and \cite{fang-liu-tseng}.

Let $\bX$ be a toric orbifold as in Section \ref{sec:toric_orb_defn}.

\begin{defn}\label{defn:equiv_J_function}
The {\em $\mathbb{T}$-equivariant (small) $J$-function} of a toric orbifold $\bX$ is an $H^*_{\mathrm{CR}, \mathbb{T}}(\bX)$-valued power series defined by
\begin{align*}
J_\bX(q,z) = \conste^{\tau_{0,2}/z}(1+\sum_\alpha\sum_{\substack{(d,l)\neq(0,0)\\ d\in H_2^\mathrm{eff}(\bX)}}\frac{q^d}{l!}\left\langle 1,\tau_\mathrm{tw},\ldots,\tau_\mathrm{tw},\frac{\phi_\alpha}{z-\psi}\right\rangle^{\bX, \mathbb{T}}_{0,l+2,d} \phi^\alpha),
\end{align*}
where $\tau_{0,2}=\sum_{a=1}^{r'} \bar{p}^\mathbb{T}_a\log q_a\in H_\mathbb{T}^2(\bX)$, $\tau_\mathrm{tw}=\sum_{j=m}^{m'-1}\tau_{\bb_j}\mathbf{1}_{\bb_j}\in\bigoplus_{j=m}^{m'-1} H_\mathbb{T}^0(\bX_{\bb_j})$, $q^d=\conste^{\langle \tau_{0,2},d\rangle}=q_1^{\langle\bar{p}_1,d\rangle}\cdots q_{r'}^{\langle\bar{p}_{r'},d\rangle}$, $\{\phi_\alpha\}$, $\{\phi^\alpha\}$ are dual basis of $H^*_{\mathrm{CR}, \mathbb{T}}(\bX)$. The {\em (small) $J$-function} of $\bX$ is an $H^*_\mathrm{CR}(\bX)$-valued power series $J_\bX(q,z)$ defined by the above equation, with $\bar{p}^\mathbb{T}_a$ replaced by $\bar{p}_a$ and $\{\phi_\alpha\}$, $\{\phi^\alpha\}$ replaced by dual basis of $H^*_{\mathrm{CR}}(\bX)$. The non-equivariant limit of $J_{\bX, \mathbb{T}}$ is $J_\bX$.
\end{defn}

%\begin{defn}\label{defn:J_function}
%The {\em (small) $J$-function} of a toric orbifold $\bX$ is an $H^*_\mathrm{CR}(\bX)$-valued power series defined by
%\begin{align*}
%J_\bX(q,z) = \conste^{\tau_{0,2}/z}(1+\sum_\alpha\sum_{\substack{(d,l)\neq(0,0)\\ d\in H_2^\mathrm{eff}(\bX)}}\frac{q^d}{l!}\left\langle 1,\tau_\mathrm{tw},\ldots,\tau_\mathrm{tw},\frac{\phi_\alpha}{z-\psi}\right\rangle^\bX_{0,l+2,d} \phi^\alpha),
%\end{align*}
%where $\tau_{0,2}=\sum_{a=1}^{r'} \bar{p}_a\log q_a\in H^2(\bX)$, $\tau_\mathrm{tw}=\sum_{j=m}^{m'-1}\tau_{\bb_j}\mathbf{1}_{\bb_j}\in\bigoplus_{j=m}^{m'-1} H^0(\bX_{\bb_j})$, $q^d=\conste^{\langle \tau_{0,2},d\rangle}=q_1^{\langle\bar{p}_1,d\rangle}\cdots q_{r'}^{\langle\bar{p}_{r'},d\rangle}$, $\{\phi_\alpha\}$, $\{\phi^\alpha\}$ are dual basis of $H^*_\mathrm{CR}(\bX)$.
%\end{defn}
%\begin{remark}
%It is clear from definitions that the non-equivariant limit of $J_{\bX, \mathbb{T}}$ is $J_\bX$.
%\end{remark}

Roughly speaking, the (equivariant) mirror theorem for the toric orbifold $\bX$ states that the (equivariant) $J$-function coincides with the (equivariant) $I$-function via the mirror map.

\begin{thm}[Equivariant mirror theorem for toric orbifolds \cite{CCIT_toricDM}; see also \cite{fang-liu-tseng}, Conjecture 4.1]\label{equiv_closed_mirror_thm}
Let $\bX$ be a semi-projective semi-Fano toric K\"ahler orbifold. Then
\begin{equation*}
e^{q_0(y){\bf 1}/z}J_{\bX, \mathbb{T}}(q,z)=I_{\bX, \mathbb{T}}(y(q,\tau),z),
\end{equation*}
where $y = y(q,\tau)$ is the inverse of the toric mirror map $q=q(y)$, $\tau=\tau(y)$ determined by the expansion of the equivariant $I$-function:
$$I_{\bX, \mathbb{T}}(y,z) = 1+\frac{q_0(y){\bf 1}+\tau(y)}{z}+O(z^{-2}),\quad \tau(y)\in H^2_{\mathrm{CR}, \mathbb{T}}(\bX).$$
\end{thm}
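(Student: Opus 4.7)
The approach I would take follows Givental's symplectic formalism for genus zero Gromov-Witten theory, adapted to Deligne-Mumford stacks as in the CCIT framework. First, I would introduce the Givental space $\mathcal{H} := H^*_{\mathrm{CR},\mathbb{T}}(\bX)((z^{-1}))$ equipped with the symplectic form coming from the Chen-Ruan pairing, and the Givental Lagrangian cone $\mathcal{L}_\bX \subset \mathcal{H}$ encoding the entire genus zero descendant theory of $\bX$. The key characterization is that $-zJ_{\bX,\mathbb{T}}(q,-z)$ is the unique family of points on $\mathcal{L}_\bX$ admitting an expansion of the form $-z + \tau_{0,2} + O(z^{-1})$, with the $O(z^{-1})$ term lying in the range of the $J$-function's small parameter slice. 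Thus it suffices to show that the $I$-function also parametrizes a family on $\mathcal{L}_\bX$, and then to match asymptotic expansions via the mirror map.

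The heart of the argument is to prove that $zI_{\bX,\mathbb{T}}(y,-z)$ is a family of points on $\mathcal{L}_\bX$. I would establish this by $\mathbb{T}\times\mathbb{C}^\times$-virtual localization on a suitable auxiliary moduli space, either the graph space of parametrized stable maps from an orbi-$\mathbb{P}^1$ to $\bX$ or a moduli of twisted quasimaps, with the extra $\mathbb{C}^\times$ acting on the source $\mathbb{P}^1$. The combinatorics of the extended stacky fan $(\Sigma, \{\bb_i\})$ governs the fixed loci: anticones $I \in \mathcal{A}$ label torus-fixed strata of $\bX$, while the hypergeometric factors $\prod_k(\bar{D}^\mathbb{T}_i + (\langle D_i,d\rangle - k)z)$ in $I_{\bX,\mathbb{T}}$ arise precisely as equivariant Euler classes of virtual normal bundles, with twisted sectors $\mathbf{1}_{\nu(d)}$ appearing from the reduction function \eqref{eqn:reduction_func} at orbifold nodes. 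The semi-Fano condition ensures that the relevant obstruction bundle contributions do not destroy the lowest-order $z$-behavior needed for the cone membership.

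Once $zI_{\bX,\mathbb{T}}(y,-z) \in \mathcal{L}_\bX$ is established, the conclusion follows by matching asymptotic expansions. By semi-Fano convergence, the equivariant $I$-function expands as $I_{\bX,\mathbb{T}}(y,z) = 1 + (q_0(y)\mathbf{1} + \tau(y))/z + O(z^{-2})$ with $\tau(y) \in H^2_{\mathrm{CR},\mathbb{T}}(\bX)$. Since $\mathcal{L}_\bX$ is ruled by tangent spaces $zT_f\mathcal{L}_\bX$ and any cone point is determined by its leading $z^{-1}$ data in a neighborhood of the origin, inverting the change of coordinates $q = q(y)$, $\tau = \tau(y)$ and factoring out the scalar factor $\conste^{q_0(y)\mathbf{1}/z}$ (which corresponds to a string equation shift) identifies the two families, proving the theorem.

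The main obstacle is the second step: verifying that $zI_{\bX,\mathbb{T}}(y,-z)$ actually lies on $\mathcal{L}_\bX$. This is the genuine content of the mirror theorem and, in the orbifold setting, requires a careful orbifold virtual localization calculation where the bookkeeping of twisted sectors, age shifts, and the contribution of extra vectors $\bb_m,\ldots,\bb_{m'-1} \in \mathrm{Box}(\Sigma)^{\mathrm{age}\leq 1}$ must be exactly matched with the hypergeometric series. Handling noncompactness of $\bX$ (permitted by semi-projectivity, so that $\mathbb{T}$-fixed loci on moduli spaces are compact) requires that the localization be interpreted equivariantly throughout, which is why the theorem is stated and proved in the equivariant setting before passing to the non-equivariant limit.
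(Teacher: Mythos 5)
The paper does not prove this theorem at all: it is quoted from \cite{CCIT_toricDM} (see also \cite{fang-liu-tseng}), and is then used as a black-box input in Section 6 to compute the orbi-disk invariants. So there is no internal proof to compare your attempt against; the only meaningful comparison is with the proof in the cited literature.

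Your sketch reproduces the standard strategy behind such mirror theorems: work in Givental's symplectic formalism for $H^*_{\mathrm{CR},\mathbb{T}}(\bX)$, characterize $-zJ_{\bX,\mathbb{T}}(q,-z)$ as the slice of the Lagrangian cone with expansion $-z+\tau_{0,2}+O(z^{-1})$, show that the $I$-function also parametrizes a family of points on the cone, and match the two families through the expansion $I_{\bX,\mathbb{T}} = 1+(q_0(y)\mathbf{1}+\tau(y))/z+O(z^{-2})$, the prefactor $e^{q_0(y)\mathbf{1}/z}$ being accounted for by the string-equation shift, since in the non-compact equivariant setting the $H^0$-component of the mirror map need not vanish. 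This is all consistent with how the result is established. However, the step you yourself flag as ``the main obstacle'' --- that $zI_{\bX,\mathbb{T}}(y,-z)$ lies on the Lagrangian cone --- is the entire mathematical content of the theorem, and your proposal does not carry it out: saying that a $\mathbb{T}\times\C^\times$-localization on graph spaces or quasimap spaces ``would'' produce the hypergeometric factors is a plan, not an argument. In \cite{CCIT_toricDM} this step is handled not by a direct graph-space computation but by first proving a characterization of points on the cone of a toric stack (poles of localization contributions only at $z$ equal to torus weights at the fixed points, a recursion among residues, and a polynomiality condition), and then verifying that the explicit combinatorial $I$-function satisfies that characterization; semi-projectivity enters exactly where you indicate, to guarantee compactness of the $\mathbb{T}$-fixed loci so that the equivariant theory and localization make sense. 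So treat your outline as a correct road map whose decisive verification is left open, rather than as a proof, and note that within this paper the correct move is simply to cite \cite{CCIT_toricDM}.
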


Taking non-equivariant limits gives (note that the non-equivariant limit of $q_0(y)$ is $0$):

\begin{thm}[Closed mirror theorem for toric orbifolds \cite{CCIT_toricDM}; see also \cite{iritani09}, Conjecture 4.3]\label{closed_mirror_thm}
Let $\bX$ be a compact semi-Fano toric K\"ahler orbifold. Then
\begin{equation*}
J_\bX(q,z)=I_\bX(y(q,\tau),z),
\end{equation*}
where $y = y(q,\tau)$ is the inverse of the toric mirror map $q=q(y)$, $\tau=\tau(y)$.
\end{thm}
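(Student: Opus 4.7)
The closed mirror theorem is designed to be an essentially immediate corollary of the equivariant version (Theorem \ref{equiv_closed_mirror_thm}), so my plan is to specialize to non-equivariant cohomology and track what happens to each ingredient. The strategy has three pieces: (i) show both sides of the equivariant identity have well-defined non-equivariant limits, (ii) identify those limits with the non-equivariant $J$- and $I$-functions, and (iii) show the correction factor $e^{q_0(y)\mathbf{1}/z}$ trivializes in the limit.

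For (i)--(ii), the compactness hypothesis is exactly what is needed. Since $\bX$ is a compact semi-Fano toric K\"ahler orbifold, the moduli spaces $\mathcal{M}^{cl}_n(\bX,d)$ are compact, so by the discussion in Section \ref{sec:equiv_GW_inv} the non-equivariant limits of the equivariant genus-zero GW invariants appearing in Definition \ref{defn:equiv_J_function} coincide with the corresponding non-equivariant invariants. Consequently, taking the non-equivariant limit of $J_{\bX,\mathbb{T}}(q,z)$ coefficient by coefficient produces $J_\bX(q,z)$. On the mirror side, the formulas in Definition \ref{defn:equiv_I_function} for $I_{\bX,\mathbb{T}}$ and $I_\bX$ differ only by replacing $\bar p_a^{\mathbb{T}}$ and $\bar D_i^{\mathbb{T}}$ by their non-equivariant limits $\bar p_a$ and $\bar D_i$; since $I_{\bX,\mathbb{T}}(y,z)$ is a convergent power series in $y$ (by \cite[Lemma 4.2]{iritani09} in the semi-Fano case) whose coefficients are polynomials in the equivariant parameters $\lambda_i$, evaluating at $\lambda=0$ is well-defined and yields $I_\bX(y,z)$.

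For (iii), the key point is that the toric mirror map is unchanged by the non-equivariant limit. Comparing the two expansions
\begin{equation*}
I_{\bX,\mathbb{T}}(y,z) = 1+\frac{q_0(y)\mathbf{1}+\tau(y)}{z}+O(z^{-2}),\qquad I_\bX(y,z) = 1+\frac{\tau(y)}{z}+O(z^{-2}),
\end{equation*}
one sees that the coordinates $\log y_a$ of the $\bar p_a^{\mathbb{T}}$- (respectively $\bar p_a$-) components and the coefficients of the $\mathbf{1}_{\bb_j}$ come from integer factors in the $I$-function and carry no equivariant dependence, so the equivariant mirror map $y\mapsto(q(y),\tau(y))$ agrees with its non-equivariant counterpart and so does its inverse $y(q,\tau)$. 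It remains to verify that $q_0(y)$, which is a priori in $K_\mathbb{T}$, has non-equivariant limit zero; this is an intrinsic property of the $H^0$-component of the $1/z$-coefficient of the semi-Fano $I$-function, and the degree-zero part of $\tau(y)$ in the non-equivariant expansion must vanish for parity/dimensional reasons given the form of $\tau$ recorded in Section \ref{sec:div_cone_etc}. Granting this, the factor $e^{q_0(y)\mathbf{1}/z}$ tends to $1$ under the non-equivariant limit.

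Combining the three pieces, the non-equivariant limit of the identity of Theorem \ref{equiv_closed_mirror_thm} reads
\begin{equation*}
J_\bX(q,z) = I_\bX\bigl(y(q,\tau),z\bigr),
\end{equation*}
which is the desired statement. The main obstacle in turning this plan into a rigorous proof is item (iii): carefully justifying that $q_0(y)$ is regular at $\lambda=0$ and vanishes there. Everything else is formal once the equivariant mirror theorem and the compactness-based identification of equivariant and non-equivariant GW invariants are in hand.
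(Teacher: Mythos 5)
Your proposal takes essentially the same route as the paper, which obtains the closed mirror theorem from the equivariant one (Theorem \ref{equiv_closed_mirror_thm}) simply by passing to the non-equivariant limit and noting that $q_0(y)\to 0$. The one step you flag as a potential obstacle is in fact immediate: $q_0(y)$ is a sum of terms each proportional to an equivariant parameter $\lambda_j\in H^2(B\mathbb{T};\rat)$ (it has the form $\sum_j \lambda_j A^\bX_j(y)$, cf.\ Section \ref{sec:toric_mir_maps} and Remark \ref{rmk:non_equiv_lim_mir_map}), so it is regular at $\lambda=0$ and vanishes there, no parity or dimension argument needed.
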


%\begin{remark}
%The non-equivariant limit of $q_0(y)$ is $0$.
%\end{remark}

\section{Orbi-disk invariants}\label{sec:review_disk_inv}

We briefly review the construction of genus 0 open orbifold GW invariants of toric orbifolds \cite{CP}.

Let $(\bX,\omega)$ be a toric K\"ahler orbifold of complex dimension $n$, equipped with the standard toric complex structure $J_0$ and a toric K\"ahler structure $\omega$. Suppose that $\bX$ is associated to the stacky fan $(\Sigma,\bb)$, where $\bb=(\bb_0,\ldots,\bb_{m-1})$ and $\bb_i=c_iv_i$. As before, $D_i$ ($i=0,\ldots,m-1$) denotes the toric prime divisor associated to $\bb_i$.
Let $L \subset \bX$ be a Lagrangian torus fiber of the moment map $\mu_0:\bX \to M_\R := M\otimes_\Z \R$, and consider a relative homotopy class $\beta \in \pi_2(\bX,L) = H_2(\bX,L;\integer)$.
%We are interested in holomorphic orbi-disks in $\bX$ bounded by $L$ and representing the class $\beta$.

\subsection{Holomorphic orbi-disks and their moduli spaces}\label{sec:orbidisk_and_moduli}
A {\em holomorphic orbi-disk} in $\bX$ with boundary in $L$ is a continuous map
$w:(\bD,\partial\bD) \to (\bX,L)$
such that the following conditions are satisfied:
\begin{enumerate}
\item
$(\bD, z_1^+,\ldots,z_l^+)$ is an orbi-disk with interior orbifold marked points $z_1^+,\ldots,z_l^+$. Namely $\bD$ is analytically the disk $D^2\subset\C$, together with orbifold structure at each marked point $z_j^+$ for $j=1,\ldots,l$. For each $j$, the orbifold structure at $z_j^+$ is given by a disk neighborhood of $z_j^+$ which is uniformized by a branched covering map $br:z \to z^{m_j}$ for some\footnote{If $m_j=1$,  $z_j^+$ is a smooth interior marked point.} $m_j\in \integer_{>0}$.

\item
For any $z_0 \in \bD$, there is a disk neighborhood of $z_0$ with a branched covering map $br:z \to z^m$, and there is a local chart $(V_{w(z_0)},G_{w(z_0)},\pi_{w(z_0)})$ of $\mathcal{X}$ at $w(z_0)$ and a local holomorphic lifting $\WT{w}_{z_0}$ of $w$ satisfying
$w \circ br = \pi_{w(z_0)} \circ \WT{w}_{z_0}.$

\item
The map $w$ is {\em good} (in the sense of Chen-Ruan \cite{CR01}) and {\em representable}. In particular, for each marked point $z_j^+$, the associated  homomorphism
\begin{equation}\label{eq:locgphi}
h_p:\Z_{m_j}\to G_{w(z_j^+)}
\end{equation}
between local groups which makes $\WT{w}_{z_j^+}$ equivariant, is injective.
\end{enumerate}

Denote by $\nu_j\in\mathrm{Box}(\Sigma)$ the image of the generator $1\in \Z_{m_j}$ under $h_j$ and let $\bX_{\nu_j}$ be the twisted sector of $\bX$ corresponding to $\nu_j$. Such a map $w$ is said to be of {\em type} $\bx := (\bX_{\nu_1},\ldots, \bX_{\nu_l})$.

There are two notions of Maslov index for an orbi-disk.
The {\em desingularized Maslov index} $\mu^{de}$ is defined by desingularizing the interior singularities (following Chen-Ruan \cite{CR01}) of the pull-back bundle $w^*T\bX$ in \cite[Section 3]{CP}.  The {\em Chern-Weil (CW) Maslov index} is defined in \cite{CS} as the integral
of the curvature of a unitary connection on $w^*T\bX$ which preserves the Lagrangian boundary condition.  We will mainly use the CW Maslov index in this paper. The following lemma, which generalizes results in \cite{cho06, auroux07, CP}, can be used to compute the Maslov index of disks.
\begin{lemma}\label{lem:maslov_ind_comp}
Let $(\bX, \omega, J)$ be a K\"ahler orbifold of complex dimension $n$, equipped with a non-zero meromorphic $n$-form $\Omega$ on $\bX$ which has at worst simple poles. Let $D\subset \bX$ be the pole divisor of $\Omega$. Suppose also that the generic points of $D$ are smooth. Then for a special Lagrangian submanifold $L \subset \bX$, the CW Maslov index of a class $\beta \in \pi_2(\bX,L)$ is given by
\begin{equation}\label{eqn:index_formula}
\mu_{CW}(\beta) = 2\beta \cdot D.
\end{equation}
\end{lemma}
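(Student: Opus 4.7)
The plan is to extend Auroux's argument from the manifold case \cite{auroux07} to the orbifold setting, using the orbifold CW Maslov index framework of \cite{CS, CP}. The key idea is that the meromorphic form $\Omega$ furnishes a preferred holomorphic trivialization of $K_\bX$ away from $D$, the special Lagrangian condition makes this trivialization real along $L$, and the CW Maslov index then reduces, via the argument principle, to an intersection count of the disk with $D$.

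I would first treat the case when $w:(\bD,\partial\bD)\to(\bX,L)$ has no orbifold marked points. Since $\bD$ is contractible, $w^*K_\bX$ admits a holomorphic trivialization $\sigma$; in this frame $w^*\Omega$ is represented by a meromorphic function $g:\bD\to\C$ whose only singularities are simple poles at the points of $w^{-1}(D)$, using the simple-pole hypothesis together with the generic smoothness of $D$. The special Lagrangian condition $\Omega|_L=e^{i\theta_0}\,\mathrm{vol}_L$ supplies a second, real trivialization of $w^*K_\bX$ along $\partial\bD$ coming from the volume form on $L$; the transition function $\phi:\partial\bD\to\C^\times$ between this real frame and $\sigma$ then satisfies $g|_{\partial\bD}\cdot\phi\equiv e^{i\theta_0}$. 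Since $\mu_{CW}(\beta)$ equals twice the winding number of $\phi$ around $0$ (this is the Stokes-theorem reformulation of the Chern--Weil integral in the definition given in \cite{CS}), and since the argument principle gives $\mathrm{wind}(g|_{\partial\bD})=-(\text{number of poles of }g)=-\beta\cdot D$, one obtains $\mu_{CW}(\beta)=2\beta\cdot D$ in the smooth case.

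For an orbi-disk with interior orbifold marked points $z_1^+,\ldots,z_l^+$, I would localize the above computation around each $z_j^+$. On $\bD\setminus\{z_1^+,\ldots,z_l^+\}$ the holomorphic and real trivializations still exist, so interior smooth intersections of $w$ with $D$ contribute exactly as above. At an orbifold marked point $z_j^+$ with uniformizing chart $br:z\mapsto z^{m_j}$ and local lift $\widetilde{w}_{z_j^+}$ into a chart $(V_{w(z_j^+)},G_{w(z_j^+)})$ of $\bX$, I would pull $\Omega$ back through the branched cover and read off the order of the resulting $\Z_{m_j}$-equivariant meromorphic $n$-form along the local preimage of $D$. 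The representability of $w$ and the injectivity of the local group homomorphism $h_p$ from \eqref{eq:locgphi} ensure that this local computation captures the twisting $\nu_j$ faithfully, and the resulting fractional winding contribution to $\phi$ is precisely the local contribution of $z_j^+$ to $\beta\cdot D$ in the orbifold sense.

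The main obstacle is the orbifold bookkeeping at an orbi-marked point whose image lies on $D$, particularly when $D$ meets the orbifold locus of $\bX$. One must verify that the (possibly fractional) pole order of the $\Z_{m_j}$-equivariant pullback of $\Omega$ along the local inverse image of $D$, combined with the boundary winding contribution of the change-of-frame function under the branch cover $z\mapsto z^{m_j}$, matches precisely the age-type coefficients that enter the orbifold definition of $\beta\cdot D$ in \cite{CP}. Once this local matching is in place, summing smooth interior intersections with the fractional contributions from all orbi-marked points reassembles into the desired formula $\mu_{CW}(\beta)=2\beta\cdot D$.
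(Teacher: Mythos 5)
Your treatment of the smooth-disk case is fine and is essentially the argument the paper invokes: the paper simply observes that for a smooth disk $u$ the pull-back $u^*K_\bX$ is an honest line bundle over $D^2$ and cites Auroux's proof, which is exactly the trivialization/winding-number/argument-principle computation you spell out. The problem is the orbi-disk case. There your proposal is a plan rather than a proof: the step you yourself identify as ``the main obstacle'' --- matching the fractional pole order of the $\Z_{m_j}$-equivariant pullback of $\Omega$ under the branched cover $z\mapsto z^{m_j}$ with the fractional local contribution to the orbifold intersection number $\beta\cdot D$ --- is precisely where all the content of the orbifold extension lies, and it is left unverified. It is also more delicate than your sketch suggests, because the CW index of an orbi-disk picks up contributions from \emph{every} orbifold marked point (via the relation $\mu_{CW}=\mu^{de}+2\sum_j\mathrm{age}(\nu_j)$ in \cite{CP}), not only from those mapping into $D$, so a purely local bookkeeping at points of $w^{-1}(D)$ does not by itself reassemble into $2\beta\cdot D$ without additional care.

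The paper avoids this local analysis entirely by a linearity argument you missed: since the CW Maslov index is topological, i.e.\ depends only on the class $\beta\in\pi_2(\bX,L)$, and since any class represented by an orbi-disk can be written as a fractional ($\Q$-)linear combination of classes of smooth disks (the Box element $\nu=\sum_k t_k\bb_{i_k}$ expresses the basic orbi-disk classes, and hence all orbi-disk classes, in terms of the smooth basic classes $\beta_i$), both sides of \eqref{eqn:index_formula} are $\Q$-linear in $\beta$ and the orbifold case follows immediately from the smooth case. Your direct approach could presumably be completed, but as written the orbi-disk half of the lemma is not established; either carry out the equivariant local computation you defer, or replace it with the short reduction by linearity.
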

\begin{proof}
Suppose $\beta$ is a homotopy class of a smooth disk. Given a smooth disk representative $u:D^2 \to \bX$ of $\beta$, note that the pull-back of the canonical line bundle $u^*(K_\bX)$ is an honest vector bundle over $D^2$, and hence, the proof in \cite{auroux07} applies to this case. Also since the CW Maslov index is topological, we can write any class $\beta$ which is represented by an orbi-disk as a (fractional) linear combination of homotopy classes of smooth disks. Hence \eqref{eqn:index_formula} for an orbi-disk class $\beta$ also follows.
\end{proof}

Orbi-disks in a symplectic toric orbifold have been classified \cite[Theorem 6.2]{CP}.
Among them, the following {\em basic disks} corresponding to the stacky vectors and twisted sectors play an important role.
\begin{thm}[\cite{CP}, Corollaries 6.3 and 6.4]
Let $\bX$ and $L$ be as in the beginning of this section.
\begin{enumerate}
\item The smooth holomorphic disks of Maslov index two (modulo $T^n$-action and automorphisms of the domain) are in a one-to-one correspondence with the stacky vectors $\{\bb_0,\ldots,\bb_{m-1}\}$, whose homotopy classes are denoted as $\beta_{0},\cdots,\beta_{m-1}$.
\item The holomorphic orbi-disks with one interior orbifold marked point and desingularized Maslov index zero (modulo $T^n$-action and automorphisms of the domain) are in a one-to-one correspondence with the twisted sectors $\nu \in \mathrm{Box}'(\Sigma)$ of the toric orbifold $\bX$, whose homotopy classes are denoted as $\beta_\nu$.
\end{enumerate}
\end{thm}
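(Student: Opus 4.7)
The plan is to classify holomorphic (orbi-)disks by exploiting the toric structure, reducing to an explicit analysis in homogeneous coordinates and applying the Maslov index formula from Lemma 5.3 to pin down the intersection pattern with toric divisors.

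For part (1), I would generalize Cho-Oh's classification \cite{cho06} for smooth toric manifolds to the toric orbifold setting. Given a smooth holomorphic disk $w:(D^2,\partial D^2)\to(\bX,L)$ of CW Maslov index $2$, Lemma~5.3 applied to the toric holomorphic volume form (whose polar divisor is the toric anticanonical divisor $D_0+\cdots+D_{m-1}$) gives
\[
2 = \mu_{CW}(\beta) = 2\,\beta\cdot (D_0+\cdots+D_{m-1}),
\]
so $\beta$ intersects exactly one toric prime divisor $D_i$ transversely at a single smooth point and no other divisor. Next, I would lift $w$ to a map into the homogeneous coordinate space $\C^{m}$, pull back each coordinate function $Z_j$ to $D^2$, and observe that $|Z_j|^2\circ w$ is a subharmonic function with prescribed boundary values, so the maximum principle plus the intersection information force $Z_j\circ w$ to be a Blaschke product in $z$ of degree $\delta_{ij}$. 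After quotienting by the $T^n$-action and $\mathrm{Aut}(D^2)$, this gives exactly one disk class $\beta_i$ per stacky vector $\bb_i$.

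For part (2), an orbi-disk $w$ with a single interior orbifold marked point $z^+_1$ of isotropy order $m_1$ has, by definition, a local lift $\tilde{w}_{z^+_1}$ through the branched cover $z\mapsto z^{m_1}$ into a uniformizing chart of $\bX$ at $w(z^+_1)$, and the associated injection $h:\Z_{m_1}\hookrightarrow G_{w(z^+_1)}$ determines a Box element $\nu\in\mathrm{Box}'(\Sigma)$ via the image of $1\in\Z_{m_1}$. The desingularization procedure of \cite{CP} replaces $w^*T\bX$ with an honest bundle over $D^2$ whose Maslov index is $\mu^{de}(\beta)$. Desingularized Maslov index zero means that the desingularized disk has trivial boundary Maslov index, and the maximum-principle argument above (applied to $|Z_j|^2\circ w$ on the resolved disk) forces the underlying coarse map to be constant away from the orbifold point. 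Conversely, for each $\nu\in\mathrm{Box}'(\Sigma)$ one constructs a holomorphic orbi-disk by taking the standard local model $(z\mapsto z^{\{t_1\}},\ldots,z\mapsto z^{\{t_n\}})$ in toric coordinates, where $\nu=\sum t_k\bb_{i_k}$, and verifies that it defines a good representable map with the correct twisted sector data.

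The main obstacle is Step (2): establishing that desingularized Maslov index zero together with the representability condition genuinely forces the map to be of the standard local form, and that the twisted sector $\nu$ is a complete invariant up to $T^n$-action and reparametrization. The existence half is by explicit construction, but the uniqueness half requires careful control of the asymptotic behavior of $w$ at $z^+_1$ in terms of the local group element $\nu$, together with the observation that the desingularization cancels the curvature contribution at the orbifold point exactly when the residual CW-index is captured entirely by $\mathrm{age}(\nu)$. Once these pieces are in place, matching Box elements to disks (and verifying the correspondence is bijective) is a direct bookkeeping argument using \eqref{eqn:index_formula} and the fan-sequence description of $\mathrm{Box}(\Sigma)$.
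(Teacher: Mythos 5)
First, a remark on the target of comparison: the paper does not prove this statement at all --- it is quoted verbatim from Cho--Poddar \cite{CP} (Corollaries 6.3 and 6.4, which are consequences of the classification theorem \cite[Theorem 6.2]{CP}), so your sketch has to be measured against that original argument. Your overall route is indeed the right one (lift to homogeneous coordinates, classify each coordinate function, then read off Maslov indices), but two points need repair. In part (1), subharmonicity of $|Z_j\circ w|^2$ plus the maximum principle only gives bounds; it does not produce the Blaschke form. The actual mechanism is the Lagrangian boundary condition $|Z_j\circ w|\equiv \mathrm{const}$ on $\partial D^2$, which permits Schwarz reflection of each coordinate to a rational function on $\proj^1$ with constant modulus on the unit circle, hence $Z_j\circ w$ is a constant times a finite Blaschke product; the intersection count $\beta\cdot(D_0+\cdots+D_{m-1})=1$ from \eqref{eqn:index_formula} then forces exactly one coordinate to carry a single Blaschke factor. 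This is a fixable imprecision.

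The genuine gap is in part (2). Your uniqueness step asserts that desingularized Maslov index zero ``forces the underlying coarse map to be constant away from the orbifold point.'' As stated this cannot be correct: a holomorphic map constant on a nonempty open subset of the disk is constant everywhere by the identity theorem, whereas the basic orbi-disks representing $\beta_\nu$ are non-constant (their coarse images are honest disks passing through the toric stratum fixed by the local group). What the argument must actually establish is different: after reflection, each homogeneous coordinate $Z_j\circ w$ is a Blaschke-type factor whose only zero sits at the orbifold marked point $z_1^+$, with \emph{fractional} order determined by the injection $h:\Z_{m_1}\hookrightarrow G_{w(z_1^+)}$, so that all intersections with the toric prime divisors are concentrated at $w(z_1^+)$ with multiplicities $t_k$ where $\nu=\sum_k t_k\bb_{i_k}$. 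The index constraint $\mu^{de}(\beta)=\mu_{CW}(\beta)-2\,\mathrm{age}(\nu)=0$, combined with $\mu_{CW}(\beta)=2\sum_i(\beta\cdot D_i)$, then excludes any additional integral Blaschke factors, and this is precisely what makes $\nu$ a complete invariant modulo the $T^n$-action and $\mathrm{Aut}(D^2,z_1^+)$. You correctly flag this uniqueness step as the main obstacle, but the mechanism you propose for it (constancy of the coarse map off the orbifold point) is the wrong one and would have to be replaced by the fractional Blaschke classification of \cite[Theorem 6.2]{CP}.
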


\begin{lemma}[\cite{CP}, Lemma 9.1]\label{lem:basic_disk_classes}
For $\bX$ and $L$ as above, the relative homotopy group $\pi_2(\bX,L)$ is generated by the classes $\beta_i$ for $i=0,\ldots,m-1$ together with $\beta_\nu$ for $\nu\in\mathrm{Box}'(\Sigma)$.
\end{lemma}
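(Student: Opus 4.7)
The plan is to use the homotopy long exact sequence of the pair $(\bX, L)$ together with an explicit computation of the connecting map on basic (orbi-)disk classes, and then handle the kernel using the orbifold fan sequence.

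First, I would record that since $L$ is a Lagrangian torus fiber, $L \cong T^n$, so $\pi_2(L) = 0$ and $\pi_1(L) \cong N$. Because $\bX$ is a semi-projective toric orbifold whose fan has full-dimensional convex support, the orbifold fundamental group $\pi_1^{\mathrm{orb}}(\bX)$ vanishes. Thus the long exact sequence of the pair collapses to
$$0 \longrightarrow \pi_2(\bX) \longrightarrow \pi_2(\bX, L) \stackrel{\partial}{\longrightarrow} N \longrightarrow 0.$$
Here $\pi_2(\bX, L)$ is understood in the orbifold sense, i.e.\ it includes homotopy classes of orbi-disks as recalled in Section~\ref{sec:orbidisk_and_moduli}.

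Second, I would compute the connecting map $\partial$ on the basic disk classes. A direct inspection of the explicit descriptions from \cite{CP} (and already implicit in the classification recalled above) gives $\partial \beta_i = \bb_i$ for $i = 0,\ldots, m-1$ and $\partial \beta_\nu = \nu$ for $\nu \in \mathrm{Box}'(\Sigma)$. By Assumption~\ref{assumption}, the elements $\{\bb_0,\ldots, \bb_{m-1}\} \cup \{\nu \in \mathrm{Box}'(\Sigma) \mid \mathrm{age}(\nu) \leq 1\}$ already generate $N$ over $\Z$, so the subgroup $G \subseteq \pi_2(\bX, L)$ generated by the $\beta_i$'s and the $\beta_\nu$'s surjects onto $N$ via $\partial$.

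It then remains to prove $\pi_2(\bX) \subseteq G$. By the orbifold Hurewicz theorem, $\pi_2(\bX) \cong H_2(\bX; \Z)$, and from the fan sequence \eqref{eqn:fan_seq} (for the extended stacky fan that includes all age $\leq 1$ box elements as extra vectors), $H_2(\bX; \Z)$ is identified with $\mathbb{L} = \ker(\phi : \widetilde{N} \to N)$. The free abelian group $\widetilde{N}$ on $\{e_i\}_{i=0}^{m'-1}$ is precisely the group generated by the basic disk and orbi-disk classes, and under this identification the map $\phi$ corresponds to $\partial$. Hence any element of $\pi_2(\bX)$, viewed as a class in $\pi_2(\bX, L)$, lies in $G$: geometrically, a generator of $\mathbb{L}$ corresponds to a torus-invariant (orbi-)sphere, which decomposes into two basic (orbi-)disks glued along a common boundary on $L$ whose boundary classes cancel in $N$.

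The most delicate point will be the last step, namely verifying at the integral (not merely rational) level that every orbi-sphere class in $H_2(\bX; \Z)$ admits such a decomposition into basic (orbi-)disks. This is exactly where Assumption~\ref{assumption} is crucial: without enough basic orbi-disks corresponding to age $\leq 1$ box elements, the identification $\ker(\phi) \cong H_2(\bX; \Z)$ would only hold after tensoring with $\Q$, and we could miss integral relations coming from nontrivial isotropy along the torus-invariant 2-cycles.
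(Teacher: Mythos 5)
First, note that the paper itself does not prove this lemma: it is quoted from Cho--Poddar \cite{CP} (Lemma 9.1), so there is no in-paper argument to compare against, and your proposal has to stand on its own. As written it does not. The first problem is the claim that $\pi_1^{\mathrm{orb}}(\bX)=0$ because the fan has full-dimensional convex support. For a toric orbifold one has $\pi_1^{\mathrm{orb}}(\bX_\Sigma)\cong N/\langle \bb_0,\ldots,\bb_{m-1}\rangle$, which is nontrivial for essentially every orbifold of interest in this paper (e.g. $[\cpx^2/\Z_m]$ has $\pi_1^{\mathrm{orb}}\cong\Z_m$, and similarly for $[\cpx^n/\Z_n]$). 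So your short exact sequence $0\to\pi_2(\bX)\to\pi_2(\bX,L)\to N\to 0$ is not justified. The surjectivity part of your argument can be repaired without this (and without Assumption \ref{assumption}, which is not among the hypotheses of the lemma -- the lemma holds for general toric K\"ahler orbifolds as in Section \ref{sec:review_disk_inv}): one only needs that the boundaries of the basic classes, namely $\{\bb_i\}\cup\mathrm{Box}'(\Sigma)$, generate $N$ over $\Z$, which follows from the existence of a single $n$-dimensional cone, since any $v\in N$ equals an integral combination of the generators of that cone plus a Box element.

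The second and more serious gap is the treatment of $\ker\partial$. You identify $\pi_2(\bX)$ with $H_2(\bX;\Z)$ via an ``orbifold Hurewicz theorem'' (which again needs orbifold simple connectivity, failing as above), and then identify $H_2(\bX;\Z)$ with $\mathbb{L}=\ker(\phi)$ for the \emph{extended} fan sequence \eqref{eqn:fan_seq}. This identification is false: as recorded in Section \ref{sec:div_cone_etc}, $\mathrm{rank}\,\mathbb{L}^\vee=m'-n$ while $\mathrm{rank}\,H_2(\bX;\Z)=m-n$, so including the extra vectors changes the lattice; and even for the unextended sequence the integral (rather than rational) identification of $H_2$ with the kernel of $\phi$, and its compatibility with the map $\pi_2(\bX)\to\pi_2(\bX,L)$, is precisely what needs proof in the simplicial/orbifold setting. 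Your closing geometric assertion -- that a generator of $\mathbb{L}$ ``corresponds to a torus-invariant (orbi-)sphere which decomposes into two basic (orbi-)disks glued along a common boundary'' -- is not an argument but a restatement of the claim for classes in $\ker\partial$, and it is not even the right shape: an element of $\ker\partial$ corresponds to an integral relation $\sum_i a_i\bb_i+\sum_\nu c_\nu\nu=0$ and must be exhibited as the corresponding integral combination (with signs, possibly many terms) of basic disk and orbi-disk classes. Establishing this integrally is the actual content of \cite[Lemma 9.1]{CP}, where it is handled by working with the quotient presentation $\bX=[U_{\mathcal{A}}/G]$ rather than by the coarse homological bookkeeping proposed here.
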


We call these generators of $\pi_2(\bX,L)$ the {\em basic disk classes}; they are the analogue of Maslov index two disk classes in toric manifolds. Basic disk classes were used in \cite{CP} to define the {\em leading order bulk orbi-potential}, and it can be used to determine the Floer homology of torus fibers with suitable bulk deformations.

Let
$\CM^{main}_{k+1,l}(L,\beta,\bx)$
be the moduli space of good representable stable maps from bordered orbifold Riemann surfaces of genus zero with $k+1$ boundary marked points $z_0,z_1\ldots,z_k$ and $l$ interior (orbifold) marked points $z_1^+,\ldots,z_l^+$ in the homotopy class $\beta$ of type $\bx = (\mathcal{X}_{\nu_1},\ldots, \mathcal{X}_{\nu_l})$. Here, the superscript ``$main$" indicates that we have chosen a connected component on which the boundary marked points respect the cyclic order of $S^1=\partial D^2$.
%Let $$\CM^{main,reg}_{k+1,l}(L,\beta,\bx)\subset \CM^{main}_{k+1,l}(L,\beta,\bx)$$ be the subset consisting of all maps from an (orbi-)disk (i.e. without (orbi-)sphere/disk bubbles).
By \cite{CP}, $\CM^{main}_{k+1,l}(L,\beta,\bx)$ has a Kuranishi structure of real virtual dimension
\begin{equation}\label{eq:dimes}
%n + \mu^{de}(\beta,\bx) + k + 1 + 2l - 3 =
n + \mu_{CW}(\beta) + k + 1 + 2l -3 - 2\sum_{j=1}^l\mathrm{age}(\nu_j).
\end{equation}

By \cite[Proposition 9.4]{CP}, if $\CM^{main}_{1,1}(L,\beta)$ is non-empty and if $\partial\beta$ is not in the sublattice generated by $\bb_0, \ldots, \bb_{m-1}$, then there exist $\nu\in \mathrm{Box}'(\Sigma)$, $k_i\in\N$ ($i=0,\ldots,m-1$) and $\alpha\in H_2^\textrm{eff}(\bX)$ such that
$\beta = \beta_\nu + \sum_{i=0}^{m-1} k_i \beta_i + \alpha,$
where $\alpha$ is realized by a union of holomorphic (orbi-)spheres. The CW Maslov index of $\beta$ written in this way is given by
$\mu_{CW}(\beta) = 2\textrm{age}(\nu) + 2 \sum_{i=0}^{m-1} k_i + 2 c_1(\bX)\cdot\alpha.$
%The same statement holds for $\CM^{main}_{k+1,l}(L,\beta)$.

\subsection{The invariants}\label{subsec:disk_inv}
Let $\bX_{\nu_1}, \ldots, \bX_{\nu_l}$ be twisted sectors of the toric orbifold $\bX$. Consider the moduli space $\CM^{main}_{1,l}(L,\beta,\bx)$ of good representable stable maps from bordered orbifold Riemann surfaces of genus zero with one boundary marked point and $l$ interior orbifold marked points of type $\bx = (\bX_{\nu_1},\ldots, \bX_{\nu_l})$ representing the class $\beta\in\pi_2(\bX, L)$. By \cite{CP},  $\CM^{main}_{1,l}(L,\beta,\bx)$ carries a virtual fundamental chain, which vanishes unless the following equality holds:
\begin{equation}\label{critmaslov}
\mu_{CW}(\beta) =  2 + \sum_{j=1}^l (2\mathrm{age}(\nu_j)-2).
\end{equation}

% if one of the following conditions is satisfied
%\begin{itemize}
%\item[(1)] $\mu_{CW}(\beta)-\sum_{i=1}^l (2\mathrm{age}(\nu_i)-2)<0$,
%\item[(2)] $\mu_{CW}(\beta)-\sum_{i=1}^l (2\mathrm{age}(\nu_i)-2)=0$ and $\beta\neq0$.
%\end{itemize}

\begin{defn}
An orbifold $\bX$ is called {\em Gorenstein} if its canonical divisor $K_\bX$ is Cartier.
\end{defn}

For a Gorenstein orbifold, the age of every twisted sector is a non-negative integer. Now we assume that the toric orbifold $\bX$ is semi-Fano (see Definition \ref{defn:sF}) and Gorenstein. Then a basic orbi-disk class $\beta_\nu$ has Maslov index $2 \mathrm{age}(\nu) \geq 2$, and hence every non-constant stable disk class has at least Maslov index $2$.

We further restrict to the case where all the interior orbifold marked points are mapped to age-one twisted sectors, i.e. the type $\bx$ consists of twisted sectors with $\textrm{age}=1$. This will be enough for our purpose of constructing the mirror over $H^2_{\mathrm{CR}}(\bX)$. In this case, the virtual fundamental chain $[\CM^{main}_{1,l}(L,\beta,\bx)]^\mathrm{vir}$ is non-zero only when $\mu_{CW}(\beta) =  2$, and in fact we get a virtual fundamental {\em cycle} because $\beta$ attains the minimal Maslov index and thus disk bubbling does not occur. Therefore the following definition of {\em genus 0 open orbifold GW invariants} (also termed {\em orbi-disk invariants}) is independent of the choice of perturbations of the Kuranishi structures (in the general case one may restrict to torus-equivariant perturbations to make sense of the following definition following Fukaya-Oh-Ohta-Ono \cite{FOOO1, FOOO2, FOOO10b}):

\begin{defn}[Orbi-disk invariants]\label{defn:orbi-disk_inv}
Let $\beta\in\pi_2(\bX,L)$ be a relative homotopy class with Maslov index given by \eqref{critmaslov}. Suppose that the moduli space $\CM_{1,l}(L,\beta,\bx)$ is compact. Then we define $n_{1,l,\beta}^\bX([\mathrm{pt}]_{L};\mathbf{1}_{\nu_1},\ldots,\mathbf{1}_{\nu_l})\in\rat$ to be the push-forward
$$n_{1,l,\beta}^\bX([\mathrm{pt}]_{L};\mathbf{1}_{\nu_1},\ldots,\mathbf{1}_{\nu_l}):=
ev_{0*}\left([\CM_{1,l}(L,\beta,\bx)]^\mathrm{vir}\right)\in H_n(L;\rat)\cong\rat,$$
where $ev_0:\CM^{main}_{1,l}(L,\beta,\bx)\to L$ is evaluation at the boundary marked point, $[\mathrm{pt}]_{L} \in H^n(L;\rat)$ is the point class of the Lagrangian torus fiber $L$, and $\mathbf{1}_{\nu_j} \in H^0(\bX_{\nu_j};\rat)\subset H^{2\mathrm{age}(\nu_j)}_{\mathrm{CR}}(\bX;\rat)$ is the fundamental class of the twisted sector $\bX_{\nu_j}$.
\end{defn}

\begin{remark}
For the cases we need in this paper, compactness of the disk moduli space $\CM_{1,l}(L,\beta,\bx)$ will be proved in Proposition \ref{prop:cpt_disk_moduli} and Corollary \ref{cor:cpt_disk_moduli}.
\end{remark}

\begin{remark}
The Kuranishi structures in this paper are the same as those defined in \cite{FOOO1, FOOO2} (we refer the readers to \cite[Appendix]{FOOO_book} and \cite{FOOO12} for the detailed construction, and also to \cite{McDuff-Wehrheim12} (and its forthcoming sequels) for a different approach). But the moduli spaces considered here are in fact much simpler than those in \cite{FOOO1, FOOO2} (and \cite{FOOO_book}) because we only need to consider stable disks with just one disk component which is minimal, and hence disk bubbling does not occur. Also, we consider only disk counting invariants, but not
the whole $A_\infty$ structure; this reduces the problem to studying moduli spaces of virtual dimensions 0 or 1, which simplifies several issues involved.
\end{remark}

For a basic (orbi-)disk with at most one interior orbifold marked point, the corresponding moduli space $\CM_{1,0}(L,\beta_i)$ (or $\CM_{1,1}(L,\beta_\nu,\nu)$ when $\beta_\nu$ is a basic orbi-disk class) is regular and can be identified with $L$.  Thus the associated invariants are evaluated as follows \cite{CP}:
\begin{enumerate}
\item
For $\nu \in \mathrm{Box}'$, we have $n_{1,1,\beta_\nu}^\bX([\mathrm{pt}]_{L};\mathbf{1}_\nu) = 1.$
\item
For $i\in \{0,\ldots,m-1\}$, we have $n_{1,0,\beta_i}^\bX([\mathrm{pt}]_{L}) = 1.$
\end{enumerate}
When there are more interior orbifold marked points or when the disk class is not basic, the corresponding moduli space is in general non-regular and virtual theory is involved in the definition, making the invariant much more difficult to compute.  One primary aim of this paper is to compute all these invariants for toric CY orbifolds.

\section{Gross fibrations for toric CY orbifolds}\label{sec:orb_Gross_fib}
 The first ingredient needed for the SYZ construction is a Lagrangian torus fibration. For a toric CY manifold, such fibrations were constructed by Gross \cite{gross_examples} and Goldstein \cite{goldstein} independently.  In this section we generalize their constructions to toric CY orbifolds; cf. the manifold case as discussed in \cite[Sections 4.1-4.5]{CLL}. %We follow the setting of \cite{gross_examples}.

\subsection{Toric CY orbifolds} \label{sec_torCYorb}

\begin{defn}
A Gorenstein toric orbifold $\bX$ is called {\em Calabi-Yau (CY)} if there exists a dual vector $\UL{\nu} \in M=N^\vee=\mathrm{Hom}(N, \Z)$ such that $(\UL{\nu}, \bb_i) =1$ for all stacky vectors $\bb_i$.
\end{defn}

Let $\bX$ be a toric CY orbifold associated to a stacky fan $(\Sigma,\bb_0,\ldots,\bb_{m-1})$. Since $\bb_i=c_i v_i$ for some primitive vector $v_i\in N$ and $(\UL{\nu}, v_i) \in \Z$, we have $c_i=1$ for all $i=0,\ldots,m-1$. Therefore toric CY orbifolds are always simplicial.

%\begin{example}
%For a compact toric orbifold $\bX$, the total space of the canonical line bundle of $\bX$ is a toric CY orbifold.  Namely, if $\bX$ is given by a fan $\Sigma$ in the lattice $N$ of rank $n-1$ with stacky vectors $\bb_0,\ldots, \bb_{m-1}$, then the total space of the canonical line bundle of $\bX$ is given by a fan $\Sigma'$ in the lattice $N \oplus \Z$ of rank $n$, whose
%rays are generated by $(0,1), (\bb_0, 1),\ldots, (\bb_{m-1},1) \in N\oplus \Z$.  If $\sigma \in \Sigma$ is
%a cone generated by $\{ \bb_{i_1},\ldots, \bb_{i_k} \}$, then there is a corresponding cone $\sigma' \in \Sigma'$
%generated by $\{(0,1), (\bb_{i_1},1),\ldots,(\bb_{i_k},1)\}$. In this case we can take $\UL{\nu}=(0,1)\in (N\oplus \Z)^\vee\simeq N^\vee\oplus \Z$.
%\end{example}

For the purpose of this paper, we will always assume that the coarse moduli space of the toric CY orbifold $\bX$ is {\em semi-projective} (Definition \ref{defn:semi_proj}).

\begin{setting}[Partial resolutions of toric Gorenstein canonical singularities]\label{setting:toricCY}
Let $\sigma\subset N_\R$ be a strongly convex rational polyhedral Gorenstein canonical cone with primitive generators $\{\tilde{\bb}_i\}\subset N$.
Here, {\em strongly convex} means that the cone $\sigma$ is convex in $N_\R$ and does not contain any whole straight line;
while {\em Gorenstein canonical} means that there exists $\underline{\nu} \in M$ such that
$\pairing{\underline{\nu}}{\tilde{\bb}_i} = 1$ for all $i$, and $\pairing{\underline{\nu}}{v} \geq 1$ for all $v \in \sigma \cap (N \setminus \{0\})$. We denote by $\mathcal{P} \subset N_\R$
the convex hull of $\{\tilde{\bb}_i\} \subset N$ in the hyperplane $\{v\in N_\R \mid (\underline{\nu},v)=1 \} \subset N_\R$. $\mathcal{P}$ is an $(n-1)$-dimensional lattice polytope.

Let $\Sigma\subset N_\R$ be a simplicial refinement of $\sigma$ obtained by taking the cones over a triangulation of $\mathcal{P}$ (where all vertices of the triangulation belong to $\mathcal{P} \cap N$). Then $\Sigma$ together with the collection
$$\{\bb_i \mid i=0, \ldots, m-1\}\subset N$$
of primitive generators of rays in $\Sigma$ is a stacky fan. The associated toric orbifold $\bX=\bX_\Sigma$ is Gorenstein and CY.

By relabeling the $\bb_i$'s if necessary, we assume that $\{\bb_0, \ldots, \bb_{n-1}\}$ generates a top-dimensional cone in $\Sigma$ and hence forms a rational basis of $N_\rat := N\otimes_\Z\rat$.
\end{setting}

\begin{prop} \label{prop:sproj}
The coarse moduli space of a toric CY orbifold $\bX$ is semi-projective if and only if $\bX$ satisfies Setting \ref{setting:toricCY}.
\end{prop}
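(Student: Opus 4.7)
The plan is to translate the problem to combinatorics of the fan and use the standard characterization of semi-projectivity for toric varieties (as in \cite[Theorem 7.2.4]{CLS_toricbook}): $X_\Sigma$ is semi-projective if and only if $|\Sigma|$ is convex of full dimension $n$ and $\Sigma$ admits a strictly convex piecewise linear support function. Once this is in hand, the two directions become a matter of extracting (resp.\ producing) the data $\sigma$, $\underline{\nu}$, and $\mathcal{P}$ of Setting \ref{setting:toricCY} from the CY hypothesis.

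For the implication $(\Leftarrow)$, suppose $\bX$ is given as in Setting \ref{setting:toricCY}. Strong convexity of $\sigma$ means $\sigma=|\Sigma|$ is a full-dimensional convex rational polyhedral cone, and any $n$-dimensional simplex in the triangulation of $\mathcal{P}$ cones off to an $n$-dimensional cone in $\Sigma$, producing a $\mathbb{T}$-fixed point in $X_\Sigma$. The refinement $\Sigma\to\sigma$ gives a proper birational toric morphism $X_\Sigma\to X_\sigma$, and since $R^0\pi_*\mathcal{O}_{X_\Sigma}=\mathcal{O}_{X_\sigma}$ one identifies $X_\sigma=\operatorname{Spec} H^0(X_\Sigma,\mathcal{O}_{X_\Sigma})$. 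Projectivity of this morphism then amounts to the existence of a strictly convex piecewise linear function on $\Sigma$ relative to $\sigma$, which is exactly the regularity (coherence) of the triangulation of $\mathcal{P}$; we work with this regular choice of triangulation as is standard in the SYZ setting (it corresponds to a point in the extended Kähler cone defined in Section \ref{sec:div_cone_etc}).

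For the implication $(\Rightarrow)$, assume $X_\Sigma$ is semi-projective, so $|\Sigma|$ is a full-dimensional convex rational polyhedral cone. The Calabi-Yau vector $\underline{\nu}\in M$ satisfies $(\underline{\nu},\bb_i)=1$ for every stacky vector, so all ray generators lie in the affine hyperplane $H=\{v\in N_\R\mid(\underline{\nu},v)=1\}$ and therefore $|\Sigma|\subset\{v\mid(\underline{\nu},v)\ge 0\}$. I would then observe that any line through the origin contained in $|\Sigma|$ must satisfy $(\underline{\nu},\cdot)\equiv 0$, but no such line is expressible as a non-negative combination of the rays (which sit at height $+1$); hence $\sigma:=|\Sigma|$ is strongly convex. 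Setting $\mathcal{P}:=\sigma\cap H$ gives a lattice polytope whose vertices are the primitive ray generators $\tilde{\bb}_i$, and the subdivision $\Sigma$ of $\sigma$ corresponds via $v\mapsto \R_{\ge 0}v$ to a triangulation of $\mathcal{P}$ with all vertices in $\mathcal{P}\cap N$ (since each $\bb_i\in N$ is primitive).

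The last thing to check is the Gorenstein canonical condition: for $v\in\sigma\cap N\setminus\{0\}$, $(\underline{\nu},v)$ is a non-negative integer, and equality $(\underline{\nu},v)=0$ would force $v$ into a linear subspace meeting $\sigma$ only at the origin by strong convexity, so $(\underline{\nu},v)\ge 1$. Thus $\bX$ is realized as in Setting \ref{setting:toricCY}. The one genuinely delicate point in the whole argument is the projectivity half of $(\Leftarrow)$, i.e.\ reducing semi-projectivity to the existence of a strictly convex support function; everything else is a direct unpacking of the convex-geometric content of the Calabi-Yau and semi-projectivity hypotheses.
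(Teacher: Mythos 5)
Your proof is correct and follows the same convex-geometric unpacking as the paper; your converse direction (semi-projective $\Rightarrow$ Setting \ref{setting:toricCY}) is essentially identical to the paper's, using convexity of the support and the height-one placement of the ray generators to get strong convexity, the polytope $\mathcal{P}$, and the Gorenstein canonical condition. The only real divergence is in the forward direction: the paper disposes of it in one line by saying $\bX$ can be built from its moment-map polytope (implicitly \cite[Proposition 7.2.9]{CLS_toricbook}), whereas you verify the definition directly — a $\mathbb{T}$-fixed point from a top-dimensional cone, the identification of $\operatorname{Spec} H^0(X_\Sigma,\mathcal{O}_{X_\Sigma})$ with $X_\sigma$, and projectivity of $X_\Sigma\to X_\sigma$ via a strictly convex piecewise linear support function. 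Both arguments rest on the same hypothesis not literally stated in Setting \ref{setting:toricCY}, namely that the triangulation of $\mathcal{P}$ is regular (coherent): a non-regular triangulation admits no strictly convex support function, hence no moment polytope, so the ``if'' direction genuinely needs it; you flag this explicitly where the paper leaves it buried in the phrase ``constructed by using its moment map polytope'', which is a point in your favor. Two harmless imprecisions: the vertices of $\mathcal{P}$ are only \emph{among} the primitive ray generators (the extremal ones), not all of them; and in checking $(\underline{\nu},v)\geq 1$ the correct justification is not strong convexity per se but the observation you already made, that every element of $|\Sigma|$ is a non-negative combination of height-one vectors, so $(\underline{\nu},v)=0$ forces $v=0$.
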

\begin{proof}
If $\bX$ satisfies Setting \ref{setting:toricCY}, it is clear that its fan has full-dimensional convex support. Moreover, $\bX$ can be constructed by using its moment map polytope, so its coarse moduli space is semi-projective.

Conversely, suppose that the coarse moduli space of $\bX$ is semi-projective. Since $\bX$ is Gorenstein, there exists $\underline{\nu} \in M$ such that $\pairing{\underline{\nu}}{\bb_i} = 1$ for all primitive generators $\bb_i$ of rays in $\Sigma$.  Then the convex hull of $\bb_i$'s in the hyperplane $\{\pairing{\underline{\nu}}{\cdot} = 1\} \subset N_\R$ defines a lattice polytope $\mathcal{P}$, and the support of the fan is equal to the cone $\sigma$ over this lattice polytope by convexity of the fan. Obviously, the cone $\sigma$ is strongly convex and Gorenstein. Also the fan of $\bX$ is obtained by a triangulation of the lattice polytope $\mathcal{P}$.
\end{proof}

For the rest of this paper, $\bX$ will be a toric CY orbifold as in Setting \ref{setting:toricCY}. This implies that Assumption \ref{assumption} is satisfied: If $\mathcal{P}$ has no interior lattice points, then clearly $\{0\}\cup (\mathcal{P} \cap N)$ generate the lattice $N$.  Otherwise we can inductively find a minimal simplex contained in $\mathcal{P}$ which does not contain any interior lattice points, and it follows that $\{0\}\cup (\mathcal{P} \cap N)$ generate the lattice $N$.

Without loss of generality we may assume that $\underline{\nu} = (0,1) \in M \simeq \Z^{n-1}\oplus \Z$ so that $\mathcal{P}$ is contained in the hyperplane $\{v\in N_\R \mid \pairing{(0,1)}{v} = 1 \}$.  We enumerate
$$\mathrm{Box}'(\Sigma)^{\mathrm{age}=1} := \{\nu \in \textrm{Box}'(\Sigma) \mid \textrm{age}(\nu) = 1\} = \{\bb_m,\ldots,\bb_{m'-1}\}$$
and choose $\bb_m,\ldots,\bb_{m'-1}$ to be the extra vectors so that
$$\mathcal{P} \cap N = \{\bb_0,\ldots,\bb_{m-1},\bb_m,\ldots,\bb_{m'-1}\}.$$

\subsection{The Gross fibration}
In this section we construct a special Lagrangian torus fibration on a toric CY orbifold $\bX$. This is a fairly straightforward generalization of the constructions of Gross \cite{gross_examples} and Goldstein \cite{goldstein} to the orbifold setting.

To begin with, notice that the vector $\UL{\nu}\in M$ corresponds to a holomorphic function on $\bX$ which we denote by $w: \bX\to \mathbb{C}$. The following two lemmas are easy generalizations of the corresponding statements for toric CY manifolds \cite{CLL}, so we omit their proofs.

\begin{lemma}[cf. \cite{CLL}, Proposition 4.2]\label{lemma:w}
The function  $w$ on $\bX$ corresponding to $\underline{\nu} \in M$ is holomorphic, and its zero divisor $(w)$ is precisely the anticanonical divisor $-K_{\bX} = \sum_{i=0}^{m-1} D_i$.
\end{lemma}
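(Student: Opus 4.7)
The plan is to realize $w$ concretely from the GIT presentation $\bX = [U_\mathcal{A}/G]$ of Section \ref{sec:toric_orb_defn} and then read off its zero divisor directly. The starting observation is that the element $\underline{\nu} \in M$ is a character of the torus $\mathbb{T} = N \otimes_\Z \C^\times$, hence defines a holomorphic function on the open dense torus orbit of $\bX$. To extend it globally, I would lift $\underline{\nu}$ along the injection $\phi^\vee : M \hookrightarrow \widetilde{M}$ from the divisor sequence. The $i$-th component of $\phi^\vee(\underline{\nu}) \in \widetilde{M}$ equals $\langle \underline{\nu}, \phi(e_i)\rangle = \langle \underline{\nu}, \bb_i\rangle$, which by the Calabi-Yau hypothesis is $1$ for every stacky vector $\bb_i$. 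Thus $\phi^\vee(\underline{\nu}) = (1,1,\ldots,1)$, corresponding to the monomial $Z_0 Z_1 \cdots Z_{m-1}$ on $\C^{m}$.

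Next I would observe that $Z_0 Z_1 \cdots Z_{m-1}$ is a polynomial, hence holomorphic on all of $\C^m \supset U_\mathcal{A}$. Moreover, because the exponent vector $\phi^\vee(\underline{\nu})$ lies in the image of $\phi^\vee$, i.e. in the kernel of $\psi^\vee : \widetilde{M} \to \mathbb{L}^\vee$, this monomial is $G$-invariant. Therefore it descends to a globally defined holomorphic function on $\bX = [U_\mathcal{A}/G]$, which restricts to $\chi^{\underline{\nu}}$ on the open torus orbit and is therefore the desired function $w$.

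The zero divisor can then be read off: the vanishing locus of $Z_0 \cdots Z_{m-1}$ in $U_\mathcal{A}$ is the union $\bigcup_{i=0}^{m-1}\{Z_i = 0\} \cap U_\mathcal{A}$, taken with multiplicity one along each coordinate hyperplane. Since the toric divisor $D_i$ is by construction the descent of $\{Z_i = 0\} \cap U_\mathcal{A}$ under the $G$-quotient, we conclude $(w) = \sum_{i=0}^{m-1} D_i$. To finish, I would invoke the standard identity $-K_\bX = \sum_{i=0}^{m-1} D_i$ for the anticanonical divisor of a Gorenstein toric orbifold, which is available because each $\bb_i$ is primitive (as noted in Section \ref{sec_torCYorb}) and $\bX$ is Gorenstein.

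There is no substantial obstacle: the only point that needs a little care is distinguishing the stacky vectors $\bb_i$ from the underlying primitive lattice vectors, but the toric Calabi-Yau condition is phrased directly in terms of the $\bb_i$, and in the Gorenstein case $\bb_i = v_i$ is primitive so the two agree. All other steps are formal consequences of the GIT construction and the standard dictionary between characters and divisors on toric orbifolds.
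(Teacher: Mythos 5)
Your proposal is correct, and it is the same standard argument the paper has in mind: the lemma's proof is omitted here with a pointer to \cite{CLL}, where one checks exactly as you do that the character $\chi^{\underline{\nu}}$ extends holomorphically because $\langle \underline{\nu},\bb_i\rangle = 1 \geq 0$ for every ray, and that $\mathrm{div}(w)=\sum_i \langle \underline{\nu},\bb_i\rangle D_i = \sum_{i=0}^{m-1} D_i = -K_\bX$. One small point of bookkeeping: the quotient presentation in Section 2 of the paper uses the \emph{extended} stacky fan, so the homogeneous coordinate ring has $m'$ variables and the $G$-invariant lift of $\chi^{\underline{\nu}}$ is $Z_0\cdots Z_{m'-1}$ (the extra vectors also pair to $1$ with $\underline{\nu}$, since they have age $1$), not $Z_0\cdots Z_{m-1}$; your formula as written implicitly uses the unextended Borisov--Chen--Smith presentation, which is equally legitimate. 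Either way the conclusion is unaffected, because the coordinate hyperplanes $\{Z_j=0\}$ attached to the extra vectors $\bb_m,\ldots,\bb_{m'-1}$ are removed from $U_{\mathcal{A}}$ (they correspond to no ray of $\Sigma$), so those factors are nowhere vanishing and the zero divisor is still $\sum_{i=0}^{m-1} D_i$ with multiplicity one along each $D_i$.
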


\begin{lemma}[cf. \cite{CLL}, Proposition 4.3]
For the dual basis $\{u_0,\ldots, u_{n-1}\} \subset M_\Q:= M\otimes_\Z\Q$ of the basis $\{\bb_0, \ldots, \bb_{n-1}\}$, denote
by $\zeta_j$ the corresponding meromorphic function to $u_j$.
%Let $U$ be a matrix whose $i$-th row is $u_i$.
Then
$%\frac{1}{det(U)}
d\zeta_0 \wedge \cdots \wedge d\zeta_{n-1} $
extends to a nowhere-zero holomorphic $n$-form $\Omega$ on $\bX$.
\end{lemma}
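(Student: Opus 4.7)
\medskip

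The plan is to verify the claim by a direct computation in each top-dimensional affine orbifold chart of $\bX$. On the open dense algebraic torus $\mathbb{T} \subset \bX$ the meromorphic functions $\zeta_0, \ldots, \zeta_{n-1}$ restrict to honest holomorphic coordinates, so $\eta := d\zeta_0 \wedge \cdots \wedge d\zeta_{n-1}$ is a holomorphic nowhere-zero form there. What needs to be shown is that $\eta$ extends across the toric boundary $-K_\bX = \sum_i D_i$ as a nowhere-zero holomorphic \emph{orbifold} $n$-form, which is a local statement.

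Fix a top-dimensional cone $\sigma \in \Sigma$ generated by $\bb_{i_1}, \ldots, \bb_{i_n}$; the associated uniformizing chart is $\C^n$ with coordinates $w_k := \chi^{u'_k}$, where $\{u'_1,\ldots,u'_n\}\subset M_\Q$ is the basis dual to $\{\bb_{i_1},\ldots,\bb_{i_n}\}$, and the local orbifold group is $G_\sigma = N/(\Z\bb_{i_1}+\cdots+\Z\bb_{i_n})$ acting diagonally via its characters. Writing $\bb_{i_k} = \sum_j M_{jk}\bb_j$ in the rational basis $\{\bb_0, \ldots, \bb_{n-1}\}$ gives $u_j = \sum_k M_{jk} u'_k$ and hence $\zeta_j = \prod_k w_k^{M_{jk}}$ on $\mathbb{T}$. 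The key manipulation is
\[
\eta \;=\; \Big(\prod_{j} \zeta_j\Big)\, \bigwedge_{j}\frac{d\zeta_j}{\zeta_j} \;=\; \Big(\prod_{k} w_k^{\sum_j M_{jk}}\Big)\, \det(M)\, \bigwedge_{k}\frac{dw_k}{w_k}.
\]

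Now the Calabi--Yau hypothesis enters twice. First, $\UL{\nu}$ takes the value $1$ on every $\bb_j$, so $\UL{\nu} = \sum_j u_j$ in $M_\Q$; pairing with $\bb_{i_k}$ gives $\sum_j M_{jk} = \pairing{\UL{\nu}}{\bb_{i_k}} = 1$ for each $k$, and the powers of $w_k$ in the front factor become exactly $1$. Thus
\[
\eta \;=\; \det(M)\, dw_1\wedge\cdots\wedge dw_n
\]
as meromorphic forms on the chart, the right-hand side being holomorphic and nowhere zero on $\C^n$ since $\{\bb_{i_k}\}$ is a basis of $N_\Q$ and therefore $\det(M)\neq 0$. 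Second, for descent to the orbifold one checks $G_\sigma$-invariance: $G_\sigma$ acts on $dw_1\wedge\cdots\wedge dw_n$ through the character $u'_1+\cdots+u'_n$, which by the same argument equals $\UL{\nu}\in M$, hence pairs integrally with all of $N$ and thus gives the trivial character on $G_\sigma=N/N_\sigma$. So $\eta$ descends to a nowhere-zero holomorphic orbifold $n$-form on the chart.

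Since the affine charts of top-dimensional cones cover $\bX$ (the fan has full-dimensional convex support by Setting \ref{setting:toricCY}), and the local extensions all agree with $\eta$ on the dense torus $\mathbb{T}$, they glue to a global nowhere-zero holomorphic $n$-form $\Omega$ on $\bX$. The main (minor) obstacle is just careful bookkeeping of the two CY identities $\sum_j M_{jk} = 1$ and $\UL{\nu} = u'_1+\cdots+u'_n\in M$ — one cancels the would-be poles of the torus volume form, the other ensures invariance under the local stabilizer; everything else is a straightforward change of variables.
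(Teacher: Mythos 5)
Your proof is correct and is precisely the chart-by-chart computation the paper has in mind when it omits the argument as an ``easy generalization'' of \cite[Proposition 4.3]{CLL}: the Calabi--Yau identity $\sum_j M_{jk} = \pairing{\UL{\nu}}{\bb_{i_k}} = 1$ cancels the poles of the torus-invariant form, and the integrality of $\UL{\nu}$ gives the $G_\sigma$-invariance needed for descent to the orbifold chart. (One cosmetic point: when some $u_j \notin M$ the functions $\zeta_j$ are only multivalued on the torus, but $d\log\zeta_j$ and $\prod_j \zeta_j = \chi^{\UL{\nu}}$ are single-valued, so $\eta$ itself and your chart computation are unaffected.)
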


Next, we equip $\bX$ with a toric K\"ahler structure $\omega$ and consider the associated moment map $\mu_0:\bX \to P$, where $P$ is the moment polytope defined by a system of inequalities:
$$(\bb_i,\cdot) \geq c_i,\quad i=0,\ldots, m-1.$$
Consider the subtorus $T^{\perp \UL{\nu}}:=N_\R^{\perp \UL{\nu}}/N^{\perp \UL{\nu}}\subset N_\mathbb{R}/N$. The moment map of the $T^{\perp \UL{\nu}}$ action is given by composing $\mu_0$ with the natural quotient map:
$$[\mu_0]:\bX \overset{\mu_0}{\longrightarrow} M_\R\to M_\R/\R\langle\UL{\nu}\rangle.$$
The following is a generalization of the Gross fibration for toric CY manifolds \cite{goldstein, gross_examples}, which gives a Lagrangian torus fibration (SYZ fibration).
\begin{defn}\label{defn:gross_fibration}
Fix $K_2>0$. A {\em Gross fibration} of $\bX$ is defined to be
\begin{equation*}
\begin{split}
\mu:\bX \to ( M_\R/\R\langle\UL{\nu}\rangle)  \times  \R_{\geq -K_2^2}, \quad x \mapsto ([\mu_0(x)] , |w(x) - K_2|^2-K_2^2).
\end{split}
\end{equation*}
We denote by ${B}:=(M_\R/\R\langle\UL{\nu}\rangle) \times\R_{\geq -K_2^2}$ the base of the Gross fibration $\mu$.
\end{defn}

Since the holomorphic function $w$ vanishes on the toric prime divisors $D_i \subset \bX$, the images of $D_i \subset \bX$ under the map $\mu$ have second coordinate zero. Moreover, the hypersurface defined by $w(x)=K_2$ maps to the boundary of the image of $\mu$.

\begin{prop}
With respect to the holomorphic volume form $\Omega/(w-K_2)$ defined on $\mu^{-1}(B^{int})$ and the toric K\"ahler form $\omega$, the map $\mu$ is a special Lagrangian torus fibration.
\end{prop}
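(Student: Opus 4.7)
The proposition asserts that the map $\mu$ is a special Lagrangian torus fibration with respect to the holomorphic volume form $\tilde{\Omega} := \Omega/(w - K_2)$. The plan is to follow the strategy of Gross \cite{gross_examples} and Goldstein \cite{goldstein} for toric CY manifolds, adapted to the orbifold setting exactly as in \cite[Section 4]{CLL}; the argument is local on the smooth dense locus of $\bX$, and the orbifold structure plays no essential role.

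First, I verify that $\mu$ has Lagrangian fibers. Since the fibers have real dimension $n = \frac{1}{2}\dim_\R\bX$, it suffices to show that the $n$ coordinate functions of $\mu$ pairwise Poisson commute. The first $n-1$ coordinates are the components of the moment map for the Hamiltonian $T^{\perp\underline{\nu}}$-action, so they mutually Poisson commute. Because $w$ corresponds to $\underline{\nu} \in M$ and $\underline{\nu}$ pairs trivially with the Lie algebra of $T^{\perp\underline{\nu}}$, the holomorphic function $w$ (and hence $|w-K_2|^2$) is $T^{\perp\underline{\nu}}$-invariant, so its Poisson bracket with the $[\mu_0]$-components vanishes. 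To show the regular fibers are tori, I will perform symplectic reduction at a regular level of $[\mu_0]$: the $(n-1)$-dimensional $T^{\perp\underline{\nu}}$-action is free there, and the reduced space is $2$-dimensional with $w$ descending as a holomorphic coordinate; the set $\{|w-K_2|^2 = s+K_2^2\}$ in the reduction is a circle, and an explicit parametrization shows that the resulting $T^{n-1}$-bundle over $S^1$ is trivial.

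The crucial computation is the specialness. Using $w = \zeta_0 \cdots \zeta_{n-1}$ and changing variables, one obtains
\[
\tilde{\Omega} \;=\; d\log(w-K_2) \wedge d\log\zeta_1 \wedge \cdots \wedge d\log\zeta_{n-1}.
\]
Let $\xi_1,\ldots,\xi_{n-1}$ be the Hamiltonian vector fields of the components of $[\mu_0]$ and $\xi_n$ that of $|w-K_2|^2$; these span the tangent space of a regular fiber $F$. To check specialness I will compute the determinant of the $n \times n$ matrix $M_{ij} := d\log\alpha_i(\xi_j)$, where $\alpha_1 := w-K_2$ and $\alpha_{i+1} := \zeta_i$. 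Three observations control the calculation. First, $M_{1,j} = 0$ for $j \leq n-1$, because $\xi_j \in T^{\perp\underline{\nu}}$ preserves $w$. Second, $M_{i,j} \in i\R$ for $i \geq 2$ and $j \leq n-1$, because $\xi_j$ is a real linear combination of the torus-rotation vector fields $\partial_{\theta_\ell}$ and $d\log\zeta_k(\partial_{\theta_\ell}) = i\delta_{k\ell}$. Third, $M_{1,n} \in i\R$, because $\xi_n$ preserves $|w-K_2|^2$, whence $d\log|w-K_2|(\xi_n) = 0$ and only the imaginary part $i\,d\arg(w-K_2)$ survives.

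Expanding $\det M$ along the first row, only the entry $M_{1,n}$ contributes, and the resulting $(n-1)\times(n-1)$ cofactor has all purely imaginary entries. Therefore $\det M \in i \cdot i^{n-1}\R = i^n\R$, so $\tilde{\Omega}|_F$ takes values in the fixed complex line $i^n\R$. After absorbing the constant phase $i^n$ (a harmless normalization), this yields $\mathrm{Im}(\tilde{\Omega})|_F = 0$ on every regular fiber, which is the special Lagrangian condition. The main technical challenge is organizing this phase computation carefully; fortunately the cofactor expansion along the first row means that the more complicated entries $M_{i,n}$ for $i \geq 2$ need not be evaluated explicitly.
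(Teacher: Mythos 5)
Your proposal is correct and is in substance the same argument the paper gives by deferring to the manifold case: your three observations ($M_{1,j}=0$ for $j\le n-1$, the purely imaginary entries in the torus directions, and $M_{1,n}\in\sqrt{-1}\,\R$) are exactly the $T^{\perp\underline{\nu}}$-invariance of $w$ and the fact that the circles $|w-K_2|=\mathrm{const}$ are special for $d\log(w-K_2)$, which is precisely what the paper's one-line symplectic-reduction proof encapsulates, with your determinant bookkeeping making the lift explicit. The only point worth adding is that on fibers meeting a toric divisor (e.g. fibers over the wall) some $\zeta_i$ vanishes and the $d\log$-frame degenerates, but the constant-phase conclusion extends to such points by continuity from the dense open part of the fiber, so this is harmless.
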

This proposition can be proved in exactly the same way as in the manifold case (cf. \cite[Theorem 2.4]{gross_examples} or \cite[Proposition 4.7]{CLL}).  It follows from the construction of symplectic reduction: The function $w$ descends to the symplectic reduction $\bX // T^{\perp \UL{\nu}} \to \cpx$; since the circles centered at $K_2$ are special Lagrangian with respect to the volume form $\der\log (w-K_2)$, it follows that their preimages are also special Lagrangian in $\bX$ with respect to the holomorphic volume form $\Omega/(w-K_2)$.

\subsubsection{Discriminant locus and local trivialization}
For each $\emptyset\neq I\subset \{0,\ldots,m-1\}$ such that $\{\bb_i \mid i\in I\}$ generates a cone in $\Sigma$, we define
\begin{equation}\label{eqn:face_of_P}
T_I:=\{\xi\in P \mid (\bb_i,\xi) = c_i,\ i\in I\}\subset \partial P.
\end{equation}
$T_I$ is a codimension-$(|I|-1)$ face of $\partial P$. Let $[T_I]:=[\mu_0](T_I)$.

%Let $\Gamma:=\{r\in B \mid r \textrm{ is a critical value of }\mu\}\subset B$ be the discriminant locus of $\mu$. Put $B_0:=B\setminus \Gamma$.
\begin{prop}
The discriminant locus of the Gross fibration $\mu$ is given by
$$\Gamma:=\{r\in B \mid r \textrm{ is a critical value of }\mu\}=\partial B \cup \left(\left(\bigcup_{|I|=2}[T_I] \right)\times \{0\}\right).$$
\end{prop}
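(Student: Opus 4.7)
The plan is to factor the Gross fibration as
\[
\bX \xrightarrow{\,F:=([\mu_0],\,w)\,} (M_\R/\R\langle\UL{\nu}\rangle)\times\C \xrightarrow{\,(\mathrm{id},\,f)\,} B, \qquad f(u):=|u-K_2|^2-K_2^2,
\]
and to analyze each factor separately, using that $w$ is $T^{\perp\UL{\nu}}$-invariant (because $\pairing{\UL{\nu}}{\cdot}\equiv 0$ on $N^{\perp\UL{\nu}}$), so that $F$ descends fibrewise to the symplectic reduction $M_b:=[\mu_0]^{-1}(b)/T^{\perp\UL{\nu}}$.

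The outer factor $(\mathrm{id},f)$ is singular exactly on $\{u=K_2\}$, since $df(u)=0$ iff $u=K_2$. Its preimage under $F$ is the hypersurface $\{w=K_2\}\subset\bX$, on which the second coordinate of $\mu$ is constant equal to $-K_2^2$. Since $F$ is surjective onto $(M_\R/\R\langle\UL{\nu}\rangle)\times\C$ (its generic fibre being a single $T^{\perp\UL{\nu}}$-orbit), its restriction to $\{w=K_2\}$ surjects onto $(M_\R/\R\langle\UL{\nu}\rangle)\times\{K_2\}$, and postcomposing with $(\mathrm{id},f)$ yields $\partial B=(M_\R/\R\langle\UL{\nu}\rangle)\times\{-K_2^2\}$ as the first contribution to $\Gamma$. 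Away from $\{w=K_2\}$ the outer factor is a local diffeomorphism, so the remaining critical points of $\mu$ coincide with those of $F$.

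By the CY condition $\pairing{\UL{\nu}}{\bb_i}=1$, at any point on a codimension-$|I|$ open toric stratum of $\bX$ the $T^{\perp\UL{\nu}}$-stabilizer has dimension $|I|-1$, so $d[\mu_0]$ has full rank $n-1$ exactly off the codimension-$\geq 2$ toric strata. On any codimension-$\geq 2$ toric stratum $w\equiv 0$ and $dw=0$; such points are therefore critical for $F$, and under $\mu$ they map into $[T_I]\times\{0\}$ (the second coordinate being $|0-K_2|^2-K_2^2=0$). Conversely, at any $x$ off the codimension-$\geq 2$ strata I would verify that $dF_x$ is surjective: $d[\mu_0]$ already supplies rank $n-1$, while modulo the $T^{\perp\UL{\nu}}$-orbit direction $\ker d[\mu_0]$ descends to $T_{[x]}M_b$, on which the holomorphic function $w$ has nonvanishing differential because $\pairing{\UL{\nu}}{\bb_i}=1$ forces $w$ to restrict to a local transverse coordinate along each toric prime divisor in $M_b$. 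Assembling the two contributions gives $\Gamma=\partial B\cup\bigcup_{|I|\geq 2}[T_I]\times\{0\}$, and since $I'\subset I$ implies $T_I\subset T_{I'}$ (and hence $[T_I]\subset[T_{I'}]$), the union over $|I|\geq 2$ collapses to the stated union over $|I|=2$. The one point requiring extra care relative to the manifold case in \cite{CLL} is verifying smoothness of $M_b$ at the image of a codimension-$1$ toric stratum and that $w$ is transverse there; this can be handled in local stacky charts using the CY normalization of the stacky vectors.
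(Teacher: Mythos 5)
Your proposal is correct and is essentially the argument the paper itself invokes by deferring to the manifold case \cite[Proposition 4.9]{CLL}: the fiber degenerates exactly where the $T^{\perp\underline{\nu}}$-orbit degenerates (codimension-$\geq 2$ toric strata, detected by your CY stabilizer-dimension count, with $w=0$ there) or where the $w$-circle collapses at $w=K_2$, giving the contributions $\bigcup_{|I|=2}[T_I]\times\{0\}$ and $\partial B$ respectively. Two cosmetic points only: the outer factor $(\mathrm{id},f)$ is a submersion away from $u=K_2$, not a local diffeomorphism, and ``the critical points of $\mu$ coincide with those of $F$'' should be weakened to the one implication you actually use — your computation $\mathrm{rank}\, d[\mu_0]\leq n-2$ together with $dw=0$ already gives $\mathrm{rank}\, d\mu\leq n-2<n$, so criticality of $\mu$ on the deep strata holds directly, and surjectivity of $dF$ (plus submersiveness of the outer factor) gives regularity elsewhere.
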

\begin{proof}
This is similar to the proof of \cite[Proposition 4.9]{CLL} in the manifold case: A fiber degenerates when the $T^{\perp \UL{\nu}}$-orbit degenerates or $|w - K_2| = 0$. A $T^{\perp \UL{\nu}}$-orbit degenerates if and only if $w = 0$ and $[\mu_0] \in \left(\bigcup_{|I|=2}[T_I] \right)$; $|w - K_2| = 0$ implies that the base point is located in $\partial B$.
\end{proof}

%The discriminant locus of $\mu$ in $\mu^{-1}(B^{int})$, can be shown to be the codimension two toric divisors as in \cite{CLL}, hence lies in the zero level.
Put $B_0:=B\setminus \Gamma$. By the arguments in \cite[Section 2.1]{CLL}, the restriction $\mu: \bX_0:=\mu^{-1}(B_0)\to B_0$ is a torus bundle. For facets $T_0,\ldots,T_{m-1}$ of $P$, consider the following open subsets of $B_0$:
\begin{equation} \label{U_i}
U_i := B_0 \setminus \bigcup_{k\neq i} ([T_k] \times \{0\}).
\end{equation}
The torus bundle $\mu$ over each $U_i$ can be explicitly trivialized. Without loss of generality we describe this explicit trivialization over $U_0$.

\begin{defn}\label{defn:choice_basis}
We choose $\underline{v}_1,\ldots,\underline{v}_{n-1}\in N$ such that
\begin{enumerate}
\item
$\{\bb_0\}\cup\{\underline{v}_1,\ldots,\underline{v}_{n-1}\}$ is an integral basis of $N$;
\item
$(\underline{v}_i, \underline{\nu})=0$ for $1\leq i\leq n-1$.
\end{enumerate}
Let $\{\nu_0, \ldots, \nu_{n-1}\}\subset M$ be the dual basis of $\{\bb_0\}\cup\{\underline{v}_1,\ldots,\underline{v}_{n-1}\}$.
\end{defn}

\begin{defn}\label{defn:trivialization}
Denote
$$T^{\perp \bb_0}:=\frac{N_\R/\R\langle \bb_0\rangle}{N/\Z\langle \bb_0\rangle}.$$
Then, over $U_0$, we have a trivialization
$$\mu^{-1}(U_0) \cong U_0 \times T^{\perp \bb_0} \times (\R/2\pi\Z).$$
Here the first map is given by $\mu$, the last map is given by $\textrm{arg}(w-K_2)$, and the second map is given by the argument over $2\pi$ of the meromorphic functions corresponding to
$\nu_1, \ldots, \nu_{n-1}$.
\end{defn}

\subsubsection{Generators of homotopy groups} \label{sec:gen_htpy}
%*********Define homotopy class of disks.
Fix $r_0:=(q_1,q_2)\in U_0$ with $q_2>0$. Consider the fiber $F_{r_0}:=\mu^{-1}(q_1, q_2)$. By the trivialization in Definition \ref{defn:trivialization}, we have $F_{r_0}\simeq T^{\perp \bb_0}\times (\R/2\pi\Z)$. Hence $\pi_1(F_{r_0})\simeq N/\Z\langle \bb_0\rangle\times \Z$ has the following basis (over $\Q$)
$$\{\lambda_i \mid 0\leq i \leq n-1\},\quad \lambda_0=(0,1), \lambda_i=([\underline{v}_i],0) \text{ for } 1\leq i\leq n-1.$$
%where $\lambda_0=(0,1)$ and $\lambda_i=([\underline{v}_i],0)$ for $1\leq i\leq n-1$.
Recall that for a regular fiber $L$ of the moment map $\bX\to P$, the basic disk classes form a natural basis of $\pi_2(\bX, L)$ (Lemma \ref{lem:basic_disk_classes}). %We now construct a basis for $\pi_2(\bX,F_{r_0})$ by an
Then the explicit Lagrangian isotopy between $F_{r_0}$ and $L$:
\begin{equation}\label{eqn:Lag_isotopy_X}
L_t:=\{x\in \bX \mid [\mu_0(x)]=q_1,\ |w(x)-t|^2=K_2^2+q_2\}, \quad t\in [0, K_2]
\end{equation}
allows us to identify $\pi_2(\bX, F_{r_0})$ with $\pi_2(\bX,L)$ and view the basic disk classes in $\pi_2(\bX, L)$ as classes in $\pi_2(\bX, F_{r_0})$. By abuse of notation, we still denote these classes by $\beta_0, \ldots, \beta_{m-1}$ and $\{\beta_\nu \mid \nu\in \mathrm{Box}'(\Sigma)\}$.
For a general $r\in U_0$, a basis for $\pi_2(\bX, F_{r})$ may be obtained by identifying $F_{r}$ with $F_{r_0}$ using the trivialization in Definition \ref{defn:trivialization}.
%The boundaries of the classes  $\beta_0, \ldots, \beta_{m-1}$ and $\{\beta_\nu \mid \nu\in \mathrm{Box}'(\Sigma)\}$ can be described as follows.
\begin{lemma} \label{boundary}
For a fiber $F_r$ of $\mu$ where $r \in U_0$, the boundary of the disk classes are %described as follows:
\begin{equation*}
\begin{split}
&\partial \beta_j=\lambda_0+\sum_{i=1}^{n-1}(\nu_i, \bb_j)\lambda_i, \quad 0\leq j\leq m-1\\
&\partial \beta_\nu= \lambda_0+\sum_{i=1}^{n-1}(\nu_i, \nu)\lambda_i, \quad \nu=\sum_{i=1}^{n-1}(\nu_i,\nu) \underline{v}_i\in \mathrm{Box}'(\Sigma).
\end{split}
\end{equation*}
\end{lemma}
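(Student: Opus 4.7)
The plan is to identify $\pi_2(\bX, F_{r_0})$ with $\pi_2(\bX, L)$ via the Lagrangian isotopy $L_t$ of \eqref{eqn:Lag_isotopy_X} (so that the basic classes $\beta_j$ and $\beta_\nu$ of Lemma \ref{lem:basic_disk_classes} are transferred to classes on $F_{r_0}$), and then read off the boundary of each class in the $\{\lambda_0,\lambda_1,\ldots,\lambda_{n-1}\}$-basis using the explicit trivialization in Definition \ref{defn:trivialization}. That trivialization realizes $\lambda_0$ as the loop dual to $\mathrm{arg}(w-K_2)$ and $\lambda_i$ ($i\geq 1$) as the loop dual to $\mathrm{arg}(\zeta_i^\vee)$, where $\zeta_i^\vee$ is the meromorphic function corresponding to $\nu_i$. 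Consequently, for any loop $\gamma$ on $F_{r_0}$ the $\lambda_0$-coefficient of $[\gamma]$ is the winding number of $\mathrm{arg}(w-K_2)\circ\gamma$, while the $\lambda_i$-coefficient ($i\geq 1$) is the winding number of $\mathrm{arg}(\zeta_i^\vee)\circ\gamma$ in $\C^\times$.

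For the basic smooth disk $\beta_j$, the boundary on $L$ represents $\bb_j\in N\cong\pi_1(L)$ by the classification in \cite{CP}. Because $\zeta_i^\vee$ corresponds to $\nu_i$, the winding number of $\zeta_i^\vee$ along this loop equals $(\nu_i,\bb_j)$, which gives the claimed $\lambda_i$-coefficients. For the $\lambda_0$-coefficient, Lemma \ref{lemma:w} gives $(w)_0=\sum_i D_i$, so the winding number of $w$ around $0$ along $\partial\beta_j$ equals $\beta_j\cdot\sum_i D_i=1$. Since $K_2<\sqrt{K_2^2+q_2}$, the point $K_2$ lies strictly inside the circle $|w|=\sqrt{K_2^2+q_2}$ traced out in the $w$-plane, so the winding number around $K_2$ also equals $1$. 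These winding numbers persist along $F_{r_0}$ because the isotopy $L_t$ keeps $[\mu_0]=q_1\in U_0$ (avoiding the divisors where $\zeta_i^\vee$ has poles or zeros) and satisfies $|w-t|^2=K_2^2+q_2>0$ throughout (so neither the zero locus of $w$ nor the point $w=K_2$ is crossed, and all relevant arguments stay well-defined).

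The orbi-disk case is handled identically: $\partial\beta_\nu$ is still an honest smooth loop on $L$ because the orbifold marked point sits strictly in the interior of the disk, and by \cite{CP} it represents $\nu\in N\cong\pi_1(L)$; this yields $\lambda_i$-coefficients $(\nu_i,\nu)$. The $\lambda_0$-coefficient is again the winding number of $w-K_2$, which by Lemma \ref{lem:maslov_ind_comp} equals $\beta_\nu\cdot\sum_i D_i=\tfrac{1}{2}\mu_{CW}(\beta_\nu)=\mathrm{age}(\nu)$, and hence is $1$ precisely on the age-$1$ twisted sectors that are relevant throughout the paper. The main technical point to check is that the trivialization-induced decomposition really produces the above two kinds of winding numbers as the $\lambda$-coefficients (in particular, distinguishing the winding around $K_2$ from that around $0$), and that the isotopy argument applies verbatim to orbi-disks; both reduce to routine verifications once one observes that the isotopy stays in the regular locus $\bX_0=\mu^{-1}(B_0)$ throughout $t\in[0,K_2]$.
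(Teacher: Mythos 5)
Your proof is correct and is essentially the argument the paper intends: its own "proof" merely defers to the manifold case (\cite[Proposition 4.12]{CLL}), which proceeds exactly as you do — transport the boundary loop of a basic (orbi-)disk through the isotopy \eqref{eqn:Lag_isotopy_X} (noting $w$ never hits $0$ or $K_2$ on any $L_t$ because $t\le K_2<\sqrt{K_2^2+q_2}$) and read off the $\lambda_0$- and $\lambda_i$-coefficients as winding numbers of $\arg(w-K_2)$ and of $\arg z^{\nu_i}$. Your further observation that the $\lambda_0$-coefficient of $\partial\beta_\nu$ is $(\underline{\nu},\nu)=\mathrm{age}(\nu)$ is also accurate: the displayed formula with coefficient $1$ is literally valid only for age-one Box elements, which are the only ones used in the paper, so your restriction is the right reading of the lemma rather than a gap in your argument.
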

\begin{proof}
Similar to the proof of \cite[Proposition 4.12]{CLL}.
\end{proof}

%\begin{lemma} \label{lemma:disc_intersection}
%Consider $\beta_i \in \pi_2(X,F_{r})$ for $r \in U_0$ defined as above.  We have
%\begin{equation*}
%\begin{split}
%&\beta_0\cdot D_j=0, \quad 1\leq j\leq m-1\\
%&\beta_i\cdot D_j=\delta_{ij}, \quad 1\leq i\leq m-1, 1\leq j\leq m-1\\
%&\beta_i\cdot \tilde{D}_0=1, \quad 0\leq i\leq m-1,
%\end{split}
%\end{equation*}
%where $\tilde{D}_0:=\{w(x)=K_2\}\subset \bX$.
%For a twisted sector $\nu \in \textrm{Box}_{\bb_\sigma}^\circ$, $\nu = \sum_k t_k \bb_{i_k}$ where $t_k \in \Q\cap [0,1)$ and $\bb_{i_k}$'s are the primitive generators of $\sigma$.  Then the intersection number of a basic orbi-disk class $\beta_\nu$ with a divisor can be expressed in terms of that of $\beta_0, \ldots, \beta_{m-1}$:
%$$ \beta_\nu \cdot D = \sum_k t_k (\beta_{i_k} \cdot D) $$
%for any divisor $D$.
%In particular, we have
%$$\beta_\nu\cdot \tilde{D}_0=\mathrm{age}(\nu)$$
%and so
%$\mu (\beta_\nu) = 2 \, \mathrm{age}(\nu)$.
%\end{lemma}
%\begin{proof}
%Similar to the proof of \cite[Proposition 4.13]{CLL}.
%\end{proof}

\subsubsection{Wall-crossing of orbi-disk invariants}\label{sec:wall_crossing_X}
Like the manifold case, the behavior of disk invariants with boundary conditions on a fiber $F_r$ depends on the location of $r$. In this section we study this behavior for the Gross fibration $\mu:\bX\to B$ of a toric CY orbifold.

Let $\beta\in \pi_2(\bX, F_r)$ be a class represented by a stable disk. Then $\beta=\sum_i u_i +\alpha$ where $u_i$'s are disk classes and $\alpha$ is the class of a rational curve, so that $\mu_{CW}(\beta)=\sum_i \mu_{CW}(u_i)+2c_1(\bX) \cdot \alpha$. Since $\bX$ is CY, $c_1(\bX)\cdot\alpha=0$. The fiber $F_r\subset \bX$ is special Lagrangian with respect to the meromorphic form $\Omega/(w-K_2)$. Since the pole divisor of $\Omega/(w-K_2)$ is $\tilde{D}_0:=\{w(x)=K_2\}\subset \bX$, Lemma \ref{lem:maslov_ind_comp} implies that $\mu_{CW}(u_i)=2 u_i\cdot \tilde{D}_0 \geq 0$.  Thus we have
\begin{lemma}
If a class $\beta\in \pi_2(\bX, F_r)$ is represented by a stable disk, then $\mu_{CW}(\beta)\geq 0$.
\end{lemma}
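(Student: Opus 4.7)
The plan is to exploit the decomposition of a stable disk class into disk components plus a sphere component, and then apply the index formula from Lemma \ref{lem:maslov_ind_comp} to the holomorphic volume form $\Omega/(w-K_2)$ whose pole divisor is the hypersurface $\tilde{D}_0 = \{w = K_2\}$. This reduces a positivity question about the Maslov index to a positivity question about an intersection number with $\tilde{D}_0$.

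First I would write $\beta = \sum_i u_i + \alpha$, where each $u_i$ is the class of a holomorphic disk component with boundary on $F_r$ and $\alpha \in H_2^{\mathrm{eff}}(\bX)$ is represented by a union of holomorphic (orbi-)spheres (such a decomposition exists because $\beta$ is represented by a stable disk). The CW Maslov index is additive under this decomposition, giving $\mu_{CW}(\beta) = \sum_i \mu_{CW}(u_i) + 2\, c_1(\bX)\cdot\alpha.$ The CY hypothesis forces $c_1(\bX)\cdot\alpha = 0$, so it suffices to show $\mu_{CW}(u_i) \geq 0$ for each $i$.

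Next, I would invoke Lemma \ref{lem:maslov_ind_comp} with the meromorphic volume form $\Omega/(w-K_2)$, whose pole divisor is $\tilde{D}_0$ and with respect to which $F_r$ is special Lagrangian; this yields $\mu_{CW}(u_i) = 2\, u_i \cdot \tilde{D}_0.$ Positivity of this intersection number then follows from the standard observation that a holomorphic (orbi-)disk either has its image contained in $\tilde{D}_0$ (impossible here, since $\partial u_i$ lies on $F_r$, which is disjoint from $\tilde{D}_0$ by the very definition of the Gross fibration, as the second coordinate of $\mu$ equals $|w-K_2|^2 - K_2^2$ and hence $F_r$ avoids $\{w=K_2\}$ whenever $r \notin \partial B$), or meets $\tilde{D}_0$ in a finite set of points, each of which contributes non-negatively to the intersection number by holomorphicity. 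Summing then gives $\mu_{CW}(\beta) \geq 0$.

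The only delicate point I foresee is the orbifold subtlety in step two: the disk components $u_i$ may be orbi-disks with interior orbifold marked points, and one should check that the CW Maslov index still enjoys the additivity $\mu_{CW}(\beta) = \sum_i \mu_{CW}(u_i) + 2 c_1(\bX)\cdot\alpha$ in the presence of orbifold points and that the intersection-theoretic formula of Lemma \ref{lem:maslov_ind_comp} applies with the possibly fractional intersection number $u_i\cdot\tilde{D}_0$ remaining non-negative. Both issues are addressed by Lemma \ref{lem:maslov_ind_comp} itself (whose statement is already phrased for orbifolds) together with the fact that nodal degenerations split the Maslov index additively, so no genuinely new argument is required beyond invoking these already-established tools.
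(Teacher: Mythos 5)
Your proposal is correct and follows essentially the same route as the paper: decompose the stable disk class as $\beta=\sum_i u_i+\alpha$, use additivity of $\mu_{CW}$ together with $c_1(\bX)\cdot\alpha=0$ from the CY condition, and apply Lemma \ref{lem:maslov_ind_comp} with the form $\Omega/(w-K_2)$ to get $\mu_{CW}(u_i)=2\,u_i\cdot\tilde{D}_0\geq 0$. The extra detail you give on why $u_i\cdot\tilde{D}_0\geq 0$ (the boundary lies on $F_r$, which is disjoint from $\{w=K_2\}$ for $r\in B_0$, so intersections are isolated and non-negative) is exactly the implicit justification in the paper's one-line assertion.
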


The following result describes when the minimal Maslov index $0$ can be achieved.

\begin{lemma}
Let $r=(q_1,q_2)\in B_0$. Consider the fiber $F_r$.
\begin{enumerate}
\item
$F_r$ bounds a non-constant stable disk of CW Maslov index $0$ if and only if $q_2=0$.
\item
If $q_2\neq 0$, then $F_r$ has minimal CW Maslov index at least $2$, i.e. $F_r$ does not bound any non-constant stable disks with CW Maslov index less than $2$.
\end{enumerate}
\end{lemma}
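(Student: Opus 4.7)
The plan is to exploit the global holomorphic function $w:\bX\to\C$ by analyzing its composition $f := w\circ u$ with any non-constant stable (orbi-)disk $u:(\Sigma,\partial\Sigma)\to(\bX,F_r)$. The key observation is that $f$ is a genuine holomorphic map to $\C$ with boundary landing on the circle $C := \{z\in\C : |z-K_2|^2 = K_2^2+q_2\}$, that $K_2$ always lies strictly inside $C$, and that $0$ lies on $C$ precisely when $q_2=0$. Combined with the formula $\mu_{CW}([u]) = 2\,[u]\cdot \tilde{D}_0$ from Lemma \ref{lem:maslov_ind_comp} applied to the meromorphic volume form $\Omega/(w-K_2)$ with pole divisor $\tilde{D}_0 := \{w=K_2\}$, this reduces the problem to studying how the holomorphic map $f$ wraps in $\C$.

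I would split into two cases. In the first case, suppose $f$ is non-constant on some disk component $\Sigma_a$ of $\Sigma$. Applying the argument principle to $f|_{\Sigma_a}$ with boundary on $C$, the winding number of $f|_{\partial\Sigma_a}$ about any interior point of the disk bounded by $C$ (in particular about $K_2$) is a strictly positive integer $d$, equal to the number of preimages counted with holomorphic multiplicity. Hence $\Sigma_a$ meets $\tilde{D}_0$ with multiplicity at least one, and by positivity of intersection of holomorphic curves with complex hypersurfaces the remaining disk and sphere components of $\Sigma$ contribute non-negatively, giving $[u]\cdot\tilde{D}_0\geq 1$ and thus $\mu_{CW}([u])\geq 2$.

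In the second case, suppose $f$ is constant on every disk component. Since any holomorphic map $\mathbb{P}^1\to\C$ is constant, $f$ is automatically constant on every sphere component as well, so by connectedness of the stable domain $f\equiv p_0$ for some $p_0\in C$. If $p_0\neq 0$, then $u(\Sigma)\subset\{w=p_0\}$, which is an $(n-1)$-dimensional algebraic subtorus $(\C^\times)^{n-1}$ contained in the open orbit $(\C^\times)^n\subset\bX$ and disjoint from every toric divisor. Each algebraic coordinate of this subtorus pulls back to a holomorphic $\C^\times$-valued function on $\Sigma$ whose log-modulus is harmonic and is constant along $\partial\Sigma$ (since $F_r\cap\{w=p_0\}$ is a single $T^{n-1}$-orbit), so the maximum principle forces each such coordinate to be constant; hence $u$ is constant, contradicting our assumption. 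Therefore $p_0=0$, which requires $0\in C$, i.e., $q_2=0$. This establishes statement (2) and the ``only if'' direction of (1).

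For the converse in (1), when $q_2=0$ the vertical ray $q_1+\R_{\geq 0}\underline{\nu}\subset M_\R$ must enter the moment polytope $P$ through some facet $T_{i_0}$, since all stacky vectors satisfy $(\underline{\nu},\bb_i)=1$ and so $P$ is unbounded only in the $+\underline{\nu}$ direction. Thus $q_1\in[T_{i_0}]$, and $F_r\cap D_{i_0}$ is a non-empty Lagrangian torus orbit in the toric sub-orbifold $D_{i_0}$. The basic holomorphic disks in $D_{i_0}$ bounded by $F_r\cap D_{i_0}$ (provided by Cho--Poddar's classification for the toric orbifold $D_{i_0}$) have image contained in $\{w=0\}$, so they miss $\tilde{D}_0$ entirely and therefore have CW Maslov index zero when viewed as disks in $(\bX,F_r)$. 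The main subtlety I expect is verifying that the standard analytic tools (argument principle, maximum principle, positivity of intersection) apply across orbifold marked points and nodes of stable maps; this is clean because $w$ is a global holomorphic function on $\bX$ and orbi-holomorphic maps are locally branched covers of ordinary holomorphic maps.
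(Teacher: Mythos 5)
Your argument is correct and follows essentially the same route as the paper's proof, which composes a stable (orbi-)disk with the global holomorphic function $w$, descends to the underlying smooth domain, and applies the maximum principle as in the manifold case of \cite[Lemma 4.27 and Corollary 4.28]{CLL}, together with the index formula $\mu_{CW}(\beta)=2\beta\cdot\tilde{D}_0$ from Lemma \ref{lem:maslov_ind_comp}. The only points you leave implicit are standard and easily supplied: components contained in $\tilde{D}_0$ cannot occur nontrivially (disk boundaries lie on $F_r$, which is disjoint from $\tilde{D}_0$, and $\{w=K_2\}\cong(\C^\times)^{n-1}$ carries no nonconstant holomorphic spheres), and the constancy of the coordinate moduli on $F_r\cap\{w=p_0\}$ follows from the strict monotonicity of $|w|$ along the $\underline{\nu}$-ray (equivalently, the symplectic reduction description of the Gross fibration), which in any case you only need on the connected boundary circle of each disk component.
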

\begin{proof}
By the observation that for a holomorphic orbi-disk $u:\mathcal{D} \to \bX$, the composition $w\circ u: \mathcal{D}\to \C$ is a holomorphic function on every local chart of $\mathcal{D}$ and is invariant under the action of the local groups and by the maximum principle, this lemma can be proved as in the manifold case; cf. \cite[Lemma 4.27 and Corollary 4.28]{CLL}.
%Therefore $w\circ u$ descends to a holomorphic function $\overline{w\circ u}: |\mathcal{D}|\to \C$ on the smooth disk $|\mathcal{D}|$ underlying $\mathcal{D}$.
%Then we can apply maximum principle on $\overline{w\circ u} - K$ as in the manifold case: Since $u$ has Maslov index $0$, it never intersects the boundary divisor $D_0$ by Lemma \ref{lem:maslov_ind_comp}. Thus $\overline{w\circ u} - K$ is never zero, and hence $\overline{w\circ u}$ is constant.  Thus the image of $u$ lies in a level set of $w$, and for topological reason this forces $w = 0$. So $q_2 = 0$.  So if $q_2 \not= 0$, $F_r$ has minimal Maslov index $2$.
\end{proof}

By definition, the {\em wall} of a Lagrangian fibration $\mu:\bX\to B$ is the locus $H\subset B_0$ of all $r\in B_0$ such that the Lagrangian fiber $F_r$ bounds a non-constant stable disk of CW Maslov index $0$. The above lemma shows that
$$H=M_\R/\R\langle \underline{\nu}\rangle\times \{0\}.$$
The complement $B_0\setminus H$ is the union of two connected components
\begin{equation*}
B_+:= M_\R/\R\langle \underline{\nu}\rangle\times (0, +\infty),\quad B_-:= M_\R/\R\langle \underline{\nu}\rangle\times (-K_2^2, 0).
\end{equation*}
For $r\in B_0\setminus H$, orbi-disk invariants with arbitrary numbers of age-one insertions are well-defined for relative homotopy classes with CW Maslov index $2$. We need to consider the two possibilities, namely $r\in B_+$ and $r\in B_-$.

{\noindent \em Case 1: $r\in B_+$.}
Let $r=(q_1,q_2)\in B_+$, namely $q_2>0$. Then \eqref{eqn:Lag_isotopy_X} gives a Lagrangian isotopy between the fiber $F_r$ and a regular Lagrangian torus fiber $L$. Furthermore, since $q_2>0$, for each $t\in [0, K_2]$, $w$ is never $0$ on $L_t$. It follows that the Lagrangians $L_t$ in the isotopy do not bound non-constant disks of CW Maslov index $0$. Hence for $r\in B_+$, the orbi-disk invariants of $(\bX, F_r)$ with arbitrary numbers of age-one insertions and CW Maslov index $2$ coincide with those of $(\bX, L)$, which are reviewed in Section \ref{subsec:disk_inv}.

{\noindent \em Case 2: $r\in B_-$.}
In this case we have the following
\begin{prop} \label{prop:B_-}
Let $r=(q_1,q_2)\in B_-$, namely $q_2<0$. Let $\beta\in \pi_2(\bX, F_r)$. Suppose $\mathbf{1}_{\nu_1},\ldots,\mathbf{1}_{\nu_l}\in H_{\mathrm{CR}}^*(\bX)$ are fundamental classes of twisted sectors $\bX_{\nu_1},\ldots,\bX_{\nu_l}$ such that $\mathrm{age}(\nu_1)=\cdots=\mathrm{age}(\nu_l)=1$. Then we have
$$n_{1,l,\beta}^{\bX}([\mathrm{pt}]_{F_r}; \mathbf{1}_{\nu_1},\ldots,\mathbf{1}_{\nu_l})
=\left\{
\begin{array}{ll}
1 & \textrm{ if } \beta=\beta_0\textrm{ and }l = 0\\
0 &  \textrm{ otherwise }.
\end{array}\right.$$
\end{prop}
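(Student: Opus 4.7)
The strategy is to apply the maximum principle to the globally defined holomorphic function $w : \bX \to \C$. For $r = (q_1, q_2) \in B_-$ we have $|w - K_2|^2 = K_2^2 + q_2 < K_2^2$ on $F_r$. Given any stable orbi-disk $u : (\mathcal{D}, \partial \mathcal{D}) \to (\bX, F_r)$ representing a class $\beta$ with $\mu_{CW}(\beta) = 2$, the composition $w \circ u - K_2$ pulls back to a holomorphic function on each local uniformizing chart (because $w$ is $G$-invariant, being a global function on $\bX$), hence satisfies a maximum modulus principle on $\mathcal{D}$. Therefore $|w \circ u - K_2| \leq \sqrt{K_2^2 + q_2} < K_2$ throughout $\mathcal{D}$, so the image of $u$ is disjoint from the zero divisor $\{w = 0\} = \bigcup_{i=0}^{m-1} D_i = -K_\bX$.

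The first consequence is that $u$ lies entirely in the open torus orbit $\bX \setminus \bigcup_i D_i \cong (\C^\times)^n$, which is the smooth locus of $\bX$ (the orbifold isotropy of a toric orbifold is supported on the toric divisors). Hence $u$ has no interior orbifold marked points, so the virtual fundamental cycle of $\CM^{main}_{1,l}(F_r, \beta, \bx)$ is empty whenever $l \geq 1$; this accounts for all cases with at least one orbifold insertion.

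For $l = 0$, the hypersurface $\tilde{D}_0 := \{w = K_2\}$ is the pole divisor of $\Omega/(w - K_2)$, so by Lemma~\ref{lem:maslov_ind_comp} the condition $\mu_{CW}(\beta) = 2$ is equivalent to $\beta \cdot \tilde{D}_0 = 1$. Equivalently, $w \circ u - K_2 : D^2 \to \{|z| \leq \sqrt{K_2^2+q_2}\}$ is a proper holomorphic map whose number of zeros equals $1$, hence it is a biholomorphism after reparametrizing the domain. Lifting such a parametrization back to $\bX \setminus \bigcup_i D_i \cong (\C^\times)^n$, with boundary constrained to the $T^{\perp \underline{\nu}}$-orbit on the level set $w = K_2$ inside $F_r$, determines $u$ uniquely up to the natural $T^n$-action and domain automorphisms. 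In particular, $\beta$ is forced to be the unique class that evaluates to $1$ on $\tilde{D}_0$ and to $0$ on each $D_i$ for $i = 0,\ldots,m-1$, which via the Lagrangian isotopy \eqref{eqn:Lag_isotopy_X} and Lemma~\ref{boundary} one identifies with the basic class $\beta_0$ corresponding to $\bb_0$.

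Finally, for $\beta = \beta_0$ the moduli space $\CM^{main}_{1,0}(F_r, \beta_0)$ is cut out transversally: the analysis above identifies it, via the boundary evaluation map $ev_0$, with $F_r$ itself (the single $T^n$-family of holomorphic disks whose projection to the $w$-plane is the standard disk around $K_2$). Hence $ev_{0*}[\CM^{main}_{1,0}(F_r,\beta_0)]^{\mathrm{vir}} = [F_r]$ and the invariant equals $1$. The main subtlety is the class identification in the third step, i.e.\ checking that the unique class produced by the $w$-analysis coincides with $\beta_0$ under the reference isotopy to a moment-map fiber; this is a direct computation of boundary classes using Lemma~\ref{boundary} and the trivialization in Definition~\ref{defn:trivialization}.
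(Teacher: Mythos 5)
Your core strategy is the same as the paper's: use the maximum principle for $w-K_2$ (descended to the underlying smooth disk) to confine disks bounded by $F_r$, $r\in B_-$, to the region $S_-=\mu^{-1}(\{q_2<0\})$, away from the toric divisors, and then pin down the class via its intersection with $\tilde{D}_0=\{w=K_2\}$ using Lemma~\ref{lem:maslov_ind_comp}. The genuine gap is that you only analyze maps from a single (orbi-)disk, while the invariant is defined on the compactified moduli space $\CM^{main}_{1,l}(F_r,\beta,\bx)$ of \emph{stable} maps, whose elements may have several disk components and trees of orbi-sphere components. In particular, for $l\geq 1$ the step ``the image avoids $\{w=0\}$, hence there are no orbifold marked points, hence the cycle is empty'' does not follow as stated: a priori a stable map could consist of a smooth disk lying in $S_-$ together with sphere components mapping into the toric divisors and carrying the twisted-sector insertions. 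The paper rules this out by a separate argument: for any non-constant holomorphic orbi-sphere $h$, $w\circ h$ is constant, and since $w^{-1}(c)\cong(\C^\times)^{n-1}$ for $c\neq0$ supports no non-constant spheres, $h$ must lie in $w^{-1}(0)$, which is disjoint from $S_-$; hence no non-constant sphere can be attached to the disk part. Likewise, configurations with several disk components must be excluded: every non-constant disk component with boundary on $F_r$ must intersect $\tilde{D}_0$ positively (otherwise $(w-K_2)\circ u$ is zero-free with constant boundary modulus, hence constant, forcing the component into a level set $w^{-1}(c)\cong(\C^\times)^{n-1}$ where it is constant), so each contributes at least $2$ to $\mu_{CW}$, while the total is $2$. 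Without these two steps, your vanishing claims for $l\geq1$ and for $\beta\neq\beta_0$, and the identification of the $\beta_0$-moduli with $F_r$, are not established on the actual compactified moduli spaces.

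Two smaller points. First, your identification of the class as ``the unique class pairing to $1$ with $\tilde{D}_0$ and to $0$ with every $D_i$, hence $\beta_0$'' needs more care: for $r\in B_-$ the basis classes are obtained by transporting those at $r_0\in B_+$ via the trivialization of Definition~\ref{defn:trivialization}, not via the isotopy \eqref{eqn:Lag_isotopy_X} — for $q_2<0$ that isotopy passes through the divisor locus at $t=\sqrt{K_2^2+q_2}$, so it does not preserve intersection numbers (this is exactly the wall-crossing); moreover uniqueness of a class from its intersection profile requires an argument in the presence of the fractional classes $\beta_\nu$. The paper instead argues directly that the class of the disk is a non-negative (fractional) combination $\sum_i c_i\beta_i$, that vanishing of the intersections with $D_1,\ldots,D_{m-1}$ kills $c_1,\ldots,c_{m-1}$, and that $\mu_{CW}=2$ forces $\beta=\beta_0$. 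Second, for $\beta=\beta_0$ the paper does not reprove regularity and the count from scratch but reduces to disks confined in an affine toric chart and invokes the argument of \cite[Proposition 4.32]{CLL}; your transversality assertion and the identification of the moduli with $F_r$ via $ev_0$ would need that (or an equivalent) argument to be supplied.
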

\begin{proof}
By dimension reason, we may assume that $\mu_{CW}(\beta)=2$.

Let $u: (\mathcal{D}, \partial \mathcal{D})\to (\bX, F_r)$ be a non-constant holomorphic orbi-disk. Then the composition $(w-K_2)\circ u$ descends to a holomorphic function $\overline{(w-K_2)\circ u}: |\mathcal{D}|\to \C$ on the smooth disk $|\mathcal{D}|$ underlying $\mathcal{D}$. Since $r\in B_-$, $|w-K_2|$ is constant on $F_r$ with value less than $K_2$. Since $u(\partial |\mathcal{D}|)=u(\partial \mathcal{D})\subset F_r$, we have $|\overline{(w-K_2)\circ u}|<K_2$ on $\partial |\mathcal{D}|$. By the maximum principle, $|\overline{(w-K_2)\circ u}|<K_2$ on the whole $|\mathcal{D}|$. Hence the image of $u$ is contained in $S_-:=\mu^{-1}(\{(q_1,q_2)\in B \mid q_2<0\})$. Also observe that $u(\mathcal{D})$ must intersect $\tilde{D}_0:=\{w(x)=K_2\}\subset \bX$. Since the hypersurface $w(x)=K_2$ does not contain orbifold points, we have $u(\mathcal{D})\cdot \tilde{D}_0\in \Z_{> 0}$. Lemma \ref{lem:maslov_ind_comp} implies that $u$ is of CW Maslov index at least $2$.

Let $h: \mathcal{C}\to \bX$ be a non-constant holomorphic map from an orbifold sphere $\mathcal{C}$. Then $h(\mathcal{C})\cap S_-=\emptyset$. To see this, we consider $w\circ h$, which descends to a holomorphic function $\overline{w\circ h}$ on the $\proj^1$ underlying $\mathcal{C}$. Since $\overline{w\circ h}$ must be a constant function, the image $h(\mathcal{C})$ is contained in a level set $w^{-1}(c)$ for some $c\in \C$. For $c\neq 0$, we have $w^{-1}(c)\simeq (\C^\times)^{n-1}$ which does not support non-constant holomorphic spheres, so $c=0$. Now we conclude by noting that $w^{-1}(0)\cap S_-=\emptyset$.

Now let $v\in \CM^{main}_{1,l}(F_r,\beta,(\bX_{\nu_1},\ldots,\bX_{\nu_l}))$ be a stable orbi-disk of CW Maslov index $2$. As explained above, each orbi-disk component contributes at least $2$ to the CW Maslov index. Hence $v$ can have only one orbi-disk component. Also, a non-constant holomorphic orbi-sphere in $\bX$ cannot meet an orbi-disk, so $v$ does not have any orbi-sphere components. This shows that for any $\beta\in \pi_2(\bX, F_r)$ of Maslov index $2$, the moduli space $\CM^{main}_{1,l}(F_r,\beta,(\bX_{\nu_1}, \ldots, \bX_{\nu_l}))$ parametrizes only orbi-disks, and all these orbi-disks are contained in $S_-$ and do not meet the toric divisors $D_1, \ldots, D_{m-1}$. Since each orbifold point on the orbi-disk of type $\nu\in \textrm{Box}'(\Sigma)$ contributes $2\mathrm{age}(\nu)$ to the CW Maslov index $\mu_{CW}(\beta)$, and we assumed $\mathrm{age}(\nu)=1$ and $\mu_{CW}(\beta)=2$, there are no orbifold points on the disk.

Recall that relative homotopy classes $\beta_\nu$ can be written as (fractional) linear combinations of $\beta_0, \ldots, \beta_{m-1}$ with non-negative coefficients. Thus, the class $\beta$ of any orbi-disk can be written as a linear combination of $\beta_0, \ldots, \beta_{m-1}$ with non-negative coefficients. Hence, from the fact that intersection numbers of $\beta$ with the divisors $D_1, \ldots, D_{m-1}$ are zero, we may conclude that $\beta=k\beta_0$ for some $k \geq 0$, and $\mu(\beta)=2$ implies that $k=1$ and $\beta=\beta_0$. Holomorphic smooth disks representing the class $\beta_0$ are confined in an affine toric chart. The argument analogous to that in the proof of \cite[Proposition 4.32]{CLL} then shows that the invariant is $1$ in this case. This concludes the proof.
\end{proof}

\subsection{Examples} \label{sect:eg fib}

\begin{enumerate}
\item
$\bX = [\cpx^2 / \Z_m]$.  This is the $A_{m-1}$ surface singularity.  The stacky fan is a cone generated by $(0,1)$ and $(m,1)$ in $N = \Z^2$ (Figure \ref{fig:fan for C^2 mod Z_m}).  By subdividing the cone by the rays generated by $(k,1)$ for $k = 1, \ldots, m-1$, one obtains a resolution of the singularity.  The age-one twisted sectors of $\bX$ are in a one-to-one correspondence with the lattice points $(k,1) \in \mathrm{Box}'$ for $k = 1, \ldots, n-1$.  The Gross fibration of this orbifold is depicted in Figure \ref{fig:fib C^2 mod Z_m}.\\

\begin{figure}[htp]
  \centering
  %\label{fig:C^2 mod Z_m}
   \begin{subfigure}[b]{0.3\textwidth}
   	\centering
    \includegraphics[width=\textwidth]{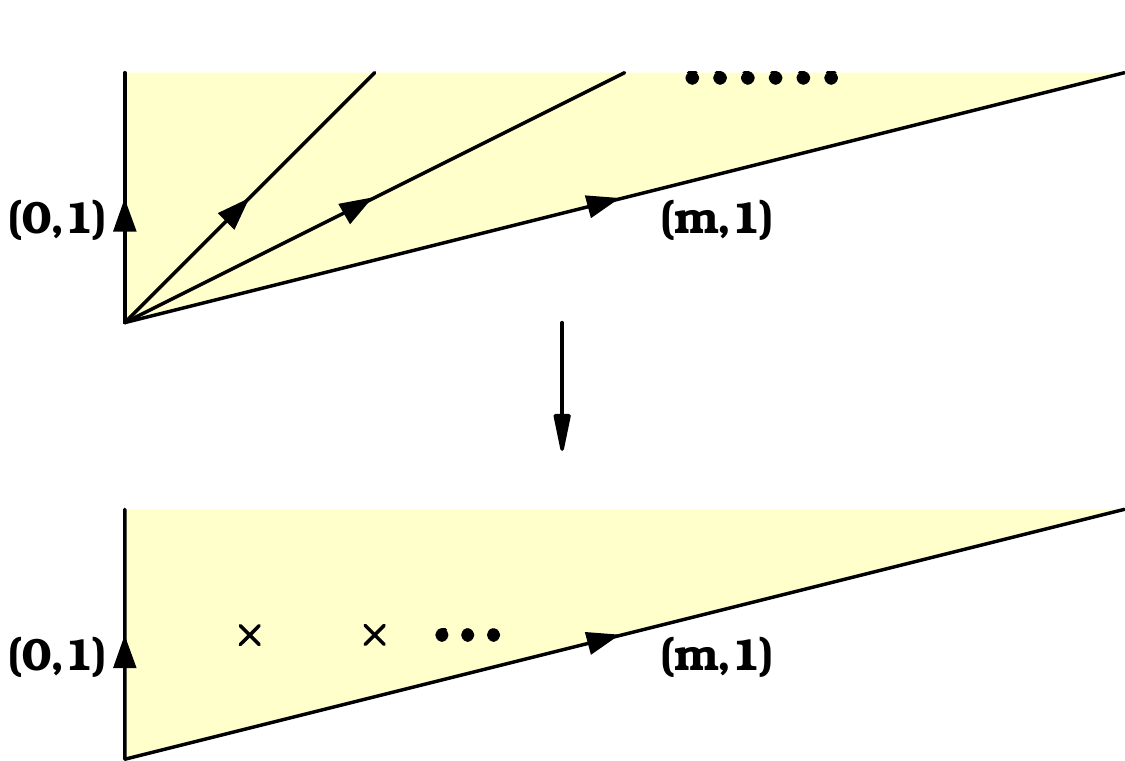}
    \caption{Fans for $[\cpx^2 / \Z_m]$ and its resolution. The crosses represent twisted sectors.}
    \label{fig:fan for C^2 mod Z_m}
   \end{subfigure}
   \hspace{10pt}
   \begin{subfigure}[b]{0.3\textwidth}
   	\centering
    \includegraphics[width=\textwidth]{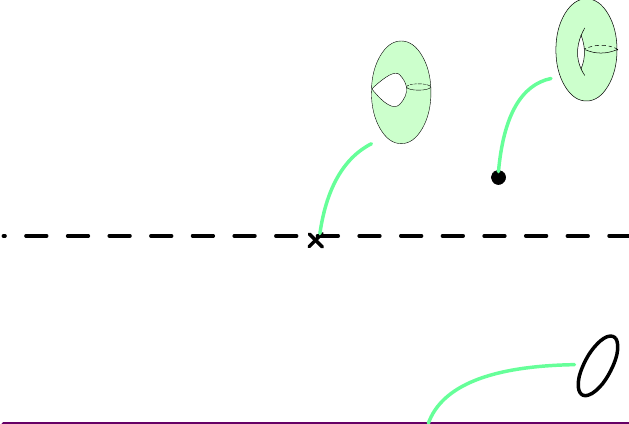}
    \caption{Gross fibration on $[\cpx^2 / \Z_m]$.  The dotted line is the wall and the cross is the discriminant locus.}
    \label{fig:fib C^2 mod Z_m}
   \end{subfigure}
   \caption{$[\cpx^2 / \Z_m]$.}
\end{figure}

\item
$\bX = [\cpx^3 / \Z_{2g+1}]$ for $g \in \N$.  Let $N$ be the lattice $\Z^3 + \Z\left\langle \frac{(1,1,2g-1)}{2g+1}\right\rangle$.
The stacky fan is a cone generated by $(1,0,0), (0,1,0), (0,0,1) \in N$, which is a cone over the convex hull of these 3 vectors in the hyperplane $\{(a,b,c) \in N_\R: a + b + c = 1\}$.  Using the triangulation of the polygon by the lattice points $(k,k,2g+1-2k) / (2g+1)$ (Figure \ref{fig:fan for C^3 mod Z_g}), one obtains a resolution of the orbifold singularity, which is the mirror manifold of a Riemann surface of genus $g$ (see \cite{KKOY09, Efimov}).\footnote{The mirror of a Riemann surface of genus $g$ is a Landau-Ginzburg model, which is a holomorphic function defined on the manifold described here \cite{KKOY09, Efimov}.}  The age-one twisted sectors of $\bX$ are in a one-to-one correspondence with the lattice points $(k,k,2g+1-2k) / (2g+1)\in \mathrm{Box}'$ for $k = 1, \ldots, g$.  See Figure \ref{fig:fib C^3 mod Z_g} for the Gross fibration this orbifold.\\

\begin{figure}[htp]
  \centering
  %\label{fig:C^3 mod Z_g}
   \begin{subfigure}[b]{0.3\textwidth}
   	\centering
    \includegraphics[width=\textwidth]{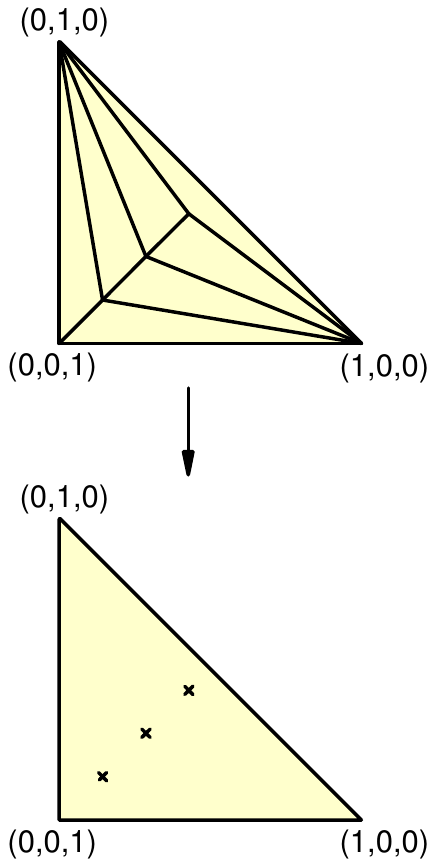}
    \caption{Cones over the polytopes give the fans for $[\cpx^3 / \Z_{2g+1}]$ and its resolution; the crosses represent twisted sectors. This figure is for $g = 3$.}
    \label{fig:fan for C^3 mod Z_g}
   \end{subfigure}
   \hspace{10pt}
   \begin{subfigure}[b]{0.3\textwidth}
   	\centering
    \includegraphics[width=\textwidth]{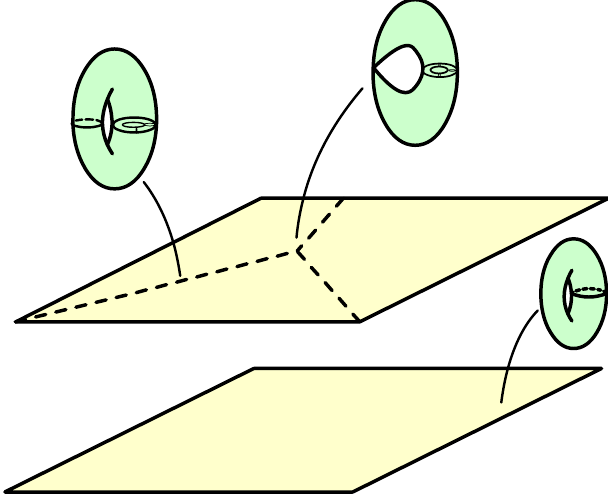}
    \caption{Gross fibration on $[\cpx^3 / \Z_{2g+1}]$ with base an upper-half-space.  The plane in the middle is the wall; the dotted line and the plane at the bottom are the discriminant loci, with singular fibers as shown.}
    \label{fig:fib C^3 mod Z_g}
   \end{subfigure}
   \caption{$[\cpx^3 / \Z_{2g+1}]$.}
\end{figure}

\item
$\bX = [\cpx^n / \Z_n]$ for $n \in \Z$.  %This gives an example in any dimension.
The stacky fan is a cone generated by $(e_1,1), \ldots, (e_{n-1},1), (-e_1 - \cdots - e_{n-1},1) \in N = \Z^{n-1} \times \Z$, where $\{e_i\}$ is the standard basis of $\Z^{n-1}$.  One obtains a resolution of the orbifold singularity by subdividing the cone by the ray generated by $(0,1) \in N$, and the resulting manifold is the total space of canonical line bundle over $\proj^{n-1}$.  There is only one age-one twisted sector, namely the lattice point $(0,1) \in \mathrm{Box}'$.
% The Gross fibration of this orbifold (for $n=3$) is described in Figure \ref{fig:fib C^3 mod Z_g}.
\end{enumerate}

\section{SYZ mirror construction}\label{sec:SYZ_mirror}

In this section we carry out the SYZ mirror construction for toric CY orbifolds; cf. the manifold case as discussed in \cite[Sections 4.6]{CLL}. The procedure may be summarized as follows. Let $\bX$ be a toric CY orbifold as in Setting \ref{setting:toricCY}, and $\mu:\bX\to B$ be the Gross fibration in Definition \ref{defn:gross_fibration}.
\begin{enumerate}
\item[Step 1.]
Consider the torus bundle $\mu: \bX_0\to B_0$. Take the dual torus bundle $\check{\mu}: \check{\bX_0}\to B_0$. The total space $\check{\bX_0}$ together with its canonical complex structure is called the {\em semi-flat mirror} of $\bX$. %The problem with the semi-flat mirror is that its complex structure is not extendable to any partial compactification of $\check{\bX}_0$ because monodromy of the integral affine structure around the discriminant loci $\Gamma \subset B_0$ is nontrivial.

\item[Step 2.]
Construct instanton corrections to the semi-flat complex coordinates by taking family Fourier transforms of generating functions of genus 0 open orbifold GW invariants. %which are virtual counts of orbi-disks with the minimal CW Maslov index (which is 2). %The wall-crossing of orbi-disk counting we discuss in the previous section modifies the gluing between charts in $\check{\bX_0}$ and resolves the nontrivial monodromy of the affine structure so that the complex structure becomes extendable.\\

\item[Step 3.]
Take the spectrum of the coordinate ring generated by the instanton-corrected complex coordinates to be the mirror.
\end{enumerate}

More precisely, we need a toric partial compactification of $\bX$ and the corresponding Lagrangian fibration to obtain sufficiently many complex coordinates of the mirror. This was explained for the manifold case in \cite[Section 4.3]{CLL}, and will not be repeated here.

Realizing the SYZ construction by using symplectic geometry and open GW invariants was pioneered by Auroux in \cite{auroux07, auroux09}.  The above procedure was proposed in \cite{CLL} and applied to all toric CY manifolds; see also Abouzaid-Auroux-Katzarkov \cite{AAK12}. We carry out this construction for toric CY orbifolds in the remainder of this section.

\subsection{The semi-flat mirror}
We construct the semi-flat mirror of $\bX$ as follows. Consider the torus bundle $\mu: \bX_0:=(\mu)^{-1}(B_0)\to B_0$. Let $\check{\bX_0}$ be the space of pairs $(F_r,\nabla)$, where $F_r:=\mu^{-1}(r), r\in B_0$ and $\nabla$ is a flat $U(1)$-connection on the trivial complex line bundle over $F_r$ up to gauge. There is a natural projection map $\check{\mu}: \check{\bX_0}\to B_0$. We write $\check{F}_r:=\check{\mu}^{-1}(r)$ for $r\in B_0$. According to \cite[Proposition 2.5]{CLL}, $\check{\mu}: \check{\bX_0}\to B_0$ is a torus bundle.

Recall that $B_0$ has an open cover $\{U_i\}$ defined by \eqref{U_i}. We focus on the open set $U=U_0$ and describe the semi-flat complex coordinates on the chart $\check{\mu}^{-1}(U)$. Fix a base point $r_0\in U$. For $r\in U$, consider the class $\lambda_i\in \pi_1(F_r)$ as in Section \ref{sec:gen_htpy}. Define cylinder classes
$[h_i(r)]\in \pi_2((\mu)^{-1}(U), F_{r_0}, F_r)$
as follows. Recall the trivialization in Definition \ref{defn:trivialization}: $(\mu)^{-1}(U) \cong U \times T^{\perp \bb_0} \times (\R/2\pi\Z)$.
Pick a path $\gamma: [0,1]\to U$ with $\gamma(0)=r_0$ and $\gamma(1)=r$. Define
$$h_j: [0,1]\times \R/\Z\to U\times T^{\perp \bb_0} \times (\R/2\pi\Z), \quad h_j(R, \Theta):=\left(\gamma(R), \frac{\Theta}{2\pi}[\underline{v}_j], 0 \right)$$
for $j=1, \ldots, n-1$, and
$$h_0: [0,1]\times \R/\Z\to U\times T^{\perp \bb_0} \times (\R/2\pi\Z), \quad h_0(R, \Theta):= (\gamma(R),0,2\pi\Theta).$$
The classes $[h_i(r)]$ are independent of the choice of $\gamma$. Now the semi-flat complex coordinates of $(\mu)^{-1}(U)$ are given by $z_0, z_1, \ldots, z_{n-1}$ where $z_i(F_r,\nabla):=\exp(\rho_i+2\pi\sqrt{-1}\check{\theta}_i)$.
%\begin{equation}\label{eqn:semi_flat_coord}
%z_i(F_r,\nabla):=\exp(\rho_i+2\pi\sqrt{-1}\check{\theta}_i),
%\end{equation}
%$\exp(2\pi \sqrt{-1}\check{\theta}_i):=\text{Hol}_\nabla(\lambda_i(r))$ and $\rho_i:=-\int_{[h_i(r)]} w$.
The semi-flat holomorphic volume form is the nowhere vanishing form $dz_1\wedge dz_2\wedge\cdots\wedge dz_{n-1}\wedge dz_0$ on $(\mu)^{-1}(U)$.
Semi-flat complex coordinates on the other charts $\check{\mu}^{-1}(U_j)$ can be similarly described.

\subsection{Instanton corrections}\label{sec:instanton_corrections}
The semi-flat complex coordinate $z_0$ receives instanton corrections by taking a family version of Fourier transformations of generating functions of genus 0 open orbifold GW invariants which count orbi-disks with CW Maslov index 2. The result is a complex-valued function
$\tilde{z}_0: (\check{\mu})^{-1}(B_0\setminus H)\to \mathbb{C}$
such that for $(F_r,\nabla)\in (\check{\mu})^{-1}(B_0\setminus H)$, the value of $\tilde{z}_0$ is given by
\begin{equation}\label{eqn:corrected_cpx_ccord}
\tilde{z}_0 = \sum_{\beta\in \pi_2(\bX,F_r)} \sum_{l\geq0}\frac{1}{l!} n^{\mathcal{\bX}}_{1,l,\beta}([\mathrm{pt}]_{F_r};\tau,\ldots,\tau) \exp\left(-\int_\beta \omega\right)\mathrm{Hol}_\nabla(\partial\beta),
\end{equation}
where $\tau \in H^2_{\mathrm{CR}}(\bX) \subset H^*_{\mathrm{CR}}(\bX)$ and $\mu_{CW}(\beta)=2$.

%We consider the class
%$$\tau = \sum_i \tau_{\nu_i} \mathbf{1}_{\nu_i} \in H^2_{\mathrm{CR}}(\bX) \subset H^2_{\mathrm{CR}}(\bX),$$
%which is a linear combination of fundamental classes of age-one twisted sectors $\nu_i$ of $\bX$.  By the discussion in Section \ref{sec:wall_crossing_X}, we know that the above genus 0 open orbifold GW invariants $n^{\bX}_{1,l,\beta}([\mathrm{pt}]_{F_r};\tau,\ldots,\tau)$ vanish except in one of the following situations:
%\begin{enumerate}
%\item
%$\beta=\beta_k+\alpha$ for some $0\leq k\leq m-1$ and $\alpha\in H_2(\bX)$ has Chern number $0$ (which implies $\alpha\in H_2(\bX)$);
%\item
%$\beta=\beta_\nu+\alpha$ for some $\nu\in \mathrm{Box}(\Sigma)$ of age $1$ and $\alpha\in H_2(\bX)$ has Chern number $0$.
%\end{enumerate}

When $r\in B_-$, Proposition \ref{prop:B_-} shows that the only non-vanishing genus 0 open GW invariant is $n_{1,0,\beta}=1$ for $\beta=\beta_0$.  Therefore \eqref{eqn:corrected_cpx_ccord} has only one term: $\tilde{z}_0 = \exp\left(-\int_{\beta_0(r_0)} \omega\right)z_0$.
%\begin{equation*}
%\begin{split}
%\tilde{z}_0 =  (\beta_0\cdot\tilde{D}_0)n_{1,0,\beta_0} \exp\left(-\int_{\beta_0(r)} \omega\right)\mathrm{Hol}_\nabla(\partial\beta_0)
%= & \exp\left(-\int_{\beta_0(r)} \omega\right)\mathrm{Hol}_\nabla(\partial\beta_0)\quad \quad (\text{because } \beta_0\cdot\tilde{D}_0=1, n_{1,0,\beta_0}=1)\\
%= & \exp\left(-\int_{\beta_0(r)} \omega\right)\mathrm{Hol}_\nabla(\lambda_0(r))\quad \quad (\text{because }\partial \beta_0=\lambda_0)\\
%= & \exp\left(-\int_{\beta_0(r)} \omega\right)\exp\left(\int_{[h_0(r)]}\omega\right) z_0\quad \quad (\text{by the definition of }z_0)\\
%=  \exp\left(-\int_{\beta_0(r_0)} \omega\right)z_0,
%\end{split}
%\end{equation*}
%where we use $[h_0(r)]=\beta_0(r)-\beta_0(r_0)$.
To simplify notations, we put $C_0:= \exp\left(-\int_{\beta_0(r_0)} \omega\right)$.

When $r\in B_+$, there are non-trivial open GW invariants and \eqref{eqn:corrected_cpx_ccord} reads
$$\tilde{z}_0 = z_0\sum_{j=0}^{m-1}C_j(1+\delta_j)\prod_{i=1}^{n-1}z_i^{(\nu_i,\bb_j)}+z_0\sum_{\nu\in \mathrm{Box}'(\Sigma)^{\mathrm{age}=1}}C_\nu (\tau_\nu+\delta_\nu)\prod_{i=1}^{n-1}z_i^{(\nu_i,\nu)},$$
where $C_j := \exp\left(-\int_{\beta_j(r_0)}\omega\right)$ for $0\leq j\leq m-1$ and $C_\nu := \exp\left(-\int_{\beta_\nu(r_0)}\omega\right)$ for $\nu\in \mathrm{Box}'(\Sigma)^{\mathrm{age}=1}$,
%\begin{equation*}
%\begin{split}
%C_j := \exp\left(-\int_{\beta_j(r_0)}\omega\right), \quad 0\leq j\leq m-1, \quad  C_\nu := \exp\left(-\int_{\beta_\nu(r_0)}\omega\right), \quad \nu\in \mathrm{Box}'(\Sigma)^{\mathrm{age}=1},
%\end{split}
%\end{equation*}
and
\begin{equation}\label{eqn:generating_functions_open_GW}
\begin{split}
& 1+\delta_j := \sum_\alpha \sum_{l\geq 0}\sum_{\nu_1,\ldots,\nu_l\in \mathrm{Box}'(\Sigma)^{\mathrm{age}=1}}
\frac{\prod_{i=1}^l\tau_{\nu_i}}{l!} n_{1, l, \beta_j(r)+\alpha}([\mathrm{pt}]_{L}; \prod_{i=1}^l\mathbf{1}_{\nu_i})
\exp\left(-\int_{\alpha}\omega\right),\\
& (0\leq j\leq m-1),\\
& \tau_\nu+\delta_\nu := \sum_\alpha \sum_{l\geq 0}\sum_{\nu_1,\ldots,\nu_l\in \mathrm{Box}'(\Sigma)^{\mathrm{age}=1}}
\frac{\prod_{i=1}^l \tau_{\nu_i}}{l!} n_{1, l, \beta_\nu(r)+\alpha}([\mathrm{pt}]_{L}; \prod_{i=1}^l\mathbf{1}_{\nu_i})
\exp\left(-\int_{\alpha}\omega\right),\\
& (\nu\in \mathrm{Box}'(\Sigma)^{\mathrm{age}=1})
\end{split}
\end{equation}
are generating functions of genus 0 open orbifold GW invariants. Here we use the relation
\begin{equation*}
-\beta_j(r)=-\beta_j(r_0)-[h_0(r)]-\sum_{i=1}^{n-1}(\nu_i,\bb_j)[h_i(r)];
\end{equation*}
also, the generating functions can be written as in the left-hand-sides of \eqref{eqn:generating_functions_open_GW} because
$n_{1, 0, \beta_j(r)}([\mathrm{pt}]_{L}) = n_{1, 1, \beta_\nu(r)}([\mathrm{pt}]_{L}; \mathbf{1}_\nu) = 1$
for any $j$ and $\nu$. Notice that $n_{1, l, \beta_\nu(r)+\alpha}([\mathrm{pt}]_{L}; \prod_{i=1}^l\mathbf{1}_{\nu_i})$ is nonzero only when $l\geq 1$, so the generating function $\tau_\nu + \delta_\nu$ has no constant term.
%\begin{remark}
%Note that because of the choices of (orbi-)disk classes, (\ref{eqn:generating_functions_open_GW}) are in fact generating functions of open orbifold GW invariants of $\bX$.
%\end{remark}

The above discussion may be summarized as follows. For $0\leq j\leq m-1$ and $\nu\in \mathrm{Box}'(\Sigma)^{\mathrm{age}=1}$ we put $z^{\bb_j}:= \prod_{i=1}^{n-1}z_i^{(\nu_i,\bb_j)}$ and $z^\nu:=\prod_{i=1}^{n-1}z_i^{(\nu_i,\nu)}$.
\begin{prop}
%\hfill
%\begin{enumerate}
%\item
We have
\begin{equation*}
\tilde{z}_0 = \left\{
\begin{array}{ll}
z_0\sum_{j=0}^{m-1}C_j(1+\delta_j)z^{\bb_j}+z_0\sum_{\nu\in \mathrm{Box}'(\Sigma)^{\mathrm{age}=1}}C_\nu (\tau_\nu+\delta_\nu)z^\nu & \text{for $r\in B_+$},\\
C_0z_0 & \text{for $r\in B_-$}.
\end{array}
\right.
\end{equation*}
%\end{enumerate}
\end{prop}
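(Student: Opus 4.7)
The plan is to expand the defining formula \eqref{eqn:corrected_cpx_ccord} for $\tilde{z}_0$ and regroup the terms according to their leading basic (orbi-)disk class, recognizing the generating functions \eqref{eqn:generating_functions_open_GW} along the way. The two cases $r \in B_\pm$ correspond to two different pictures of which relative classes can be represented by stable orbi-disks, so they are handled separately.

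For $r \in B_-$, I would simply invoke Proposition \ref{prop:B_-}. It implies that the only class $\beta \in \pi_2(\bX,F_r)$ contributing to \eqref{eqn:corrected_cpx_ccord} is $\beta = \beta_0$, with $l=0$ and $n_{1,0,\beta_0}^{\bX}([\mathrm{pt}]_{F_r})=1$. Under the trivialization of Definition \ref{defn:trivialization} and the identification of cylinder classes with the semi-flat coordinates, the factor $\exp\bigl(-\int_{\beta_0}\omega\bigr)\mathrm{Hol}_\nabla(\partial \beta_0)$ equals $C_0 z_0$ by the very definition of $z_0$ and the area comparison $\int_{\beta_0(r)}\omega = \int_{\beta_0(r_0)}\omega + \int_{[h_0(r)]}\omega$. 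This yields $\tilde{z}_0 = C_0 z_0$.

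For $r \in B_+$, I would first enumerate the classes $\beta$ with $\mu_{CW}(\beta)=2$ that carry a non-trivial orbi-disk moduli space. By Lemma \ref{lem:basic_disk_classes} and the CY condition, any such class decomposes as $\beta = \beta_j(r) + \alpha$ with $0 \leq j \leq m-1$ or $\beta = \beta_\nu(r) + \alpha$ with $\nu \in \mathrm{Box}'(\Sigma)^{\mathrm{age}=1}$, where $\alpha \in H_2^{\mathrm{eff}}(\bX)$ satisfies $c_1(\bX)\cdot\alpha = 0$. Next, for each such $\beta$, I would use Lemma \ref{boundary} to compute $\partial \beta$ in the basis $\{\lambda_i\}$ of $\pi_1(F_r)$: in the smooth case $\partial \beta_j = \lambda_0 + \sum_i (\nu_i,\bb_j)\lambda_i$, and in the orbifold case $\partial \beta_\nu = \lambda_0 + \sum_i (\nu_i,\nu)\lambda_i$, so that $\mathrm{Hol}_\nabla(\partial \beta)$ separates into a $z_0$-factor times $z^{\bb_j}$ or $z^\nu$ respectively. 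The area factor decomposes as $\exp\bigl(-\int_\alpha\omega\bigr)$ times $C_j$ (or $C_\nu$) times the semi-flat piece coming from $[h_i(r)]$, which recombines with the holonomy into the semi-flat monomial.

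Finally, I would sum over all $l \geq 0$ and over the interior insertions $\nu_1,\ldots,\nu_l$, noting that shifting $\beta_j \mapsto \beta_j + \alpha$ preserves $\partial \beta$ mod the effective classes (which have trivial boundary). This repackages the contributions with leading class $\beta_j$ precisely into the generating function $1 + \delta_j$, and those with leading class $\beta_\nu$ into $\tau_\nu + \delta_\nu$, giving the stated formula for $r \in B_+$. The main obstacle is purely bookkeeping: checking that the factorization $\exp(-\int_\beta \omega)\mathrm{Hol}_\nabla(\partial\beta) = C_{(\cdot)} \, z_0 \, z^{(\cdot)} \, \exp(-\int_\alpha \omega)$ is consistent across all $\alpha$ (so that the effective-class sum really produces the generating functions defined in \eqref{eqn:generating_functions_open_GW}), and verifying that no further Maslov-index-$2$ stable classes have been missed, which follows from the decomposition recalled at the end of Section \ref{sec:orbidisk_and_moduli} together with CY-ness.
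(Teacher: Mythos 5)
Your proposal is correct and follows essentially the same route as the paper: Proposition \ref{prop:B_-} disposes of the chamber $B_-$, while for $B_+$ the paper likewise classifies the CW Maslov index two classes as $\beta_j(r)+\alpha$ or $\beta_\nu(r)+\alpha$ with $c_1(\bX)\cdot\alpha=0$, uses Lemma \ref{boundary} together with the relation $-\beta_j(r)=-\beta_j(r_0)-[h_0(r)]-\sum_{i=1}^{n-1}(\nu_i,\bb_j)[h_i(r)]$ to factor the area/holonomy term into $C_j\,z_0\,z^{\bb_j}$ (resp. $C_\nu\,z_0\,z^{\nu}$) times $\exp(-\int_\alpha\omega)$, and regroups the sum into the generating functions \eqref{eqn:generating_functions_open_GW}. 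The only ingredient you leave implicit is the isotopy argument of Section \ref{sec:wall_crossing_X} (Case 1), which is what identifies the orbi-disk invariants of $(\bX,F_r)$ for $r\in B_+$ with the moment-map-fiber invariants entering \eqref{eqn:generating_functions_open_GW}.
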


\subsection{The mirror}\label{subsec:mirror}
Let $\mathbb{C}[[q,\tau]]$ be the ring of formal power series in the variables
$$\{q_1,\ldots,q_{r}\} \cup \{\tau_\nu \mid \nu\in \mathrm{Box}'(\Sigma)^{\mathrm{age}=1}\},$$
which are parameters in the complexified extended K\"ahler moduli space of $\bX$ (see Section \ref{sec:kahler_moduli} for precise definitions of these parameters), with coefficients in $\mathbb{C}$. Consider $R_+=R_-:=\mathbb{C}[[q,\tau]][z_0^\pm, \ldots, z_{n-1}^\pm]$. Let $R_0$ be the localization of $\mathbb{C}[[q,\tau]][z_0^\pm, \ldots, z_{n-1}^\pm]$ at
\begin{equation*}
g:= \sum_{j=0}^{m-1}C_j(1+\delta_j)z^{\bb_j}+\sum_{\nu\in \mathrm{Box}'(\Sigma)^{\mathrm{age}=1}}C_\nu (\tau_\nu+\delta_\nu)z^\nu.
\end{equation*}
Let $[Id]: R_-\to R_0$ be the localization map. Also define $R_+\to R_0$ by $z_k\mapsto [z_k]$ for $k=1,\ldots,n-1$ and $z_0\mapsto [g^{-1}z_0]$.

Using these two maps, we put
$R:=R_-\times_{R_0}R_+.$
We identify $\tilde{z}_0$ with $u:=(C_0z_0, z_0g)\in R$.  Setting $v:=(C_0^{-1}z_0^{-1}g, z_0^{-1})\in R$, we have
$$R\simeq \mathbb{C}[[q,\tau]][u^\pm, v^\pm, z_1^\pm,\ldots,z_{n-1}^\pm]/\langle uv-g\rangle.$$
Taking the relative spectrum $\mathrm{Spec}(R)$, we obtain
%over $\mathbb{C}[[q,\tau]]$ is the hypersurface $uv = g(z_1,\ldots,z_{n-1})$ in $(\mathrm{Spec}(\mathbb{C}[[q,\tau]][u^\pm, v^\pm]))^2 \times (\mathrm{Spec}(\mathbb{C}[[q,\tau]][z_1,\ldots,z_{n-1}]))^{n-1}$
%\begin{align*}
%& \{ (u, v, z_1,\ldots,z_{n-1}) \in (\mathrm{Spec}(\mathbb{C}[[q,\tau]][u^\pm, v^\pm]))^2 \times (\mathrm{Spec}(\mathbb{C}[[q,\tau]][z_1,\ldots,z_{n-1}]))^{n-1})
%\mid \\
%& \qquad\qquad\qquad\qquad\qquad\qquad\qquad\qquad\qquad\qquad\qquad\qquad\qquad\qquad
%uv = g(z_1,\ldots,z_{n-1}) \},
%\end{align*}
$$ \check{\bX} = \{(u, v, z_1, \ldots, z_{n-1}) \in \C^2 \times (\C^\times)^{n-1}: uv = g(z_1,\ldots,z_{n-1}) \}.$$
%\begin{equation*}
%\begin{split}
%\check{\bX} := & \{ (u, v, z_1, \ldots, z_{n-1}) \in (\mathrm{Spec}(\mathbb{C}[[q,\tau]][u, v]))^2 \times (\mathrm{Spec}(\mathbb{C}[[q,\tau]][z_1,\ldots,z_{n-1}]))^{n-1}) \mid \\
%& \qquad\qquad\qquad\qquad\qquad\qquad\qquad\qquad\qquad\qquad\qquad\qquad\qquad\quad
%uv = g(z_1,\ldots,z_{n-1}) \}.
%\end{split}
%\end{equation*}
This gives the SYZ mirror of {\em the complement of the hypersurface $\{w(x) = K_2\}$ in $\bX$}. The SYZ mirror of the toric CY orbifold $\bX$ itself is given by the {\em Landau-Ginzburg model} $(\check{\bX},W)$, where $W:\check{\bX} \to \C$ is the Fourier transformation of the generating function orbi-disk invariants for classes with CW Maslov index 2, which is simply the holomorphic function defined by $W := u$; see \cite[Section 4.6]{CLL} and \cite[Section 7]{AAK12} for related discussions in the manifold case.

%There is a canonical map
%\begin{equation}\label{eqn:canonical_map}
%\rho_0: \check{\mu}^{-1}(B_0\setminus H)\to \check{\bX},
%\
%u:= \left\{
%\begin{array}{ll}
%C_0z_0 & \text{on $(\check{\mu})^{-1}(B_-)$},\\
%z_0g &  \text{on $(\check{\mu})^{-1}(B_+)$},
%\end{array}\right.
%\
%v:= \left\{
%\begin{array}{ll}
%C_0^{-1}z_0^{-1}g & \text{on $(\check{\mu})^{-1}(B_-)$},\\
%z_0^{-1} &  \text{on $(\check{\mu})^{-1}(B_+)$}.
%\end{array}\right.
%\end{equation}
%given by
%\begin{equation*}
%\begin{split}
%&u:= \left\{
%\begin{array}{ll}
%C_0z_0 & \textrm{ on }(\check{\mu})^{-1}(B_-)\\
%z_0g &  \textrm{ on }(\check{\mu})^{-1}(B_+) .
%\end{array}\right.\\
%&v:= \left\{
%\begin{array}{ll}
%C_0^{-1}z_0^{-1}g & \textrm{ on }(\check{\mu})^{-1}(B_-)\\
%z_0^{-1} &  \textrm{ on }(\check{\mu})^{-1}(B_+) .
%\end{array}\right.
%\end{split}
%\end{equation*}

It is not difficult to see that
\begin{prop}\label{prop:coor_change}
There exists a coordinate change such that under the new coordinates, the defining equation $uv=g$ of $\check{\bX}$ can be written as
\begin{equation*}
uv = (1+\delta_0) + \sum_{j=1}^{n-1}(1+\delta_j)z_j + \sum_{j=n}^{m-1}(1+\delta_j)q_j z^{\bb_j}
+ \sum_{\nu\in \mathrm{Box}'(\Sigma)^{\mathrm{age}=1}} (\tau_\nu+\delta_\nu)q^{-D^{\vee}_\nu} z^\nu,
\end{equation*}
where $q_j := q^{\xi_j}$ and $\xi_j \in H_2(\bX;\Q)$ is the class defined by $\bb_j = \sum_{i=0}^{n-1} a_{ji}\bb_i$ for $j = n,\ldots,m-1$, and $q^{-D^{\vee}_\nu} := \prod_{a=1}^{r} q_a^{-\langle p_a,D^\vee_\nu\rangle}$ for $\nu \in \mathrm{Box}'(\Sigma)^{\mathrm{age}=1}$.
\end{prop}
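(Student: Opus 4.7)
The plan is to normalize the equation $uv=g$ by rescaling $u$ and then, by a rational change of torus coordinates, absorb all the $C$-factors coming from the areas of the basic disk classes. I first observe that by Definition \ref{defn:choice_basis}, $\nu_i(\bb_0)=0$ for $i=1,\ldots,n-1$, hence $z^{\bb_0}=\prod_{i=1}^{n-1}z_i^{(\nu_i,\bb_0)}=1$. The $j=0$ summand in $g$ is therefore $C_0(1+\delta_0)$, and rescaling $u\mapsto C_0^{-1}u$ turns this term into the constant $(1+\delta_0)$ while dividing every other coefficient by $C_0$.

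Next I replace the rational basis $\{[\underline{v}_i]\}_{i=1}^{n-1}$ of $N_\Q/\Q\langle\bb_0\rangle$ by $\{[\bb_i]\}_{i=1}^{n-1}$; this is a $\Q$-linear basis change but in general not $\Z$-linear, since $\{\bb_0,\ldots,\bb_{n-1}\}$ is only a rational basis of $N$. Under the associated change of characters on the torus, I introduce new monomials $\tilde{z}^\eta:=\prod_{i=1}^{n-1}\tilde{z}_i^{\langle u_i,\eta\rangle}$ with $\{u_0,\ldots,u_{n-1}\}\subset M_\Q$ the $\Q$-dual basis of $\{\bb_0,\ldots,\bb_{n-1}\}$, so that $\tilde{z}^{\bb_j}=\tilde{z}_j$ for $j=1,\ldots,n-1$. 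A further rescaling $\tilde{z}_j \mapsto (C_j/C_0)^{-1}\tilde{z}_j$ for $j=1,\ldots,n-1$ makes the $j$-th term (for $1\leq j\leq n-1$) read $(1+\delta_j)\tilde{z}_j$, matching the desired form.

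It remains to identify the prefactors of the $\tilde{z}^{\bb_j}$ (for $j\geq n$) and $\tilde{z}^\nu$ terms. For $j=n,\ldots,m-1$, write $\bb_j=\sum_{i=0}^{n-1}a_{ji}\bb_i$. The identity $\sum_i a_{ji}=(\underline{\nu},\bb_j)=1$ combined with Lemma \ref{boundary} shows that $\xi_j:=\beta_j-\sum_i a_{ji}\beta_i$ has vanishing boundary in $\pi_1(F_r)$ and hence represents a class in $H_2(\bX;\Q)$. A direct substitution transforms $(C_j/C_0)z^{\bb_j}$ into $\bigl[C_j\prod_{k=0}^{n-1}C_k^{-a_{jk}}\bigr]\tilde{z}^{\bb_j}$, whose bracketed prefactor equals $\exp\bigl(-\int_{\xi_j}\omega\bigr)=q^{\xi_j}=q_j$. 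An analogous computation for $\nu\in\mathrm{Box}'(\Sigma)^{\mathrm{age}=1}$, using $\sum_i b_{\nu i}=\mathrm{age}(\nu)=1$ for the decomposition $\nu=\sum_i b_{\nu i}\bb_i$, produces a closed class $\xi_\nu:=\beta_\nu-\sum_i b_{\nu i}\beta_i$ whose associated prefactor is $q^{\xi_\nu}$; comparing pairings with the toric divisor classes $D_i$ and using the defining relations \eqref{eqn:dual_of_D_j} then identifies this with $q^{-D^\vee_\nu}$, as claimed.

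The main technical subtlety is that the basis change on $N_\Q/\Q\langle\bb_0\rangle$ is only $\Q$-linear, so the ``coordinate change'' on $(\C^\times)^{n-1}$ involves fractional exponents whenever the cone spanned by $\bb_0,\ldots,\bb_{n-1}$ is not smooth. This is consistent with the proposition's equation itself, since the monomials $\tilde{z}^{\bb_j}$ for $j\geq n$, the monomials $\tilde{z}^\nu$, and the coefficients $q^{-D^\vee_\nu}$ all carry rational exponents; all of the manipulations above make sense in an appropriate rational extension of the coefficient ring $\C[[q,\tau]]$, in which the SYZ mirror $\check{\bX}$ naturally lives as a formal scheme.
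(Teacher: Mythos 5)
Your proof is correct and takes essentially the same route as the paper's: normalize by $C_0$, pass to new coordinates absorbing $C_j z^{\bb_j}=C_0\hat{z}_j$ (your rational monomial basis change plus rescaling is just a repackaging of the paper's use of the invertibility over $\Q$ of the matrix $\bigl((\nu_i,\bb_j)\bigr)_{i,j=0}^{n-1}$), and then identify the remaining prefactors using $\sum_i a_{ji}=1$ and $\sum_i b_{\nu i}=\mathrm{age}(\nu)=1$ together with Lemma \ref{boundary}. Your concluding identification of the $\nu$-prefactor with $q^{-D^\vee_\nu}$ is stated at the same level of detail as in the paper's own argument, so nothing is missing relative to it.
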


\begin{remark}[Convergence]
A priori the K\"ahler parameters $q_a$'s and the variables $\tau_\nu$'s keeping track of stacky insertions in the generating functions \eqref{eqn:generating_functions_open_GW} are only formal. However, in our case, the generating functions can be shown to be convergent; see Corollary \ref{cor:convergence} below.
\end{remark}

\subsection{Examples} \label{sect:eg_SYZ}
%\begin{enumerate}
%\item
\noindent{(1)} $\bX = [\cpx^2 / \Z_m]$.  The stacky fan and Gross fibration are shown in Figure \ref{fig:fan for C^2 mod Z_m} and \ref{fig:fib C^2 mod Z_m} respectively.  It has $m-1$ twisted sectors of age $1$ which are in bijection with the vectors $\nu_i = (i,1)$ for $i = 1, \ldots, m-1$.  Each twisted sector $\nu_i$ has a corresponding basic orbi-disk class $\beta_{\nu_i}$. The SYZ mirror constructed in this section is
%\begin{equation}\label{eq:ammir}
$uv = 1 + z^m + \sum_{j=1}^{m-1} (\tau_j + \delta_{\nu_j}(\tau)) z^j,$
%\end{equation}
where
\begin{equation}\label{eqn:gen_ex1}
\tau_j + \delta_{\nu_j}(\tau) = \sum_{k_1,\ldots,k_{m-1} \geq 0} \frac{\tau_1^{k_1}\ldots \tau_{m-1}^{k_{m-1}}}{(k_1 + \ldots + k_{m-1})!} n_{1,l,\beta_{\nu_j}} ([\pt]_L; (\mathbf{1}_{\nu_1})^{k_1} \times \ldots \times (\mathbf{1}_{\nu_{m-1}})^{k_{m-1}}),
\end{equation}
$l = k_1 + \ldots + k_g$ and $\tau = \sum_{i=1}^{m-1} \tau_i \mathbf{1}_{\nu_i} \in H^2_{\mathrm{CR}}(\bX)$. All K\"ahler parameters $\tau_i$ are contributed from twisted sectors in this case, and the non-triviality of orbi-disk invariants is also due to the presence of twisted sectors.

The $A_{m-1}$ singularity $X = \cpx^2 / \Z_m$ has a resolution $\tilde{X}$ whose fan and Gross fibration are shown in Figure \ref{fig:fan for C^2 mod Z_m} and \ref{fig:fib C^2 mod Z_m}.  It has $m-1$ irreducible $(-2)$ curves $l_i$'s which have Chern number $0$, and they are in bijection with the primitive generators $(i,1)$, $i=1,\ldots,m-1$. The SYZ mirror of the resolution $\tilde{X}$ is
\begin{equation}
 uv = 1 + z^m + \sum_{j=1}^{m-1} (1 + \delta_{j} (q)) z^j, \quad 1 + \delta_{j} (q) = \sum_{k_1,\ldots,k_{m-1} \geq 0} n_{1,0,\beta_j + \alpha_{k}} q^{\alpha_{k}},
 \end{equation}
%where
%$$ 1 + \delta_{j} (q) = \sum_{k_1,\ldots,k_{m-1} \geq 0} n_{1,0,\beta_j + \alpha_{k}} q^{\alpha_{k}} $$
and $\alpha_{k} = \sum_{i=1}^{m-1} k_i l_i$ in the above expression.  The K\"ahler parameters $q^{l_i}$'s are given by $\exp (-\int_{l_i} \omega)$, and the non-triviality of disk invariants is due to the presence of rational curves of Chern number zero. The SYZ construction for toric CY surfaces $\tilde{X}$ has been studied in \cite{LLW_surfaces}, where $\delta_{j}$ has been computed explicitly.

%\item
\noindent{(2)} $\bX = [\cpx^3 / \Z_{2g+1}]$ for $g \in \N$.  See Figure \ref{fig:fan for C^3 mod Z_g} and \ref{fig:fib C^3 mod Z_g} for the fan and Gross fibration.  It has $g$ twisted sectors of age one which are in one-to-one correspondence with the vectors $\nu_i = (i,i,2g+1-2i)/(2g+1) \in N$ for $i = 1, \ldots, g$. Let $z_1$ be the affine complex coordinate corresponding to the vector $(1,0,-1) \in N$, $z_2$ to $(1,1,-2)/(2g+1)$ and $u$ to $(0,0,1)$.  Then the SYZ mirror of $\bX$ is
\begin{equation}\label{eq:mirz3}
 uv = 1 + z_1 + z_1^{-1}z_2^{2g+1} + \sum_{j=1}^g (\tau_j + \delta_{\nu_j} (\tau)) z_2^j,
 \end{equation}
where
$$\tau_j + \delta_{\nu_j}(\tau) = \sum_{k_1,\ldots,k_{g} \geq 0} \frac{\tau_1^{k_1}\ldots \tau_{g}^{k_{g}}}{(k_1 + \ldots + k_{g})!}
n_{1,l,\beta_{\nu_j}}([\pt]_L; (\mathbf{1}_{\nu_1})^{k_1} \times \ldots \times (\mathbf{1}_{\nu_{g}})^{k_{g}}),$$
$l = k_1 + \ldots + k_g$ and $\tau = \sum_{i=1}^{g} \tau_i \mathbf{1}_{\nu_i} \in H^2_{\mathrm{CR}}(\bX)$.

The orbifold $X = \cpx^3 / \Z_{2g+1}$ has a toric resolution $\tilde{X}$.  Figure \ref{fig:mirror_genus_g} shows the codimension-two skeleta of its moment map polytope, which is also the discriminant locus of the Gross fibration.  Its Mori cone is generated by $C_1,\ldots,C_g$ as shown in Figure \ref{fig:mirror_genus_g}.  The SYZ mirror of the resolution $\tilde{X}$ is
$$ uv = 1 + z_1 + q^{\sum_{i=1}^g (2i-1) C_i} z_1^{-1}z_2^{2g+1}  + \sum_{j=1}^g (1 + \delta_{j} (q)) q^{\sum_{i=0}^{j-2} (j-1-i)C_{g-i}} z_2^j, $$
where
$$ 1 + \delta_{j} (q) = \sum_{k_1,\ldots,k_{g} \geq 0} n_{1,0,\beta_j + \alpha_{k}} q^{\alpha_{k}},$$
$\alpha_{k} = \sum_{i=1}^{g} k_i C_i$, and $\beta_j$ is the basic disk class corresponding to the toric divisor $D_j$.

\begin{figure}[htp]
\centering
\includegraphics{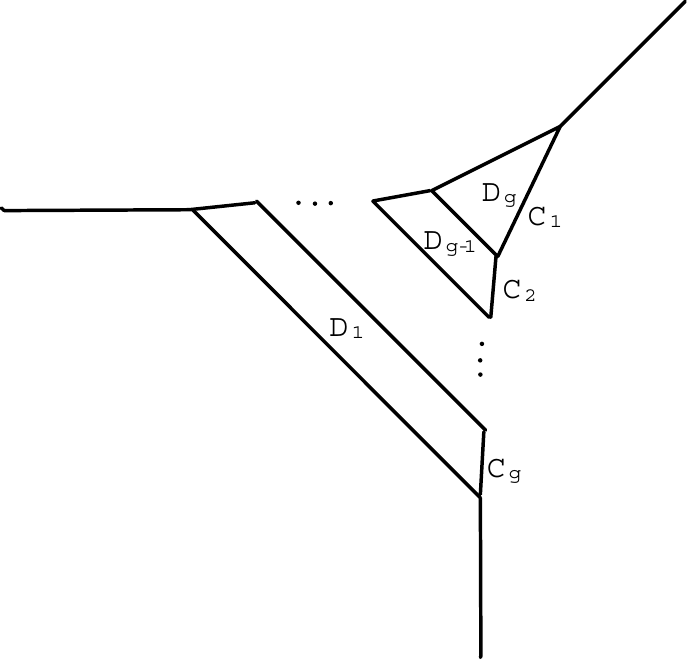}
\caption{A toric resolution of $\cpx^3 / \Z_{2g+1}$.  The diagram shows the $1$-strata of its moment map polytope.  $C_i$'s are labelling the holomorphic spheres which are mapped to the corresponding edges by the moment map.  $D_i$'s are labelling the toric divisors which are mapped to the corresponding facets.}\label{fig:mirror_genus_g}
\end{figure}

%\item
\noindent{(3)} $\bX = [\cpx^n / \Z_n]$ for $n \in \Z$.  Its fan has been described in Section \ref{sect:eg fib}.  It has a twisted sector of age one which corresponds to $\nu = (0,1) \in \Z^{n-1} \times \Z$.  Its SYZ mirror is
$$ uv = (\tau + \delta_\nu(\tau)) + z_1 + \ldots + z_{n-1} +  z_1^{-1}\ldots z_{n-1}^{-1}, \quad \tau + \delta_\nu(\tau) = \sum_{k \geq 1} \frac{\tau^{k}}{k!} n_{1,k,\beta_\nu} ([\pt]_L; (\mathbf{1}_\nu)^{k}).$$
%where
%$$\tau + \delta_\nu(\tau) = \sum_{k \geq 1} \frac{\tau^{k}}{k!} n_{1,k,\beta_\nu} ([\pt]_L; (\mathbf{1}_\nu)^{k}).$$
When $n=3$, this mirror is the same as the one given in \eqref{eq:mirz3} as we can make change of variables $v^{new} = \frac{v}{z_2}, z_1^{new}= \frac{z_1}{z_2}, z_2^{new}= \frac{1}{z_2}$ in \eqref{eq:mirz3} to obtain the above  equation.

The total space of the canonical line bundle $K_{\proj^{n-1}}$ of  the projective space $\proj^{n-1}$ gives a crepant resolution, whose SYZ mirror is
$$ uv = (1 + \delta) + z_1 + \ldots + z_{n-1} +  q z_1^{-1}\ldots z_{n-1}^{-1}, \quad 1 + \delta = \sum_{k \geq 0} q^k n_{1,k,\beta_0 + kl},$$
%where
%$$1 + \delta = \sum_{k \geq 0} q^k n_{1,k,\beta_0 + kl}$$
where $l$ is the line class in $K_{\proj^{n-1}}$ and its corresponding K\"ahler parameter is $q$.
When $n=3$, this serves as one of the first nontrivial examples for the SYZ construction for toric CY 3-folds in \cite{CLL}.
%\end{enumerate}

Note that in the above examples, the mirror of $\bX$ and its crepant resolution almost have the same expressions, except that they have different coefficients.  This motivates the Open Crepant Resolution Theorem \ref{thm:openCR_toric_CY} which gives a precise relation between their mirrors.

\section{Computation of orbi-disk invariants}\label{sec:computation_open_GW}
In this section we compute the orbi-disk invariants of a toric CY orbifold relative to a Lagrangian torus fiber of the moment map.

Let $\bX$ be a toric CY orbifold as in Setting \ref{setting:toricCY}. Let $L\subset \bX$ be a Lagrangian torus fiber of the moment map. Let $\beta\in \pi_2(\bX, L)$ be such that $\mu_{CW}(\beta)=2$. Let $\bx=(\bX_{\nu_1},\ldots,\bX_{\nu_l})$ be a collection of twisted sectors of $\bX$ such that $\nu_i\in \mathrm{Box}'$ satisfies $\mathrm{age}(\nu_i)=1$ for all $i$. Suppose that the moduli space $\mathcal{M}_{1,l}^{main}(L, \beta,\bx)$ is non-empty. We would like to compute the corresponding genus 0 open orbifold GW invariant $n_{1,l,\beta}^\bX([\mathrm{pt}]_L; \mathbf{1}_{\nu_1},\ldots,\mathbf{1}_{\nu_l})$ (Definition \ref{defn:orbi-disk_inv}).

The approach we take here is to construct a suitable toric partial compactification $\bar{\bX}$ of $\bX$ for each $\beta\in \pi_2(\bX, L)$ with $\mu_{CW}(\beta)=2$, prove that the above invariants are equal to certain genus 0 {\em closed} orbifold GW invariants of $\bar{\bX}$, and evaluate them by toric mirror theorems, generalizing the approach in \cite{CLT11}. The proof of such an {\em open/closed equality}, which is geometric in nature, is by comparing moduli spaces of stable (orbi-)disks to $\bX$ with moduli spaces of stable orbi-maps to $\bar{\bX}$, as Kuranishi spaces. The key geometric idea, namely, ``capping off'' the disk component to form a genus $0$ closed Riemann surface, was first employed in \cite{Chan10, LLW10} and later in \cite{LLW_surfaces} (for toric CY surfaces) and \cite{chan-lau, CLLT12} (for compact semi-Fano toric manifolds). It was also applied in \cite{CCLT12} to calculate orbi-disk invariants for certain compact toric orbifolds.

\subsection{Toric (partial) compactifications}
We begin with the construction of the toric partial compactification $\bar{\bX}$. According to our discussion in Section~\ref{sec:orbidisk_and_moduli}, the class $\beta\in \pi_2(\bX,L)$ must be of the form
$\beta=\beta'+\alpha,$
where $\beta'\in\pi_2(\bX,L)$ is a basic (orbi) disk class with CW Maslov index 2 and $\alpha\in H_2^\textrm{eff}(\bX)$ is an effective curve class such that $c_1(\bX) \cdot \alpha=0$.
%Moreover, the proof of \cite[Lemma 4.1]{LLW10} shows that holomorphic maps representing $\alpha$ must have their images contained entirely in the union of compact divisors of $\bX$.
We have $\partial \beta' = \bb_{i_0} \in N$ for some $i_0\in\{0,1,\ldots,m'-1\}$.

%Moreover, if the vector $\bb_{i_0} \in N$ lies in the boundary of the support $|\Sigma|$, the divisor corresponding to $\bb_{i_0}$ is non-compact and is an $(n-1)$-dimensional toric CY orbifold, whose open toric orbit $(\C^\times)^{n-1}$ cannot contain any holomorphic sphere.  In such a situation the open GW invariant $n_\beta$ is non-zero only when $\beta = \beta'$ is the basic disc class corresponding to $\bb_{i_0}$, and it is known that $n_{\beta'} = 1$ by the result of Cho-Oh \cite{cho06}.  Thus this is a known case.

\begin{construction}\label{construction:compactification}
Let
$\bb_\infty:=-\bb_{i_0} \in N.$
%where $\bb_{i_0}$ lies in the interior of the support $|\Sigma|$ (see the explanations above for this assumption).
Let $\bar{\Sigma}\subset N_\mathbb{R}$ be the smallest complete simplicial fan that contains $\Sigma$ and the ray $\mathbb{R}_{\geq 0}\bb_\infty \subset N_\R$. More concretely, the fan $\bar{\Sigma}$ consists of cones in $\Sigma$ together with additional cones, each is spanned by the ray $\mathbb{R}_{\geq 0}\bb_\infty$ together with a cone over a face of the polytope $\mathcal{P}$ (recall the definition of $\mathcal{P}$ in Setting \ref{setting:toricCY}). The data
$(\bar{\Sigma}, \{\bb_i\}_{i=0}^{m-1}\cup\{\bb_\infty\})$
gives a stacky fan. Consider the associated toric orbifold: $\bar{\bX}:=\bX_{\bar{\Sigma}}.$
 We choose the extra vectors to be the same as that for $\bX$, namely, $\{\bb_m,\ldots,\bb_{m'-1}\} \subset N$.
\end{construction}

\begin{remark}\label{rmk:beta_depend}
We emphasize that, although not reflected in the notation, the compactification $\bar{\bX}$ depends on the class $\beta\in \pi_2(\bX,L)$.
\end{remark}

Since $\Sigma$ satisfies the Assumption \ref{assumption}, so does the stacky fan $\bar{\Sigma}$. The fan $\bar{\Sigma}$ has more primitive generators than $\Sigma$. We also have $\bX\subset \bar{\bX}$ and the toric prime divisor $D_\infty:=\bar{\bX}\setminus \bX$ corresponding to $\bb_\infty$.
The inclusion $\bX\subset \bar{\bX}$ divides the toric prime divisors in $\bar{\bX}$ into two kinds: the set of generators $\{\bb_i\}_{i=0}^{m-1}$ is a disjoint union $\{\bb_i\}=I\coprod J$, where for $\bb_i\in I$ the corresponding toric prime divisor $D_i\subset \bar{\bX}$ is contained entirely in $\bX$ (these correspond to the compact toric prime divisors in $\bX$), and for $\bb_j\in J$ the corresponding toric prime divisor $D_j\subset \bar{\bX}$ has non-empty intersection with $D_\infty$ (these correspond to the non-compact toric prime divisors in $\bX$).

Let $\beta_\infty\in\pi_2(\bar{\bX},L)$ be the basic disk class corresponding to $\bb_\infty$. As $\partial (\beta'+\beta_\infty)=\bb_{i_0}+\bb_\infty=0\in N$, the class $\bar{\beta}':=\beta'+\beta_\infty$ belongs to $H_2(\bar{\bX};\Q)$ (see \cite[Section 9.1]{CP}), and we have $c_1(\bar{\bX}) \cdot \bar{\beta}' = 2$.  Moreover we have the decompositions
$$H_2(\bar{\bX};\Q) = H_2(\bX;\Q) \oplus \Q\bar{\beta}'\textrm{ and }H_2^\textrm{eff}(\bar{\bX}) = \Z_{\geq0}\bar{\beta}' \oplus H_2^\textrm{eff}(\bX).$$
Denote by $\bar{\mathbb{L}}$, $\bar{\mathbb{K}}$ and $\bar{\mathbb{K}}_\mathrm{eff}$ respectively the counterparts for $\bar{\bX}$ of the spaces $\mathbb{L}$, $\mathbb{K}$ and $\mathbb{K}_\mathrm{eff}$ for $\bX$. Then we have the corresponding decompositions
\begin{align*}
\bar{\mathbb{L}} = \mathbb{L} \oplus \Z d_\infty,\quad \bar{\mathbb{K}} = \mathbb{K} \oplus \Z d_\infty,\quad
\bar{\mathbb{K}}_\mathrm{eff} = \mathbb{K}_\textrm{eff} \oplus \Z_{\geq0} d_\infty,
\end{align*}
where $d_\infty = e_{i_0} + e_\infty \in \widetilde{N} \oplus \Z e_\infty = \bigoplus_{i=0}^{m'-1}\Z e_i \oplus \Z e_\infty$.

Since $\alpha$ can be represented by a holomorphic map to $\bar{\bX}$ whose image is contained entirely in $\bX$ and misses $D_\infty=\bar{\bX}\setminus \bX$, we have $D_\infty\cdot \alpha=0$, and hence $c_1(\bar{\bX})\cdot\alpha=0$.  Moreover, each $\nu_i\in \mathrm{Box}'(\Sigma)$ with $\mathrm{age}(\nu_i)=1$ determines uniquely an element $\bar{\nu}_i\in \mathrm{Box}'(\bar{\Sigma})$ with $\mathrm{age}(\bar{\nu}_i)=1$.

We make some important observations about $\bar{\bX}$.

\begin{prop}\label{prop:semi_Fano}
The toric orbifold $\bar{\bX}$ with the extra vectors $\{\bb_m,\ldots,\bb_{m'-1}\}$ is semi-Fano in the sense of Definition~\ref{defn:sF}.
\end{prop}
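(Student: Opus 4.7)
The plan is to verify the equivalent characterization of the semi-Fano condition stated immediately after Definition~\ref{defn:sF}: namely, $\bar{\bX}$ is semi-Fano if and only if (a) $c_1(\bar{\bX})\in H^2(\bar{\bX};\Q)$ lies in the closure of the K\"ahler cone (i.e.\ $-K_{\bar{\bX}}$ is nef), and (b) $\mathrm{age}(\bb_j)\leq 1$ for every extra vector $\bb_j$ with $m\leq j\leq m'-1$.

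Condition (b) is essentially immediate from the choice of extra vectors. Recall from the end of Section~\ref{sec_torCYorb} that $\{\bb_m,\ldots,\bb_{m'-1}\}=\mathrm{Box}'(\Sigma)^{\mathrm{age}=1}\subset \mathcal{P}\cap N$. Since $\bar{\Sigma}$ is obtained from $\Sigma$ by adjoining only the ray $\R_{\geq 0}\bb_\infty$ (with $\bb_\infty=-\bb_{i_0}$ on the opposite side of the origin from $\mathcal{P}$) together with cones spanned by $\bb_\infty$ and faces of $\mathcal{P}$, every simplicial cone of $\Sigma$ remains a cone of $\bar{\Sigma}$. Consequently the minimal cone containing each $\bb_j\in\mathcal{P}\cap N$ is unchanged, and $\mathrm{age}(\bb_j)=(\underline{\nu},\bb_j)=1$ still holds in $\bar{\Sigma}$.

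For condition (a), the first step is to rewrite $c_1(\bar{\bX})$ in a convenient form. Applying $\underline{\nu}\in M$ to the fan sequence of $\bar{\bX}$ and using $(\underline{\nu},\bb_i)=1$ for $0\leq i\leq m-1$ together with $(\underline{\nu},\bb_\infty)=-(\underline{\nu},\bb_{i_0})=-1$ yields the linear equivalence $\sum_{i=0}^{m-1}D_i\equiv D_\infty$, and therefore
\begin{equation*}
c_1(\bar{\bX})=\sum_{i=0}^{m-1}D_i+D_\infty\equiv 2D_\infty\quad\text{in }H^2(\bar{\bX};\Q).
\end{equation*}
The task then reduces to showing that $D_\infty$ is nef, and I would verify this by pairing with the generators of the effective cone supplied by the decomposition $H_2^\textrm{eff}(\bar{\bX})=\Z_{\geq 0}\bar{\beta}'\oplus H_2^\textrm{eff}(\bX)$ stated just before the proposition. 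For $\alpha\in H_2^\textrm{eff}(\bX)$, any representative lies entirely in $\bX\subset\bar{\bX}$ and hence misses the toric boundary $D_\infty=\bar{\bX}\setminus\bX$, giving $D_\infty\cdot\alpha=0$. For $\bar{\beta}'=\beta'+\beta_\infty$, the basic disk class $\beta_\infty$ associated to the new ray $\bb_\infty$ contributes $D_\infty\cdot\beta_\infty=1$ while $D_\infty\cdot\beta'=0$. Therefore $c_1(\bar{\bX})\cdot\gamma=2k\geq 0$ for every $\gamma=k\bar{\beta}'+\alpha\in H_2^\textrm{eff}(\bar{\bX})$, which proves that $-K_{\bar{\bX}}$ is nef.

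The main subtle point is the effective cone decomposition itself: one must be confident that no new irreducible curves introduced by the compactification (in particular curves supported inside $D_\infty$) produce effective classes outside $\Z_{\geq 0}\bar{\beta}'\oplus H_2^\textrm{eff}(\bX)$. Combinatorially, this is a wall-relation check in $\bar{\Sigma}$ for walls involving $\bb_\infty$; however, as this decomposition is already asserted in the construction of $\bar{\bX}$ earlier in the subsection, I would simply invoke it. With (a) and (b) verified, the equivalent characterization yields the semi-Fano property.
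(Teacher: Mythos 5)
Your reductions are all sound: the age condition for the extra vectors persists because every cone of $\Sigma$ (in particular the minimal cone containing each $\bb_j$) remains a cone of $\bar{\Sigma}$, so $\mathrm{age}(\bb_j)=(\underline{\nu},\bb_j)=1$; the principal divisor of the character $\underline{\nu}$ gives $\sum_{i=0}^{m-1}D_i \sim D_\infty$, hence $c_1(\bar{\bX})\equiv 2D_\infty$; and the pairings $D_\infty\cdot\alpha=0$ for $\alpha\in H_2^\textrm{eff}(\bX)$ and $D_\infty\cdot\bar{\beta}'=1$ are correct. But your route is genuinely different from the paper's, and the difference is exactly where the content lies. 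You deduce nefness from the asserted splitting $H_2^\textrm{eff}(\bar{\bX})=\Z_{\geq0}\bar{\beta}'\oplus H_2^\textrm{eff}(\bX)$, whereas the paper proves nefness directly for an arbitrary rational orbi-curve $C$: first $k:=C\cdot D_\infty\geq 0$, because any component contained in $D_\infty$ still meets $D_\infty$ non-negatively ($D_\infty$ is linearly equivalent to $\{w=c\}$, $c\neq 0$, which intersects such a curve transversally, and intersection numbers are topological); then $C-kC_0$, with $C_0$ a sphere in class $\bar{\beta}'$, pairs to zero with $D_\infty$, hence lies in $H_2(\bX;\Q)$ where $\sum_{i=0}^{m-1}D_i=c_1(\bX)=0$, giving $c_1(\bar{\bX})\cdot C=2k\geq 0$.

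The caveat, which you yourself flag, is that the effective-cone decomposition you invoke is stated in the paper without proof, and it is strictly stronger than the nefness you need: besides $C\cdot D_\infty\geq 0$ it also asserts that $C-(C\cdot D_\infty)\bar{\beta}'$ is effective in $\bX$. Its nontrivial content is precisely the point at issue, namely that curves supported in $D_\infty$ cannot have negative $\bar{\beta}'$-component (equivalently cannot pair negatively with $D_\infty$). So while the citation is formally admissible given where the assertion appears in the text, your argument defers the essential difficulty rather than resolving it; to be self-contained you would need to supply either the wall-relation check you mention for walls containing $\bb_\infty$, or the paper's shorter linear-equivalence argument, which establishes $C\cdot D_\infty\geq 0$ without ever needing the effective-cone splitting.
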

\begin{proof}
To show that $\bar{\bX}$ is semi-Fano, we need to prove that
$ c_1 (\bar{\bX}) = \sum_{i=0}^{m-1} D_i + D_\infty $
is nef (since $\mathrm{age}(\bb_j) = 1$ for $j=m,\ldots,m'-1$), i.e. every rational orbi-curve $C$ satisfies
$$(D_0 + \ldots + D_{m-1} + D_\infty) \cdot C \geq 0.$$
Let $C \cdot D_\infty = k \in \Z$.  We must have $k \geq 0$. Otherwise, $C$ has a component contained in $D_\infty$ whose intersection with $D_\infty$ is negative.  Now $D_\infty = \{\unu = \infty\}$ is linearly equivalent to the divisor $\tilde{D} = \{\unu = c\}$ for any $c \not = 0$.\footnote{Two divisors $D_1$ and $D_2$ are said to be linear equivalent if there exists a meromorphic function $\phi$ such that $D_1$ and $D_2$ are the zero and pole divisors of $\phi$ respectively.  In such a case given a rational curve $C$, the intersection number of $C$ with $D_1$ is the same as that with $D_2$.  In our situation we take the meromorphic function $\phi$ to be $\unu - c$ for a fixed complex number $c$.}  A rational curve in $D_\infty$ has transverse intersections with $\tilde{D}$, and hence the intersection number is non-negative.  Since intersection number is topological, this implies $D_\infty$ has non-negative intersection with any curve contained in $D_\infty$ itself.  Thus $k$ cannot be negative.

Now consider $C - k C_0$, where $C_0$ is a holomorphic sphere representing the class $\beta' + \beta_\infty$ which has Chern number $c_1(\bar{\bX})\cdot C_0 = 2$.  $C - k C_0$ has zero intersection with the divisor $D_\infty$.  Moreover it can be written as a linear combination of one-dimensional toric strata of $\bX$.  Since $\bX$ is CY, $(C - k C_0) \cdot (D_0 + \ldots + D_{m-1}) = 0$.  Then
$$(D_0 + \ldots + D_{m-1} + D_\infty) \cdot C =  (D_0 + \ldots + D_{m-1} + D_\infty) \cdot (C - k C_0) + 2k = 2k \geq 0.$$
This completes the proof.
\end{proof}

\begin{prop}\label{prop:semi_proj}
The toric variety $\bar{X}$ underlying $\bar{\bX}$ is semi-projective.
\end{prop}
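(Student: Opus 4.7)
The plan is to upgrade semi-projectivity to projectivity: since $\bar\Sigma$ is complete by Construction \ref{construction:compactification}, $\bar X$ is a complete toric variety, and for a complete toric variety semi-projectivity and projectivity coincide. I would therefore construct an ample $T$-invariant $\Q$-divisor on $\bar X$ by realizing $\bar\Sigma$ as the normal fan of a bounded rational polytope in $M_\R$.

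The starting point is to use the semi-projectivity of $\bX$ (Proposition \ref{prop:sproj}) to pick an ample $T$-invariant $\Q$-divisor on the coarse space $X$ of $\bX$. Such a choice yields an unbounded rational polyhedron
$$P = \{v \in M_\R \mid \langle \bb_i, v\rangle \geq c_i, \ i = 0, \ldots, m-1\}$$
whose normal fan is $\Sigma$ and whose recession cone is $\sigma^\vee$, where $\sigma = |\Sigma|$. I would then truncate $P$ by the new half-space coming from $\bb_\infty$: for a real parameter $t > 0$, set
$$\bar P_t := P \cap \{v \in M_\R \mid \langle \bb_\infty, v\rangle \geq -t\}.$$
Because $|\bar\Sigma| = N_\R$, the recession cone of $\bar P_t$ collapses to $\{0\}$, so $\bar P_t$ is a bounded rational polytope.

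The heart of the argument is then to verify that, for $t$ sufficiently large, the normal fan of $\bar P_t$ equals $\bar\Sigma$. For $t \gg 0$ every vertex of $P$ persists as a vertex of $\bar P_t$, which accounts for the maximal cones of $\bar\Sigma$ already contained in $\Sigma$. The new vertices of $\bar P_t$ arise from intersecting the hyperplane $\{\langle \bb_\infty, \cdot\rangle = -t\}$ with the unbounded edges of $P$; these edges are in bijection with the boundary $(n-1)$-cones of $\Sigma$ (i.e., cones over $(n-2)$-simplices in the induced triangulation of $\partial \mathcal P$), and each such intersection produces a new vertex whose normal cone is the join of $\R_{\geq 0}\bb_\infty$ with the corresponding boundary cone of $\Sigma$. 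Linear independence of $\bb_\infty$ with the generators of any such boundary cone is automatic, since those generators lie on $\{\langle \underline\nu, \cdot\rangle = 1\}$ while $\bb_\infty$ lies on $\{\langle \underline\nu, \cdot\rangle = -1\}$. By Construction \ref{construction:compactification}, these are exactly the additional maximal cones of $\bar\Sigma$, so the normal fan of $\bar P_t$ is indeed $\bar\Sigma$, and $\bar P_t$ furnishes the desired ample $\Q$-divisor on $\bar X$.

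The main technical obstacle is the combinatorial matching of new vertices of $\bar P_t$ with the additional maximal cones described in Construction \ref{construction:compactification}; the remainder is routine dualization between fans and polyhedra. I do not expect any deeper difficulty, since the semi-projectivity of $\bX$ directly supplies the input polyhedron $P$ and the completeness of $\bar\Sigma$ automatically forces boundedness of the truncation $\bar P_t$.
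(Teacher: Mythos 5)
Your argument breaks at the very first step: $\bar\Sigma$ is \emph{not} complete in general. As Proposition \ref{prop:proj} and the remark following it make precise, the fan $\bar{\Sigma}$ of Construction \ref{construction:compactification} is complete only when $\bb_{i_0}$ lies in the \emph{interior} of the support $|\Sigma|$; when $\bb_{i_0}$ lies on the boundary of $|\Sigma|$ (equivalently, on $\partial\mathcal{P}$ without being forced into the interior), the new cones joining $\R_{\geq 0}\bb_\infty$ to cones over faces of $\mathcal{P}$ cannot include those containing $\bb_{i_0}$, and $\bar\Sigma$ remains incomplete, so $\bar{\bX}$ is non-compact. This is not a marginal case: Example \ref{KF2} ($\bX = K_{\mathbb{F}_2}$, with $\bb_2=(0,1,1)$ on the boundary of $\mathcal{P}$ but not a vertex) is exactly such a compactification, and it is one of the motivating examples of the paper. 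In that situation your truncation $\bar P_t = P \cap \{\langle \bb_\infty,\cdot\rangle \geq -t\}$ stays \emph{unbounded} (its recession cone is the dual of $|\bar\Sigma| \subsetneq N_\R$, not $\{0\}$), so there is no bounded polytope, no ample divisor, and no reduction of semi-projectivity to projectivity. The statement you are asked to prove is genuinely about the semi-projective, generally non-projective, case — indeed the whole point of Propositions \ref{prop:semi_Fano} and \ref{prop:semi_proj} is to justify using the \emph{equivariant} mirror theorem precisely because $\bar{\bX}$ may fail to be projective.

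The good news is that your truncation idea is the right one and, once you drop the completeness/boundedness claims, it collapses to the paper's proof: the moment polyhedron $P$ of the semi-projective $X$ (Proposition \ref{prop:sproj}) is a full-dimensional lattice polyhedron, intersecting it with the half-space normal to $\bb_\infty$ again yields a full-dimensional lattice polyhedron (bounded or not), and \cite[Proposition 7.2.9]{CLS_toricbook} says that a toric variety arising from such a polyhedron is semi-projective. No matching of normal fans with a complete fan, and no ample class, is needed; the combinatorial bookkeeping of new vertices in your third paragraph is only valid in the interior case and should be discarded in favor of this direct application of the polyhedron criterion.
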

\begin{proof}
By \cite[Proposition 7.2.9]{CLS_toricbook}, the toric variety $X$ is semi-projective, as its moment map image is a full-dimensional lattice polyhedron $P$. The toric variety $\bar{X}$ corresponds to intersecting $P$ with a half space normal to $\bb_\infty$. The result is still a full dimensional lattice polyhedron. Hence $\bar{X}$ is semi-projective again by \cite[Proposition 7.2.9]{CLS_toricbook}.
\end{proof}

%If $\bb_{i_0} \in N$ lies in the interior of the support $|\Sigma|$, then in fact $\bar{\bX}$ is projective:
\begin{prop}\label{prop:proj}
Suppose that $\bb_{i_0} \in N$ lies in the interior of the support $|\Sigma|$. Then the fan $\bar{\Sigma}$ is complete, and hence the toric variety $\bar{X}$ underlying $\bar{\bX}$ is projective.
\end{prop}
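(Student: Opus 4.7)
The plan is to establish completeness of $\bar{\Sigma}$ directly from the interior-point hypothesis on $\bb_{i_0}$, and then to deduce projectivity of the complete toric variety $\bar{X}$ by combining with the semi-projectivity provided by Proposition~\ref{prop:semi_proj}.

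The core task is to prove $|\bar{\Sigma}| = N_\R$. By Construction~\ref{construction:compactification}, the support of $\bar{\Sigma}$ is $\sigma \cup \bigcup_{\tau} (\R_{\geq 0}\bb_\infty + \tau)$, where $\tau$ ranges over the cones of $\Sigma$ contained in $\partial\sigma$ (i.e., cones over proper faces of $\mathcal{P}$). Hence it suffices to show that every $v \in N_\R \setminus \sigma$ can be written as $v = s\bb_\infty + w$ for some $s \geq 0$ and $w \in \partial\sigma$. For this, I would exploit the interior hypothesis as follows: writing $v + s\bb_{i_0} = s(\bb_{i_0} + v/s)$ and noting that $\bb_{i_0} + v/s \to \bb_{i_0} \in \mathrm{int}(\sigma)$ as $s \to \infty$, the translate $v + s\bb_{i_0}$ lies in $\mathrm{int}(\sigma)$ for all sufficiently large $s > 0$. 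Thus $S(v) := \{s \geq 0 \mid v + s\bb_{i_0} \in \sigma\}$ is nonempty and, since $\sigma$ is closed, attains a minimum $s_0 \geq 0$. Since $v \notin \sigma$ we have $s_0 > 0$, and the minimality of $s_0$ forces $w := v + s_0\bb_{i_0}$ to lie on $\partial\sigma$, giving the desired decomposition $v = s_0\bb_\infty + w$. Because $\Sigma$ restricts to a simplicial subdivision of $\partial\sigma$, the point $w$ lies in some cone $\tau \in \Sigma$ contained in $\partial\sigma$, and hence $v$ lies in the cone $\R_{\geq 0}\bb_\infty + \tau$ of $\bar{\Sigma}$. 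This proves $|\bar{\Sigma}| = N_\R$.

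With completeness of $\bar{\Sigma}$ in hand, the toric variety $\bar{X}$ is complete, and in particular $H^0(\bar{X}, \mathcal{O}_{\bar{X}}) = \cpx$. By Proposition~\ref{prop:semi_proj}, the natural morphism $\bar{X} \to \mathrm{Spec}\, H^0(\bar{X}, \mathcal{O}_{\bar{X}}) = \mathrm{Spec}\, \cpx$ is projective, so $\bar{X}$ is projective over a point, i.e.\ projective. The only substantive step is the interior-point convexity argument in the second paragraph; the passage from completeness plus semi-projectivity to projectivity is a standard fact in toric geometry.
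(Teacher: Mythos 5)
Your proof is correct and rests on the same key idea as the paper's: since $\bb_{i_0}$ lies in the interior of $|\Sigma|=\sigma$, translating any $v$ by a sufficiently large positive multiple of $\bb_{i_0}$ lands it in $\sigma$, and converting $-\bb_{i_0}$ into $+\bb_\infty$ exhibits $v$ inside $|\bar{\Sigma}|$; projectivity then follows by combining completeness with Proposition~\ref{prop:semi_proj}. Your minimal-$s_0$ refinement, which places $v$ in an explicit cone $\R_{\geq 0}\bb_\infty+\tau$ with $\tau\subset\partial\sigma$, is if anything slightly more careful than the paper's version, which stops at writing $v$ as a non-negative combination of the generators of $\bar{\Sigma}$.
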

\begin{proof}
To prove that $\bar{\Sigma}$ is complete, it suffices to see that any vector $v \in N_\R$ can be written as a non-negative linear combination of generators of the fan $\bar{\Sigma}$.  Since $\bb_{i_0}$ lies in the interior of the support $|\Sigma|$, there exists $t \in \R_{>0}$ large enough such that $v + t\bb_{i_0} \in |\Sigma|$.  Thus $v + t\bb_{i_0} = \sum_{i=0}^{m-1} a_i \bb_i $ for $a_i \in \R_{\geq 0}$.  Then
$ v = \sum_{i=0}^{m-1} a_i \bb_i -  t\bb_{i_0}  = \sum_{i=0}^{m-1} a_i \bb_i +  t\bb_{\infty}$.
\end{proof}

\begin{remark}
If $\bb_{i_0} \in N$ lies on the boundary of $|\Sigma|$, then the fan $\bar{\Sigma}$ in Construction \ref{construction:compactification} is incomplete, so the toric orbifold $\bar{\bX}$ is not projective but only semi-projective.
\end{remark}

We will need the following lemma when we analyze the curve moduli.

\begin{lemma} \label{unique sphere}
Given a generic point in $\bar{\bX}$, there exists a unique non-constant holomorphic sphere of Chern number two passing through the point.
\end{lemma}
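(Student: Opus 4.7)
The plan is to classify all non-constant Chern-two spheres in $\bar{\bX}$ by first determining their possible homology classes, then exhibiting explicit representatives via the toric $\C^\times$-action, and finally ruling out any other irreducible representatives through a generic point.

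First I would reduce to the case of a birational parametrization. If $f:\proj^1\to\bar{\bX}$ is non-constant with $c_1(\bar{\bX})\cdot f_*[\proj^1]=2$, express $f$ as a degree-$d$ cover of its image curve $C'$. Using the decomposition $H_2(\bar{\bX};\Q)=H_2(\bX;\Q)\oplus\Q\bar{\beta}'$ together with $D_\infty\cdot\bar{\beta}'=1$ and $D_\infty\cdot H_2(\bX)=0$, and invoking the Calabi--Yau property of $\bX$ (so that $c_1(\bar{\bX})\cdot\alpha=0$ for every $\alpha\in H_2(\bX)$), one finds that $c_1(\bar{\bX})\cdot C=2(C\cdot D_\infty)$ for every $C\in H_2(\bar{\bX};\Q)$; in particular this pairing is always even. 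Since $c_1(\bar{\bX})\cdot C'=2/d$ must be an even integer, $d=1$ and $f$ is birational onto its image $C$. The same identity gives $C\cdot D_\infty=1$, and combined with the effective decomposition $H_2^{\textrm{eff}}(\bar{\bX})=\Z_{\geq 0}\bar{\beta}'\oplus H_2^{\textrm{eff}}(\bX)$, we conclude $C=\bar{\beta}'+\alpha$ with $\alpha\in H_2^{\textrm{eff}}(\bX)$.

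Second, for existence, consider the $\C^\times$-subgroup $T_{i_0}\subset\mathbb{T}$ corresponding to $\bb_{i_0}\in N$. Through every point $p$ in the open dense torus orbit of $\bar{\bX}$, the orbit closure $\overline{T_{i_0}\cdot p}$ is an irreducible rational curve limiting to a point of $D_{i_0}$ as the parameter approaches $0$ and to a point of $D_\infty$ as it approaches $\infty$; it intersects each of these two divisors transversely once, is disjoint from all other toric divisors, and hence represents the class $\bar{\beta}'$.

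The main obstacle is establishing uniqueness, namely that for a generic $p$ no other irreducible Chern-two sphere passes through $p$. For representatives of $\bar{\beta}'$ itself, the vanishing $C\cdot D_j=0$ for $j\notin\{i_0,\infty\}$ confines $C$ to the open toric subvariety $U=\bar{\bX}\setminus\bigcup_{j\notin\{i_0,\infty\}} D_j$, whose fan in $N_\R$ consists solely of the two opposite rays generated by $\bb_{i_0}$ and $\bb_\infty$; standard toric geometry identifies $U$ with a $\proj^1$-bundle over the complementary torus, in which rational curves of class $\bar{\beta}'$ are exactly the $\proj^1$-fibers, giving one through each point. For the case $\alpha\neq 0$, the key observation is that every non-zero effective class in $\bX$ is supported on the proper compact exceptional locus of the toric crepant partial resolution $\bX\to\mathrm{Spec}\,H^0(X,\mathcal{O}_X)$; any irreducible representative of $\bar{\beta}'+\alpha$ therefore specializes in a one-parameter family to a reducible stable map with an $\alpha$-component trapped in this compact exceptional locus and a $\bar{\beta}'$-component, so by semicontinuity of intersections with toric divisors, the image of the evaluation map on the corresponding moduli component lies in a proper subvariety of $\bar{\bX}$ and misses a generic $p$. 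Making this last step fully rigorous requires a careful analysis of the stable map moduli and of the geometry of Chern-zero effective classes in semi-projective toric CY orbifolds.
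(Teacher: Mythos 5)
Your existence argument (the $T_{i_0}$-orbit closure) and your uniqueness argument for the class $\bar{\beta}'$ itself (confinement to the open toric subvariety with fan $\{0,\R_{\geq0}\bb_{i_0},\R_{\geq0}\bb_\infty\}$) are fine, but the proof has a genuine gap exactly where the lemma has content: ruling out irreducible Chern-two spheres in a class $\bar{\beta}'+\alpha$ with $\alpha\neq 0$ through a generic point. The claim you lean on --- that every non-zero effective curve class of $\bX$ is supported on a \emph{compact} exceptional locus of the affinization map --- is false in general: in the paper's own example $\bX=K_{\mathbb{F}_2}$, the Chern-zero class $l$ is represented by $(0,-2)$-curves sweeping out the non-compact divisor $D_2\cong\proj^1\times\C$. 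Effective classes do lie in the union of the toric divisors (the level sets $w^{-1}(c)$, $c\neq0$, carry no rational curves), but that union is not compact, and ``specialize to a reducible stable map and invoke semicontinuity'' does not by itself show that the evaluation image of the moduli of \emph{irreducible} representatives of $\bar{\beta}'+\alpha$ is a proper subvariety --- that is precisely the statement that needs proof, as you acknowledge. A secondary issue: your parity step $c_1(\bar{\bX})\cdot C'=2\,(C'\cdot D_\infty)\in 2\Z$ assumes $C'\cdot D_\infty\in\Z$, which can fail for curves through stacky points lying on $D_\infty$ ($D_\infty$ is in general only $\Q$-Cartier, while $2D_\infty\sim -K_{\bar{\bX}}$ is Cartier), so the reduction to birational parametrizations also needs justification.

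The paper's proof avoids all of this with one local computation that handles every class at once: choose toric coordinates $(\unu,z_1,\ldots,z_{n-1})$; an irreducible Chern-two sphere through a point of the open orbit must meet $D_\infty$ (otherwise it lies in $\bX$, and the maximum principle applied to $w$ forces it into the toric divisors), so $w$ pulled back to $\proj^1$ is non-constant, and since the total intersection with $\sum_i D_i+D_\infty$ is $2$, it has exactly one zero and one pole; hence the sphere meets no other divisors, the functions $z_i$ have neither zeros nor poles and are therefore constant, and the sphere is the curve $\{z_i=c_i\}$ --- in particular $\alpha=0$ automatically. To complete your route you would need an argument of comparable strength for the $\alpha\neq0$ case in place of the compactness claim.
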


\begin{proof}
Choose local toric coordinates $(\unu, z_1, \ldots, z_{n-1})$ such that $z_1, \ldots, z_{n-1}$ are not identically zero when restricted on $D_{i_0}$. We take the point to be in the open toric orbit $(\C^\times)^n \subset \bar{\bX}$.  Suppose it has coordinates $(c_0, c_1, \ldots, c_{n-1})$, where $c_i \not= 0$ for all $i=0,\ldots,n-1$.  Then the holomorphic sphere defined by $z_i = c_i$ for all $i=1,\ldots,n-1$ passes through the point, and it only intersects $D_{i_0}$ and $D_\infty$ once but not any other divisors.  Thus it intersects with the anti-canonical divisor (which is the sum over all toric prime divisors) twice and hence has Chern number two.

To show uniqueness, suppose we have a non-constant holomorphic sphere of Chern number two passing through a point in the open toric orbit.  It must intersect $D_\infty$, since otherwise, it will be entirely contained in the toric CY $\bX$, and by the maximum principle applied to the holomorphic function $\unu$ on the sphere, the sphere must lie entirely in the toric divisors of $\bX$, and hence cannot pass through a point in the open toric orbit.  Since it has Maslov index two, it intersects $D_\infty$ at most two times (counted with multiplicity).  The meromorphic function $\unu$ on the sphere must have both zeroes and poles, and thus it must have one zero and one pole.  This means that the sphere intersects both $D_0$ and $D_\infty$ once, and that it cannot intersect other divisors since it only has Maslov index two.  Thus the functions $z_i$'s on the sphere have neither poles nor zeroes, and hence can only be constants.  We conclude that it is precisely the holomorphic sphere defined by $z_i = c_i$ for all $i=1,\ldots,n-1$.
\end{proof}

%Suppose $\bb_{i_0} \in N$ lies in the interior of the support $|\Sigma|$. It is good to keep the example $\bX = \C^3 / \Z_3$ (or its crepant resolution $K_{\BP^2}$) in mind in the following discussions.

%\begin{example}
%$\bX = \C^3 / \Z_3$.  Take the lattice $N=\Z^3$.  The stacky fan is a cone generated by $m=3$ primitive generators $\bb_0 = (1,0,1)$, $\bb_1 = (0,1,1)$ and $\bb_2 = (-1,-1,1)$.  The extended stacky fan has one more primitive generator, which is $\bb_3 = (0,0,1)$ corresponding to the age-one twisted sector at the origin of $\bX = \C^3 / \Z_3$.  $\bb_i$'s correspond to four basic (orbi) disc classes for $i=0,\ldots,3$.
%\end{example}

\begin{example} \label{KF2}
The fan of the Hirzebruch surface $\mathbb{F}_2$ has primitive generators $(-1,1)$, $(0,1)$, $(1,1)$, $(0,-1)$.  The total space of the canonical line bundle $\bX = K_{\mathbb{F}_2}$ is again a toric manifold, whose fan has primitive generators $\bb_0 = (0,0,1)$, $\bb_1 = (-1,1,1)$, $\bb_2 = (0,1,1)$, $\bb_3=(1,1,1)$ and $\bb_4=(0,-1,1)$.  The polytope $\mathcal{P}$ is the convex hull of $(-1,1), (1,1), (0,-1)$ in the plane $\R^2$.  The generator $(0,1)$ lies in the boundary of $\mathcal{P}$ but is {\em not} a vertex of $\mathcal{P}$, so the toric compactification $\bar{\bX}$ corresponding to $\bb_2$ (see Construction \ref{construction:compactification}) is noncompact.% because $\bb_2$ lies in the boundary of $|\Sigma|$.

The toric prime divisor $D_2$ in $\bX$ corresponding to $\bb_2$ is noncompact and biholomorphic to $\proj^1 \times \C$.  The inclusion $(z,c): \proj^1 \hookrightarrow  \proj^1 \times \C \cong D_2$ for any constant $c \in \C$ gives a $(0,-2)$ rational curve in $\bX = K_{\mathbb{F}_2}$, whose class is denoted by $l \in H_2(\bX; \Z)$.  It has Chern number zero and {\em does} contribute to sphere bubbling so that the open GW invariants $n^{\bX}_{\beta_2+kl}$ for $k \in \Z_{\geq 0}$ are non-trivial.  We will see in Section \ref{sec:examples_mirror} that in fact $n^{\bX}_{\beta_2+kl} = 1$ when $k=0,1$ and zero otherwise.  Hence $n^{\bX}_{\beta_2+kl} = n^{\mathbb{F}_2}_{\beta_2+kl}$ where $\beta_2$ and $l$ on the right hand side of the equality denote the basic disk class corresponding to $D_2 \subset \mathbb{F}_2$ and the class of the $(-2)$-curve in $\mathbb{F}_2$ respectively.
\end{example}

%Even though $\bX$ and $\bar{\bX}$ are not projective, the moduli spaces in Definition \ref{def:moduli_spaces} are still compact. The moduli of disks in $\bX$ is compact by Proposition \ref{prop:cpt_disk_moduli} and Corollary \ref{cor:cpt_disk_moduli}. Consequently the open invariant $n_{1,l,\beta}^\bX([\mathrm{pt}]_L; \mathbf{1}_{\nu_1},\ldots,\mathbf{1}_{\nu_l})$ is well-defined. It is not hard to see that Theorem \ref{thm:open_closed_equality} is also valid in this setting, with very similar proofs. Thus the open/closed equality (\ref{eqn:open_closed_equality}) still holds. Aslo, the disk and curve moduli spaces for $\bar{\bX}$, being identified with disk moduli for $\bX$, are still compact, and the open invariant $n_{1,l,\beta}^{\bar{\bX}}([\mathrm{pt}]_L; \mathbf{1}_{\bar{\nu}_1},\ldots,\mathbf{1}_{\bar{\nu}_l})$ and closed invariants $\langle[\mathrm{pt}], \mathbf{1}_{\bar{\nu}_1},\ldots,\mathbf{1}_{\bar{\nu}_l} \rangle_{0,1+l, \bar{\beta}}^{\bar{\bX}}$ of $\bar{\bX}$ are well-defined.

\subsection{An open/closed equality}\label{sec:open_closed_equality}
Let $\iota: \{p\}\to L$ be the inclusion of a point.
\begin{defn}\label{def:moduli_spaces}
Let $\bX$ and $\bar{\bX}$ be as in Construction \ref{construction:compactification}. Consider three moduli spaces:
\begin{enumerate}
\item
Let $\mathcal{M}^{op}_{1,l}(\bX, \beta, \bx):=\mathcal{M}^{main}_{1,l}(L, \beta, \bx)$ be the moduli space of stable maps from genus $0$ bordered orbifold Riemann surfaces with one boundary component to $(\bX, L)$ of class $\beta=\beta'+\alpha$ such that there is one boundary marked point and $l$ interior marked points of type $\bx=(\bX_{\nu_1},\ldots,\bX_{\nu_l})$. Let $ev_0: \mathcal{M}^{op}_{1,l}(\bX, \beta, \bx)\to L$ denote the evaluation map at the boundary marked point. Consider the fiber product
$\mathcal{M}^{op}_{1,l}(\bX, \beta, \bx, p):=\mathcal{M}^{op}_{1,l}(\bX, \beta, \bx) \times_{ev_0, \iota} \{p\}.$

\item
Let $\mathcal{M}^{op}_{1,l}(\bar{\bX}, \beta, \bx'):=\mathcal{M}^{main}_{1,l}(L, \beta, \bx')$ be the moduli space of stable maps from genus $0$ bordered orbifold Riemann surfaces with one boundary component to $(\bar{\bX}, L)$ of class $\beta$ such that there is one boundary marked point and $l$ interior marked points of type $\bx'=(\bar{\bX}_{\nu_1},\ldots,\bar{\bX}_{\nu_l})$. Let $ev_0: \mathcal{M}^{op}_{1,l}(\bar{\bX}, \beta, \bx')\to L$ denote the evaluation map at the boundary marked point. Consider the fiber product
$\mathcal{M}^{op}_{1,l}(\bar{\bX}, \beta, \bx', p):= \mathcal{M}^{op}_{1,l}(\bar{\bX}, \beta, \bx') \times_{ev_0, \iota} \{p\}.$

\item
Let $\mathcal{M}^{cl}_{1+l}(\bar{\bX}, \bar{\beta}, \bar{\bx})$ be the moduli space of stable maps from genus $0$ orbifold Riemann surfaces to $\bar{\bX}$ of class $\bar{\beta}:=\bar{\beta}'+\alpha$ such that the $1+l$ interior marked points of are type $\bar{\bx}=(\bar{\bX}, \bar{\bX}_{\nu_1},\ldots,\bar{\bX}_{\nu_l})$. Let $ev_0: \mathcal{M}^{cl}_{1+l}(\bar{\bX}, \bar{\beta}, \bar{\bx}) \to \bar{\bX}$ denote the evaluation map at the first marked point. Consider the fiber product
$\mathcal{M}^{cl}_{1+l}(\bar{\bX}, \bar{\beta}, \bar{\bx}, p):= \mathcal{M}^{cl}_{1+l}(\bar{\bX}, \bar{\beta}, \bar{\bx}) \times_{ev_0,\iota} \{p\}.$
\end{enumerate}
\end{defn}

\begin{prop}[Compactness]\label{prop:cpt_disk_moduli}
\hfill
\begin{enumerate}
\item[(a)]
Let $D$ be a toric prime divisor of the toric CY orbifold $\bX$, $\alpha \in H_2(D; \Z)$ and $p \in D$.  Then the moduli space of rational curves in $D$ representing $\alpha$ with one marked point passing through $p$ is compact.
\item[(b)]
Let $\alpha \in H_2(\bX; \Z)$ and $p \in \bX$.  Then the moduli space of rational curves in $\bX$ representing $\alpha$ with one marked point passing through $p$ is compact.
\item[(c)]
The moduli $\mathcal{M}^{op}_{1,l}(\bX, \beta_i + \alpha, \bx)$ for $i = 0,\ldots,m'-1$ and $\alpha \in  H_2(\bX; \Z)$ is compact.
\end{enumerate}
\end{prop}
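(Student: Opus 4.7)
The plan is to prove the three parts in order, reducing (b) and (c) to (a) via the toric Calabi-Yau geometry already developed in the paper.

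For part (a), the toric prime divisor $D$ corresponding to a stacky vector $\bb_i$ is itself a toric orbifold whose coarse moduli space is semi-projective (inherited from $\bX$ via the star construction at $\bb_i$), with moment polytope a face of $P$. Given a sequence of rational (orbi-)curves $C_n \subset D$ in the fixed class $\alpha$ all passing through $p$, the symplectic areas $\int_{C_n}\omega = \langle [\omega],\alpha\rangle$ are constant. The moment-map images $\mu_0(C_n)$ are connected subsets of the polytope containing the fixed point $\mu_0(p)$, and the monotonicity formula for holomorphic curves in a K\"ahler (orbifold) target gives a diameter bound on $\mu_0(C_n)$ in terms of the fixed area. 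Properness of $\mu_0$ onto its polytope image then confines every $C_n$ to a common compact subset of $D$, after which orbifold Gromov compactness (\cite{CR01, AGV08}) delivers the desired compactness.

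For part (b), I would use the holomorphic function $w:\bX\to\C$ of Lemma \ref{lemma:w}, whose zero divisor is exactly $\sum_{i=0}^{m-1} D_i$. Any non-constant holomorphic orbi-sphere $C\subset\bX$ has $w|_C$ pulling back to a holomorphic function on the underlying compact $\proj^1$, hence constant; since no non-constant orbi-sphere lies in the open torus orbit (by a moment-map/maximum-principle argument as in the proof of Proposition \ref{prop:B_-}), we conclude $C\subset\{w=0\}=\bigcup_i D_i$. Thus the marked moduli is empty when $p$ lies off this divisor, and otherwise decomposes as a finite union over the $D_i$'s containing $p$, each piece compact by (a).

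For part (c), stable orbi-disks in the class $\beta_i+\alpha$ have boundary on the compact Lagrangian torus $L$, so boundary marked points remain in a compact set. Because $c_1(\bX)\cdot\alpha=0$ and every basic (orbi-)disk class has minimal CW Maslov index $2$, any bubbled configuration in class $\beta_i+\alpha$ must consist of exactly one disk component of Maslov index $2$ (hence a basic disk class) together with sphere bubbles whose total homology class lies in $H_2(\bX;\Z)$. The sphere bubbles are controlled by (b), while the basic disks are completely classified in \cite{CP} and are confined to an explicit neighborhood of $L$ within an affine toric chart, giving a uniform image bound. Combining these with the orbifold Gromov compactness theorem for bordered orbifold maps used in \cite{CP} yields compactness of $\CM^{op}_{1,l}(\bX,\beta_i+\alpha,\bx)$.

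The main obstacle is the diameter-versus-area bound underpinning (a): since $D$ is not compact, one must verify that holomorphic curves of a fixed class through a fixed point remain in a compact region. The combination of the monotonicity formula with the properness of the moment map of a semi-projective toric orbifold onto its polytope image supplies exactly this confinement, and once secured, (b) and (c) follow by the standard CY/minimal-Maslov-index reductions sketched above.
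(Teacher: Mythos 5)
Your parts (b) and (c) follow the paper's route (maximum principle applied to $w$ to push spheres into the toric divisors, the Cho--Poddar classification of basic disks, and a fiber-product/bubble decomposition), but part (a) — which everything else rests on — is where your argument has a genuine gap. You confine the curves via the monotonicity formula plus properness of the moment map, but monotonicity only gives a uniform area-versus-distance bound when the target has uniformly bounded local geometry (curvature and injectivity-radius bounds) on the region the curves may visit. The divisor $D$ is non-compact and the toric K\"ahler structure on $\bX$ is arbitrary, so no bounded-geometry hypothesis is available, and the monotonicity constant may degenerate at infinity; as stated, the ``diameter bound on $\mu_0(C_n)$'' does not follow. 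The paper avoids metric analysis entirely: since $D$ is a non-compact toric orbifold, its fan support $|\Sigma_D|$ lies in half-spaces $\{\nu\ge 0\}$, each such $\nu\in M^{\perp v}$ gives a \emph{holomorphic} function on $D$, which by the maximum principle is constant (equal to $\nu(p)$) on every connected rational curve through $p$; taking the extremal $\nu_1,\ldots,\nu_k$ confines all such curves to the compact subvariety $\{\nu_i=\nu_i(p)\ \forall i\}$, and compactness follows. If you want to salvage your approach in this spirit, note you already use exactly this holomorphic-function trick in your part (b); applying it inside $D$ (or to the proper map from the semi-projective coarse space to its affinization) replaces the unjustified monotonicity step.

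A secondary inaccuracy is in your part (b): after showing all rational curves lie in $\bigcup_i D_i$, you claim the moduli ``decomposes as a finite union over the $D_i$'s containing $p$, each piece compact by (a).'' A connected stable map through $p$ need not lie in a single prime divisor — it can have components in several $D_i$'s attached along their intersections — so the correct statement is the paper's: the moduli space is a fiber product of moduli of rational curves in the various prime divisors, with the attaching points constrained to the compact sets produced by (a) (so one needs (a) with the point constraint varying in a compact set, which the same argument gives). With those two repairs your outline matches the paper's proof.
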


\begin{proof}
\hfill
\begin{enumerate}
\item[(a)]

The statement certainly holds when the divisor $D$ is compact.  Now suppose that $D$ is a non-compact divisor.  We are going to prove that all rational curves representing $\alpha$ with one marked point passing through $p$ must lie in a compact subvariety of $D$, and hence the moduli space is compact.

The toric divisor $D \subset \bX$ itself is a toric orbifold, whose fan $\Sigma_{D}$ is given by the quotient of $\Sigma$ in the $v$-direction and localization at zero, where $v$ is the primitive generator of $\Sigma$ corresponding to $D$.  Since $D$ is non-compact, $v$ lies in the boundary of the polytope $\mathcal{P}$.  Thus there exists a half space defined by $\{\nu \geq 0\} \subset \left(N/\langle v \rangle\right)_{\R}$ for some $\nu \in M^{\perp v}$ containing $|\Sigma_{D}|$.  Then the function on $D$ corresponding to $\nu$ is holomorphic, and by abuse of notation we also denote it by $\nu$.  By the maximum principle, $\nu$ is constant on each sphere component of a rational curve in $D$.  Since the rational curve is connected, $\nu$ takes the same constant on the whole rational curve.  Let $\nu(p) = c \in \C$.  Then any rational curve with one marked point passing through $p$ lies in the level set $\{\nu = c\} \subset D$.

The above is true for all $\nu \in M^{\perp v}$ such that the corresponding half space $\{\nu \geq 0\}$ contains $|\Sigma_D|$.  Let $\nu_1, \ldots, \nu_k$ be the extremal ones, meaning that each of the corresponding half spaces contains $|\Sigma_D|$ and a codimension-one face of $|\Sigma_D|$.  Then there exist $c_1,\ldots,c_k \in \C$ such that any rational curve with one marked point passing through $p$ lies in $\{\nu_i = c_i \textrm{ for all } i = 1,\ldots,k\}$, which is a compact subvariety of $D$.  Hence the moduli space of rational curves representing $\alpha$ with one marked point and passing through $p$ is compact.

\item[(b)]

We may assume that $p$ lies in a toric divisor of $\bX$, or otherwise the moduli space is empty since $\bX$ is a toric CY orbifold.  All rational curves in $\bX$ lie in toric divisors of $\bX$.  Thus the moduli space can be written as a fiber product of moduli spaces of rational curves in prime divisors of $\bX$.  By part (a) the moduli space of rational curves in a toric prime divisor passing through a fixed target point is compact.  Hence the fiber product is also compact.

\item[(c)]

The disk moduli $\mathcal{M}^{op}_{1,l}(\bX,\beta_i + \alpha, \bx)$ is equal to the fiber product $\mathcal{M}^{op}_{1,1}(\bX, \beta_i) \times_{ev} \mathcal{M}^{cl}_{\bullet + l}(\bX,\alpha, \bx)$, where $\mathcal{M}^{op}_{1,1}(\bX, \beta_i)$ is the moduli space of stable disks in $\bX$ representing the basic disks class $\beta_i$ with one interior marked point and one boundary marked point, $\mathcal{M}^{cl}_{\bullet+l}(\bX,\alpha, \bx)$ is the moduli space of rational curves in $\bX$ representing $\alpha$ with one marked point $\bullet$ and $l$ other marked points of type $\bx$, and the fiber product is over evaluation maps at the interior marked point of the disk and the marked point $\bullet$ of the rational curve.  Now, the moduli space $\mathcal{M}^{op}_{1,1}(\bX, \beta_i)$ is known to be compact by the classification result of Cho-Poddar \cite{CP}.  By part (b), $\mathcal{M}^{cl}_{\bullet + l}(\bX,\alpha, \bx) \times_{ev} \{\pt\}$ is compact.  Thus the fiber product $\mathcal{M}^{op}_{1,1}(\bX, \beta_i) \times_{ev} \mathcal{M}^{cl}_{\bullet + l}(\bX,\alpha, \bx)$ is also compact.
\end{enumerate}

\end{proof}

\begin{corollary}\label{cor:cpt_disk_moduli}
The moduli space $\mathcal{M}^{op}_{1,l}(\bX, \beta, \bx, p)$ in Definition \ref{def:moduli_spaces} is compact. Hence, the open orbifold GW invariant $n_{1,l,\beta}^\bX([\mathrm{pt}]_L; \mathbf{1}_{\nu_1},\ldots,\mathbf{1}_{\nu_l})$ in Definition \ref{defn:orbi-disk_inv} is well-defined.
\end{corollary}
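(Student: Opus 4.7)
The plan is to deduce the corollary as a short consequence of Proposition \ref{prop:cpt_disk_moduli}(c) by a standard topological fiber-product argument, and then to verify that the hypothesis of Definition \ref{defn:orbi-disk_inv} is satisfied so that the invariant is well-defined.

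First I would recall that, as in Construction \ref{construction:compactification}, the class $\beta \in \pi_2(\bX,L)$ decomposes as $\beta = \beta' + \alpha$ with $\beta'$ a basic (orbi-)disk class corresponding to some $\bb_{i_0}$ with $i_0 \in \{0,1,\ldots,m'-1\}$, and $\alpha \in H_2^\mathrm{eff}(\bX)$ an effective curve class satisfying $c_1(\bX)\cdot \alpha = 0$ (which is automatic since $\bX$ is CY). Hence $\beta$ is of the form $\beta_{i_0} + \alpha$ covered by the hypothesis of Proposition \ref{prop:cpt_disk_moduli}(c), which gives compactness of $\mathcal{M}^{op}_{1,l}(\bX, \beta, \bx)$.

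Next I would use the fact that the evaluation map $ev_0 : \mathcal{M}^{op}_{1,l}(\bX, \beta, \bx) \to L$ at the boundary marked point is continuous and that $\iota : \{p\} \hookrightarrow L$ is a closed embedding. Consequently
\[
\mathcal{M}^{op}_{1,l}(\bX, \beta, \bx, p) \;=\; \mathcal{M}^{op}_{1,l}(\bX, \beta, \bx) \times_{ev_0,\iota} \{p\} \;=\; ev_0^{-1}(p)
\]
is a closed subspace of the compact Hausdorff space $\mathcal{M}^{op}_{1,l}(\bX, \beta, \bx)$, and hence is itself compact. This settles the first claim of the corollary.

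For the second claim, note that the moduli space $\mathcal{M}_{1,l}(L,\beta,\bx)$ appearing in Definition \ref{defn:orbi-disk_inv} is by definition the same as $\mathcal{M}^{op}_{1,l}(\bX,\beta,\bx)$, which is compact by the first step. Since $\mathrm{age}(\nu_i)=1$ for each $i$ and $\mu_{CW}(\beta)=2$, the Maslov-index constraint \eqref{critmaslov} holds and $\beta$ attains the minimal positive CW Maslov index; in particular, disk bubbling does not occur, so the virtual fundamental cycle $[\mathcal{M}_{1,l}(L,\beta,\bx)]^{\mathrm{vir}}$ exists and is independent of the choice of Kuranishi perturbation. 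Therefore the pushforward $ev_{0*}\bigl([\mathcal{M}_{1,l}(L,\beta,\bx)]^{\mathrm{vir}}\bigr) \in H_n(L;\rat) \cong \rat$ defining $n^\bX_{1,l,\beta}([\mathrm{pt}]_L;\mathbf{1}_{\nu_1},\ldots,\mathbf{1}_{\nu_l})$ is unambiguously defined.

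There is no real obstacle here: all the analytic work (maximum-principle and toric geometry arguments confining rational curves to compact subvarieties) has already been done in Proposition \ref{prop:cpt_disk_moduli}. The only point requiring care is matching the decomposition $\beta = \beta' + \alpha$ of the present $\beta$ with the form $\beta_i + \alpha$ for which part (c) of that proposition is stated; this match is exactly the content of Construction \ref{construction:compactification} combined with Lemma \ref{lem:basic_disk_classes}, so the corollary follows immediately.
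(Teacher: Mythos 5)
Your proposal is correct and follows exactly the route the paper intends: the corollary is stated as an immediate consequence of Proposition \ref{prop:cpt_disk_moduli}(c), with $\mathcal{M}^{op}_{1,l}(\bX,\beta,\bx,p)=ev_0^{-1}(p)$ compact as a closed subset of the compact moduli space, and well-definedness then following from Definition \ref{defn:orbi-disk_inv}. Your matching of $\beta=\beta'+\alpha$ with the classes $\beta_i+\alpha$ covered by part (c) is exactly how the paper sets things up in Definition \ref{def:moduli_spaces}, so there is nothing to add.
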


The main result of this subsection is the following
\begin{thm}\label{thm:open_closed_equality}
\hfill
\begin{enumerate}
\item[(a)]
The moduli spaces $\mathcal{M}^{op}_{1,l}(\bX, \beta, \bx,p)$ and $\mathcal{M}^{op}_{1,l}(\bar{\bX}, \beta, \bx',p)$ are isomorphic as Kuranishi spaces.  Hence we have the following equality between genus 0 open orbifold GW invariants:
$$n_{1,l,\beta}^\bX([\mathrm{pt}]_L; \mathbf{1}_{\nu_1},\ldots,\mathbf{1}_{\nu_l}) =
n_{1,l,\beta}^{\bar{\bX}}([\mathrm{pt}]_L; \mathbf{1}_{\bar{\nu}_1},\ldots,\mathbf{1}_{\bar{\nu}_l}).$$

\item[(b)]
The moduli spaces $\mathcal{M}^{op}_{1,l}(\bar{\bX}, \beta, \bx',p)$ and $\mathcal{M}^{cl}_{1+l}(\bar{\bX}, \bar{\beta}, \bar{\bx},p)$ are isomorphic as Kuranishi spaces.  Hence we have the following equality between genus 0 open and closed orbifold GW invariants, called the {\em open/closed equality}:
\begin{equation}\label{eqn:open_closed_equality}
n_{1,l,\beta}^\bX([\mathrm{pt}]_L; \mathbf{1}_{\nu_1},\ldots,\mathbf{1}_{\nu_l}) =
\langle[\mathrm{pt}], \mathbf{1}_{\bar{\nu}_1},\ldots,\mathbf{1}_{\bar{\nu}_l} \rangle_{0,1+l, \bar{\beta}}^{\bar{\bX}}.
\end{equation}
%$$n_{1,l,\beta}^{\bar{\bX}}([\mathrm{pt}]_L; \mathbf{1}_{\bar{\nu}_1},\ldots,\mathbf{1}_{\bar{\nu}_l}) =\langle[\mathrm{pt}], \mathbf{1}_{\bar{\nu}_1},\ldots,\mathbf{1}_{\bar{\nu}_l} \rangle_{0,1+l, \bar{\beta}}^{\bar{\bX}}.$$
\end{enumerate}
\end{thm}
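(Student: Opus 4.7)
The strategy splits naturally into two parts. Part (a) is a soft identification that only needs to rule out contact with the newly added divisor $D_\infty$. Part (b) is the substantive ``capping off'' statement that exchanges a boundary marked point for an interior marked point on a closed curve.

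\emph{Part (a).} I would first show that every stable orbi-disk $u$ representing $\beta \in \pi_2(\bar{\bX}, L) = \pi_2(\bX, L)$ has image entirely inside $\bX$. Since $\partial\beta = \bb_{i_0}$ while $\bb_\infty = -\bb_{i_0}$, the algebraic intersection satisfies $\beta \cdot D_\infty = 0$, and positivity of local intersection of a holomorphic orbi-map with an irreducible toric divisor forces $u(\mathcal{D}) \cap D_\infty = \emptyset$. Hence $u(\mathcal{D}) \subset \bar{\bX} \setminus D_\infty = \bX$, producing a set-theoretic bijection between the two disk moduli. Because $\bX$ is open in $\bar{\bX}$, the restriction $T\bar{\bX}|_\bX = T\bX$ and the linearized Cauchy--Riemann operators with Lagrangian boundary condition along $L$ coincide, so the Kuranishi neighborhoods, obstruction bundles, and Kuranishi maps of the two moduli problems are literally identified. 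The twisted-sector labels $\bx$ and $\bx'$ match since $\mathrm{Box}'(\Sigma)$ embeds naturally into $\mathrm{Box}'(\bar{\Sigma})$ via the inclusion of fans.

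\emph{Part (b).} The substance is a capping/cutting bijection. For the capping direction, given $[u] \in \mathcal{M}^{op}_{1,l}(\bar{\bX},\beta,\bx',p)$, I would attach to $u$ the unique basic smooth holomorphic disk $v$ of class $\beta_\infty$ in $\bar{\bX}$ whose boundary passes through $p \in L$. By the Cho--Poddar classification \cite{CP}, such a $v$ is regular, and once the boundary marked point is pinned at $p$ it is unique up to domain reparametrization; moreover the associated disk moduli is diffeomorphic to $L$ via boundary evaluation. Doubling $u$ and $v$ along the common boundary circle (standard in Floer theory for moment-map Lagrangians, cf.~\cite{chan-lau, CLLT12}) produces a stable orbi-sphere map $\tilde u : \mathbb{P}^1 \to \bar{\bX}$ of class $\bar{\beta} = \beta + \beta_\infty$, carrying one distinguished marked point at the former boundary point (mapping to $p$) together with the $l$ orbifold marked points of type $\bar{\bx}$ inherited from $u$; this defines an element of $\mathcal{M}^{cl}_{1+l}(\bar{\bX},\bar{\beta},\bar{\bx},p)$. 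For the inverse cutting direction, any stable orbi-sphere of class $\bar{\beta}$ through $p$ intersects $D_\infty$ exactly once since $\bar{\beta} \cdot D_\infty = 1$, and positivity of intersection together with the Maslov/age bookkeeping of Section~\ref{sec:orbidisk_and_moduli} pins the intersecting component to have class $\beta_\infty$; for generic $p$, Lemma \ref{unique sphere} identifies this component explicitly. Excising it recovers the original orbi-disk.

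\emph{Kuranishi identification and main obstacle.} The principal technical challenge is to upgrade this bijection to an isomorphism of Kuranishi spaces. For a capped map $\tilde u = u \cup v$, I would show that the Dolbeault complex of $\tilde u^*T\bar{\bX}$ on the genus-zero domain decomposes via a gluing/Mayer--Vietoris sequence into the Dolbeault complex of $u^*T\bar{\bX}$ with Lagrangian boundary condition along $L$ together with that of the cap $v^*T\bar{\bX}$. Since $v$ is rigid of index zero with vanishing obstruction (by the regularity in \cite{CP}), the cap contributes trivially to both deformation and obstruction spaces, so the tangent--obstruction theory of $\tilde u$ is canonically identified with that of $u$. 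Patching Kuranishi charts along the doubling locus using the Fukaya--Oh--Ohta--Ono machinery, in the orbifold refinement of \cite{CP} and \cite{CCLT12}, furnishes the desired isomorphism. Pushing forward the virtual fundamental cycles under the evaluation at $p$ then yields the open/closed equality \eqref{eqn:open_closed_equality}.
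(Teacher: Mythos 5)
Part (a) of your proposal has a genuine gap. You deduce from $\beta\cdot D_\infty=0$ and ``positivity of local intersection'' that every stable representative misses $D_\infty$, but positivity of intersection only controls components whose image is \emph{not} contained in $D_\infty$. A stable map in class $\beta=\beta'+\alpha$ can a priori contain sphere bubbles lying inside $D_\infty$ (or chains passing through the non-compact divisors $D_j$, $\bb_j\in J$, that reach $D_\infty$); such components are not excluded by the vanishing of the total intersection number, and this is exactly the configuration one must rule out. The paper does this with a genuinely different argument: it splits a hypothetical stable disk as $\mathcal{D}\cup\mathcal{C}_0\cup\mathcal{C}_\infty$ with $\mathcal{C}_\infty$ mapped into $D_\infty\cup\bigcup_{\bb_j\in J}D_j$, uses nefness of $c_1(\bar{\bX})$ (Proposition \ref{prop:semi_Fano}) to force each piece of $B=f_*[\mathcal{C}_\infty]$ to have Chern number zero, writes $B$ as an effective combination of torus-invariant curve classes, and then applies the toric vanishing lemma of Gonzalez--Iritani (\cite[Lemma 4.5]{G-I11}) to conclude $B=0$, so $\mathcal{C}_\infty=\emptyset$. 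Some such argument (or at least the observation, used in the proof of Proposition \ref{prop:semi_Fano}, that $D_\infty$ is linearly equivalent to $\{\unu=c\}$ so even curves inside $D_\infty$ meet it non-negatively, \emph{plus} an argument killing the class of components inside $D_\infty$) is indispensable; your one-line positivity claim does not supply it.

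Part (b) is essentially the paper's argument in different clothing. Where you ``double'' the disk component with the unique basic $\beta_\infty$-disk through $p$, the paper simply swaps the disk component $u_0$ (regular by Cho--Poddar) for the unique Chern-number-two sphere $C_0$ through the generic point $p$ (Lemma \ref{unique sphere}), keeping the Chern-zero rational part $C'$ fixed; since both $u_0$ and $C_0$ are unobstructed, deformations and obstructions live entirely on $C'$ on both sides, which is how the Kuranishi structures get identified (with the FOOO-style care about choices of obstruction bundles for unstable domains, as in \cite{CCLT12, FOOO12}). Your Mayer--Vietoris/gluing sketch for the tangent--obstruction theories is a plausible alternative packaging of the same point, though you should phrase the cap as the $\beta_\infty$-disk sharing the \emph{entire boundary circle} of the disk component (not merely passing through $p$), and note that the disk component of a stable disk in class $\beta'+\alpha$ must itself be basic of class $\beta'$ before the capping/cutting bijection is available.
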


\begin{proof}
We begin with part (a). The inclusion $\bX \subset \bar{\bX}$ gives a natural map
$$\mathcal{M}^{op}_{1,l}(\bX, \beta, \bx,p)\to \mathcal{M}^{op}_{1,l}(\bar{\bX}, \beta, \bx',p),$$
which is clearly injective. To show that this map is surjective, we need to prove that a stable disk in $\mathcal{M}^{op}_{1,l}(\bar{\bX}, \beta, \bx',p)$ is indeed contained in $\bX$.  This means there are no stable disk maps $f:(\mathcal{C}, \partial\mathcal{C})\to (\bar{\bX}, L)$ of class $\beta=\beta'+\alpha$ such that
$\mathcal{C}=\mathcal{D}\cup \mathcal{C}_0\cup \mathcal{C}_\infty$
is a union where $\mathcal{D}$ is the disk component; $\mathcal{C}_0$ is a closed (orbifold) Riemann surface whose components are contained in $\bigcup_{\bb_i\in I} D_i$; and $\mathcal{C}_\infty$ is a non-empty closed (orbifold) Riemann surface whose components are contained in $D_\infty\cup \bigcup_{\bb_j\in J}D_j$ and have non-negative intersections with divisors $D_i$, $\bb_i\in I$ (via $f$).

Suppose there is such a stable disk map. Let $A:=f_*[\mathcal{C}_0]$ and $B:=f_*[\mathcal{C}_\infty]$. Then $\alpha=A+B$. Since $c_1(\bar{\bX})\cdot\alpha=0$ and $-K_{\bar{\bX}}$ is nef, we have $c_1(\bar{\bX})\cdot A = 0 = c_1(\bar{\bX}) \cdot B$.
Writing $B=\sum_k b_k B_k$ as an effective linear combination of the classes $B_k$ of irreducible $1$-dimensional torus-invariant orbits in $\bar{\bX}$, we have $c_1(\bar{\bX})\cdot (b_kB_k)=0$ for all $k$ (again using the fact that $-K_{\bar{\bX}}$ is nef). Each $B_k$ corresponds to an $(n-1)$-dimensional cone $\sigma_k\in \bar{\Sigma}$, and by construction, either $\sigma_k$ contains $\bb_\infty$, or $\sigma_k$ and $\bb_{\infty}$ together span an $n$-dimensional cone in $\bar{\Sigma}$.

Since $f(\mathcal{C}_\infty)\subset D_\infty\cup \bigcup_{\bb_j\in J}D_j$, we see that if $\bb_i\in I$ then $\bb_i \notin \sigma_k$. Also, since $D\cdot (b_kB_k)\geq 0$ for every toric prime divisor of $\bar{\bX}$ not corresponding to a ray in $\sigma_k$, we have by\footnote{Their argument extends to the simplicial cases needed here.} \cite[Lemma 4.5]{G-I11} that $D\cdot (b_kB_k)=0$ for every toric prime divisor $D$ corresponding to an element in $(\{\bb_i\}\cup \{\bb_\infty\})\setminus F(\sigma_k)$; here $F(\sigma_k)$ is the minimal face in the fan polytope of $\bar{\Sigma}$ that contains rays in $\sigma_k$. As the divisors $D$ corresponding to $(\{\bb_i\}\cup \{\bb_\infty\})\setminus F(\sigma_k)$ span $H^2(\bar{\bX})$, we must have $b_kB_k=0$. We conclude that $B=0$.

Therefore we have a bijection between moduli spaces
$\mathcal{M}^{op}_{1,l}(\bX, \beta, \bx,p) \cong \mathcal{M}^{op}_{1,l}(\bar{\bX}, \beta, \bx',p).$
Since every stable disk in $\mathcal{M}^{op}_{1,l}(\bar{\bX}, \beta, \bx',p)$ is supported in (a compact region of) $\bX$, it is clear that it has the same deformations and obstructions as the corresponding stable disk in $\mathcal{M}^{op}_{1,l}(\bX, \beta, \bx,p)$.  By the same arguments as in Part(C) of the proof of \cite[Propostion 5.6]{CLLT12} (which can be adapted to the orbifold setting here in a straightforward way), it follows that the above bijection gives an isomorphism of Kuranishi structures. This proves (a).

The proof of part (b) is basically the same as that of \cite[Theorem 35]{CCLT12}.  First of all, for a stable disk map in $\mathcal{M}^{op}_{1,l}(\bar{\bX}, \beta, \bx',p)$, it consists of a unique disk component $u_0$ and a rational curve component $C'$.  We denote such a stable disk by $u_0 + C'$.  The disk component represents a basic (orbi-)disk class and hence is regular by \cite[Propositions 8.3 and 8.6]{CP}.  Thus the obstruction merely comes from the rational curve component.

On the other hand, by Lemma \ref{unique sphere}, there is a unique holomorphic sphere $C_0$ with Chern number two in $\bar{\bX}$ passing through a generic point $p \in \bar{\bX}$.  So for a stable curve in $\mathcal{M}^{cl}_{1+l}(\bar{\bX}, \bar{\beta}, \bar{\bx},p)$, since it passes through $p$ and it has Chern number two, it has $C_0$ as one of its components, and the rest is a rational curve $C'$ with Chern number zero contained in the toric divisors.  We denote such a rational curve by $C_0 + C'$.  Since $C_0$ is a holomorphic sphere whose normal bundle is trivial, it is unobstructed.  Thus the obstruction of $C_0 + C'$ merely comes from $C'$.  A bijective map between $\mathcal{M}^{op}_{1,l}(\bar{\bX}, \beta, \bx',p)$ and $\mathcal{M}^{cl}_{1+l}(\bar{\bX}, \bar{\beta}, \bar{\bx},p)$ is given by sending $u_0 + C'$ to $C_0 + C'$ and vice versa.  They have the same deformations and obstructions (which are contributed from the same rational curve component $C'$), and hence, as Kuranishi structures, we have
$$\mathcal{M}^{op}_{1,l}(\bar{\bX}, \beta, \bx',p) \cong \mathcal{M}^{cl}_{1+l}(\bar{\bX}, \bar{\beta}, \bar{\bx},p).$$

The identification of the two Kuranishi structures can be done as explained in Step 3 of the proof of \cite[Theorem 35]{CCLT12}, except that the choices of obstruction bundles have to be suitably modified in order to obtain smoothly compatible Kuranishi charts which can be glued together to obtain a global structure (see \cite{McDuff-Wehrheim12, FOOO12}).

Recall that in the general scheme developed by Fukaya, Oh, Ohta and Ono in constructing Kuranishi structures of a moduli space, one first constructs a Kuranishi neighborhood for each point of the moduli space. To obtain a global Kuranishi structure which is smoothly compatible, one then chooses a sufficiently dense finite set of points in the moduli space, and redefines the Kuranishi neighborhood by considering a new obstruction bundle obtained as the direct sum of parallel transports of the obstruction bundles over the finite set of points. When the domain of the stable map is not stable, however, one has to further consider a stabilization of the domain and extra care is needed in choosing the obstruction bundles. See \cite[Section 3.2]{FOOO12} for a brief description and \cite[Sections 15-18]{FOOO12} for the detailed construction.

The construction of Kuranishi neighborhoods given in the proof \cite[Theorem 35]{CCLT12} corresponds to the case where the domain of a stable map is also stable, in which the above description of the obstruction bundles already suffices. But for the moduli spaces we consider here, the domain of a stable map may not be stable, so we need the general construction as described in \cite[Part 4]{FOOO12}. Nevertheless, we emphasize that all these (or any such) constructions can be carried out in the same way for the open and closed moduli spaces because the obstruction bundles on the disk component $u_0$ and the sphere component $C_0$ both vanish, and therefore the Kuranishi structures are naturally identified with each other.
\end{proof}

\begin{remark}\label{rmk:curve_moduli_cpt}
The proof of Theorem \ref{thm:open_closed_equality} identifies the moduli space $\mathcal{M}^{cl}_{1+l}(\bar{\bX}, \bar{\beta}, \bar{\bx},p)$ with the moduli space $\mathcal{M}^{op}_{1,l}(\bX, \beta, \bx,p)$, which is compact by Corollary \ref{cor:cpt_disk_moduli}. So $\mathcal{M}^{cl}_{1+l}(\bar{\bX}, \bar{\beta}, \bar{\bx},p)$ is also compact, and hence the closed orbifold GW invariant $\langle[\mathrm{pt}], \mathbf{1}_{\bar{\nu}_1},\ldots,\mathbf{1}_{\bar{\nu}_l} \rangle_{0,1+l, \bar{\beta}}^{\bar{\bX}}$ is well-defined even when $\bar{\bX}$ is noncompact.
\end{remark}

%Combining parts (a) and (b) of Theorem \ref{thm:open_closed_equality}, we have the following {\em open/closed equality}:
%\begin{equation}\label{eqn:open_closed_equality}
%n_{1,l,\beta}^\bX([\mathrm{pt}]_L; \mathbf{1}_{\nu_1},\ldots,\mathbf{1}_{\nu_l}) =\langle[\mathrm{pt}], \mathbf{1}_{\bar{\nu}_1},\ldots,\mathbf{1}_{\bar{\nu}_l} \rangle_{0,1+l, \bar{\beta}}^{\bar{\bX}}.
%\end{equation}

\subsection{Calculation via mirror theorem}\label{sec:computation_w_J}
By the open/closed equality \eqref{eqn:open_closed_equality}, the open orbifold GW invariants of $\bX$ we need may be computed by evaluating the genus 0 closed orbifold GW invariants $\langle[\mathrm{pt}], \mathbf{1}_{\bar{\nu}_1},\ldots,\mathbf{1}_{\bar{\nu}_l} \rangle_{0,1+l, \bar{\beta}}^{\bar{\bX}}$ of $\bar{\bX}$.
These closed orbifold GW invariants are certain coefficients of the $J$-function of $\bar{\bX}$. We evaluate them by extending the approach developed in \cite{CLT11} to the orbifold setting.

The idea is to use closed mirror theorems for toric orbifolds to explicitly compute these coefficients via the combinatorially defined $I$-function of $\bar{\bX}$. However, since $\bar{\bX}$ may not be compact, we cannot directly apply the closed mirror theorem (Theorem \ref{closed_mirror_thm}) to $\bar{\bX}$ as in \cite{CLT11}. We get around this by first applying the {\em equivariant} mirror theorem (Theorem \ref{equiv_closed_mirror_thm}) to evaluate the genus 0 {\em equivariant} closed orbifold GW invariants of $\bar{\bX}$:
$\langle[\mathrm{pt}]_\mathbb{T}, \mathbf{1}_{\bar{\nu}_1},\ldots,\mathbf{1}_{\bar{\nu}_l} \rangle_{0,1+l, \bar{\beta}}^{\bar{\bX}, \mathbb{T}},$
where $[\text{pt}]_\mathbb{T}\in H^*_\mathbb{T}(\bar{\bX})$ is the equivariant lift of $[\text{pt}]\in H^*(\bar{\bX})$ represented by a $\mathbb{T}$-fixed point, and then evaluating $\langle[\mathrm{pt}], \mathbf{1}_{\bar{\nu}_1},\ldots,\mathbf{1}_{\bar{\nu}_l} \rangle_{0,1+l, \bar{\beta}}^{\bar{\bX}}$ by taking non-equivariant limits.

%We first consider the case when $\bar{\bX}$ is obtained by Construction \ref{construction:compactification} with $\bb_{i_0} \in N$ in the interior  of the support $|\Sigma|$.

\subsubsection{Identifying the invariants}
We now begin the computation of the relevant equivariant orbifold GW invariants.
The $\mathbb{T}$-equivariant $J$-function of $\bar{\bX}$ (cf. Definition \ref{defn:equiv_J_function}) expands as a series in $1/z$ as follows:
\begin{align*}
J_{\bar{\bX}, \mathbb{T}}(q,z)
= & \conste^{\tau_{0,2}/z} (1+\sum_\alpha\sum_{\substack{(d,l)\neq(0,0)\\ d\in H_2^\mathrm{eff}(\bar{\bX})}}
\frac{q^d}{l!}\frac{1}{z}\sum_{k\geq 0}\left\langle 1,\tau_\mathrm{tw},\ldots,\tau_\mathrm{tw},\phi_\alpha\psi^k\right\rangle^{\bar{\mathcal{X}}, \mathbb{T}}_{0,l+2,d} \frac{\phi^\alpha}{z^k} )\\
= & \left(1+\frac{\tau_{0,2}}{z}+O\left(\frac{1}{z^2}\right)\right)
(1+\sum_\alpha\sum_{\substack{(d,l)\neq(0,0)\\ d\in H_2^\mathrm{eff}(\bar{\bX})}}
\frac{q^d}{l!}\frac{1}{z}\sum_{k\geq 0}\left\langle \tau_\mathrm{tw},\ldots,\tau_\mathrm{tw},\phi_\alpha\psi^{k-1}\right\rangle^{\bar{\mathcal{X}}, \mathbb{T}}_{0,l+1,d} \frac{\phi^\alpha}{z^k}),
\end{align*}
where we use the string equation in the second equality. Note that $\tau_{0,2}\in H^2_\mathbb{T}(\bar{\bX})$, and $\phi_\alpha=[\mathrm{pt}]_\mathbb{T}$ if and only if $\phi^\alpha={\bf 1}\in H^0(\bar{\bX})$. If we consider
\begin{equation}\label{eqn:choice_of_tau_tw}
\tau_\mathrm{tw}=\sum_{\nu\in \mathrm{Box}'(\Sigma)^{\mathrm{age}=1}}\tau_\nu \mathbf{1}_{\bar{\nu}},
\end{equation}
then the closed equivariant orbifold GW invariants
$\langle[\mathrm{pt}]_\mathbb{T}, \mathbf{1}_{\bar{\nu}_1},\ldots,\mathbf{1}_{\bar{\nu}_l} \rangle_{0,1+l, \bar{\beta}}^{\bar{\bX}, \mathbb{T}}$
occur as the coefficients of $q^{\bar{\beta}}\tau_{\nu_1}\cdots\tau_{\nu_l}$ in the $1/z^2$-term of $J_{\bar{\bX}, \mathbb{T}}(q,z)$ that takes values in $H^0(\bar{\bX})$.

Since $\bar{\bX}$ is semi-Fano (by Proposition~\ref{prop:semi_Fano}) and semi-projective (by Proposition~\ref{prop:semi_proj}), we can apply the equivariant toric mirror theorem (Theorem \ref{equiv_closed_mirror_thm}) which says that
$$e^{q_0(y)/z}J_{\bar{\bX}, \mathbb{T}}(q,z)=I_{\bar{\bX}, \mathbb{T}}(y(q,\tau),z)$$
via the inverse $y = y(q,\tau)$ of the toric mirror map. Recall that the equivariant $I$-function here is the one defined using the extended stacky fan
$$(\bar{\Sigma}, \{\bb_i \mid 0\leq i\leq m-1\} \cup \{\bb_\infty\} \cup \{\bb_j \mid m\leq j\leq m'-1\}),$$
where
$$\{\bb_j \mid m\leq j \leq m'-1\}=\{\nu\in\mathrm{Box}'(\Sigma) \mid \mathrm{age}(\nu)=1\}.$$
Therefore our next task is to explicitly identify the part of the $1/z^2$-term of the equivariant $I$-function of $\bar{\bX}$ that takes values in $H^0(\bar{\bX})$. According to the definition of the equivariant $I$-function in Definition \ref{defn:equiv_I_function}, the part taking values in $H^0(\bar{\bX})$ arises from terms with $d\in \bar{\mathbb{K}}_\mathrm{eff}$ such that
\begin{equation}\label{eqn:condition_d_1}
\nu(d)=0, \textrm{ i.e. } \mathbf{1}_{\nu(d)}=\mathbf{1}\in H^0(\bar{\bX}).
\end{equation}
And for $d\in \bar{\mathbb{K}}_\mathrm{eff}$ to satisfy \eqref{eqn:condition_d_1}, we must have
$$\langle D_i, d \rangle \in \Z,  \textrm{ for } i\in \{0,\ldots,m'-1\} \cup \{\infty\}.$$
This follows from the definition of $\nu(d)$.

Let $d\in \bar{\mathbb{K}}_\mathrm{eff}$ be such that $\nu(d)=0$. We examine the $(1/z)$-series expansion of the corresponding term in the equivariant $I$-function of $\bar{\bX}$:
\begin{equation}\label{eqn:term_in_I}
y^d\prod_{i\in\{0,\ldots,m'-1\}\cup\{\infty\}}
\frac{\prod_{k=\lceil\langle D_i,d\rangle\rceil}^\infty(\bar{D}^\mathbb{T}_i+(\langle D_i,d\rangle-k)z)}
{\prod_{k=0}^\infty(\bar{D}^\mathbb{T}_i+(\langle D_i,d\rangle-k)z)}.
\end{equation}
Recall that $\bar{D}^\mathbb{T}_0,\ldots,\bar{D}^\mathbb{T}_{m-1},\bar{D}^\mathbb{T}_\infty \in H^2(\bar{\bX})$ are $\mathbb{T}$-divisor classes corresponding to $\bb_0,\ldots,\bb_{m-1},\bb_\infty$, and $\bar{D}^\mathbb{T}_j =0$ in $H^2_\mathbb{T}(\bar{\bX})$ for $m\leq j\leq m'-1$. We may factor out copies of $z$ to rewrite \eqref{eqn:term_in_I} as
\begin{equation}\label{eqn:term_in_I_2}
\frac{y^d}{z^{\langle \hat{\rho}(\bar{\bX}), d \rangle}}
\prod_{i\in\{0,\ldots,m'-1\}\cup\{\infty\}}
\frac{\prod_{k=\lceil\langle D_i,d\rangle\rceil}^\infty(\bar{D}^\mathbb{T}_i/z+(\langle D_i,d\rangle-k))}
{\prod_{k=0}^\infty(\bar{D}^\mathbb{T}_i/z+(\langle D_i,d\rangle-k))}.
\end{equation}
where $\hat{\rho}(\bar{\bX})=\sum_{i=0}^{m-1} D_i + D_\infty + \sum_{j=m}^{m'-1} D_j$. So we need
\begin{equation}\label{eqn:condition_d_2}
\langle \hat{\rho}(\bar{\bX}), d\rangle= \sum_{i=0}^{m-1} \langle D_i, d\rangle +\langle D_\infty, d\rangle + \sum_{j=m}^{m'-1} \langle D_j, d\rangle \leq 2.
\end{equation}

Since we need the part taking values in $H^0(\bar{\bX})$, we need the terms in \eqref{eqn:term_in_I_2} in which the divisor classes $\bar{D}^\mathbb{T}_0, \ldots, \bar{D}^\mathbb{T}_{m-1}, \bar{D}^\mathbb{T}_\infty$ do not occur. For $0\leq i\leq m-1$ or $i=\infty$, the fraction
$$\frac{\prod_{k=\lceil\langle D_i,d\rangle\rceil}^\infty(\bar{D}^\mathbb{T}_i/z+(\langle D_i,d\rangle-k))}{\prod_{k=0}^\infty(\bar{D}^\mathbb{T}_i/z+(\langle D_i,d\rangle-k))}$$
is proportional to $\bar{D}^\mathbb{T}_j$ if $\langle D_j,d\rangle=\lceil\langle D_j,d\rangle\rceil<0$. Thus we need
\begin{equation}\label{eqn:condition_d_3}
\langle D_i,d\rangle\geq 0, \quad i \in \{0,\ldots,m-1\}\cup\{\infty\}.
\end{equation}
Also observe that since $d\in \bar{\mathbb{K}}_{\mathrm{eff}}$, $\langle D_j, d\rangle \geq 0$ for $m\leq j\leq m'-1$. So there are only two possible cases: either
\begin{itemize}
\item there is exactly one $j$ such that $\langle D_j, d\rangle = 2$ in \eqref{eqn:condition_d_2} and $\langle D_i, d\rangle=0$ for $i\neq j$; or\\

\item there are $j_1, j_2$ such that $\langle D_{j_1}, d\rangle = \langle D_{j_2}, d\rangle = 1$ in \eqref{eqn:condition_d_2} and $\langle D_i, d\rangle=0$ for $i\neq j_1, j_2$.
\end{itemize}

By the fan sequence \eqref{eqn:fan_seq}, an element $d\in \bar{\mathbb{K}}_\mathrm{eff}$ corresponds to an element
$$\sum_{0\leq i\leq m-1}  \langle D_i, d\rangle e_i +\langle D_\infty, d\rangle e_\infty+\sum_{m\leq j\leq m'-1} \langle D_j, d\rangle e_j \in
\bigoplus_{0\leq j\leq m-1}  \mathbb{Z} e_j \oplus \mathbb{Z} e_\infty \oplus \bigoplus_{m\leq j\leq m'-1} \mathbb{Z} e_j$$
such that
$$\sum_{0\leq i\leq m-1}  \langle D_i, d\rangle \bb_i + \langle D_\infty, d\rangle \bb_\infty
+ \sum_{m\leq j\leq m'-1} \langle D_j, d\rangle \bb_j = 0.$$
In order for this equality to hold, we cannot have $\langle D_i, d\rangle=0$ for all but one $i$. So we must be in the other case, namely, there are exactly two indices $j_1, j_2$ such that $\langle D_{j_1}, d\rangle = \langle D_{j_2}, d\rangle = 1$, and $\langle D_i, d\rangle = 0$ for $i\neq j_1, j_2$. Since the vectors $\bb_0,\ldots,\bb_{m-1}, \bb_m,\ldots,\bb_{m'-1}$ belong to the half-space in $N_\R\oplus\mathbb{R}$ opposite to the half-space containing $\bb_\infty$, we must have $\infty\in \{j_1, j_2\}$. As noted in Remark \ref{rmk:beta_depend}, the fan $\bar{\Sigma}$ depends on the disk class $\beta\in \pi_2(\bX, L)$ in question. There are two possibilities:
\begin{itemize}
\item
{\bf Case 1: $\beta$ is a smooth disk class.} This means that $\beta = \beta'+\alpha$ with $\alpha \in H_2(\bX)$ and $\beta'\in \pi_2(\bX, L)$ is the class of a basic smooth disk. In this case $\partial \beta' = \bb_{i_0}$ for some $0\leq i_0\leq m-1$ and $\bb_\infty=-\bb_{i_0}$. So the only possible $d\in \bar{\mathbb{K}}_\mathrm{eff}$ comes from the relation $\bb_{i_0}+\bb_\infty = 0$. In this case the necessary term in the equivariant $I$-function of $\bar{\bX}$ is $y^{d_\infty}$, where $d_\infty = e_{i_0} + e_\infty = \bar{\beta}' \in H_2(\bar{\bX};\Q)$.\\

\item
{\bf Case 2: $\beta$ is an orbi-disk class.} This means that $\beta = \beta'+\alpha$ with $\alpha \in H_2(\bX)$ and $\beta'=\beta_{\nu_{j_0}}\in \pi_2(\bX, L)$ is the class of a basic orbi-disk corresponding to $\bb_{j_0} \in \mathrm{Box}'(\Sigma)^{\textrm{age}=1}$ for some $m\leq j_0\leq m'-1$. In this case $\partial \beta' = \bb_{j_0}$ and $\bb_\infty=-\bb_{j_0}$. So the only possible $d\in \bar{\mathbb{K}}_{\mathrm{eff}}$ comes from the relation $\bb_{j_0}+\bb_\infty = 0$. In this case the necessary term in the equivariant $I$-function of $\bar{\bX}$ is $y^{d_\infty}$, where $d_\infty = e_{j_0} + e_\infty$. Note that in this case, $d_\infty$ is {\em not} a class in $H_2(\bar{\bX};\Q)$.
\end{itemize}
Equating the relevant $1/z^2$-terms in the equivariant $I$- and $J$-functions yields
\begin{equation}\label{eqn:formula_needed_equiv_closed_invs}
y^{d_\infty}=\frac{q_0(y)^2}{2}+\sum_{d\in H_2^\textrm{eff}(\bar{\bX})}\sum_{l\geq 0}\sum_{\nu_1,\ldots,\nu_l\in \mathrm{Box}'(\Sigma)^{\mathrm{age}=1}}
\frac{\prod_{i=1}^l \tau_{\nu_i}}{l!}\langle [\mathrm{pt}]_\mathbb{T}, \prod_{i=1}^l \mathbf{1}_{\bar{\nu}_i}\rangle_{0, l+1, d}^{\bar{\bX}, \mathbb{T}}q^d.
\end{equation}

\subsubsection{Computing toric mirror maps}\label{sec:toric_mir_maps}
To explicitly evaluate (\ref{eqn:formula_needed_equiv_closed_invs}), we compute the toric mirror map for $\bar{\bX}$, which is part of the $1/z$-term in the expansion of the equivariant $I$-function.

Let $d\in \bar{\mathbb{K}}_\mathrm{eff}$. Similar to the calculations in the previous section, we first examine the $(1/z)$-series expansion of the corresponding term in the equivariant $I$-function of $\bar{\bX}$:
\begin{align*}
& y^d\prod_{i\in\{0,\ldots,m'-1\}\cup\{\infty\}}
\frac{\prod_{k=\lceil\langle D_i,d\rangle\rceil}^\infty(\bar{D}^\mathbb{T}_i+(\langle D_i,d\rangle-k)z)}
{\prod_{k=0}^\infty(\bar{D}^\mathbb{T}_i+(\langle D_i,d\rangle-k)z)}\mathbf{1}_{\nu(d)}\\
= & \frac{y^d}{z^{\langle \hat{\rho}(\bar{\bX}), d \rangle+\textrm{age}(\nu(d))}}
\prod_{i\in\{0,\ldots,m'-1\}\cup\{\infty\}}
\frac{\prod_{k=\lceil\langle D_i,d\rangle\rceil}^\infty(\bar{D}^\mathbb{T}_i/z+(\langle D_i,d\rangle-k))}
{\prod_{k=0}^\infty(\bar{D}^\mathbb{T}_i/z+(\langle D_i,d\rangle-k))}\mathbf{1}_{\nu(d)}.
\end{align*}
We need the $1/z$-term that takes value in $H_{\mathrm{CR}, \mathbb{T}}^{\leq 2}(\bar{\bX})$. There are three types.
%$$H^0(\bar{\bX})\oplus H^2(\bar{\bX})\oplus \bigoplus_{\nu\in \mathrm{Box}'(\Sigma)^{\mathrm{age}=1}}H^0(\bar{\bX}_{\bar{\nu}}):$$

\begin{itemize}
\item
{\bf degree $0$ term:} This requires that $\nu(d)=0$. As noted above, this implies $\langle D_i, d\rangle\in \Z$ for all $i$. Furthermore, we must have $\langle D_i, d\rangle\geq 0$ for all $i$ in order for the term to be of cohomological degree $0$. Also, we need $1/z^{\langle \hat{\rho}(\bar{\bX}), d\rangle+\mathrm{age}(\nu(d))}=1/z$, which means that $\langle \hat{\rho}(\bar{\bX}), d\rangle=1$.  Consequently $\langle D_i, d\rangle=1$ for exactly one $D_i$ and $=0$ otherwise. As we have seen, such a class $d\in \bar{\mathbb{K}}_\mathrm{eff}$ does not exist. So there is no $H^0(\bar{\bX})$-term.\\

\item
{\bf degree $2$ term from untwisted sector:} This means terms proportional to $\mathbb{T}$-divisors $\bar{D}_i^\mathbb{T}$. Again this requires that $\nu(d)=0$, which implies $\langle D_i, d\rangle\in \Z$ for all $i$. Furthermore, we must have exactly one $\bar{D}^\mathbb{T}_j/z$, which requires $\langle D_j, d\rangle<0$ for this $j$ and $\langle D_i, d\rangle\geq0$ for all $i\neq j$. To get the $1/z$-term, we need $\langle \hat{\rho}(\bar{\bX}), d\rangle+\mathrm{age}(\nu(d))=0$, so we should have $\langle \hat{\rho}(\bar{\bX}),d\rangle = 0$.

For each $j\in \{0,1,\ldots,m-1\}\cup\{\infty\}$, we define
$$\Omega^{\bar{\bX}}_j:=\{d\in \bar{\mathbb{K}}_\mathrm{eff} \mid \langle \hat{\rho}(\bar{\bX}),d\rangle = 0, \nu(d)=0, \langle D_j,d\rangle \in \Z_{<0}\textrm{ and } \langle D_i,d\rangle \in \Z_{\geq 0}\ \forall i\neq j\},$$
and set
$$A_j^{\bar{\bX}}(y):=\sum_{d\in \Omega^{\bar{\bX}}_j}y^d \frac{(-1)^{-\langle D_j,d\rangle-1}(-\langle D_j,d\rangle-1)!}{\prod_{i\neq j}\langle D_i,d\rangle!}.$$
Then the degree $2$ term from untwisted sector is given by $$\sum_{j=0}^{m-1} A^{\bar{\bX}}_j(y)\bar{D}^\mathbb{T}_j/z+A^{\bar{\bX}}_\infty(y) \bar{D}^\mathbb{T}_\infty/z.$$

\item
{\bf degree $2$ term from twisted sectors:} This requires that $\nu(d)=\nu$. Since $\mathrm{age}(\nu)=1$, we must have $\langle \hat{\rho}(\bar{\bX}), d\rangle=0$. In order to avoid being proportional to a $\mathbb{T}$-divisor, $\langle D_i,d\rangle$ cannot be a negative integer for any $i$.

For each $j\in \{m,m+1,\ldots,m'-1\}$, we define
$$\Omega^{\bar{\bX}}_j:=\{d\in \bar{\mathbb{K}}_\mathrm{eff} \mid \langle \hat{\rho}(\bar{\bX}), d\rangle = 0, \nu(d)=\bb_j
\textrm{ and } \langle D_i,d\rangle \notin \Z_{<0}\ \forall i\},$$
and set
$$A^{\bar{\bX}}_j(y) := \sum_{d\in \Omega^{\bar{\bX}}_j} y^d \prod_{i\in\{0,\ldots,m'-1\}\cup\{\infty\}}
\frac{\prod_{k=\lceil\langle D_i,d\rangle\rceil}^\infty(\langle D_i,d\rangle-k)}{\prod_{k=0}^\infty(\langle D_i,d\rangle-k)}.$$
Then the degree $2$ term from twisted sectors is $$\sum_{j=m}^{m'-1} A^{\bar{\bX}}_j(y)\mathbf{1}_{\bb_j}/z.$$
\end{itemize}

The fan sequence of $\bar{\bX}$ is given by $0\to \text{ker}\to \widetilde{N}^{-}:=\widetilde{N}\oplus \mathbb{Z}\to N\to 0,$
and the divisor sequence of $\bar{\bX}$ is given by $0\to M\to \widetilde{M}^{-}:=(\widetilde{N}^{-})^\vee\to \bar{\mathbb{L}}^\vee\to 0.$
Observe that $\textrm{rk}(\bar{\mathbb{L}}^\vee) = \textrm{rk}({\mathbb{L}^\vee})+1 = r+1 = m'+1-n$ and $\textrm{rk}(H^2(\bar{\bX})) = \textrm{rk}(H^2(\bX)) + 1 = r'+1 = m+1-n$. We choose an integral basis
$$\{p_1, \ldots, p_r, p_\infty\}\subset \bar{\mathbb{L}}^\vee$$
such that $p_a$ is in the closure of $\widetilde{C}_{\bar{\bX}}$ for all $a$ and $p_{r'+1}, \ldots, p_r \in \sum_{i=m}^{m'-1}\mathbb{R}_{\geq 0}D_i$ so that the images $\{\bar{p}_1,\ldots,\bar{p}_{r'},\bar{p}_\infty\}$ of $\{p_1, \ldots, p_{r'}, p_\infty\}$ under the quotient $\bar{\mathbb{L}}^\vee\otimes \Q\to H^2(\bar{\bX}; \Q)$ form a nef basis of $H^2(\bar{\bX};\Q)$ and $\bar{p}_a = 0$ for $a=r'+1,\ldots,r$. And we pick $\{p_1^\mathbb{T},...,p_r^{\mathbb{T}}, p_\infty^{\mathbb{T}}\}\subset \widetilde{M}^{-}$ in the way described in Section \ref{sec:div_cone_etc}. We further assume that $\{p_1, \ldots, p_r\}$ gives the original basis of $\mathbb{L}^\vee$ chosen for $\bX$.

Expressing $D_i$ in terms of the basis $\{p_a\}$ defines an integral matrix $(Q_{ia})$ by
$$D_i = \sum_{a\in\{1,\ldots,r\}\cup\{\infty\}} Q_{ia} p_a,\quad Q_{ia}\in\mathbb{Z}.$$
As above, the image of $D_i$ under the quotient $\bar{\mathbb{L}}^\vee\otimes \Q\to H^2(\bar{\bX}; \Q)$ is denoted by $\bar{D}_i$. Then for $i\in\{0,\ldots,m-1\}\cup\{\infty\}$, the class $\bar{D}^\mathbb{T}_i$ of the toric prime $\mathbb{T}$-divisor $D^\mathbb{T}_i$ is given by
$$\bar{D}^\mathbb{T}_i = \lambda_i+\sum_{a\in\{1,\ldots,r'\}\cup\{\infty\}} Q_{ia}\bar{p}^\mathbb{T}_a, \quad \lambda_i\in H_\mathbb{T}^2(\text{pt});$$
and for $i=m,\ldots, m'-1$, $\bar{D}^\mathbb{T}_i=0$ in $H^2(\bX; \R)$.

Hence the coefficient of the $1/z$-term in the equivariant $I$-function can be expressed as
\begin{equation}\label{eqn:1/z_terms_I}
\begin{split}
  & \sum_{a\in\{1,\ldots,r'\}\cup\{\infty\}} \bar{p}^\mathbb{T}_a\log y_a + \sum_{j\in \{0,\ldots,m-1\}\cup\{\infty\}} A^{\bar{\bX}}_j(y)\bar{D}^\mathbb{T}_j
  + \sum_{j=m}^{m'-1} A^{\bar{\bX}}_j(y)\mathbf{1}_{\bb_j} \\
%= & \sum_{a\in\{1,\ldots,r'\}\cup\{\infty\}} \bar{p}^\mathbb{T}_a\log y_a + \sum_{j\in \{0,\ldots,m-1\}\cup\{\infty\}}
%A^{\bar{\bX}}_j(y)\left(\lambda_j+\sum_{a\in\{1,\ldots,r'\}\cup\{\infty\}} Q_{ja}\bar{p}^\mathbb{T}_a\right) + \sum_{j=m}^{m'-1} A^{\bar{\bX}}_j(y)\mathbf{1}_{\bb_j} \\
= & \sum_{a\in\{1,\ldots,r'\}\cup\{\infty\}} \left(\log y_a + \sum_{j\in \{0,\ldots,m-1\}\cup\{\infty\}} Q_{ja} A^{\bar{\bX}}_j(y)\right) \bar{p}^\mathbb{T}_a
+ \sum_{j=m}^{m'-1} A^{\bar{\bX}}_j(y)\mathbf{1}_{\bb_j}+ \sum_{j\in \{0,\ldots,m-1\}\cup\{\infty\}} \lambda_jA^{\bar{\bX}}_j(y).
\end{split}
\end{equation}
On the other hand, the coefficient of the $1/z$-term in the $J$-function is given by
\begin{equation}\label{eqn:1/z_terms_J}
\sum_{a\in\{1,\ldots,r'\}\cup\{\infty\}} \bar{p}^\mathbb{T}_a\log q_a + \tau_\mathrm{tw} = \sum_{a=1}^r \bar{p}^\mathbb{T}_a\log q_a +\sum_{j=m}^{m'-1}\tau_{\bb_j} \mathbf{1}_{\bb_j}.
\end{equation}
The toric mirror map for $\bar{\bX}$ is obtained by comparing \eqref{eqn:1/z_terms_I} and \eqref{eqn:1/z_terms_J}:
\begin{equation}\label{eqn:toric_mirror_map}
\begin{split}
\log q_a & = \log y_a + \sum_{j\in \{0,\ldots,m-1\}\cup\{\infty\}} Q_{ja}A^{\bar{\bX}}_j(y), \quad a \in \{1,\ldots,r'\}\cup\{\infty\},\\
\tau_{\bb_j} & = A^{\bar{\bX}}_j(y), \quad j=m,\ldots,m'-1,
\end{split}
\end{equation}
and set $q_0(y):= \sum_{j\in \{0,\ldots,m-1\}\cup\{\infty\}} \lambda_jA^{\bar{\bX}}_j(y)$.

Let us have a closer look at the toric mirror map \eqref{eqn:toric_mirror_map} for $\bar{\bX}$. First of all, recall that $\bar{\mathbb{K}}_\textrm{eff} = \mathbb{K}_\textrm{eff} \oplus \Z_{\geq0} d_\infty$, so we can decompose any $d \in \bar{\mathbb{K}}_\textrm{eff}$ as
$d = d' + k d_\infty$, where $d' \in \mathbb{K}_\textrm{eff}$ and $k \in \Z_{\geq0}$. Suppose that $\langle \hat{\rho}(\bar{\bX}), d\rangle = 0$. Then we have
$ 0 = \sum_{i=0}^{m'-1} \langle D_i, d'\rangle + \langle D_\infty, d\rangle = \langle \hat{\rho}(\bX), d'\rangle + k$.
But $\bX$ is semi-Fano, so $\langle \hat{\rho}(\bX), d'\rangle \geq 0$. This implies that $\langle D_\infty, d\rangle = k = 0$, and hence $d = d' \in \mathbb{K}_\textrm{eff}$.

As an immediate consequence, we have $A^{\bar{\bX}}_\infty = 0$, since $d \in \Omega^{\bar{\bX}}_\infty$ implies that $\langle \hat{\rho}(\bar{\bX}), d\rangle = 0$ and $\langle D_\infty, d\rangle < 0$ which is impossible and so $\Omega^{\bar{\bX}} = \emptyset$. Also for $j\in \{0,1,\ldots,m-1,m,\ldots, m'-1\}$, $d \in \Omega^{\bar{\bX}}_j$ implies that $\langle \hat{\rho}(\bar{\bX}), d\rangle = 0$, so $d$ lies in $\mathbb{K}_\textrm{eff}$ and hence we have $\Omega^{\bar{\bX}}_j = \Omega^\bX_j$, where
$$\Omega^\bX_j := \{d\in \mathbb{K}_\mathrm{eff} \mid \nu(d)=0, \langle D_j,d\rangle \in \Z_{<0}\textrm{ and } \langle D_i,d\rangle \in \Z_{\geq 0}\ \forall i\neq j\}, \quad j = 0, 1, \ldots, m-1,$$
$$\Omega^\bX_j:=\{d\in \mathbb{K}_\mathrm{eff} \mid \nu(d)=\bb_j\textrm{ and } \langle D_i,d\rangle \notin \Z_{<0}\ \forall i\}, \quad j = m, m+1, \ldots, m'-1.$$
Here we have used the fact that $\hat{\rho}(\bX) = 0$.

\begin{prop}\label{prop:mir_map_barX}
The toric mirror map of the toric compactification $\bar{\bX}$ is of the form
\begin{equation}\label{eqn:toric_mirror_map_barX_1}
\begin{split}
\log q_a = & \log y_a + \sum_{j=0}^{m-1} Q_{ja}A^\bX_j(y), \quad a = 1,\ldots,r',\\
\log q_\infty = & \log y_\infty + A^\bX_{i_0}(y),\\
\tau_{\bb_j} = & A^\bX_j(y), \quad j = m,\ldots,m'-1,
\end{split}
\end{equation}
when $\beta = \beta_{i_0} + \alpha$ is a smooth disk class, and of the form
\begin{equation}\label{eqn:toric_mirror_map_barX_2}
\begin{split}
\log q_a = & \log y_a + \sum_{j=0}^{m-1} Q_{ja}A^\bX_j(y), \quad a = 1,\ldots,r',\\
\log q_\infty = & \log y_\infty,\\
\tau_{\bb_j} = & A^\bX_j(y), \quad j=m,\ldots,m'-1,
\end{split}
\end{equation}
when $\beta = \beta_{\nu_{j_0}} + \alpha$ is an orbi-disk class, where
\begin{equation}\label{eqn:toric_mirror_map_revised1}
A^\bX_j(y) := \sum_{d\in \Omega^\bX_j}y^d \frac{(-1)^{-\langle D_j,d\rangle-1}(-\langle D_j,d\rangle-1)!}{\prod_{i\neq j}\langle D_i,d\rangle!}, \quad j = 0, 1, \ldots, m-1,
\end{equation}
\begin{equation}\label{eqn:toric_mirror_map_revised2}
A^\bX_j(y) := \sum_{d\in \Omega^\bX_j}y^d \prod_{i=0}^{m'-1}
\frac{\prod_{k=\lceil\langle D_i,d\rangle\rceil}^\infty(\langle D_i,d\rangle-k)}{\prod_{k=0}^\infty(\langle D_i,d\rangle-k)}, \quad j = m, m+1, \ldots, m'-1.
\end{equation}
\end{prop}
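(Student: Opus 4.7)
The plan is to directly unpack the toric mirror map of $\bar{\bX}$ derived in \eqref{eqn:toric_mirror_map} using the two reductions $A^{\bar{\bX}}_\infty \equiv 0$ and $\Omega^{\bar{\bX}}_j = \Omega^\bX_j$ for $j \in \{0,1,\ldots,m'-1\}$ already established in the paragraphs preceding the proposition. Recall these reductions come from the semi-Fano hypothesis on $\bX$: the inequality $\langle \hat{\rho}(\bX),d'\rangle \geq 0$ forces the $d_\infty$-component $k$ of any $d = d' + kd_\infty \in \bar{\mathbb{K}}_\mathrm{eff}$ satisfying $\langle\hat{\rho}(\bar{\bX}),d\rangle = 0$ to vanish. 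Once $\Omega^{\bar{\bX}}_j$ is replaced by $\Omega^\bX_j$, a termwise comparison of the defining series for $A^{\bar{\bX}}_j$ (given just after \eqref{eqn:term_in_I_2}) with \eqref{eqn:toric_mirror_map_revised1}--\eqref{eqn:toric_mirror_map_revised2} gives $A^{\bar{\bX}}_j = A^\bX_j$. So the only remaining content of the proposition is the explicit computation of the integer entries $Q_{j,\infty}$ and $Q_{\infty,a}$ arising from the extra basis vector $p_\infty \in \bar{\mathbb{L}}^\vee$.

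For this, I would fix the basis $\{p_1,\ldots,p_r,p_\infty\}$ of $\bar{\mathbb{L}}^\vee$ compatibly with the splitting $\bar{\mathbb{L}} = \mathbb{L} \oplus \Z d_\infty$: take $p_1,\ldots,p_r$ to be the original basis of $\mathbb{L}^\vee$ extended by zero on $d_\infty$, and take $p_\infty$ to be the dual of $d_\infty$ (so that it vanishes on the original dual basis $\gamma_1,\ldots,\gamma_r \in \mathbb{L}$). Under this choice the dual basis of $\bar{\mathbb{L}}$ is $\{\gamma_1,\ldots,\gamma_r,d_\infty\}$. Since each $\gamma_a$ lies in $\mathbb{L} \subset \widetilde{N}$ and has no $e_\infty$-component, the pairing $\langle D_\infty,\gamma_a\rangle = e_\infty^\vee(\gamma_a) = 0$ gives $Q_{\infty,a} = 0$ for $a = 1,\ldots,r'$; moreover, for these $a$ the entries $Q_{j,a}$ with $j = 0,\ldots,m-1$ coincide with the original matrix entries in the $\bX$-setting (since $p_a$ is the original $\mathbb{L}^\vee$-basis element). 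The remaining entries are $Q_{j,\infty} = \langle D_j,d_\infty\rangle = e_j^\vee(d_\infty)$, which simply reads off the $e_j$-coefficient of $d_\infty$ under the identification $\bar{\mathbb{L}} \subset \widetilde{N}^{-}$.

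The rest is substitution. If $\beta = \beta_{i_0} + \alpha$ is a smooth disk class then $d_\infty = e_{i_0}+e_\infty$, so among $j \in \{0,\ldots,m-1\} \cup \{\infty\}$ the only nonzero entries are $Q_{i_0,\infty} = Q_{\infty,\infty} = 1$; combined with $A^{\bar{\bX}}_\infty = 0$ this yields $\log q_\infty = \log y_\infty + A^\bX_{i_0}(y)$, and the equations for $\log q_a$ ($a \leq r'$) and $\tau_{\bb_j}$ reduce verbatim to \eqref{eqn:toric_mirror_map_barX_1}. If $\beta = \beta_{\nu_{j_0}}+\alpha$ is an orbi-disk class with $j_0 \in \{m,\ldots,m'-1\}$ then $d_\infty = e_{j_0}+e_\infty$, and among $j \in \{0,\ldots,m-1\} \cup \{\infty\}$ only $Q_{\infty,\infty} = 1$ contributes, giving $\log q_\infty = \log y_\infty$ and hence \eqref{eqn:toric_mirror_map_barX_2}. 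There is no real obstacle here; the substantive input (the semi-Fano-driven identification $\Omega^{\bar{\bX}}_j = \Omega^\bX_j$ and the vanishing of $A^{\bar{\bX}}_\infty$) is already in hand, and the remainder is a careful bookkeeping of how the new basis vector $p_\infty$ pairs with the divisor classes $D_j$ in $\bar{\bX}$.
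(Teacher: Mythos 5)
Your proposal is correct and follows essentially the same route as the paper: both rest on the previously established facts $\Omega^{\bar{\bX}}_\infty=\emptyset$, $\Omega^{\bar{\bX}}_j=\Omega^{\bX}_j$ and $\langle D_\infty,d\rangle=0$ (so $A^{\bar{\bX}}_\infty=0$ and $A^{\bar{\bX}}_j=A^{\bX}_j$), followed by determining the entries $Q_{j\infty}$ and substituting into \eqref{eqn:toric_mirror_map}. Your explicit bookkeeping of the basis (extending $p_1,\ldots,p_r$ by zero on $d_\infty$ and taking $p_\infty$ dual to $d_\infty$, i.e.\ $p_\infty=D_\infty$) just spells out the choice the paper makes implicitly when it asserts $Q_{j\infty}=1$ for $j\in\{i_0,\infty\}$ (resp.\ $\{j_0,\infty\}$) and $0$ otherwise.
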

\begin{proof}
We already have $\Omega^{\bar{\bX}}_\infty = \emptyset$ and $\Omega^{\bar{\bX}}_j = \Omega^\bX_j$ for $j = 0,\ldots,m'-1$. Also, $d\in \Omega^{\bar{\bX}}_j = \Omega^\bX_j$ implies that $\langle D_\infty, d\rangle = 0$. Thus we have $A^{\bar{\bX}}_\infty = 0$ and $A^{\bar{\bX}}_j = A^\bX_j$ for $j = 0,\ldots,m'-1$. Finally, when $\beta = \beta_{i_0} + \alpha$ is a smooth disk class, we have $Q_{j\infty}=1$ for $j\in \{i_0,\infty\}$ and $Q_{j\infty}=0$ for $j\notin\{i_0,\infty\}$; whereas when $\beta = \beta_{\nu_{j_0}} + \alpha$ is an orbi-disk class, we have $Q_{j\infty}=1$ for $j\in \{j_0,\infty\}$ and $Q_{j\infty}=0$ for $j\notin\{j_0,\infty\}$, and in particular, $Q_{j\infty}=0$ for all $j=0,\ldots,m-1$. The result now follows from \eqref{eqn:toric_mirror_map}.
\end{proof}

A key observation is that in both cases \eqref{eqn:toric_mirror_map_barX_1} and \eqref{eqn:toric_mirror_map_barX_2}, the toric mirror map of $\bar{\bX}$ contains parts which depend only on $\bX$:
\begin{prop}
The toric mirror map for the toric CY orbifold $\bX$ is given by
\begin{equation}\label{eqn:toric_mirror_map_X}
\begin{split}
\log q_a = & \log y_a + \sum_{j=0}^{m-1} Q_{ja}A^\bX_j(y), \quad a = 1,\ldots,r',\\
\tau_{\bb_j} = & A^\bX_j(y), \quad j=m,\ldots,m'-1,
\end{split}
\end{equation}
where the functions $A^\bX_j(y)$ are defined in \eqref{eqn:toric_mirror_map_revised1} and \eqref{eqn:toric_mirror_map_revised2} in Proposition \ref{prop:mir_map_barX}.
\end{prop}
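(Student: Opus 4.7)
The plan is to repeat, now for $\bX$ itself, exactly the expansion-and-matching procedure that was just carried out for $\bar{\bX}$ in Section \ref{sec:toric_mir_maps}. Concretely, I would expand the (non-equivariant) $I$-function
\[
I_\bX(y,z) = \conste^{\sum_{a=1}^{r}\bar{p}_a\log y_a/z}\sum_{d\in\mathbb{K}_{\mathrm{eff}}} y^d\prod_{i=0}^{m'-1}\frac{\prod_{k=\lceil\langle D_i,d\rangle\rceil}^{\infty}(\bar{D}_i+(\langle D_i,d\rangle-k)z)}{\prod_{k=0}^{\infty}(\bar{D}_i+(\langle D_i,d\rangle-k)z)}\,\mathbf{1}_{\nu(d)}
\]
in powers of $1/z$, isolate the coefficient of $z^{-1}$ that lies in $H^{\leq 2}_{\mathrm{CR}}(\bX)$, and compare it with the corresponding piece $\sum_{a=1}^{r'}\bar{p}_a\log q_a+\sum_{j=m}^{m'-1}\tau_{\bb_j}\mathbf{1}_{\bb_j}$ of $J_\bX$, using Theorem \ref{closed_mirror_thm} (or equivalently, taking the non-equivariant limit of the calculation already done for $\bar{\bX}$).

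The classification of which $d\in\mathbb{K}_{\mathrm{eff}}$ contribute to the $z^{-1}$-coefficient proceeds exactly as in Section \ref{sec:toric_mir_maps}. After factoring out $z^{-(\langle\hat{\rho}(\bX),d\rangle+\mathrm{age}(\nu(d)))}$ from each summand, one splits the analysis into three cases: (i) a degree-zero contribution proportional to $\mathbf{1}\in H^0(\bX)$, which as before would require an index $j$ with $\langle D_j,d\rangle=1$ and $\langle D_i,d\rangle=0$ for $i\neq j$, which is ruled out by the fan sequence; (ii) untwisted degree-two contributions proportional to $\bar{D}_j$ for $j=0,\ldots,m-1$, coming precisely from the classes $d\in\Omega^\bX_j$ and producing the coefficient $A^\bX_j(y)$ of \eqref{eqn:toric_mirror_map_revised1}; (iii) twisted degree-two contributions proportional to $\mathbf{1}_{\bb_j}$ for $j=m,\ldots,m'-1$, coming from $d\in\Omega^\bX_j$ and producing the coefficient $A^\bX_j(y)$ of \eqref{eqn:toric_mirror_map_revised2}. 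No new ray $\bb_\infty$ is present, so the arguments simplify; the constraint $\langle\hat{\rho}(\bX),d\rangle=0$ together with the CY property (giving $c_1(\bX)=0$) and the age-one choice of extra vectors keeps all combinatorial sets well-defined.

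Assembling these pieces, the $1/z$-coefficient of $I_\bX$ equals
\[
\sum_{a=1}^{r'}\Bigl(\log y_a+\sum_{j=0}^{m-1}Q_{ja}A^\bX_j(y)\Bigr)\bar{p}_a+\sum_{j=m}^{m'-1}A^\bX_j(y)\,\mathbf{1}_{\bb_j},
\]
where we have used $\bar{D}_j=\sum_{a=1}^{r'}Q_{ja}\bar{p}_a$ and the fact that $\bar{p}_a=0$ in $H^2(\bX;\Q)$ for $a>r'$. Matching this with $\sum_{a=1}^{r'}\bar{p}_a\log q_a+\sum_{j=m}^{m'-1}\tau_{\bb_j}\mathbf{1}_{\bb_j}$ under the definition of the toric mirror map yields the stated formulas \eqref{eqn:toric_mirror_map_X}. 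Alternatively, one can bypass this redo entirely: the expressions for $\log q_a$ ($a=1,\ldots,r'$) and $\tau_{\bb_j}$ ($j=m,\ldots,m'-1$) obtained for $\bar{\bX}$ in Proposition \ref{prop:mir_map_barX} involve only the data $\Omega^{\bar{\bX}}_j=\Omega^\bX_j$ and $A^{\bar{\bX}}_j=A^\bX_j$ for $j\in\{0,\ldots,m'-1\}$, which depend solely on $\bX$; the extra row $\log q_\infty$ and the vanishing $A^{\bar{\bX}}_\infty=0$ decouple from the $\bX$-part of the toric mirror map. Hence the $\bX$-part is intrinsic and equals the asserted formula.

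The main (and only nontrivial) obstacle is the book-keeping showing that the degree-zero $H^0(\bX)$ contribution truly vanishes after passing to the non-equivariant limit: in the equivariant setting the function $q_0(y)\mathbf{1}$ is present, and one needs to check that its non-equivariant limit is zero, so that no $\mathbf{1}$-component is generated in $\tau(y)$. This follows exactly as for $\bar{\bX}$, from the observation that the $\mathbf{1}$-contribution in the equivariant $I$-function is a $\Q$-linear combination of the equivariant parameters $\lambda_i\in H^2(B\mathbb{T})$ (multiplying the functions $A^\bX_j(y)$), and hence vanishes in the non-equivariant limit; no additional input is required.
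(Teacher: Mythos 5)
Your proposal is correct and follows essentially the same route as the paper, whose proof is precisely to rerun the Section \ref{sec:toric_mir_maps} expansion-and-matching of the $1/z$-coefficient for $\bX$ (equivalently, to observe that the $\bX$-part of Proposition \ref{prop:mir_map_barX} decouples from the $\infty$-data). The only caveat is that, since a toric CY orbifold $\bX$ is noncompact, you should not invoke the compact-case Theorem \ref{closed_mirror_thm} but rather work with the equivariant $I$-function of $\bX$ and take non-equivariant limits (noting $q_0(y)\to 0$), which is exactly the alternative you already supply, so no genuine gap remains.
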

\begin{proof}
This can be seen by exactly the same calculations as in this subsection applied to the equivariant $I$-function of $\bX$; see also \cite[Section 4.1]{fang-liu-tseng}.
\end{proof}
%Using calculations similar to those in this subsection, it is easy to see that our definition of the toric mirror map for $\bX$ coincides with that defined using the {\em equivariant} $I$-function of $\bX$ (see e.g. \cite[Section 4.1]{fang-liu-tseng} for such a definition).
\begin{remark} \label{rmk:non_equiv_lim_mir_map}
\hfill
\begin{enumerate}
\item
In the non-equivariant limit $H_\mathbb{T}^*(\text{pt})\to H^*(\text{pt})$, we have $\lambda_i\to 0$. Hence $q_0(y)\to 0$ in the non-equivariant limit.
\item
It is clear from the description that (\ref{eqn:toric_mirror_map_barX_1}), (\ref{eqn:toric_mirror_map_barX_2}), (\ref{eqn:toric_mirror_map_X}) do not depend on $\mathbb{T}$-actions, and remain unchanged in the non-equivariant limit $H_\mathbb{T}^*(\text{pt})\to H^*(\text{pt})$.
\item
Also note that, for $j=m,m+1,\ldots,m'-1$,
$$A^\bX_j(y) = y^{D^\vee_j} + \textrm{higher order terms},$$
where $D_j^{\vee}\in \mathbb{K}_\textrm{eff}$ is the class described in \eqref{eqn:dual_of_D_j}.
\end{enumerate}
\end{remark}

\subsection{Explicit formulas}
In this subsection we combine previous discussions to derive explicit formulas for generating functions of genus $0$ open orbifold GW invariants of $\bX$. First we discuss non-equivariant limits.
\begin{prop}\label{prop:non_equiv_lim_inv}
The non-equivariant limit of $\langle [\mathrm{pt}]_\mathbb{T}, \prod_{i=1}^l \mathbf{1}_{\bar{\nu}_i}\rangle_{0, l+1, d}^{\bar{\bX}, \mathbb{T}}$ is $\langle [\mathrm{pt}], \prod_{i=1}^l \mathbf{1}_{\bar{\nu}_i}\rangle_{0, l+1, d}^{\bar{\bX}}$.
\end{prop}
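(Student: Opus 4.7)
My plan is to reduce the statement to the standard fact that non-equivariant limits commute with integration over compact virtual fundamental classes, with the key input being the compactness of the cut-down moduli space established earlier in the paper.

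First, I would observe that because $\bar{\bX}$ has $\mathbb{T}$-fixed points, the class $[\mathrm{pt}]_\mathbb{T}\in H^*_\mathbb{T}(\bar{\bX})$ is (up to a well-defined equivariant multiple) represented by the cycle of a single $\mathbb{T}$-fixed point $p\in \bar{\bX}^\mathbb{T}$. Since the evaluation map $ev_0:\mathcal{M}^{cl}_{l+1}(\bar{\bX},d)\to \bar{\bX}$ is $\mathbb{T}$-equivariant, a standard fiber-product/push-pull argument lets me rewrite
\begin{equation*}
\langle [\mathrm{pt}]_\mathbb{T}, \mathbf{1}_{\bar{\nu}_1},\ldots,\mathbf{1}_{\bar{\nu}_l}\rangle^{\bar{\bX},\mathbb{T}}_{0,l+1,d} = \int_{[\mathcal{M}^{cl}_{l+1}(\bar{\bX},d,p)]^{\mathrm{vir},\mathbb{T}}} \prod_{i=1}^l ev_i^*\mathbf{1}_{\bar{\nu}_i},
\end{equation*}
where $\mathcal{M}^{cl}_{l+1}(\bar{\bX},d,p)$ is the cut-down moduli space from Definition \ref{def:moduli_spaces}. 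The analogous formula, without equivariant decorations, holds for the non-equivariant invariant.

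Second, I would invoke Remark \ref{rmk:curve_moduli_cpt}, which identifies $\mathcal{M}^{cl}_{l+1}(\bar{\bX},d,p)$ with the open disk moduli $\mathcal{M}^{op}_{1,l}(\bX,\beta,\bx,p)$ as Kuranishi spaces via Theorem \ref{thm:open_closed_equality}. Since the latter is compact by Corollary \ref{cor:cpt_disk_moduli}, the cut-down closed moduli space is compact as well. Consequently both the equivariant and non-equivariant virtual fundamental classes $[\mathcal{M}^{cl}_{l+1}(\bar{\bX},d,p)]^{\mathrm{vir},\mathbb{T}}$ and $[\mathcal{M}^{cl}_{l+1}(\bar{\bX},d,p)]^{\mathrm{vir}}$ are honestly defined as compactly supported classes, and there is no need to invoke virtual localization to make sense of either integral.

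Finally, since the integrations take place over compact virtual classes and the non-equivariant limit of each insertion $\mathbf{1}_{\bar{\nu}_i}$ is itself, the standard compatibility of Kuranishi-theoretic integration with the non-equivariant specialization $H^*_\mathbb{T}(\mathrm{pt})\to H^*(\mathrm{pt})$ (which simply follows from compatibility of the equivariant and ordinary virtual fundamental classes under restriction, cf.~the discussion in Section \ref{sec:equiv_GW_inv}) gives the desired equality of limits. The only nontrivial ingredient is the compactness of the point-constrained moduli space, which is precisely what the open/closed equality of Section \ref{sec:open_closed_equality} provides; indeed, this is a recurring theme in this section, since $\bar{\bX}$ itself may fail to be compact but the point insertion always cuts the problem down to a compact one.
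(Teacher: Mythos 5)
Your overall strategy is the same as the paper's (reduce to an integral over the point-constrained moduli space, use its compactness, then invoke compatibility of equivariant and non-equivariant virtual classes as in Section \ref{sec:equiv_GW_inv}), but there is a genuine gap in how you obtain compactness. The point $p$ you must use to represent $[\mathrm{pt}]_\mathbb{T}$ is a $\mathbb{T}$-fixed point of $\bar{\bX}$, which lies in the deepest toric stratum; the Lagrangian torus fiber $L$ is a free orbit in the open dense part of $\bar{\bX}$, so a $\mathbb{T}$-fixed point is never on $L$. Remark \ref{rmk:curve_moduli_cpt} and Theorem \ref{thm:open_closed_equality} identify $\mathcal{M}^{cl}_{1+l}(\bar{\bX},\bar{\beta},\bar{\bx},p)$ with the compact open moduli $\mathcal{M}^{op}_{1,l}(\bX,\beta,\bx,p)$ \emph{only for} $p\in L$ (the fiber product is taken over $\iota:\{p\}\to L$, and Lemma \ref{unique sphere} is applied at a point of the open orbit), so your citation does not give compactness of the cut-down moduli at the fixed point. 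The paper fills this in by checking separately, via the arguments of Proposition \ref{prop:cpt_disk_moduli}, that $\mathcal{M}^{cl}_{1+l}(\bar{\bX},\bar{\beta},\bar{\bx},p)$ is compact for \emph{any} $p$, not just $p\in L$.

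A second, related omission: the non-equivariant invariant $\langle[\mathrm{pt}],\prod_i\mathbf{1}_{\bar{\nu}_i}\rangle^{\bar{\bX}}_{0,l+1,d}$ is only defined (in the non-projective case) through the compact moduli at $p\in L$, whereas your equivariant computation sits over the fixed point. To conclude you must know that the non-equivariant invariant is independent of the choice of constraint point $p$; since $\bar{\bX}$ is non-compact this is not automatic and is exactly the ``standard cobordism argument'' the paper invokes, which itself relies on the compactness-for-all-$p$ statement above. With those two steps supplied (and noting that when $\bb_{i_0}$ is interior so that $\bar{\bX}$ is projective by Proposition \ref{prop:proj}, the uncut moduli is already compact and no cut-down is needed), your argument becomes the paper's proof.
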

\begin{proof}
If $\bar{\bX}$ is projective (this is the case when $\bb_{i_0}\in N$ lies in the interior of the support $|\Sigma|$ by Proposition \ref{prop:proj}), then moduli spaces of stable maps to $\bar{\bX}$ of fixed genus, degree, and number of marked points is compact. In this case the result follows by the discussion in Section \ref{sec:equiv_GW_inv}.

Suppose that $\bar{\bX}$ is semi-projective but not projective. As noted in Remark \ref{rmk:curve_moduli_cpt}, the moduli space $\mathcal{M}^{cl}_{1+l}(\bar{\bX}, \bar{\beta}, \bar{\bx},p)$ used to define the invariant $\langle [\mathrm{pt}], \prod_{i=1}^l \mathbf{1}_{\bar{\nu}_i}\rangle_{0, l+1, d}^{\bar{\bX}}$ is compact for $p\in L$. In fact it is straightforward to check that $\mathcal{M}^{cl}_{1+l}(\bar{\bX}, \bar{\beta}, \bar{\bx},p)$ is compact for any $p$, using the arguments in the proof of Proposition \ref{prop:cpt_disk_moduli}. A standard cobordism argument shows that the invariant $\langle [\mathrm{pt}], \prod_{i=1}^l \mathbf{1}_{\bar{\nu}_i}\rangle_{0, l+1, d}^{\bar{\bX}}$ does not depend on the choice of $p$. If $p\in \bar{\bX}$ is a $\mathbb{T}$-fixed point, then $\mathbb{T}$ acts on $\mathcal{M}^{cl}_{1+l}(\bar{\bX}, \bar{\beta}, \bar{\bx},p)$ and for such $p$ the moduli space $\mathcal{M}^{cl}_{1+l}(\bar{\bX}, \bar{\beta}, \bar{\bx},p)$ can be used to define $\mathbb{T}$-equivariant GW invariant $\langle [\mathrm{pt}]_\mathbb{T}, \prod_{i=1}^l \mathbf{1}_{\bar{\nu}_i}\rangle_{0, l+1, d}^{\bar{\bX}, \mathbb{T}}$. Choose $p\in \bar{\bX}$ to be a $\mathbb{T}$-fixed point and argue as in Section \ref{sec:equiv_GW_inv}, the result follows.
\end{proof}

This proposition allows us to obtain the following
\begin{prop}
Using the notations in Section \ref{sec:open_closed_equality}, we have
\begin{equation}\label{eqn:formula_for_generating_function}
y^{d_\infty}=q^{\bar{\beta}'}\sum_{\alpha\in H_2^\textrm{eff}(\bX)}\sum_{l\geq 0}\sum_{\nu_1,\ldots,\nu_l\in \mathrm{Box}'(\Sigma)^{\mathrm{age}=1}}\frac{\prod_{i=1}^l\tau_{\nu_i}}{l!}n^\bX_{1,l,\beta'+\alpha}([\mathrm{pt}]_L; \prod_{i=1}^l \mathbf{1}_{\nu_i})q^\alpha.
\end{equation}
\end{prop}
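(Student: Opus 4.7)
The proof is essentially a bookkeeping exercise that combines three ingredients already in hand: equation~(\ref{eqn:formula_needed_equiv_closed_invs}) coming from comparing the $1/z^2$ terms of the equivariant $I$- and $J$-functions, the non-equivariant limit (Proposition~\ref{prop:non_equiv_lim_inv} and Remark~\ref{rmk:non_equiv_lim_mir_map}(1)), and the open/closed equality of Theorem~\ref{thm:open_closed_equality}. The plan is to take the non-equivariant limit of~(\ref{eqn:formula_needed_equiv_closed_invs}), use a virtual dimension count to isolate the degrees $d$ for which the right-hand side can be nonzero, and then translate the surviving closed invariants into open orbi-disk invariants of $\bX$.

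First I would pass to the non-equivariant limit of~(\ref{eqn:formula_needed_equiv_closed_invs}). Remark~\ref{rmk:non_equiv_lim_mir_map}(1) says $q_0(y)\to 0$, so the term $q_0(y)^2/2$ drops out, while Proposition~\ref{prop:non_equiv_lim_inv} converts each equivariant invariant $\langle[\mathrm{pt}]_\mathbb{T},\prod_i\mathbf{1}_{\bar\nu_i}\rangle^{\bar\bX,\mathbb T}_{0,1+l,d}$ into its ordinary counterpart $\langle[\mathrm{pt}],\prod_i\mathbf{1}_{\bar\nu_i}\rangle^{\bar\bX}_{0,1+l,d}$. The left-hand side $y^{d_\infty}$ is independent of the torus action, so it is unchanged. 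This reduces the identity to an equality between $y^{d_\infty}$ and a generating sum of ordinary genus $0$ closed orbifold GW invariants of $\bar\bX$.

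Next I would carry out a dimension count to restrict the sum. The virtual dimension of $\mathcal{M}^{cl}_{0,1+l}(\bar\bX,d;\bar\bX_0,\bar\bX_{\bar\nu_1},\ldots,\bar\bX_{\bar\nu_l})$ with $\mathrm{age}(\bar\nu_i)=1$ equals $n+c_1(\bar\bX)\cdot d-2$, while the total real degree of $[\mathrm{pt}]\otimes\prod_i\mathbf{1}_{\bar\nu_i}$ is $2n$; hence the invariant vanishes unless $c_1(\bar\bX)\cdot d=2$. Using the splitting $H_2^\mathrm{eff}(\bar\bX)=\Z_{\geq 0}\bar\beta'\oplus H_2^\mathrm{eff}(\bX)$ established in Section~\ref{sec:open_closed_equality}, write $d=k\bar\beta'+\alpha$ with $k\geq 0$ and $\alpha\in H_2^\mathrm{eff}(\bX)$. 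Since $c_1(\bar\bX)\cdot\bar\beta'=2$ and $c_1(\bar\bX)\cdot\alpha=0$ (because $\alpha\cdot D_\infty=0$ as $\alpha$ is represented by a curve inside $\bX$, and $\sum_{i=0}^{m-1}D_i\cdot\alpha=0$ by the Calabi--Yau condition on $\bX$), the constraint forces $k=1$. Hence only $d=\bar\beta'+\alpha$ with $\alpha\in H_2^\mathrm{eff}(\bX)$ contributes, and $q^d=q^{\bar\beta'}q^{\alpha}$.

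Finally I would invoke Theorem~\ref{thm:open_closed_equality}(b) to replace
$\langle[\mathrm{pt}],\mathbf{1}_{\bar\nu_1},\ldots,\mathbf{1}_{\bar\nu_l}\rangle^{\bar\bX}_{0,1+l,\bar\beta'+\alpha}$
by $n^{\bX}_{1,l,\beta'+\alpha}([\mathrm{pt}]_L;\mathbf{1}_{\nu_1},\ldots,\mathbf{1}_{\nu_l})$, pull out the common factor $q^{\bar\beta'}$, and rearrange the sum to match~(\ref{eqn:formula_for_generating_function}). The hard work has already been done in establishing the open/closed equality and the equivariant closed mirror theorem; the main obstacle here is really just being careful with the dimension count, especially verifying that $c_1(\bar\bX)\cdot\alpha=0$ for $\alpha\in H_2^\mathrm{eff}(\bX)$, which is what forces the crucial collapse of the sum over $d$ to a sum over $\alpha$.
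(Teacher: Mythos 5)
Your proposal is correct and follows essentially the same route as the paper: take the non-equivariant limit of \eqref{eqn:formula_needed_equiv_closed_invs} via Remark \ref{rmk:non_equiv_lim_mir_map} and Proposition \ref{prop:non_equiv_lim_inv}, use the dimension constraint $c_1(\bar{\bX})\cdot d = 2$ together with the splitting $H_2^\textrm{eff}(\bar{\bX}) = \Z_{\geq0}\bar{\beta}' \oplus H_2^\textrm{eff}(\bX)$ and $c_1(\bar{\bX})\cdot\alpha = 0$ to collapse the sum to $d = \bar{\beta}'+\alpha$, and conclude by the open/closed equality \eqref{eqn:open_closed_equality}. The only nitpick is a harmless mixing of real and complex degrees in your dimension count, which does not affect the conclusion.
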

\begin{proof}
In view of Remark \ref{rmk:non_equiv_lim_mir_map} and Proposition \ref{prop:non_equiv_lim_inv}, the non-equivariant limit of (\ref{eqn:formula_needed_equiv_closed_invs}) gives
\begin{equation}\label{eqn:formula_needed_closed_invs}
y^{d_\infty}=\sum_{d\in H_2^\textrm{eff}(\bar{\bX})}\sum_{l\geq 0}\sum_{\nu_1,\ldots,\nu_l\in \mathrm{Box}'(\Sigma)^{\mathrm{age}=1}}
\frac{\prod_{i=1}^l \tau_{\nu_i}}{l!}\langle [\mathrm{pt}], \prod_{i=1}^l \mathbf{1}_{\bar{\nu}_i}\rangle_{0, l+1, d}^{\bar{\bX}}q^d.
\end{equation}
By dimension reason, the invariant $\langle[\mathrm{pt}], \prod_{i=1}^l \mathbf{1}_{\bar{\nu}_i}\rangle_{0, l+1, d}^{\bar{\bX}}$ vanishes unless $c_1(\bar{\bX})\cdot d = 2$. Now we have $H_2^\textrm{eff}(\bar{\bX}) = \Z_{\geq0}\bar{\beta}' \oplus H_2^\textrm{eff}(\bX)$. Also $\bar{\bX}$ is semi-Fano and $c_1(\bar{\bX})\cdot \bar{\beta}' = 2$. So $c_1(\bar{\bX}) \cdot d = 2$ implies that $d$ must be of the form $\bar{\beta}' + \alpha$ where $\alpha\in H_2^\textrm{eff}(\bX)$ has Chern number $c_1(\bar{\bX})\cdot \alpha = 0$. The formula \eqref{eqn:formula_for_generating_function} then follows from the open/closed equality \eqref{eqn:open_closed_equality}.
\end{proof}

Recall the choice of $\tau_\mathrm{tw}$ in (\ref{eqn:choice_of_tau_tw}). \eqref{eqn:formula_for_generating_function} can also be written in a more succinct way as
\begin{align*}
y^{d_\infty} = q^{\bar{\beta}'}\sum_{\alpha\in H_2^\textrm{eff}(\bX)} \sum_{l\geq 0} \frac{1}{l!} n^\bX_{1,l,\beta'+\alpha}([\mathrm{pt}]_L; \prod_{i=1}^l \tau_\mathrm{tw})q^\alpha,
\end{align*}
where $\tau_\mathrm{tw} = \sum_{\nu\in \mathrm{Box}'(\Sigma)^{\mathrm{age}(\nu)=1}}\tau_\nu \mathbf{1}_{\bar{\nu}}$.

Recall that \eqref{eqn:Lag_isotopy_X} gives a Lagrangian isotopy between a moment map fiber $L$ and a fiber $F_r$ of the Gross fibration when $r$ lies in the chamber $B_+$. Hence \eqref{eqn:formula_for_generating_function} also computes the generating functions of genus 0 open orbifold GW invariants defined in \eqref{eqn:generating_functions_open_GW}:
\begin{align*}
y^{d_\infty} = q^{\bar{\beta}'} (1 + \delta_j),
\end{align*}
when $\beta'$ corresponds to $\beta_j(r)$ under the isotopy \eqref{eqn:Lag_isotopy_X}, and
\begin{align*}
y^{d_\infty} = q^{\bar{\beta}'} \tau_\nu(1 + \delta_\nu),
\end{align*}
when $\beta'$ corresponds to $\beta_\nu(r)$ under the isotopy \eqref{eqn:Lag_isotopy_X}.

The formula \eqref{eqn:formula_for_generating_function} identifies the generating function of genus 0 open orbifold GW invariants with $y^{d_\infty}q^{-\bar{\beta}'}$. We can now derive an even more explicit formula for computing the orbi-disk invariants using our results in the previous subsection.

\begin{thm}\label{thm:sm_disk_gen_function}
If $\beta'=\beta_{i_0}$ is a basic smooth disk class corresponding to the ray generated by $\bb_{i_0}$ for some $i_0\in \{0,1,\ldots,m-1\}$, then we have
\begin{equation}\label{eqn:formula_for_generating_function_sm_disk}
\sum_{\alpha\in H_2^\textrm{eff}(\bX)}\sum_{l\geq 0}\sum_{\nu_1,\ldots,\nu_l\in \mathrm{Box}'(\Sigma)^{\mathrm{age}=1}}\frac{\prod_{i=1}^l\tau_{\nu_i}}{l!}n^\bX_{1,l,\beta_{i_0}+\alpha}([\mathrm{pt}]_L; \prod_{i=1}^l \mathbf{1}_{\nu_i})q^\alpha = \exp\left(-A^\bX_{i_0}(y(q,\tau))\right)
\end{equation}
via the inverse $y = y(q,\tau)$ of the toric mirror map \eqref{eqn:toric_mirror_map_X} of $\bX$.
\end{thm}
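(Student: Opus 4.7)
The plan is to combine equation~\eqref{eqn:formula_for_generating_function} --- which identifies the generating function on the left-hand side of~\eqref{eqn:formula_for_generating_function_sm_disk} with the monomial ratio $y^{d_\infty}/q^{\bar{\beta}_{i_0}}$ --- with the explicit form of the toric mirror map of $\bar{\bX}$ established in Proposition~\ref{prop:mir_map_barX}. Here $d_\infty = e_{i_0}+e_\infty \in \bar{\mathbb{L}}$ and $\bar{\beta}_{i_0} = \beta_{i_0}+\beta_\infty \in H_2(\bar{\bX};\Q)$ are the two incarnations of the capped-off disk class.

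First I would fix a compatible basis of $\bar{\mathbb{L}}^\vee$ using the splitting $\bar{\mathbb{L}} = \mathbb{L} \oplus \Z d_\infty$: take $\{p_1,\ldots,p_r\}$ to be the $\bX$-basis of $\mathbb{L}^\vee$ included in $\bar{\mathbb{L}}^\vee$ via extension by zero on $d_\infty$, so that $\langle p_a, d_\infty\rangle = 0$ for every $a\leq r$, and choose $p_\infty$ dual to $d_\infty$, i.e.\ $\langle p_\infty, d_\infty\rangle = 1$ (modifying $p_\infty$ by an element of $\mathbb{L}^\vee$ if needed to land in the closure of $\widetilde{C}_{\bar{\bX}}$). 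Under this choice one reads off $y^{d_\infty} = y_\infty$ and $q^{\bar{\beta}_{i_0}} = q_\infty$ directly, so the theorem reduces to the single identity $y_\infty/q_\infty = \exp(-A^\bX_{i_0}(y(q,\tau)))$.

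The mirror-map equation~\eqref{eqn:toric_mirror_map_barX_1} of $\bar{\bX}$ for a smooth disk class gives exactly $\log q_\infty = \log y_\infty + A^\bX_{i_0}(y)$. The remaining equations in~\eqref{eqn:toric_mirror_map_barX_1}, namely those for $\log q_a$ ($a = 1,\ldots,r'$) and for $\tau_{\bb_j}$ ($j = m,\ldots,m'-1$), coincide verbatim with the $\bX$-mirror map~\eqref{eqn:toric_mirror_map_X} and involve only $y_1,\ldots,y_r$; moreover $A^\bX_{i_0}(y)$ itself depends only on these same variables, since its defining summation set $\Omega^\bX_{i_0}$ sits inside $\mathbb{K}_\mathrm{eff} \subset \mathbb{L}\otimes\Q$. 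Consequently the inversion $y = y(q,\tau)$ of the $\bX$-mirror map can be carried out independently of $y_\infty$, and substituting into $y_\infty/q_\infty = \exp(-A^\bX_{i_0}(y))$ yields~\eqref{eqn:formula_for_generating_function_sm_disk}.

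The part I would flag as most delicate is the decoupling of the $\bar{\bX}$-mirror map into the $\bX$-mirror map plus one isolated equation for $q_\infty$. This decoupling relies on the two identities $A^{\bar{\bX}}_\infty = 0$ and $A^{\bar{\bX}}_j = A^\bX_j$ for $0 \leq j \leq m'-1$ --- both derived in Section~\ref{sec:toric_mir_maps} from the semi-Fano property of $\bX$ together with $\hat{\rho}(\bX)=0$ --- and on the basis bookkeeping above, which ensures that $q_\infty$ is the K\"ahler parameter of $\bar{\bX}$ dual precisely to the capped-off disk class. Once these facts are in place, the remainder is a brief algebraic manipulation.
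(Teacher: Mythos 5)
Your proposal is correct and follows essentially the same route as the paper: both start from \eqref{eqn:formula_for_generating_function}, which identifies the generating function with $y^{d_\infty}q^{-\bar{\beta}'}$, and then read the answer off from the toric mirror map of $\bar{\bX}$ in Proposition \ref{prop:mir_map_barX} together with the decoupling facts $A^{\bar{\bX}}_\infty = 0$ and $A^{\bar{\bX}}_j = A^{\bX}_j$. The only difference is bookkeeping: you normalize the basis so that $y^{d_\infty}=y_\infty$ and $q^{\bar{\beta}'}=q_\infty$ (note that $p_\infty := D_\infty$ already pairs to $1$ with $d_\infty$ and annihilates $\mathbb{L}$, so no modification by an element of $\mathbb{L}^\vee$ is needed --- such a modification would make the $A^{\bX}_j$ depend on $y_\infty$ and spoil your claimed verbatim decoupling), whereas the paper keeps its previously fixed basis and instead checks that the cross-terms cancel via $\langle D_i, d_\infty\rangle = Q_{i\infty}$ for $i=0,\ldots,m-1$.
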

\begin{proof}
Recall that in this case, we have $d_\infty = \bar{\beta}'$. Also, $D_\infty = p_\infty$. So $\langle p_\infty, d_\infty \rangle = 1$. On the other hand, since $d_\infty \in H_2(\bar{\bX};\Q)$, we have $\langle \bar{D}_i, d_\infty\rangle = \langle D_i, d_\infty\rangle$ for any $i$ and $\langle \bar{p}_a, d_\infty\rangle = \langle p_a, d_\infty\rangle$ for any $a$. Using the toric mirror map \eqref{eqn:toric_mirror_map_barX_1} for $\bar{\bX}$, we have
\begin{align*}
\log q^{d_\infty}
& = \sum_{a=1}^{r'} \langle \bar{p}_a, d_\infty\rangle \log q_a
+ \langle \bar{p}_\infty, d_\infty\rangle \log q_\infty\\
& = \sum_{a=1}^{r'} \langle \bar{p}_a, d_\infty\rangle \left(\log y_a + \sum_{i=0}^{m-1} Q_{ia}A^\bX_i(y)\right)
+ \left(\log y_\infty + A^\bX_{i_0}(y)\right)\\
%& = \sum_{a=1}^{r'} \langle \bar{p}_a, d_\infty\rangle \log y_a  + \log y_\infty
%+ \sum_{i=0}^{m-1} \left( \sum_{a=1}^{r'} Q_{ia} \langle \bar{p}_a, d_\infty\rangle \right) A^\bX_i(y) + A^\bX_{i_0}(y)\\
& = \log y^{d_\infty} + A^\bX_{i_0}(y) + \sum_{i=0}^{m-1} \left(\langle D_i, d_\infty\rangle - Q_{i\infty}\right)A^\bX_i(y).
\end{align*}
But $\langle D_i, d_\infty\rangle = Q_{i\infty}$ for $i=0,\ldots,m-1$, so we arrive at the desired formula.
\end{proof}

\begin{thm}\label{thm:orbi_disk_gen_function}
If $\beta'=\beta_{\nu_{j_0}}$ is a basic orbi-disk class corresponding to $\nu_{j_0}\in \textrm{Box}'(\Sigma)^{\textrm{age}=1}$ for some $j_0\in \{m,m+1,\ldots,m'-1\}$, then we have
\begin{equation}\label{eqn:formula_for_generating_function_orb_disk}
\sum_{\alpha\in H_2^\textrm{eff}(\bX)}\sum_{l\geq 0}\sum_{\nu_1,\ldots,\nu_l\in \mathrm{Box}'(\Sigma)^{\mathrm{age}=1}}\frac{\prod_{i=1}^l\tau_{\nu_i}}{l!}n^\bX_{1,l,\beta_{\nu_{j_0}}+\alpha}([\mathrm{pt}]_L; \prod_{i=1}^l \mathbf{1}_{\nu_i})q^\alpha = y^{D^\vee_{j_0}} \exp\left(-\sum_{i\notin I_{j_0}}c_{j_0i}A^\bX_i(y(q,\tau)) \right),
\end{equation}
via the inverse $y=y(q,\tau)$ of the toric mirror map \eqref{eqn:toric_mirror_map_X} of $\bX$, where $D_{j_0}^\vee \in \mathbb{K}_\textrm{eff}$ is the class defined in \eqref{eqn:dual_of_D_j}, $I_{j_0}\in \mathcal{A}$ is the anticone of the minimal cone containing $\bb_{j_0} = \nu_{j_0}$ and $c_{j_0i} \in \Q\cap [0,1)$ are rational numbers such that $\bb_{j_0} = \sum_{i\notin I_{j_0}} c_{j_0i}\bb_i$.
\end{thm}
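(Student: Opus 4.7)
The plan is to follow the strategy of Theorem \ref{thm:sm_disk_gen_function}, adding one new algebraic identity to handle the orbi-disk situation. By \eqref{eqn:formula_for_generating_function} applied with $\beta' = \beta_{\nu_{j_0}}$, the left-hand side of \eqref{eqn:formula_for_generating_function_orb_disk} equals $y^{d_\infty} q^{-\bar{\beta}'}$ where $d_\infty = e_{j_0} + e_\infty$ and $\bar{\beta}' = \beta_{\nu_{j_0}} + \beta_\infty$, so the task reduces to verifying
\[
y^{d_\infty} q^{-\bar{\beta}'} = y^{D^\vee_{j_0}} \exp\left(-\sum_{i \notin I_{j_0}} c_{j_0 i} A^\bX_i(y)\right).
\]
The new feature, compared to the smooth case, is that $d_\infty$ no longer lies in $H_2(\bar{\bX}; \Q)$ (since $\langle D_{j_0}, d_\infty\rangle = 1 \neq 0$ and $\bb_{j_0}$ is an extra vector of $\bar{\bX}$), so $y^{d_\infty}$ cannot be identified with $q^{\bar{\beta}'}$ times a correction in $q$ alone; the extra factor $y^{D^\vee_{j_0}}$ accounts for precisely this discrepancy.

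The first step is to establish the key identity $d_\infty = \bar{\beta}' + D^\vee_{j_0}$ in $\bar{\mathbb{L}} \otimes \Q$. Working inside $\widetilde{N}^- \otimes \Q$ via the fan sequence of $\bar{\bX}$, the defining relations \eqref{eqn:dual_of_D_j} give $D^\vee_{j_0} = e_{j_0} - \sum_{i \notin I_{j_0}} c_{j_0 i} e_i$. On the other hand, the intersection numbers of $\bar{\beta}' = \beta_{\nu_{j_0}} + \beta_\infty$ with the toric divisors of $\bar{\bX}$ can be read off from $\nu_{j_0} = \sum_{i \notin I_{j_0}} c_{j_0 i} \bb_i$ together with $\langle D_i, \beta_\infty\rangle = \delta_{i\infty}$, producing $\bar{\beta}' = e_\infty + \sum_{i \notin I_{j_0}} c_{j_0 i} e_i$ in $\widetilde{N}^- \otimes \Q$. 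Summing then yields $\bar{\beta}' + D^\vee_{j_0} = e_{j_0} + e_\infty = d_\infty$, and exponentiation gives $y^{d_\infty} = y^{\bar{\beta}'} \cdot y^{D^\vee_{j_0}}$.

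The remaining task is to show $q^{\bar{\beta}'}/y^{\bar{\beta}'} = \exp\left(\sum_{i \notin I_{j_0}} c_{j_0 i} A^\bX_i(y)\right)$. Expanding $\log q^{\bar{\beta}'}$ in the basis $\{\bar{p}_a\}_{a=1}^{r'} \cup \{\bar{p}_\infty\}$ of $H^2(\bar{\bX}; \Q)$ and substituting the toric mirror map \eqref{eqn:toric_mirror_map_barX_2} for $\bar{\bX}$ (in particular $\log q_\infty = \log y_\infty$, which reflects $A^{\bar{\bX}}_\infty = 0$), the linear combinations collapse using $D_j = \sum_a Q_{ja} p_a$ together with the fact that $\bar{\beta}'$ pairs trivially with the projection kernel in $\bar{\mathbb{L}}^\vee \otimes \Q$. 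The crucial simplification $\sum_{a=1}^{r'} Q_{ja}\langle p_a, \bar{\beta}'\rangle = \langle D_j, \bar{\beta}'\rangle$ uses the case-specific vanishing $Q_{j\infty} = 0$ for $j = 0, \ldots, m-1$. Combined with $\langle D_j, \bar{\beta}'\rangle = c_{j_0 j}$ and the observation that every extra vector $\bb_i$ (for $i \geq m$) lies in every anticone, the resulting sum runs over $i \in \{0, \ldots, m-1\} \setminus I_{j_0} = \{i : i \notin I_{j_0}\}$, producing exactly the claimed exponential factor.

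The hard part is the lattice identity $d_\infty = \bar{\beta}' + D^\vee_{j_0}$; all subsequent manipulations essentially repeat the mirror-map bookkeeping of Theorem \ref{thm:sm_disk_gen_function}. What makes this identity nontrivial is that it captures precisely how the orbifold twisting data encoded by $\nu_{j_0}$, the auxiliary ray $\bb_\infty = -\nu_{j_0}$, and the fractional curve class $D^\vee_{j_0}$ conspire to match $y^{d_\infty}$ with a genuine curve class contribution, thereby justifying the $y^{D^\vee_{j_0}}$ prefactor appearing in the statement.
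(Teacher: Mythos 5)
Your proposal is correct and follows essentially the same route as the paper: the key identity $d_\infty - \bar{\beta}' = D^\vee_{j_0}$ (which the paper states directly, using the same expression $\bar{\beta}' = \sum_{i\notin I_{j_0}} c_{j_0 i} e_i + e_\infty$ and the defining relations \eqref{eqn:dual_of_D_j}), followed by computing $\log q^{\bar{\beta}'} - \log y^{\bar{\beta}'}$ via the mirror map \eqref{eqn:toric_mirror_map_barX_2} using $Q_{i\infty}=0$ for $i=0,\ldots,m-1$ and $\langle D_i,\bar{\beta}'\rangle = c_{j_0 i}$. No gaps; this matches the paper's argument step for step.
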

\begin{proof}
In this case, the class $\bar{\beta}' \in H_2(\bar{\bX};\Q)$ is given by
$$\bar{\beta}' = (\sum_{i\notin I_{j_0}} c_{j_0i} e_i) + e_\infty \in \widetilde{N} \oplus \Z e_\infty = \bigoplus_{i=0}^{m'-1}\Z e_i \oplus \Z e_\infty;$$
while $d_\infty = e_{j_0} + e_\infty$ (recall that this $d_\infty$ is {\em not} a class in $H_2(\bar{\bX};\Q)$). Hence $d_\infty - \bar{\beta}'$ is precisely the class $D_{j_0}^\vee \in \mathbb{K}_\textrm{eff}$. So we can write $y^{d_\infty}q^{-\bar{\beta}'} = y^{D^\vee_{j_0}} y^{\bar{\beta}'}q^{-\bar{\beta}'}$.

Now,
$$\log y^{\bar{\beta}'} = \sum_{a=1}^r \langle p_a, \bar{\beta}'\rangle \log y_a + \langle p_\infty, \bar{\beta}'\rangle \log y_\infty,$$
and using the toric mirror map \eqref{eqn:toric_mirror_map_barX_2} for $\bar{\bX}$, we have
\begin{align*}
\log q^{\bar{\beta}'}
& = \sum_{a=1}^{r'} \langle \bar{p}_a, \bar{\beta}'\rangle \log q_a
+ \langle \bar{p}_\infty, \bar{\beta}'\rangle \log q_\infty\\
%& = \sum_{a=1}^{r'} \langle \bar{p}_a, \bar{\beta}'\rangle \left(\log y_a + \sum_{i=0}^{m-1} Q_{ia}A^\bX_i(y)\right)
%+ \langle \bar{p}_\infty, \bar{\beta}'\rangle \log y_\infty\\
& = \sum_{a=1}^{r'} \langle \bar{p}_a, \bar{\beta}'\rangle \log y_a + \sum_{i=0}^{m-1} \left( \sum_{a=1}^{r'} Q_{ia} \langle \bar{p}_a, \bar{\beta}'\rangle \right) A^\bX_i(y) + \langle \bar{p}_\infty, \bar{\beta}'\rangle \log y_\infty.
\end{align*}
Since $Q_{i\infty} = 0$ for $i=0,\ldots,m-1$, we have $\sum_{a=1}^{r'} Q_{ia} \langle \bar{p}_a, \bar{\beta}'\rangle = \langle \bar{D}_i, \bar{\beta}'\rangle$. Also, since $\bar{\beta}' \in H_2(\bar{\bX};\Q)$, we have $\langle \bar{D}_i, \bar{\beta}'\rangle = \langle D_i, \bar{\beta}'\rangle$ for any $i$ (and $\langle \bar{p}_a, \bar{\beta}'\rangle = \langle p_a, \bar{\beta}'\rangle$ for any $a$), so
$$\sum_{i=0}^{m-1} \left( \sum_{a=1}^{r'} Q_{ia} \langle \bar{p}_a, \bar{\beta}'\rangle \right) A^\bX_i(y) = \sum_{i\notin I_{j_0}} c_{j_0i}A^\bX_i(y),$$
and hence
$\log y^{\bar{\beta}'} - \log q^{\bar{\beta}'} = - \sum_{i\notin I_{j_0}} c_{j_0i}A^\bX_i(y)$.
The formula follows.
\end{proof}

%\begin{corollary}
%Let $F_r$ be a Lagrangian torus fiber of the Gross fibration over a point $r$ in the chamber $B_+$. Then we have the following formulas for the generating functions of genus 0 open orbifold GW invariants defined in \eqref{eqn:generating_functions_open_GW}:
%\begin{equation}\label{eqn:formula_for_generating_function_sm_disk_F_r}
%1 + \delta_i = \exp\left(-A^\bX_i(y(q,\tau))\right), \text{ for } i = 0,1,\ldots,m-1
%\end{equation}
%for $i = 0,1,\ldots,m-1$
%when $\beta'$ is a basic smooth disk class corresponding to $\beta_i(r)$ under the isotopy \eqref{eqn:Lag_isotopy_X}, and
%\begin{equation}\label{eqn:formula_for_generating_function_orb_disk_F_r}
%\tau_{\nu_j} + \delta_{\nu_j} = y^{D^\vee_j} \exp\left(-\sum_{i\notin I_j}c_{ji}A^\bX_i(y(q,\tau)) \right), \text{ for } j = m,m+1,\ldots,m'-1
%\end{equation}
%for $j = m,m+1,\ldots,m'-1$
%when $\beta'$ is a basic orbi-disk class corresponding to $\beta_{\nu_j}(r)$ under the isotopy \eqref{eqn:Lag_isotopy_X}.
%\end{corollary}

As a by-product of our calculations, we obtain the following convergence result:
\begin{corollary}\label{cor:convergence}
The generating series of genus 0 open orbifold GW invariants
\begin{equation*}
\sum_{\alpha\in H_2^\textrm{eff}(\bX)}\sum_{l\geq 0}\sum_{\nu_1,\ldots,\nu_l\in \mathrm{Box}'(\Sigma)^{\mathrm{age}=1}}\frac{\prod_{i=1}^l\tau_{\nu_i}}{l!}n^\bX_{1,l,\beta'+\alpha}([\mathrm{pt}]_L; \prod_{i=1}^l \mathbf{1}_{\nu_i})q^\alpha.
\end{equation*}
in \eqref{eqn:formula_for_generating_function} and hence those in \eqref{eqn:generating_functions_open_GW} are convergent power series in the variables $q_a$'s and $\tau_{\nu_i}$'s.
\end{corollary}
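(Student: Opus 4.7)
The strategy is to exploit the explicit closed-form expressions derived in Theorems \ref{thm:sm_disk_gen_function} and \ref{thm:orbi_disk_gen_function}, which identify each generating series with the composition of an elementary function with the inverse toric mirror map. The plan reduces convergence of the open series to convergence of two already-known analytic objects: the building blocks $A^\bX_j(y)$ of the toric mirror map, and the local inverse of the mirror map itself.

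First I would establish that each $A^\bX_j(y)$ is a convergent power series in $y_1,\ldots,y_r$ on a neighborhood of the origin. By Proposition \ref{prop:semi_Fano}, the compactification $\bar{\bX}$ (and equally $\bX$ itself, which is semi-Fano with vanishing $\hat{\rho}$) satisfies the hypothesis of \cite[Lemma 4.2]{iritani09}, which asserts that the $I$-function of a semi-projective semi-Fano toric orbifold is a convergent power series in the K\"ahler parameters near $y=0$. The functions $A^\bX_j(y)$ are, by the identification carried out in Section \ref{sec:toric_mir_maps}, precisely the coefficients of $\bar{D}_j^\mathbb{T}/z$ (for $0\leq j\leq m-1$) and of $\mathbf{1}_{\bb_j}/z$ (for $m\leq j\leq m'-1$) in the $1/z$-expansion of $I_{\bX,\mathbb{T}}$; these coefficients inherit the convergence of the $I$-function term-by-term on any polydisk of convergence.

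Next I would invoke the analytic inverse function theorem to invert the toric mirror map \eqref{eqn:toric_mirror_map_X}. Writing the map in the logarithmic coordinates $\log q_a = \log y_a + \sum_{j} Q_{ja}A^\bX_j(y)$ and $\tau_{\bb_j} = A^\bX_j(y)$, the leading-order behavior $A^\bX_j(y)=y^{D^\vee_j}+\text{h.o.t.}$ recorded in Remark \ref{rmk:non_equiv_lim_mir_map}(3) makes the Jacobian at $y=0$ upper-triangular with unit diagonal (after the obvious change between $y_a$ and $\log y_a$). Thus the map is a local biholomorphism and admits a convergent analytic inverse $y=y(q,\tau)$ on some polydisk neighborhood of $(q,\tau)=0$.

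Finally, plugging the convergent inverse into the right-hand sides of \eqref{eqn:formula_for_generating_function_sm_disk} and \eqref{eqn:formula_for_generating_function_orb_disk} yields a convergent power series on a (possibly smaller) polydisk, since the exponential of a convergent series is convergent and so is a monomial $y^{D^\vee_{j_0}}$ in convergent functions. This proves the convergence of \eqref{eqn:formula_for_generating_function}; the generating functions appearing in \eqref{eqn:generating_functions_open_GW} are the same series (or differ by the single monomial factor $q^{\bar{\beta}'}$), so they are convergent as well. The one point that requires some care is the bookkeeping of how the $A^\bX_j$'s combine with the logarithmic variables, but as this is purely formal and does not affect the radii of convergence, I do not anticipate a genuine obstacle.
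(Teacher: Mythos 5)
Your proposal is correct and follows essentially the same route as the paper: the paper's proof likewise observes that the toric mirror map \eqref{eqn:toric_mirror_map_X} is a local isomorphism near $y=0$ (citing \cite[Section 4.1]{iritani09}, which rests on the convergence of the $I$-function), so its inverse is analytic near $(q,\tau)=0$, and then notes that the right-hand sides of \eqref{eqn:formula_for_generating_function_sm_disk} and \eqref{eqn:formula_for_generating_function_orb_disk} are convergent in $y$, whence the composition converges. The only difference is that you sketch the invertibility via the inverse function theorem rather than simply citing Iritani, which is fine.
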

\begin{proof}
As noted in \cite[Section 4.1]{iritani09}, the toric mirror map \eqref{eqn:toric_mirror_map_X} is a local isomorphism near $y=0$. The inverse of \eqref{eqn:toric_mirror_map_X} is therefore also analytic near $q=0$, which allows us to express the variables $y_a$'s as convergent power series in the variables $q_a$'a and $\tau_{\nu_i}$'s. Also note that the expressions in \eqref{eqn:formula_for_generating_function_sm_disk} and \eqref{eqn:formula_for_generating_function_orb_disk} are convergent power series in the variables $y_a$. The result follows.
\end{proof}

\subsection{Examples}\label{sec:examples_mirror}
%In this subsection we present some examples.
%\begin{enumerate}
%\item
\noindent{(1)} $\bX = [\cpx^2 / \Z_m]$ (Example (1) in Section \ref{sect:eg_SYZ}).  There are $m-1$ twisted sectors $\nu_j$, $j = 1, \ldots, m-1$, and each corresponds to a basic orbi-disk class $\beta_{\nu_j}$.  The generating functions of genus 0 open orbifold GW invariants are $\tau_j + \delta_{\nu_j}(\tau)$ given in \eqref{eqn:gen_ex1}.
%$$\tau_j + \delta_{\nu_j}(\tau) = \sum_{k_1,\ldots,k_{m-1} \geq 0} \frac{\tau_1^{k_1}\ldots \tau_{m-1}^{k_{m-1}}}{(k_1 + \ldots + k_{m-1})!} n_{1,l,\beta_{\nu_j}} ([\pt]_L; (\mathbf{1}_{\nu_1})^{k_1} \times \ldots \times (\mathbf{1}_{\nu_{m-1}})^{k_{m-1}})$$
%where $l = k_1 + \ldots + k_g$ and $\tau = \sum_{i=1}^{m-1} \tau_i \mathbf{1}_{\nu_i} \in H^2_{\mathrm{CR}}(\bX)$ for $j = 1, \ldots, m-1$.
By Theorem \ref{thm:orbi_disk_gen_function}, this is equal to the inverse of the toric mirror map. The toric mirror map for $\bX$ was computed explicitly in \cite{CCIT09}: $ \tau_r = g_r (y)$, where
\begin{align*}
g_r(y) = \sum_{\substack{k_1, \ldots, k_{m-1} \geq 0\\ \langle b(k) \rangle = r/m}} \frac{y_1^{k_1}\ldots y_{n-1}^{k_{m-1}}}{k_1! \ldots k_{m-1}!} \frac{\Gamma(\langle D_0(k)\rangle)}{\Gamma(1+ D_0(k))} \frac{\Gamma(\langle D_m(k)\rangle)}{\Gamma(1+D_m(k))},\\
b(k) = \sum_{i=1}^{m-1} \frac{i}{n} k_i,\ D_0(k) = -\frac{1}{m} \sum_{i=1}^{m-1} (m-i) k_i,\ D_m(k) = -\frac{1}{m} \sum_{i=1}^{m-1} i k_i.
\end{align*}
Denote the inverse of $(g_1(y), \ldots, g_{m-1}(y))$ by $(f_1(\tau),\ldots,f_{m-1}(\tau))$.  Then
$ f_j(\tau) = \tau_j + \delta_{\nu_j}(\tau)$ for $j=1,...,m-1$.
The inverse mirror map $(f_1(\tau),\ldots,f_{m-1}(\tau))$ was computed in \cite[Proposition 6.2]{CCIT09}:
\begin{equation*}
f_j(\tau)=(-1)^{m-j}e_{m-j}(\kappa_0,...,\kappa_{m-1}), \quad j=1,...,m-1,
\end{equation*}
where $e_j$ is the $j$-th elementary symmetric polynomial in $m$ variables, and
\begin{equation}\label{eqn:kappas}
\kappa_k(\tau_1,...,\tau_{m-1})=\zeta^{2k+1}\prod_{r=1}^{m-1}\exp\left(\frac{1}{m}\zeta^{(2k+1)r}\tau_r \right), \quad  \zeta:=\exp(\pi\sqrt{-1}/m).
\end{equation}
%It is straightforward to extract orbi-disk invariants from these formulas.
Using these calculations, the SYZ mirror of $[\C^2/\Z_m]$ can be written in a nice form as follows.
Recall that the mirror curve is given by $uv = 1 + z^m + \sum_{j=1}^{m-1} (\tau_j + \delta_{\nu_j}(\tau)) z^j$.
As $\tau_j + \delta_{\nu_j}(\tau) = f_j(\tau) = (-1)^{m-j}e_{m-j}(\kappa_0,...,\kappa_{m-1})$ and it is easy to check that $1 = (-1)^{m} \kappa_0 \cdots \kappa_{m-1}$, the SYZ mirror of $[\C^2/\Z_m]$ is given by
\begin{equation}\label{eq:ammirf}
 uv = \prod_{j=0}^{m-1} ( z - \kappa_j).
\end{equation}

For the crepant resolution $Y$ of $X = \cpx^2 / \Z_m$, its genus 0 open GW invariants have been computed in \cite{LLW_surfaces}.  The result can be stated as follows.  Let $D_0, \ldots, D_m$ be the toric prime divisors corresponding to the primitive generators $(0,1), \ldots, (m,1)$ of the fan, $\beta_1,\ldots,\beta_m$ be the corresponding basic disks, and $q_i$ for $i = 1,\ldots,m-1$ be the K\"ahler parameters corresponding to the $(-2)$-curves $D_i$.  It turns out that the generating functions of genus 0 open GW invariants
$$q_{j-1} q_{j-2}^2 \ldots q_1^{j-1} (1 + \delta_{j}(q)) = q_{j-1} q_{j-2}^2 \ldots q_1^{j-1} \left(\sum_{\alpha} n_{\beta_j + \alpha} q^{\alpha}\right)$$
are equal to the coefficients of $z^j$ of the polynomial
$ (1 + z)(1 + q_1 z)(1+q_1q_2z)\ldots (1+q_1\ldots q_{m-1}z).$

%\item
\noindent{(2)} $\bX=[\cpx^3/\Z_{2g+1}]$ (Example (2) in Section \ref{sect:eg_SYZ}). In this case $[\cpx^3/\Z_{2g+1}]$ is obtained as the quotient orbifold of $\cpx^3$ by the $\Z_{2g+1}$-action with weights $(1,1,2g-1)$. The standard $(\cpx^*)^3$-action on $\cpx^3$ commutes with this $\Z_{2g+1}$-action and induces a $(\cpx^*)^3$-action on the quotient $[\cpx^3/\Z_{2g+1}]$.

There is an alternative route to derive the mirror map of $[\cpx^3/\Z_{2g+1}]$ as follows. The $J$-function of $(\cpx^*)^3$-equivariant GW theory of $[\cpx^3/\Z_{2g+1}]$ coincides with a suitable {\em twisted} $J$-function of the orbifold $B\Z_{2g+1}$, considered in \cite{orbQRR} and \cite{CCIT09}. The $J$-function of $B\Z_{2g+1}$ has been computed in \cite{Jarvis-Kimura_BG} (see also \cite[Proposition 6.1]{CCIT09}:
\begin{equation*}
J^{B\Z_{2g+1}}(y,z)=\sum_{k_0,...,k_{2g}\geq 0}\frac{1}{z^{k_0+...+k_{2g}}}\frac{y_0^{k_0}...y_{2g}^{k_{2g}}}{k_0!...k_{2g}!}{\bf 1}_{\langle \sum_{i=0}^{2g} i\frac{k_i}{2g+1}\rangle}.
\end{equation*}
The twisted GW theory we need is the GW theory of $B\Z_{2g+1}$ twisted by the inverse $(\cpx^*)^3$-equivariant Euler class and the vector bundle $L_1\oplus L_1\oplus L_{2g-1}$, where $L_k$ is the line bundle on $B\Z_{2g+1}$ defined by the $1$-dimensional representation $\cpx_k$ of $\Z_{2g+1}$ on which $1\in \Z_{2g+1}$ acts with eigenvalue $\exp(\frac{2\pi\sqrt{-1}k}{2g+1})$. The generalities of twisted GW theory are developed in \cite{orbQRR}. The $J$-function of the twisted GW theory can be computed by applying \cite[Theorem 4.8]{CCIT09}:
\begin{equation*}
I^{tw}(y,z)= \sum_{k_0,...,k_{2g}\geq 0}\frac{M_{1,k}M_{2,k}M_{3,k}}{z^{k_0+...+k_{2g}}}\frac{y_0^{k_0}...y_{2g}^{k_{2g}}}{k_0!...k_{2g}!}{\bf 1}_{\langle \sum_{i=0}^{2g} i\frac{k_i}{2g+1}\rangle},
\end{equation*}
where
\begin{equation*}
\begin{split}
M_{1,k} & = \prod_{m=0}^{\lfloor b(k)\rfloor-1}\left(\lambda_1-\left(\langle b(k)\rangle+m \right)z\right),\
M_{2,k} = \prod_{m=0}^{\lfloor b(k)\rfloor-1}\left(\lambda_2-\left(\langle b(k)\rangle+m \right)z\right),\\
M_{3,k} & = \prod_{N(k)+1\leq m\leq 0}\left(\lambda_3+\left(m-\left(1-\langle c(k)\rangle\right)\right)z\right),\\
\end{split}
\end{equation*}
and
\begin{equation*}
\begin{split}
b(k):= \sum_{i=1}^{2g}\frac{ik_i}{2g+1}, \ c(k):=-\sum_{i=1}^{2g}\frac{ik_i}{2g+1}(2g-1), \
N(k):=1+\sum_{i=1}^{2g}\lfloor \frac{i(2g-1)}{2g+1}\rfloor k_i+\lfloor c(k)\rfloor.
\end{split}
\end{equation*}
Here $\lambda_k, k=1,2,3$ is the weight of the $k$-th factor of $(\cpx^*)^3$ acting on the $k$-th factor of $\cpx^3$.
By \cite[Theorem 4.8]{CCIT09} it is then straightforward to extract the $J$-function of $[\cpx^3/\Z_{2g+1}]$, the mirror map, and generating functions of orbi-disk invariants from $I^{tw}(y,z)$. We leave the details to the readers.

%\item
\noindent{(3)} $\bX = [\cpx^n / \Z_n]$ (Example (3) in Section \ref{sect:eg_SYZ}).  In this case there is only one twisted sector $\nu$ of age one. Let $\tau$ be the corresponding orbifold parameter.  The toric mirror map has been computed explicitly in \cite{CCLT12}:
$$ \tau = g(y) = \sum_{k=0}^{\infty} \frac{((-\frac{1}{n}) \ldots (1-k-\frac{1}{n}))^n}{(kn+1)!} y^{kn+1}.$$
Then Theorem \ref{thm:orbi_disk_gen_function} tells us that the generating function
$\tau + \delta_\nu(\tau) = \sum_{k \geq 1} \frac{\tau^{k}}{k!} n_{1,k,\beta_\nu} ([\pt]_L; (\mathbf{1}_\nu)^{k})$
of genus 0 open orbifold GW invariants is equal to the inverse series of $g(y)$.

The total space of the canonical line bundle of $\proj^{n-1}$, $Y = K_{\proj^{n-1}}$, is a crepant resolution of $X = \cpx^n / \Z_n$.  Its cohomology is generated by the line class $l$ of $\proj^{n-1}$; let $q$ denote the corresponding K\"ahler parameter.  Let $\beta_0$ be the basic disk class corresponding to the zero-section.  The generating function of genus 0 open GW invariants
$ 1 + \delta (q) = \sum_{k \geq 0} n_{\beta_0 + kl} q^{k}$
is equal to $\exp g(y)$, where
$$ g(y) = \sum_{k>0} (-1)^{nk} \frac{(nk-1)!}{(k!)^n} y^k,$$
and $q$ and $y$ are related by the mirror map $q = y \exp (-n g(y))$.

%\item
\noindent{(4)} $\bX = K_{\mathbb{F}_2}$ (Example \ref{KF2}).  $\bX$ is a smooth toric manifold whose fan has primitive generators $\bb_0 = (0,0,1)$, $\bb_1 = (-1,1,1)$, $\bb_2 = (0,1,1)$, $\bb_3=(1,1,1)$ and $\bb_4=(0,-1,1)$.  Note that the Hirzebruch surface $\mathbb{F}_2$ is not Fano (but semi-Fano).  We remark that $\bX = K_{\mathbb{F}_2}$ is a new example whose open GW invariants were not computed in previous works.

The primitive generators which are not vertices of $\mathcal{P}$ (the convex hull of $\bb_1$, $\bb_3$ and $\bb_4$) are $\bb_0$ and $\bb_2$.  Hence
$n_{\beta_i + \alpha} = 0$
for $i=1,3,4$ and $\alpha \neq 0$.  Also $n_{\beta_i} = 1$ for $i=0,\ldots,4$.  Only the open GW invariants $n_{\beta_0 + \alpha}$ and $n_{\beta_2 + \alpha}$ for $\alpha \neq 0$ can be non-trivial.%, and we will compute them below.

Take $p_1 = D_0, p_2 = D_2$ to be the basis of $H^2(\bX, \rat)$, and let $C_1,C_2$ be the dual basis.  Denote the $(-2)$ exceptional curve class of $\mathbb{F}_2$ by $e$, and denote the fiber curve class of $\mathbb{F}_2$ by $f$.  $e$ and $f$ form a basis of $H_2(\bX; \Z)$.  By computing the intersection numbers of $e$ and $f$ with $p_1$ and $p_2$, we obtain the relations
$f = C_2 - 2 C_1$ and
$e = -2 C_2$.

The K\"ahler parameters of $C_1$ and $C_2$ are denoted as $q_1$ and $q_2$ respectively, while that of $e$ and $f$ are denoted as $q^e$ and $q^f$ respectively.  we have $q^f = q_2 q_1^{-2}, q^e = q_2^{-2}$.
The corresponding parameters of the complex moduli of the mirror are denoted by $(y_1,y_2)$, and we have $y^f = y_2 y_1^{-2}, y^e = y_2^{-2}$.
The mirror map is given by
$q_1 = y_1 \exp (A_1^{\bX}(y_1,y_2)),\ q_2 = y_2 \exp (A_2^{\bX}(y_1,y_2))$,
where
$$A^\bX_j(y) := \sum_{d\in \Omega^\bX_j}y^d \frac{(-1)^{-\langle D_j,d\rangle-1}(-\langle D_j,d\rangle-1)!}{\prod_{i\neq j}\langle D_i,d\rangle!}$$
by Equation \eqref{eqn:toric_mirror_map_revised1}, and $\Omega^\bX_j := \{d\in \mathbb{K}_\mathrm{eff} \mid \langle D_j,d\rangle \in \Z_{<0}\textrm{ and } \langle D_i,d\rangle \in \Z_{\geq 0}\ \forall i\neq j\}$.

First consider $A_2^{\bX}$.  For $C = a e + b f$ where $a, b \in \Z$, $C \cdot D_2 = -2 a + b < 0$ and $C \cdot D_0 = -2b \geq 0$ imply that $b = 0$ and $a \geq 0$.  Also $C \cdot D_i \geq 0$ for $i\not= 2$.  Hence $\Omega^\bX_2 = \{k e: k \in \nat\}$, and
$$A^\bX_2(y_1,y_2) = \sum_{k=1}^\infty y^{ke} \frac{(-1)^{2k-1}(2k-1)!}{(k!)^2} = -\log 2 + \log (1 + \sqrt{1-4 y^e}). $$
Thus
$$ q^e = q_2^{-2} = y^e \exp (-2 A_2^{\bX}(y_1,y_2)) = \frac{4 y^e}{(1 + \sqrt{1-4 y^e})^2}. $$
Taking the inverse, we obtain
$$ y^e = \frac{q^e}{(1+q^e)^2}, \quad y_2 = y^{-e/2} = (1+q^e) q_2.  $$
Comparing with $y_2 = q_2 \exp(-A_2^{\bX}(y_1,y_2))$, this implies
$\exp(-A_2^{\bX}(y_1,y_2)) = 1+q^e $
under the mirror map.  By Theorem \ref{thm:sm_disk_gen_function}, we have
$ \sum_{\alpha} n_{\beta_2 + \alpha} q^\alpha = 1 + q^e. $
Thus $n_{\beta_2 + \alpha} =1$ when $\alpha = 0, e$, and zero for all other classes $\alpha$.

The hypergeometric series $A^\bX_2$ above also gives the mirror map of  $\mathbb{F}_2$.  This is the analytic reason why the open GW invariants above are the same as those of $\mathbb{F}_2$:
$ n^{\bX}_{\beta_2 + \alpha} = n^{\mathbb{F}_2}_{\beta_2 + \alpha}. $
It is geometrically intuitive: the bubbling contributions of the curve class $e$ to $\beta_2$ in $\mathbb{F}_2$ are the same as that in $K_{\mathbb{F}_2}$, because $D_2$ in $K_{\mathbb{F}_2}$ is just the product of the corresponding divisor in $\mathbb{F}_2$ with the complex line $\C$.

Now consider $A_1^{\bX}$.  For $C = a e + b f$ where $a, b \in \Z$,
$C \cdot D_2 = -2 a + b \geq 0, C \cdot D_0 = -2b < 0$
imply that $b \geq 2a > 0$.  Also $C \cdot D_i \geq 0$ for $i\not= 2$.  Hence $\Omega^\bX_1 = \{kf + a(e+2f):  a \in \nat, k \in \Z_{\geq 0}\}$, and
$$A^\bX_1(y_1,y_2) = \sum_{a=1}^\infty \sum_{k=0}^{\infty} y^{kf + a(e+2f)} \frac{(-1)^{2(2a+k)-1}(2(2a+k)-1)!}{(k!)(a!)^2(2a+k)!}. $$
By Theorem \ref{thm:sm_disk_gen_function}, this gives
$ \sum_{\alpha} n_{\beta_0 + \alpha} q^\alpha = \exp(-A_1^{\bX}(y_1(q),y_2(q))) $
where the mirror map $q(y)$ is
\begin{align*}
q^f = y^f \exp (-2 A_1(y^e,y^f) + A_2(y^e)), \ q^e = y^e \exp (-2 A_2(y^e)).
\end{align*}

The following table can be obtained by inverting the mirror map using computers:\\

\begin{center}
	\begin{tabular}{|c|c|c|c|c|c|c|c|}
		\hline
		$n_{\beta_0 + ae+bf}$ & $a=0$ & $a=1$ & $a=2$ & $a=3$ & $a=4$ & $a=5$ & $a=6$ \\
		\hline
		$b=0$ & $1$ & $0$ & $0$ & $0$ & $0$ & $0$ & $0$ \\
		\hline
		$b=1$ & $0$ & $0$ & $0$ & $0$ & $0$ & $0$ & $0$ \\
		\hline
		$b=2$ & $0$ & $-3$ & $0$ & $0$ & $0$ & $0$ & $0$ \\
		\hline
		$b=3$ & $0$ & $-20$ & $-20$ & $0$ & $0$ & $0$ & $0$ \\
		\hline
		$b=4$ & $0$ & $-105$ & $-294$ & $-105$ & $0$ & $0$ & $0$ \\
		\hline
		$b=5$ & $0$ & $-504$ & $-2808$ & $-2808$ & $-504$ & $0$ & $0$ \\
		\hline
		$b=6$ & $0$ & $-2310$ & $-21835$ & $-42867$ & $-21835$ & $-2310$ & 0 \\
		\hline		
	\end{tabular}
\end{center}
%\end{enumerate}

\section{Open mirror theorems}\label{sec:open_mirror_thms}

In this section we define the SYZ map, and prove an open mirror theorem which says that the SYZ map coincides with the inverse of the toric mirror map. For toric CY manifolds, this theorem implies that the inverse of a mirror map defined using period integrals (so this is {\em not} the toric mirror map) can be expressed explicitly in terms of generating functions of genus 0 open GW invariants defined by Fukaya-Oh-Ohta-Ono \cite{FOOO1}. This confirms in the affirmative a conjecture of Gross-Siebert \cite[Conjecture 0.2]{gross07}, which was later made precise in \cite[Conjecture 1.1]{CLL} in the toric CY case.

\subsection{The SYZ map} \label{sec:two_mirror_maps}

\subsubsection{K\"ahler moduli} \label{sec:kahler_moduli}
As before, $\bX$ is a toric CY orbifold as in Setting \ref{setting:toricCY}. Let $\widetilde{C}_\bX\subset \mathbb{L}^\vee\otimes \R$ be the extended K\"ahler cone of $\bX$ as defined in Section \ref{sec:toric_mirror_theorem}. Recall that there is a splitting $\widetilde{C}_\bX = C_\bX+\sum_{j=m}^{m'-1} \mathbb{R}_{>0}D_j\subset \mathbb{L}^\vee\otimes \R$, where $C_\bX \subset H^2(\bX;\R)$ is the K\"ahler cone of $\bX$. We define the complexified (extended) K\"ahler moduli space of $\bX$ as
$$\mathcal{M}_K(\bX) := \left(\widetilde{C}_\bX+\sqrt{-1}H^2(\bX,\R)\right)/H^2(\bX,\Z) + \sum_{j=m}^{m'-1} \C D_j.$$
%where $\C_+ := \{a+\sqrt{-1}b \in \C \mid a>0\}$ is the right half-plane in $\C$.
Elements of $\mathcal{M}_K(\bX)$ are represented by complexified (extended) K\"ahler class
$\omega^\C = \omega + \sqrt{-1}B+ \sum_{j=m}^{m'-1}\tau_j D_j,$
where $\omega\in C_\bX$, $B\in H^2(\bX,\R)$ and $\tau_j \in \C$.

We identify $\mathcal{M}_K(\bX)$ with $(\Delta^*)^{r'} \times \C^{r-r'}$, where $\Delta^*$ is the punctured unit disk,
via the coordinates $q_a = \exp \left(-2\pi\int_{\gamma_a} \left(\omega + \sqrt{-1}B \right)\right)$ for $a = 1, \ldots, r'$ and
$\tau_j \in \C,\quad j = m, \ldots, m'-1$,
\begin{comment}
\begin{equation*}
\begin{split}
q_a & = \exp \left(-2\pi\int_{\gamma_a} \left(\omega + \sqrt{-1}B \right)\right),\quad a = 1, \ldots, r',\\
\tau_j & \in \C,\quad j = m, \ldots, m'-1,
\end{split}
\end{equation*}
\end{comment}
where $\{\gamma_1, \ldots, \gamma_{r'}\}$ is the integral basis of $H_2(\bX;\Z)$ we chose in Section \ref{sec:toric_mirror_theorem}.
A partial compactification of $\mathcal{M}_K(\bX)$ is given by $(\Delta^*)^{r'}\times \C^{r-r'} \subset \Delta^{r'}\times \C^{r-r'}$.

\begin{comment}
Recall that the SYZ mirror of $\bX$ equipped with a Gross fibration $\mu:\bX \to B$ is given by
\begin{equation*}
\check{\bX}_{q,\tau} = \left\{(u, v, z_1, \ldots, z_{n-1})\in \C^2\times (\C^\times)^{n-1} \mid uv = G_{(q,\tau)}(z_1,\ldots,z_{n-1}) \right\},
\end{equation*}
where
$$G_{(q,\tau)}(z_1,\ldots,z_{n-1}) = \sum_{i=0}^{m-1} C_i(1+\delta_i) z^{\bb_i} + \sum_{j=m}^{m'-1} C_{\nu_j} (\tau_{\nu_j}+\delta_{\nu_j})z^{\nu_j},$$
and the coefficients $C_i, C_{\nu_j} \in \C$ are subject to the following constraints (put $q^{D^\vee_j} = \prod_{a=1}^{r'} q_a^{\langle p_a, D^\vee_j\rangle}$):
\begin{equation*}
\begin{split}
\prod_{i=0}^{m-1} C_i^{Q_{ia}} & = q_a, \quad a = 1,\ldots,r',\\
\prod_{i=0}^{m-1} C_i^{Q_{ia}} \prod_{j=m}^{m'-1} C_{\nu_j}^{Q_{ja}}
& = \prod_{j=m}^{m'-1} \left(q^{D^\vee_j}\right)^{-Q_{ja}}, \quad a = r'+1,\ldots,r.
\end{split}
\end{equation*}
\end{comment}

\subsubsection{Complex moduli}

On the mirror side, recall that
$$\mathcal{P} \cap N = \{\bb_0,\ldots,\bb_{m-1},\bb_m,\ldots,\bb_{m'-1}\}$$
and $\mathcal{P}$ is contained in the hyperplane $\{v\in N_\R \mid \pairing{(0,1)}{v} = 1\}$. Denote by $L(\mathcal{P}) \simeq \C^{m'}$ the space of Laurent polynomials $G\in \C[z_1^{\pm1},\ldots,z_{n-1}^{\pm1}]$ of the form $\sum_{i=0}^{m'-1} C_i z^{\bb_i}$, i.e. those with Newton polytope $\mathcal{P}$. Let $\proj_\mathcal{P}$ be the projective toric variety defined by the normal fan of $\mathcal{P}$. In Batyrev \cite{B93}, a Laurent polynomial $G\in L(\mathcal{P})$ is defined to be {\em $\mathcal{P}$-regular} if the intersection of the closure $\bar{Z}_f\subset\proj_\mathcal{P}$, of the associated affine hypersurface $Z_f := \{(z_1,\ldots,z_{n-1})\in(\C^\times)^{n-1} \mid f(z_1,\ldots,z_{n-1})=0\}$ in $(\C^\times)^{n-1}$, with every torus orbit $O\subset\proj_\mathcal{P}$ is a smooth subvariety of codimension 1 in $O$. Denote by $L_\textrm{reg}(\mathcal{P})$ the space of all $\mathcal{P}$-regular Laurent polynomials.

Following Batyrev \cite{B93} and Konishi-Minabe \cite{konishi09}, we define the complex moduli space $\mathcal{M}_\C(\check{\bX})$ of the mirror $\check{\bX}$ to be the GIT quotient of $L_\textrm{reg}(\mathcal{P})$ by a natural $(\C^\times)^n$-action, which is nonempty and has complex dimension $r = m'-n$ \cite{B93}. It parametrizes a family of non-compact CY manifolds $\{\check{\bX}_y\}$:
\begin{equation}\label{eqn:CY_mirror_family}
\check{\bX}_y := \left\{(u, v, z_1, \ldots, z_{n-1})\in \C^2\times (\C^\times)^{n-1} \mid uv = G_y(z_1,\ldots,z_{n-1}) \right\},
\end{equation}
where
$$G_y(z_1,\ldots,z_{n-1}) = \sum_{i=0}^{m-1} \check{C}_i z^{\bb_i} + \sum_{j=m}^{m'-1} \check{C}_{\nu_j} z^{\nu_j},$$
and the coefficients $\check{C}_i, \check{C}_{\nu_j} \in \C$ are subject to the following constraints:
\begin{equation*}
\begin{split}
\prod_{i=0}^{m-1} \check{C}_i^{Q_{ia}} & = y_a, \quad a = 1,\ldots,r',\\
\prod_{i=0}^{m-1} \check{C}_i^{Q_{ia}}\prod_{j=m}^{m'-1} \check{C}_{\nu_j}^{Q_{ja}} & = y_a, \quad a = r'+1,\ldots,r.
\end{split}
\end{equation*}
Note that the non-compact CY manifolds in the family \eqref{eqn:CY_mirror_family} may become singular and develop orbifold singularities when some of the $y_a$'s go to zero.
%We identify $\mathcal{M}_\C(\check{\bX})$ with a punctured polydisk $(\Delta^*)^r$ and consider the natural partial compactification $\mathcal{M}_\C(\check{\bX})=(\Delta^*)^r\subset \Delta^r$.

To define period integrals, let $\check{\Omega}_y$ be the holomorphic volume form on $\check{\bX}_y$ defined by
\begin{equation*}
\check{\Omega}_y = \mathrm{Res}\left(\frac{1}{uv-G_y(z_1,\ldots,z_{n-1})} d\log z_0\wedge \cdots \wedge d \log z_{n-1} \wedge du\wedge dv \right),
\end{equation*}
where $G_y(z_1,\ldots,z_{n-1}) := \sum_{i=0}^{m-1} \check{C}_i z^{\bb_i} + \sum_{j=m}^{m'-1} \check{C}_{\nu_j} z^{\nu_j}$.
\subsubsection{Two mirror maps}

\begin{defn}\label{defn:SYZ_map}
We define the {\em SYZ map} as follows:
\begin{equation}\label{eqn:SYZ_map}
\begin{split}
\mathcal{F}^{\mathrm{SYZ}} & :\mathcal{M}_K(\bX)\to \mathcal{M}_\C(\check{\bX}), \quad y \mapsto \mathcal{F}^{\mathrm{SYZ}}(q, \tau)\\
y_a & := q_a \prod_{i=0}^{m-1}\left(1+\delta_i\right)^{Q_{ia}}, \quad a = 1,\ldots,r',\\
y_a & := \prod_{i=0}^{m-1}\left(1+\delta_i\right)^{Q_{ia}}
\prod_{j=m}^{m'-1}\left(q^{-D^\vee_j} \left(\tau_{\nu_j}+\delta_{\nu_j}\right)\right)^{Q_{ja}}, \quad a = r'+1,\ldots,r,
\end{split}
\end{equation}
where $q^{-D^\vee_j} := \prod_{a=1}^{r'} q_a^{\langle p_a, D^\vee_j\rangle}$, and $1+\delta_i$ and $\tau_{\nu_j}+\delta_{\nu_j}$ are the generating functions of genus 0 open orbifold GW invariants in $\bX$ relative to a Lagrangian torus fiber of a Gross fibration $\mu:\bX \to B$, defined in \eqref{eqn:generating_functions_open_GW}.
\end{defn}
By Theorems \ref{thm:sm_disk_gen_function} and \ref{thm:orbi_disk_gen_function}, we have
\begin{equation}\label{eqn:formula_for_generating_function_sm_disk_F_r}
1 + \delta_i = \exp\left(-A^\bX_i(y(q,\tau))\right), \text{ for } i = 0,1,\ldots,m-1,
\end{equation}
\begin{equation}\label{eqn:formula_for_generating_function_orb_disk_F_r}
\tau_{\nu_j} + \delta_{\nu_j} = y^{D^\vee_j} \exp\left(-\sum_{i\notin I_j}c_{ji}A^\bX_i(y(q,\tau)) \right), \text{ for } j = m,m+1,\ldots,m'-1.
\end{equation}

On the other hand, recall that the toric mirror map \eqref{eqn:toric_mirror_map_X} for $\bX$ is given by
\begin{equation*}
\begin{split}
\mathcal{F}^{\mathrm{mirror}} & :\mathcal{M}_\C(\check{\bX})\to \mathcal{M}_K(\bX), \quad (q, \tau) \mapsto \mathcal{F}^{\mathrm{mirror}}(y)\\
q_a & = y_a \prod_{j=0}^{m-1} \exp\left(A^\bX_j(y)\right)^{Q_{ja}}, \quad a = 1,\ldots,r',\quad
\tau_{\bb_j}  = A^\bX_j(y), \quad j=m,\ldots,m'-1.
\end{split}
\end{equation*}

\subsection{Open mirror theorems}

\subsubsection{Proof of Theorem \ref{thm:open_mirror}}
%We are now ready to prove one of the main results in this paper:
%\begin{thm}[Open mirror theorem for toric CY orbifolds - Version 1]\label{thm:open_mirror}
%Let $\bX$ be a toric CY orbifold $\bX$ as in Setting \ref{setting:toricCY}. Then locally around $(q,\tau) = 0$, the SYZ map is inverse to the toric mirror map, i.e. we have
%\begin{equation}\label{eqn:SYZ=mirror}
%\mathcal{F}^{\mathrm{SYZ}} = \left(\mathcal{F}^{\mathrm{mirror}}\right)^{-1}.
%\end{equation}
%In particular, this holds for a semi-projective toric CY manifold.
%\end{thm}
%\begin{proof}[Proof of Theorem \ref{thm:open_mirror}]
Recall that the toric mirror map $\mathcal{F}^{\mathrm{mirror}}$ is a local isomorphism near $y = 0$, so we can consider its inverse $\left(\mathcal{F}^{\mathrm{mirror}}\right)^{-1}$ given by $y = y(q,\tau)$ near $(q,\tau) = 0$.

For $a = 1, \ldots, r'$, we have, by the formula \eqref{eqn:formula_for_generating_function_sm_disk_F_r},
\begin{align*}
\log q_a + \sum_{i=0}^{m-1} Q_{ia}(1 + \delta_i) = \log q_a - \sum_{i=0}^{m-1} Q_{ia} A^\bX_i(y(q,\tau)) = \log y_a.
\end{align*}

For $a = r'+1, \ldots, r$, we have, by the formulas \eqref{eqn:formula_for_generating_function_sm_disk_F_r} and \eqref{eqn:formula_for_generating_function_orb_disk_F_r},
\begin{equation}\label{eqn:y_a}
\begin{split}
& \sum_{j=m}^{m'-1} Q_{ja} \left(\log q^{-D^\vee_j} + \log(\tau_{\nu_j} + \delta_{\nu_j})\right)\\
= & \sum_{j=m}^{m'-1} Q_{ja} \left(-\sum_{b=1}^{r'}\langle p_b, D^\vee_j\rangle \log q_b + \sum_{b=1}^r\langle p_b, D^\vee_j\rangle \log y_b
- \sum_{i\notin I_j} c_{ji}A^\bX_i(y(q,\tau))\right)\\
= & \sum_{b=r'+1}^r\left(\sum_{j=m}^{m'-1} Q_{ja}\langle p_b, D^\vee_j\rangle \right)\log y_b
+ \sum_{j=m}^{m'-1} Q_{ja}\left(\sum_{b=1}^{r'} \langle p_b, D^\vee_j\rangle \log \left(y_b q_b^{-1}\right)\right) - \sum_{j=m}^{m'-1} Q_{ja} \left(\sum_{i\notin I_j} c_{ji}A^\bX_i(y(q,\tau))\right).
\end{split}
\end{equation}

Now, the definition of $D^\vee_j$ implies that $\langle D_i, D^\vee_j\rangle = \delta_{ij}$ for $m\leq i,j\leq m'-1$. Since $D_i = \sum_{a=1}^r Q_{ia}p_a$ and $Q_{ia} = 0$ for $1\leq a\leq r'$ and $m\leq i\leq m'-1$, we have $\sum_{a=r'+1}^r Q_{ia}\langle p_a, D^\vee_j\rangle = \delta_{ij}$ for $m\leq i,j\leq m'-1$. This shows that the $(r-r')\times (r-r')$ square matrices $(Q_{ia})$ and $(\langle p_a, D^\vee_i\rangle)$ (where $m\leq i \leq m'-1$ and $r'+1\leq a\leq r$) are inverse to each other (note that $r-r'=m'-m$), so
$\sum_{j=m}^{m'-1} Q_{ja}\langle p_b, D^\vee_j\rangle = \delta_{ab}$
for $r'+1\leq a,b\leq r$. Hence the first term of the last expression in \eqref{eqn:y_a} is precisely $\log y_a$.

On the other hand, we have
\begin{align*}
\sum_{b=1}^{r'} \langle p_b, D^\vee_j\rangle \log \left(y_b q_b^{-1}\right)
 = \sum_{b=1}^{r'} \langle p_b, D^\vee_j\rangle \left(-\sum_{k=0}^{m-1} Q_{kb}A^\bX_k(y)\right)
 = -\sum_{k=0}^{m-1} \left( \sum_{b=1}^{r'} Q_{kb} \langle p_b, D^\vee_j\rangle\right) A^\bX_k(y),
\end{align*}
and using the above formula $\sum_{j=m}^{m'-1} Q_{ja}\langle p_b, D^\vee_j\rangle = \delta_{ab}$ again, we can write
\begin{align*}
\sum_{k=0}^{m-1} Q_{ka} \log(1 + \delta_k) & = - \sum_{k=0}^{m-1} Q_{ka} A^\bX_k(y) = - \sum_{k=0}^{m-1} \left(\sum_{b=r'+1}^r Q_{kb}\left(\sum_{j=m}^{m'-1}Q_{ja} \langle p_b, D^\vee_j\rangle\right)\right) A^\bX_k(y)\\
& = - \sum_{j=m}^{m'-1} Q_{ja} \left(\sum_{b=r'+1}^r \langle p_b, D^\vee_j\rangle \left(\sum_{k=0}^{m-1} Q_{kb}A^\bX_k(y) \right) \right)
\end{align*}

We compute the sum
\begin{align*}
& \sum_{k=0}^{m-1} Q_{ka} \log(1 + \delta_k)
+ \sum_{j=m}^{m'-1} Q_{ja} \left(\sum_{b=1}^{r'} \langle p_b, D^\vee_j\rangle \log \left(y_b q_b^{-1}\right)\right)\\
= & - \sum_{j=m}^{m'-1} Q_{ja} \left(\sum_{b=r'+1}^r \langle p_b, D^\vee_j\rangle \left(\sum_{k=0}^{m-1} Q_{kb}A^\bX_k(y) \right) \right)%\\
%& \qquad
- \sum_{j=m}^{m'-1} Q_{ja} \left(\sum_{b=1}^{r'} \langle p_b, D^\vee_j\rangle \left(\sum_{k=0}^{m-1} Q_{kb}A^\bX_k(y) \right) \right)\\
%= & - \sum_{j=m}^{m'-1} Q_{ja} \left(\sum_{k=0}^{m-1} \left( \sum_{b=1}^r Q_{kb} \langle p_b, D^\vee_j\rangle\right) A^\bX_k(y)\right)\\
= & - \sum_{j=m}^{m'-1} Q_{ja} \left(\sum_{k=0}^{m-1} \langle D_k, D^\vee_j\rangle A^\bX_k(y)\right)
=  \sum_{j=m}^{m'-1} Q_{ja} \left(\sum_{k\notin I_j} c_{jk} A^\bX_k(y)\right),
\end{align*}
which cancels with the third term of the last expression in \eqref{eqn:y_a}. Hence we conclude that
$$\sum_{i=0}^{m-1} Q_{ia} \log(1 + \delta_i) + \sum_{j=m}^{m'-1} Q_{ja} \left(\log q^{-D^\vee_j} + \log(\tau_{\nu_j} + \delta_{\nu_j})\right) = \log y_a$$
for $a = r'+1, \ldots, r$. This proves the theorem.
%\end{proof}

\subsubsection{Connection with period integrals}
Traditionally, mirror maps are defined in terms of period integrals, which are integrals $\int_\Gamma \check{\Omega}_y$ of the holomorphic volume form $\check{\Omega}_y$ over middle-dimensional cycles $\Gamma \in H_n(\check{\bX}_y;\C)$ (see, e.g. \cite[Chapter 6]{Cox-Katz}). Theorem \ref{thm:period_mirror} shows that the inverse of such a mirror map also coincides with the SYZ map.
%\begin{thm}[Open mirror theorem for toric CY orbifolds - Version 2]\label{thm:period_mirror}
%Let $\bX$ be a toric CY orbifold $\bX$ as in Setting \ref{setting:toricCY}. Then there exist linearly independent cycles $\Gamma_1,\ldots,\Gamma_r\in H_n(\check{\bX}_y;\C)$ such that
%\begin{equation}\label{eqn:SYZ=period}
%\begin{split}
%q_a & = \exp\left(-\int_{\Gamma_a}\check{\Omega}_{\mathcal{F}^\mathrm{SYZ}(q,\tau)}\right), \quad a = 1,\ldots,r',\\
%\tau_{\bb_j} & = \int_{\Gamma_{j-m+r'+1}}\check{\Omega}_{\mathcal{F}^\mathrm{SYZ}(q,\tau)}, \quad j = m,\ldots,m'-1.
%\end{split}
%\end{equation}
%where $\mathcal{F}^\mathrm{SYZ}(q,\tau)$ is the SYZ map in Definition~\ref{defn:SYZ_map}.
%\end{thm}
When $\bX$ is a toric CY {\em manifold}, we do not have extra vectors so that $m'=m$ and $r=r'$, and there are no twisted sectors insertions in the invariants $n^\bX_{1,l,\beta_i+\alpha}([\mathrm{pt}]_L)$. Theorem \ref{thm:period_mirror} in this case specializes to Corollary \ref{cor:period_mirror_manifold}.
%\begin{corollary}[Open mirror theorem for toric CY manifolds]\label{cor:period_mirror_manifold}
%Let $\bX$ be a semi-projective toric CY manifold. Then there exist linearly independent cycles $\Gamma_1,\ldots,\Gamma_r\in H_n(\check{\bX}_y;\C)$ such that
%\begin{equation*}
%q_a = \exp\left(-\int_{\Gamma_a}\check{\Omega}_{\mathcal{F}^\mathrm{SYZ}(q,\tau)}\right), \quad a = 1,\ldots,r,
%\end{equation*}
%where $\mathcal{F}^\mathrm{SYZ}(q)$ is the SYZ map in Definition~\ref{defn:SYZ_map}, now defined in terms of the generating functions $1+\delta_i$ of genus 0 open GW invariants $n^\bX_{1,l,\beta_i+\alpha}([\mathrm{pt}]_L)$.
%\end{corollary}

Theorem~\ref{thm:period_mirror} and Corollary~\ref{cor:period_mirror_manifold} give an enumerative meaning to period integrals, which was first envisioned by Gross and Siebert in \cite[Conjecture 0.2 and Remark 5.1]{gross07} where they conjectured that period integrals of the mirror can be interpreted as (virtual) counting of {\em tropical} disks (instead of holomorphic disks) in the base of an SYZ fibration for a {\em compact} CY manifold; in \cite[Example 5.2]{gross08}, they also observed a precise relation between the so-called {\em slab functions}, which appeared in their program, and period computations for the toric CY 3-fold $K_{\proj^2}$ in \cite{graber-zaslow01}. A more precise relation in the case of toric CY manifolds was later formulated in \cite[Conjecture 1.1]{CLL}.\footnote{It was wrongly asserted that the cycles $\Gamma_1,\ldots,\Gamma_r$ form a basis of $H_n(\check{\bX}_y;\C)$ in \cite[Conjecture 1.1]{CLL} while they should just be linearly independent cycles; see \cite[Conjecture 2]{CLT11} for the correct version.}

We point out that Corollary \ref{cor:period_mirror_manifold} is weaker than \cite[Conjecture 1.1]{CLL} because the cycles $\Gamma_1,\ldots,\Gamma_r$ are allowed to have complex coefficients instead of being {\em integral}. In the special case where $\bX$ is the total space of the canonical bundle over a compact toric Fano manifold, Corollary \ref{cor:period_mirror_manifold} was proven in \cite{CLT11}. As discussed in \cite[Section 5.2]{CLT11}, to enhance Corollary \ref{cor:period_mirror_manifold} to \cite[Conjecture 1.1]{CLL}, one needs to study the monodromy of $H_n(\check{\bX}_y;\Z)$ around the limit points in the complex moduli space $\mathcal{M}_\C(\check{\bX})$.

Theorem \ref{thm:period_mirror} is essentially a consequence of Theorem \ref{thm:open_mirror} and the analysis of the relationships between period integrals over $n$-cycles of the mirror and GKZ hypergeometric systems in \cite[Section 4]{CLT11}. Recall that the {\em Gel'fand-Kapranov-Zelevinsky (GKZ) system} \cite{GKZ89, GKZ90} of differential equations (also called $A$-hypergeometric system) associated to $\bX$, or to the set of lattice points $\Sigma(1)=\{\bb_0,\bb_1,\ldots,\bb_{m-1}\}$, is the following system of partial differential equations on functions $\Phi(\check{C})$ of $\check{C}=(\check{C}_0,\check{C}_1,\ldots,\check{C}_{m-1})\in \C^m$:
\begin{equation}\label{eq:GKZ}
\begin{split}
\left(\sum_{i=0}^{m-1} \bb_i \check{C}_i\partial_i\right)\Phi(\check{C}) & = 0,\\
\left(\prod_{i:\langle D_i, d\rangle>0} \partial_i^{\langle D_i, d\rangle} -
\prod_{i:\langle D_i, d\rangle<0} \partial_i^{-\langle D_i, d\rangle}\right)\Phi(\check{C}) & = 0,\quad d \in \mathbb{L},
\end{split}
\end{equation}
where $\partial_i = \partial/\partial \check{C}_i$ for $i=0,1,\ldots,m-1$. Note that the first equation in \eqref{eq:GKZ} consists of $n$ equations, so there are $n+r = m$ equations in total. By \cite[Proposition 14]{CLT11}, the period integrals
$$\int_\Gamma \check{\Omega}_y,\quad \Gamma\in H_n(\check{\bX}_y;\Z),$$
provide a $\C$-basis of solutions to the GKZ hypergeometric system \eqref{eq:GKZ}; see also \cite{hosono06} and \cite[Corollary A.16]{konishi09}.
Now Theorem \ref{thm:period_mirror} follows from the following
\begin{lemma}\label{lem:GKZ_mirror_map}
The components of the toric mirror map \eqref{eqn:toric_mirror_map_X} of a toric CY orbifold $\bX$,
\begin{equation*}
\begin{split}
\log q_a & = \log y_a + \sum_{j=0}^{m-1} Q_{ja}A^\bX_j(y), \quad a = 1,\ldots,r',\\
\tau_{\bb_j} & = A^\bX_j(y), \quad j=m,\ldots,m'-1,
\end{split}
\end{equation*}
are solutions to the GKZ hypergeometric system \eqref{eq:GKZ}.
\end{lemma}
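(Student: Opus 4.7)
My plan is to exhibit the components of the toric mirror map as scalar coefficients in the expansion of the $I$-function $I_\bX(y,z)$ along the cohomology basis $\{\bar{p}_a\}\cup\{\mathbf{1}_{\bb_j}\}$ of $H^{\leq 2}_{\mathrm{CR}}(\bX)$, and then invoke the fact that the $I$-function is a cohomology-valued solution to the GKZ hypergeometric system. This extends the approach of \cite[Section 4]{CLT11}, which handled the case $\bX = K_Y$ with $Y$ a toric Fano manifold, to the general toric CY orbifold setting of Setting \ref{setting:toricCY}.

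First I would recall the computation in Section~\ref{sec:toric_mir_maps}, which already shows that the $1/z$-coefficient of $I_\bX(y,z)$, expanded against $\{\bar{p}_a\}\cup\{\mathbf{1}_{\bb_j}\}$, equals $\sum_{a=1}^{r'} \log q_a(y)\,\bar{p}_a + \sum_{j=m}^{m'-1} \tau_{\bb_j}(y)\,\mathbf{1}_{\bb_j}$. After the substitution $y_a = \prod_{i} \check{C}_i^{Q_{ia}}$ into the mirror coordinates, it therefore suffices to verify that each of these scalar coefficient functions is annihilated by the differential operators in \eqref{eq:GKZ}.

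Next I would handle the two types of GKZ operators separately. The linear Euler-type equations $\sum_i \bb_i \check{C}_i\partial_i \Phi = 0$ reduce, via the chain rule, to the fan relation $\sum_i Q_{ia}\bb_i = 0$ encoded in the fan sequence \eqref{eqn:fan_seq}; this relation annihilates any power series in $y$ as well as the log-terms $\log y_a = \sum_i Q_{ia} \log \check{C}_i$, and is essentially formal. For the box operators $\square_d = \prod_{\langle D_i,d\rangle>0}\partial_i^{\langle D_i,d\rangle} - \prod_{\langle D_i,d\rangle<0}\partial_i^{-\langle D_i,d\rangle}$ with $d\in \mathbb{L}$, the strategy is to show that each Chen-Ruan component of $I_\bX(y,z)$ is separately annihilated by $\square_d$, an identity for toric orbifold $I$-functions in the spirit of the classical Gelfand-Kapranov-Zelevinsky identities, whose non-extended version is implicit in \cite{iritani09}. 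Linear independence of the basis $\{1\}\cup\{\bar{p}_a\}\cup\{\mathbf{1}_{\bb_j}\}\cup\cdots$ in $H^*_\mathrm{CR}(\bX;\C)$ then forces every cohomological coefficient, and in particular $\log q_a$ and $\tau_{\bb_j}$, to solve $\square_d \Phi = 0$.

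The hard part will be the termwise check in the previous step that $\square_d$ annihilates each Chen-Ruan component of $I_\bX$ when twisted sectors are present. In the manifold case this is an explicit factorial cancellation, as in \cite[Proposition 13]{CLT11}. In the orbifold case one must carefully track the reduction function $\nu(d)$ defined in \eqref{eqn:reduction_func}, the ceilings $\lceil\langle D_i, d\rangle\rceil$, and the fractional parts $\{-\langle D_i, d\rangle\}$ that determine which twisted sector each summand of $I_\bX$ contributes to, and then verify that these data match properly under the shift $d \mapsto d+d'$ induced by $\square_{d'}$. This bookkeeping, aided by Assumption \ref{assumption} and the extended stacky fan combinatorics, is the genuinely non-formal content of the proof; after it is in place, the scalar statement for the mirror map components follows immediately.
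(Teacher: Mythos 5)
Your plan is correct in substance and rests on the same key fact as the paper's argument: the mirror map components are the $H^{\leq 2}_{\mathrm{CR}}$-valued coefficients of the $1/z$-term of an $I$-function, the $I$-function is annihilated by GKZ-type operators, and linear independence of the classes $\{\bar{p}_a\}\cup\{\mathbf{1}_{\bb_j}\}$ extracts the scalar statement. The execution differs, though. The paper does not touch $I_\bX$ directly: it routes through the compactification $\bar{\bX}$ of Construction \ref{construction:compactification}, quotes \cite[Lemma 4.6]{iritani09} verbatim to get that $I_{\bar{\bX}}$ solves the system \eqref{eqn:GKZ_type} written in the logarithmic derivatives $\mathcal{D}_i=\sum_a Q_{ia}y_a\partial/\partial y_a$, observes that the components of \eqref{eqn:toric_mirror_map_X} sit inside the mirror map \eqref{eqn:toric_mirror_map_barX_1} of $\bar{\bX}$, and then converts $\Box_d$ into the operators of \eqref{eq:GKZ} via the identity $\prod_{\langle D_i,d\rangle>0}\partial_i^{\langle D_i,d\rangle}-\prod_{\langle D_i,d\rangle<0}\partial_i^{-\langle D_i,d\rangle}=\bigl(\prod_{\langle D_i,d\rangle>0}\check{C}_i^{-\langle D_i,d\rangle}\bigr)\Box_d$, restricted to $d\in\mathbb{L}\oplus 0\subset\bar{\mathbb{L}}$ where $\langle D_\infty,d\rangle=0$. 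You stay on $\bX$ and propose to re-derive the annihilation of each Chen--Ruan component termwise (the $\Gamma$-series cancellation, tracking $\nu(d)$ from \eqref{eqn:reduction_func}); this is legitimate and more self-contained, since the identity is a formal termwise computation and the box operators preserve each coset of $\mathbb{L}$ in $\mathbb{K}$, so the twisted components do separate as you intend -- but it amounts to reproving the quoted lemma, whereas the paper's detour through $\bar{\bX}$ reduces the new content to the elementary operator conversion and reuses the set-up of Section \ref{sec:computation_w_J}.

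The one step you should write out carefully -- and it is precisely the step the paper also dispatches in a single sentence -- is the homogeneity (Euler) part together with the dictionary between the $y$- and $\check{C}$-variables. Under $y_a=\prod_i\check{C}_i^{Q_{ia}}$ one has $y^d=\prod_i\check{C}_i^{\langle D_i,d\rangle}$, and the Euler operator multiplies this monomial by $\sum_i\langle D_i,d\rangle\bb_i$ summed only over those $i$ that carry a GKZ variable; since the classes $d$ contributing to the $A^\bX_j$'s can pair nontrivially with the extended divisors $D_m,\ldots,D_{m'-1}$ (and are only fractional for the twisted components), this sum is not automatically zero unless one keeps variables for all lattice points of $\mathcal{P}$, i.e.\ also the coefficients $\check{C}_{\nu_j}$ appearing in the mirror family \eqref{eqn:CY_mirror_family}, or otherwise accounts for the extended directions. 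So ``chain rule plus the fan relation'' is genuinely formal only after that convention is fixed; once it is, your termwise check and the linear-independence argument go through and give the lemma.
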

\begin{proof}
The proof is more or less the same as that of \cite[Theorem 12]{CLT11}, which in turn is basically a corollary of a result of Iritani \cite[Lemma 4.6]{iritani09}. We first fix $i_0 \in \{0,\ldots,m'-1\}$, and consider the corresponding toric compactification $\bar{\bX}$. For $i \in \{0,\ldots,m-1\} \cup \{\infty\}$, set
$$\mathcal{D}_i = \sum_{a \in \{1,\ldots,r\} \cup \{\infty\}} Q_{ia} y_a\frac{\partial}{\partial y_a},$$
and, for $d \in \bar{\mathbb{L}}$, we define a differential operator
\begin{equation*}
\Box_d := \prod_{i:\langle D_i, d\rangle>0} \prod_{k=0}^{\langle D_i, d\rangle-1}(\mathcal{D}_i-k) -
y^d \prod_{i:\langle D_i, d\rangle<0} \prod_{k=0}^{-\langle D_i, d\rangle-1}(\mathcal{D}_i-k).
\end{equation*}

Now \cite[Lemma 4.6]{iritani09} says that the $I$-function $I_{\bar{\bX}}(y,z)$ satisfy the following system of GKZ-type differential equations:
\begin{equation}\label{eqn:GKZ_type}
\Box_d \Psi = 0,\quad d \in \bar{\mathbb{L}}.
\end{equation}
In particular, the components
\begin{equation*}
\begin{split}
\log q_a & = \log y_a + \sum_{j=0}^{m-1} Q_{ja}A^\bX_j(y), \quad a = 1,\ldots,r',\\
\tau_{\bb_j} & = A^\bX_j(y), \quad j=m,\ldots,m'-1,
\end{split}
\end{equation*}
of the toric mirror map of $\bX$, which are contained in the toric mirror map \eqref{eqn:toric_mirror_map_barX_1} of $\bar{\bX}$, are solutions to the above system.

Hence, it suffices to show that solutions to the above system also satisfy the GKZ hypergeometric system \eqref{eq:GKZ}. This was shown in the proof of \cite[Theorem 12]{CLT11}, so we will just describe the argument briefly. First of all, we have $\sum_{i=0}^{m'-1} Q_{ia}=0$ for $a=1,\ldots,r$. Together with the fact that $y_a = \prod_{i=0}^{m-1} \check{C}_i^{Q_{ia}}$ for $a=1,\ldots,r$, one can see that the first $n$ equations in \eqref{eq:GKZ} are satisfied by any solution of \eqref{eqn:GKZ_type}. On the other hand, it is not hard to compute, using the fact that $\langle D_\infty, d\rangle=0$ for $d \in \mathbb{L} \oplus 0 \subset \bar{\mathbb{L}}$, that
$$\prod_{i:\langle D_i, d\rangle>0} \partial_i^{\langle D_i, d\rangle}
- \prod_{i:\langle D_i, d\rangle<0} \partial_i^{-\langle D_i, d\rangle}
= \left(\prod_{i:\langle D_i, d\rangle>0}\check{C}_i^{-\langle D_i, d\rangle}\right) \Box_d$$
for $d \in \mathbb{L}$. Hence the other set of equations in \eqref{eq:GKZ} are also satisfied. The lemma follows.
\end{proof}

%\begin{remark}
%As the reader may have noticed, the proof of the above lemma works even when $\bX$ is a toric CY orbifold.
%\end{remark}

%***** Explicit expression of $\mathcal{F}^{\mathrm{mirror}}$ by using $I$-function

%\subsection{Examples}

\section{Application to crepant resolutions}\label{sec:openCRC}

Let $\mathcal{Z}$ be a compact Gorenstein toric orbifold. Suppose the underlying simplicial toric variety $Z$ admits a toric crepant resolution $\widetilde{Z}$. In \cite{CCLT12}, a conjecture on the relationship between genus 0 open GW invariants of $\widetilde{Z}$ and $\mathcal{Z}$ was formulated and studied. In this section we consider the following setting. Let $\bX$ be a toric CY orbifold as in Setting \ref{setting:toricCY}. It is well-known (see e.g. \cite{Fu}) that toric crepant birational maps to the coarse moduli space $X$ of $\bX$ can be obtained from regular subdivisions of the fan $\Sigma$ satisfying certain conditions. More precisely, let $\bX'=\bX_{\Sigma'}$ be the toric orbifold obtained from the fan $\Sigma'$, where $\Sigma'$ is a regular subdivision of $\Sigma$. Then the morphism $X'\to X$ between the coarse moduli spaces is crepant if and only if for each ray of $\Sigma'$ with minimal lattice generator $u$, we have $(\underline{\nu}, u)=1$. We prove the following:
\begin{thm}[Open crepant resolution theorem]\label{thm:openCR_toric_CY}
Let $\bX$ be a toric CY orbifold as in Setting \ref{setting:toricCY}. Let $\bX'$ be a toric orbifold obtained by a regular subdivision of the fan $\Sigma$, such that the natural map $X'\to X$ between the coarse moduli spaces is crepant. Denoted by $(q,\tau)$ and $(Q,\mathcal{T})$ the flat coordinates on the K\"ahler moduli of $\bX$ and $\bX'$ respectively, and $r$ is the dimension of the extended complexified K\"ahler moduli space of $\bX$ (which is equal to that of $\bX'$). Then there exists
\begin{enumerate}
\item $\epsilon>0$;
\item a coordinate change $(Q(q,\tau), \mathcal{T}(q,\tau))$, which is a holomorphic map $(\Delta(\epsilon)-\real_{\leq 0})^r \to (\cpx^\times)^r$, and $\Delta(\epsilon)$ is an open disk of radius $\epsilon$ in the complex plane;
\item a choice of an analytic continuation of the SYZ map $\mathcal{F}^{\mathrm{SYZ}}_{\bX'}(Q,\mathcal{T})$ to the target of the holomorphic map $(Q(q,\tau), \mathcal{T}(q,\tau))$,
\end{enumerate}
such that
\begin{equation*}
\mathcal{F}^{\mathrm{SYZ}}_{\mathcal{X}}(q,\tau) = \mathcal{F}^{\mathrm{SYZ}}_{\bX'}(Q(q,\tau), \mathcal{T}(q,\tau)).
\end{equation*}
\end{thm}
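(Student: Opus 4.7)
The plan is to reduce the statement to a comparison of toric mirror maps via the open mirror theorem, and then to exploit the fact that the two toric mirror maps arise from solutions of the same GKZ hypergeometric system with different expansions.

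First I would apply Theorem \ref{thm:open_mirror} to both $\bX$ and $\bX'$. This gives
\begin{equation*}
\mathcal{F}^{\mathrm{SYZ}}_{\bX} = (\mathcal{F}^{\mathrm{mirror}}_{\bX})^{-1}, \qquad \mathcal{F}^{\mathrm{SYZ}}_{\bX'} = (\mathcal{F}^{\mathrm{mirror}}_{\bX'})^{-1},
\end{equation*}
near the respective large volume limits. Hence the desired equality $\mathcal{F}^{\mathrm{SYZ}}_{\bX}(q,\tau) = \mathcal{F}^{\mathrm{SYZ}}_{\bX'}(Q(q,\tau), \mathcal{T}(q,\tau))$ is equivalent to the statement that the inverse toric mirror map $(\mathcal{F}^{\mathrm{mirror}}_{\bX'})^{-1}$, analytically continued to a neighborhood of the large volume limit of $\bX$, agrees with $(\mathcal{F}^{\mathrm{mirror}}_{\bX})^{-1}$ up to the stated change of variables $(Q(q,\tau), \mathcal{T}(q,\tau))$. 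Equivalently, the two mirror maps $\mathcal{F}^{\mathrm{mirror}}_{\bX}$ and $\mathcal{F}^{\mathrm{mirror}}_{\bX'}$ are related by an invertible analytic continuation on the common complex moduli side.

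The key geometric observation is that $\bX$ and $\bX'$ are two (toric, crepant) simplicial refinements of the same Gorenstein canonical cone $\sigma$ with the same polytope $\mathcal{P}$. The choices of extended stacky fans can be made so that both use the same set of lattice points
\begin{equation*}
\mathcal{P}\cap N \;=\; \{\bb_0,\ldots,\bb_{m'-1}\},
\end{equation*}
only the triangulation differs. Consequently the complex moduli spaces $\mathcal{M}_\C(\check{\bX})$ and $\mathcal{M}_\C(\check{\bX'})$ coincide, and the two GKZ hypergeometric systems \eqref{eq:GKZ} associated to $\bX$ and $\bX'$ are literally the same system of PDEs on this common complex moduli space. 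By Lemma \ref{lem:GKZ_mirror_map}, the non-trivial components of both $\mathcal{F}^{\mathrm{mirror}}_{\bX}$ and $\mathcal{F}^{\mathrm{mirror}}_{\bX'}$ are solutions of this common GKZ system; they differ only in the local expansion used, because each is a distinguished basis of logarithmic and power-series solutions at the cusp in the GKZ (secondary) fan picked out by its own triangulation.

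The plan is therefore to pass from one cusp to the other by analytic continuation along a path in the complex moduli space avoiding the discriminant locus. This yields a transformation of the distinguished solutions of $\bX'$ into holomorphic (or logarithmic) functions on a punctured neighborhood of the $\bX$-cusp, and precisely produces the coordinate change $(Q(q,\tau),\mathcal{T}(q,\tau))$; the target $(\cpx^\times)^r$ comes from exponentiation, while the domain $(\Delta(\epsilon)-\R_{\leq 0})^r$ reflects the choice of a branch cut used for the analytic continuation. The technical material needed to make this precise, namely the convergence of the mirror maps on appropriate sectorial domains and the explicit control of the paths of continuation through the secondary fan, is exactly what is collected in Appendix \ref{app:analy_conti}; substituting the continued expression for $\mathcal{F}^{\mathrm{mirror}}_{\bX'}$ into $(\mathcal{F}^{\mathrm{mirror}}_{\bX})^{-1}\circ \mathcal{F}^{\mathrm{mirror}}_{\bX'}$ yields the claimed identity.

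The hard part will be the analytic continuation step, namely verifying that the toric mirror map of $\bX'$ admits a single-valued analytic continuation from its own cusp to a neighborhood of the $\bX$-cusp on the specified slit-polydisk domain, and that the resulting map factors through $(\cpx^\times)^r$. This requires a careful analysis of the discriminant locus of the GKZ system restricted to the one-parameter families joining the two cusps, and a uniform radius-of-convergence estimate for the hypergeometric series $A^{\bX}_i(y)$ and $A^{\bX'}_i(Y)$; all of this is deferred to Appendix \ref{app:analy_conti}. Once the continuation is established, the comparison of the two mirror maps is formal, and Theorem \ref{thm:open_mirror} translates it into the desired identity of SYZ maps.
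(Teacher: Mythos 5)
Your overall strategy is the paper's: invoke Theorem \ref{thm:open_mirror} to replace both SYZ maps by inverses of the toric mirror maps, and then obtain the identity by analytically continuing the mirror map of $\bX'$ to a neighborhood of the large volume limit of $\bX$, with the change of variables given by composing the continued mirror map of $\bX'$ with the inverse mirror map of $\bX$. Two points, however, deserve attention. First, the paper's proof contains a reduction step that you omit and that is needed to legitimately defer to Appendix \ref{app:analy_conti}: the crepant birational map $X'\to X$ is factored into a sequence of crepant maps each obtained by a regular subdivision introducing exactly \emph{one} new ray (equivalently, crossing a single wall of the secondary fan), and the Mellin--Barnes continuation in the appendix is carried out only for this single-wall case; the general statement then follows by composing the continuations. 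As written, your appeal to the appendix for a continuation ``from cusp to cusp'' along an arbitrary path through the secondary fan claims more than the appendix provides, so without the factorization your argument has a gap at precisely the step you call the hard part.

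Second, your GKZ framing is a genuinely different (and legitimate) way to think about why a continuation should exist: since $\bX$ and $\bX'$ use the same lattice points $\mathcal{P}\cap N$, their complex moduli and GKZ systems coincide, and by Lemma \ref{lem:GKZ_mirror_map} both mirror maps are solution vectors of this common system expanded at different cusps. But the paper does not use this for Theorem \ref{thm:openCR_toric_CY} (it uses the GKZ lemma only for the period-integral statement, Theorem \ref{thm:period_mirror}), and the GKZ viewpoint alone does not produce the specific data in the statement --- the coordinate change $(Q(q,\tau),\mathcal{T}(q,\tau))$ valued in $(\C^\times)^r$ and the slit polydisk $(\Delta(\epsilon)-\R_{\leq 0})^r$ --- which in the paper come from the explicit Mellin--Barnes continuation of the hypergeometric series $A^{\bX'}_j$ across a single wall. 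If you add the wall-by-wall factorization and replace the generic ``path avoiding the discriminant'' by the explicit single-wall continuation of the appendix, your proposal becomes the paper's proof.
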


Theorem \ref{thm:openCR_toric_CY} may be interpreted as saying that generating functions of genus 0 open GW invariants of $\bX'$ coincide with those of $\bX$ after analytical continuations and changes of variables. See \cite[Conjecture 1, Theorem 3]{CCLT12} for related statements for compact toric orbifolds.

Our proof of Theorem \ref{thm:openCR_toric_CY} employs the general strategy described in \cite{CCLT12}. Namely we use the open mirror theorem (Theorem \ref{thm:open_mirror}) to relate genus 0 open (orbifold) GW invariants of $\bX$ and $\bX'$ to their toric mirror maps. These toric mirror maps are explicit hypergeometric series and their analytic continuations can be done by using Mellin-Barnes integrals techniques. See Appendix \ref{app:analy_conti}.

\begin{proof}[Proof of Theorem \ref{thm:openCR_toric_CY}]
%*************To be completed
We adapt the strategy used in \cite{CCLT12} for proving related results for compact toric orbifolds. By Theorem \ref{thm:open_mirror}, we may replace $\mathcal{F}^{\mathrm{SYZ}}$ by $\left(\mathcal{F}^{\mathrm{mirror}}\right)^{-1}$, which are given by the toric mirror maps \eqref{eqn:toric_mirror_map_X}. It suffices to show that an analytical continuation of the toric mirror map exists. The necessary change of variables is given by composing the inverse of the (analytically continued) toric mirror map of $\bX'$ with the toric mirror map of $\bX$.

Now the crepant birational map $X'\to X$ may be decomposed into a sequence of crepant birational maps each of which is obtained by a regular subdivision that introduces only one new ray. If we can construct an analytical continuation of the toric mirror map for each of these simpler crepant birational maps, then we would obtain the necessary analytical continuation of the toric mirror map of $\bX'$ by composition. Therefore we may assume that the fan $\Sigma'$ is obtained by a regular subdivision of $\Sigma$ which introduces only one new ray. In terms of secondary fans, this means that $X'\to X$ is obtained by crossing a single wall. Therefore it remains to construct an analytic continuation of the mirror map in case of a crepant birational map corresponding to crossing a single wall in the secondary fan. This is done in Appendix \ref{app:analy_conti}.
\end{proof}

\begin{example}
In the case when $\bX=[\cpx^2/\Z_m]$ (see Example (1) of Section \ref{sect:eg_SYZ}), and $\bX'$ the minimal resolution of $\bX$, an analytic continuation of the inverse mirror map was explicitly constructed in \cite{CCIT09}. We reproduce the result here. Denote by $g^0_{\bX'}(y'),...,g^{m-1}_{\bX'}(y')$ the inverse mirror map of $\bX'$, and denote by $g_0(y),...,g_{m-1}(y)$ the inverse mirror map of $\bX$. Then according to \cite[Proposition A.7]{CCIT09}, for $1\leq i\leq m-1$, there is an analytic continuation of $g^i_{\bX'}(y')$ such that
\begin{equation*}
g^i_{\bX'}(y')=-\frac{2\pi\sqrt{-1}}{m}+\frac{1}{m}\sum_{k=1}^{m-1}\zeta^{2ki}(\zeta^{-k}-\zeta^k)g_k(y), \text{ where } \zeta=\exp(\frac{\pi\sqrt{-1}}{m}).
\end{equation*}
It may be checked that this yields an identification between the mirrors of $\bX$ and $\bX'$.
\end{example}

\begin{remark}
In the case when $\bX=[\cpx^n/\Z_n]$ (see Example (3) of Section \ref{sect:eg_SYZ}), and $\bX'=\mathcal{O}_{\mathbb{P}^{n-1}}(-n)$, an analytic continuation of the inverse mirror map was explicitly carried out in \cite{CCLT12}. We refer the readers to \cite[Section 6.2]{CCLT12} for more details.
\end{remark}

\appendix

\section{Analytic continuation of mirror maps}\label{app:analy_conti}
We explicitly construct analytic continuations of the toric mirror maps in case of crepant partial resolutions obtained by crossing a single wall in the secondary fan. This is needed in the proof of Theorem \ref{thm:openCR_toric_CY}. The technique of constructing analytical continuations using Mellin-Barnes integrals is well-known and has appeared in e.g. \cite{candelas91}, \cite{BH06}, \cite{CIT09}.

\subsection{Toric basics}
In this subsection we describe the geometric and combinatorial set-up that we are going to consider. Much of the toric geometry needed here is discussed in Section \ref{sec:toric_orbifolds} and repeated here in order to properly set up the notations.

Let $\bX_1$ be a toric CY orbifold given by the stacky fan
\begin{equation}\label{stacky_fan1}
\left(\Sigma_1\subset N_\R, \{\bb_0,\ldots,\bb_{m-1}\}\cup\{\bb_m,\ldots,\bb_{m'-1}\} \right)
\end{equation}
where $N$ is a lattice of rank $n$, $\Sigma_1\subset N_\R$ is a simplicial fan, $\bb_0,\ldots,\bb_{m-1}\in N$ are primitive generators of the rays of $\Sigma_1$, and $\bb_m,\ldots,\bb_{m'-1}$ are extra vectors chosen from $\mathrm{Box}(\Sigma_1)^{\mathrm{age}=1}$. The CY condition means there exists $\underline{\nu}\in M := N^\vee = \textrm{Hom}(N,\Z)$ such that $(\underline{\nu}, \bb_i)=1$ for $i=0,\ldots,m-1$. We also assume that $\bX_1$ is as in Setting \ref{setting:toricCY} so that Assumption \ref{assumption} is satisfied.

The fan sequence of this stacky fan reads
%\begin{equation*}
$0\longrightarrow \mathbb{L}_1:=\mathrm{Ker}(\phi_1)\overset{\psi_1}{\longrightarrow} \bigoplus_{i=0}^{m'-1}\Z e_i\overset{\phi_1}{\longrightarrow} N\longrightarrow 0.$
%\end{equation*}
Tensoring with $\C^\times$ yields
%\begin{equation*}
$0 \longrightarrow G_1:=\mathbb{L}_1\otimes_\Z \C^\times \longrightarrow (\C^\times)^{m'}\longrightarrow N\otimes_\Z \C^\times \to 0.$
%\end{equation*}
The set of anti-cones of the stacky fan \eqref{stacky_fan1} is given by
%\begin{equation*}
$\mathcal{A}_1:=\left\{I\subset \{0,\ldots,m'-1\} \mid \sum_{i\notin I} \R_{\geq 0} \bb_i \text{ is a cone in }\Sigma_1 \right\}.$
%\end{equation*}
Note that $\{0,\ldots,m'-1\}\setminus \{i\} \in \mathcal{A}_1$ if and only if $i\in \{0,\ldots,m-1\}$. Hence if $I\in \mathcal{A}_1$, then $\{m,\ldots,m'-1\}\subset I$. Therefore we may define the following
\begin{equation*}
\mathcal{A}'_1:=\left\{I'\subset \{0,\ldots,m-1\} \mid I'\cup \{m,\ldots,m'-1\}\in \mathcal{A}_1 \right\}.
\end{equation*}
The divisor sequence
%\begin{equation*}
$0\longrightarrow M \overset{\phi_1^\vee}{\longrightarrow} \bigoplus_{i=0}^{m-1}\Z e_i^\vee\overset{\psi_1^\vee}{\longrightarrow}\mathbb{L}_1^\vee\longrightarrow 0$
%\end{equation*}
is obtained by dualizing the fan sequence.

For each $i=0,\ldots,m'-1$, we put $D_i:=\psi_1^\vee(e_i^\vee)\in \mathbb{L}_1^\vee$. The extended K\"ahler cone $\widetilde{C}_{\bX_1}$ of $\bX_1$ and the K\"ahler cone $C_{\bX_1}$ of $\bX_1$ are defined to be
\begin{equation*}
\widetilde{C}_{\bX_1}:=\bigcap_{I\in \mathcal{A}_1}\left(\sum_{i\in I}\mathbb{R}_{>0}D_i \right)\subset \mathbb{L}_1^\vee\otimes\mathbb{R},\quad {C}_{\bX_1}:=\bigcap_{I'\in \mathcal{A}'_1}\left(\sum_{i\in I}\mathbb{R}_{>0}\bar{D}_i \right)\subset H^2(\bX_1, \R).
\end{equation*}
%where $C_{\bX_1}$ is the K\"ahler cone of $\bX_1$:
%\begin{equation*}
%{C}_{\bX_1}:=\bigcap_{I'\in \mathcal{A}'_1}\left(\sum_{i\in I}\mathbb{R}_{>0}\bar{D}_i \right)\subset H^2(\bX_1, \R).
%\end{equation*}
We understood that $C_{\bX_1}$ is the image of $\widetilde{C}_{\bX_1}$ under the quotient map
\begin{equation*}
\mathbb{L}_1^\vee\otimes \mathbb{R}\to \mathbb{L}_1^\vee \otimes \R /\sum_{i=m}^{m'-1}\R D_i \simeq H^2(\bX_1, \R).
\end{equation*}
There is a splitting
%\begin{equation*}
$\mathbb{L}_1^\vee\otimes \mathbb{R} =\mathrm{Ker}\left(\left(D_{m}^\vee,\ldots,D_{m'-1}^\vee \right): \mathbb{L}_1^\vee\otimes \mathbb{R}\to \mathbb{R}^{m'-m}\right)\oplus \bigoplus_{j=m}^{m'-1}\mathbb{R}D_j,$
%\end{equation*}
and the extended K\"ahler cone is decomposed accordingly:
%\begin{equation*}
$\widetilde{C}_{\bX_1}=C_{\bX_1}+\sum_{j=m}^{m'-1} \mathbb{R}_{>0}D_j.$
%\end{equation*}

Let $\omega_1\in \widetilde{C}_{\bX_1}$ be an extended K\"ahler class of $\bX_1$. According to \cite[Section 3.1.1]{iritani09}, the defining condition of $\mathcal{A}_1$ may also be formulated as
$\omega_1\in \sum_{i\in I} \R_{> 0} D_i$.
The extended canonical class of $\bX_1$ is $\hat{\rho}_{\bX_1}:=\sum_{i=0}^{m'-1}D_i$. By \cite[Lemma 3.3]{iritani09}, we have $\hat{\rho}_{\bX_1}=\sum_{i=0}^{m-1}D_i +\sum_{i=m}^{m'-1}\left(1-\mathrm{age}(\bb_i)\right)D_i.$
Since we choose $\bb_i, i=m,\ldots,m'-1$ to have age $1$, we see that $\hat{\rho}_{\bX_1} = \sum_{i=0}^{m-1}D_i = c_1(\bX_1) = 0$.

\subsection{Geometry of wall-crossing}
As mentioned earlier, we want to consider toric crepant birational maps obtained by introducing a new ray. We now describe this in terms of wall-crossing. We refer to \cite[Chapters 14--15]{CLS_toricbook} for the basics of wall-crossings in the toric setting.

By definition, a wall is a subspace
\begin{equation*}
\widetilde{W}=W\oplus \bigoplus_{j=m}^{m'-1} \R D_j \subset \mathbb{L}_1^\vee\otimes \R,
\end{equation*}
where $W$ is a hyperplane given by a linear functional $l$, such that
(1) $C_{\bX_1}\subset \{l>0\}$, and
(2) the intersection $\overline{C}_{\bX_1}\cap W$ of the closure of $C_{\bX_1}$ with $W$ is a top-dimensional cone in $W$.
Let $C_{\bX_1}(W)\subset \overline{C}_{\bX_1}\cap W$ be the relative interior and let $\widetilde{C}_{\bX_1}(W):=C_{\bX_1}(W) \oplus \bigoplus_{j=m}^{m'-1} \R D_j$.

We want to consider a crepant birational map obtained by introducing one new ray. This means that there is exactly one $D_i$ lying outside the K\"ahler cone ${C}_{\bX_1}$. By relabeling the 1-dimensional cones, we may assume that $D_{m-1}$ lies outside $C_{\bX_1}$. More precisely, we assume
\begin{equation}\label{eqn:sings_of_lD}
\left\{
\begin{array}{lll}
l(D_i)>0 & \text{for $0\leq i\leq a-1$},\\
l(D_i)=0 &  \text{for $a\leq i\leq m-2$},\\
l(D_{m-1})<0 &
\end{array}\right.
\end{equation}

Let $\omega_2$ be an extended K\"ahler class in the chamber\footnote{The chamber structure is given by the secondary fan associated to $\Sigma_1$.} adjacent to $(\overline{C}_{\bX_1}\cap W) \oplus \bigoplus_{j=m}^{m'-1} \R D_j$. Following \cite[Section 3.1.1]{iritani09}, we may use $\omega_2$ to define another toric orbifold $\bX_2$ as follows. The set of anti-cones is defined to be
$\mathcal{A}_2:=\left\{ I\subset \{0,\ldots,m'-1\} \mid \omega_2\in \sum_{i\in I} \R_{>0} D_i \right\}$.
The toric orbifold $\bX_2$ is then defined to be the following stack quotient
\begin{equation*}
\bX_2 := \left[\left(\C^{m'}\setminus \bigcup_{I\notin \mathcal{A}_2}\C^I \right)/G_1\right],
\end{equation*}
where $\C^I:=\{(z_0,\ldots,z_{m'-1})\in \C^{m'} \mid z_i=0 \textrm{ for } i\notin I \}$.
The fan $\Sigma_2$ of this toric orbifold is defined from $\mathcal{A}_2$ as follows: $\sum_{i \notin I} \R_{\geq 0} b_i$ is a cone of $\Sigma_2$ if and only if $I\in \mathcal{A}_2$. We also define
$\mathcal{A}'_2 := \left\{I'\subset \{0,\ldots,m-1\} \mid I'\cup \{m,\ldots,m'-1\}\in \mathcal{A}_2 \right\}$.

Next we make a few observations about the two sets $\mathcal{A}_1$, $\mathcal{A}_2$ of anti-cones.
\begin{lemma}\label{lem:anticone1}
Let $I\in \mathcal{A}_1$. Then $I\in \mathcal{A}_2$ if and only if $m-1\in I$.
\end{lemma}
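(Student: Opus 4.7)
The proof hinges on the signs of the linear functional $l$ defining the wall $\widetilde{W}$. Since $\widetilde{W} = W \oplus \bigoplus_{j=m}^{m'-1} \R D_j$ is the zero locus of $l$, we have $l(D_j) = 0$ for $j = m, \ldots, m'-1$. Combined with \eqref{eqn:sings_of_lD}, this gives $l(D_i) \geq 0$ for every $i \neq m-1$ (strictly positive precisely when $i \in \{0, \ldots, a-1\}$) and $l(D_{m-1}) < 0$. Moreover, $l(\omega_1) > 0$ since $\omega_1 \in C_{\bX_1} \subset \{l > 0\}$, while $l(\omega_2) < 0$ since $\omega_2$ lies in the chamber $C_{\bX_2}$ adjacent to $C_{\bX_1}$ across $\widetilde{W}$.

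The ``only if'' direction is then immediate. If $I \in \mathcal{A}_2$, we can write $\omega_2 = \sum_{i \in I} d_i D_i$ with $d_i > 0$; applying $l$ gives
\[ 0 \, > \, l(\omega_2) \, = \, \sum_{i \in I} d_i\, l(D_i). \]
Since $l(D_{m-1})$ is the unique negative value among the $l(D_i)$'s while all others are nonnegative, this forces $m-1 \in I$.

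For the ``if'' direction, assume $I \in \mathcal{A}_1$ and $m-1 \in I$; we wish to show $I \in \mathcal{A}_2$, i.e., $\omega_2 \in \sigma_I := \sum_{i \in I} \R_{>0} D_i$. The set $\sigma_I$ is an open convex cone containing $\omega_1$. Consider the segment $\omega_t := (1-t)\omega_1 + t\omega_2$ for $t \in [0,1]$; by the hypothesis that $C_{\bX_1}, C_{\bX_2}$ are adjacent chambers separated by the single wall $\widetilde{W}$, the segment crosses $\widetilde{W}$ at a unique point $\omega_* \in \widetilde{C}_{\bX_1}(W)$. By openness and convexity of each $\sum_{i \in J} \R_{>0} D_i$, one has $J \in \mathcal{A}_1 \cap \mathcal{A}_2$ if and only if $\omega_* \in \sum_{i \in J} \R_{>0} D_i$, so the assertion reduces to showing $\omega_* \in \sigma_I$. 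Starting from a representation $\omega_1 = \sum_{i \in I} c_i^1 D_i$ with $c_i^1 > 0$, one adds a positive multiple of $D_{m-1}$ (available since $m-1 \in I$) to decrease $l$ through zero while keeping all coefficients strictly positive, and then uses the $|I| - r$ linear relations among $\{D_i\}_{i \in I}$ to adjust the resulting point to equal $\omega_*$. The main (mild) obstacle is this last adjustment, which is a standard consequence of the wall-crossing description in the secondary fan: a subset $J \in \mathcal{A}_1$ survives the wall-crossing precisely when $\sigma_J$ is not contained in the half-space $\{l \geq 0\}$, equivalently when some $i \in J$ satisfies $l(D_i) < 0$, which in our setup is exactly the condition $m-1 \in J$.
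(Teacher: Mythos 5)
Your ``only if'' half is correct and is exactly the paper's argument: writing $\omega_2=\sum_{i\in I}d_iD_i$ with $d_i>0$ and applying $l$ (extended by zero on $\bigoplus_{j=m}^{m'-1}\R D_j$), the fact that $l(D_{m-1})$ is the only negative value among the $l(D_i)$ while $l(\omega_2)<0$ forces $m-1\in I$.

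The ``if'' half, however, contains a genuine gap. The reduction to showing $\omega_*\in\sigma_I:=\sum_{i\in I}\R_{>0}D_i$ is reasonable (granting the standard fact that the collection $\{J:\omega\in\sigma_J\}$ is constant as $\omega$ varies within a chamber of the secondary fan), but the step meant to establish $\omega_*\in\sigma_I$ does not work as written: adding a positive multiple of $D_{m-1}$ to $\omega_1$ produces \emph{some} point of $\sigma_I$ on the hyperplane $\{l=0\}$, with no reason for it to equal $\omega_*$, and ``using the $|I|-r$ linear relations among $\{D_i\}_{i\in I}$'' cannot move you to $\omega_*$ --- relations only re-express the same point with different coefficients. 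You then concede this and appeal to ``a standard consequence of the wall-crossing description,'' namely that $J\in\mathcal{A}_1$ survives the crossing iff some $i\in J$ has $l(D_i)<0$; but that assertion is precisely the lemma to be proved, so the decisive step is circular. One way to close the gap: if $\omega_*\notin\sigma_I$ (note $\sigma_I$ is the interior of $\overline{\sigma}_I$, which is full-dimensional since it contains $\widetilde{C}_{\bX_1}$), choose a nonzero supporting functional $h\geq0$ of $\overline{\sigma}_I$ with $h(\omega_*)=0$; since $h\geq0$ on the closed chamber and vanishes at a relative-interior point of its facet lying in $\widetilde{W}$, $h$ vanishes on all of $\widetilde{W}$, hence $h=c\,l$ with $c>0$, contradicting $h(D_{m-1})\geq0$ because $D_{m-1}\in\overline{\sigma}_I$ and $l(D_{m-1})<0$. (Alternatively your segment idea can be repaired: with $p=\omega_1+t_0D_{m-1}\in\sigma_I\cap\{l=0\}$ and $\omega_*\in\overline{\sigma}_I$, the half-open segment from $p$ to $\omega_*$ lies in the open cone $\sigma_I$ and, near $\omega_*$, in the relative interior of the common facet, so $\sigma_I$ meets the second chamber and chamber-constancy gives $\omega_2\in\sigma_I$.) The paper proves this direction, in contrapositive form, with a similarly brief cone argument, but your write-up as it stands does not supply the missing cone-geometry step.
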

\begin{proof}
Suppose $I\in \mathcal{A}_2$. Then $\omega_2\in \sum_{i\in I} \R_{>0} D_i$. Since $l(D_i)\geq 0$ for all $i$ except $i=m-1$, and $l(\omega_2)<0$, in order for $\omega_2\in \sum_{i\in I}\R_{>0} D_i$ we must have $m-1\in I$.
Suppose that $I\notin \mathcal{A}_2$. Then $\omega_2\notin \sum_{i\in I} \R_{>0} D_i$. But this means that $\R_{>0}\omega_2\notin \sum_{i\in I} \R_{>0} D_i$. This implies $m-1\notin I$.
\end{proof}

\begin{lemma}\label{lem:anticone2}
Let $I\in \mathcal{A}_1$ and $I\notin\mathcal{A}_2$. Then
\begin{enumerate}
\item
$\left(I\cup \{m-1\}\right)\setminus \{0,\ldots,a-1\} \in \mathcal{A}_2$.
\item
If $|I|=\textrm{dim }G_1$, then $I\cap \{0,\ldots,a-1\} = \{i_I\}$ is a singleton, so $(I\cup \{m-1\})\setminus \{i_I\}\in \mathcal{A}_2$.
\end{enumerate}
\end{lemma}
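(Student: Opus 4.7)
The strategy is to reduce to the maximal case $|I| = r$ and handle it by a cone-geometry argument, then deduce the general case by upward closedness of anti-cones. First I would set up notation: writing $\omega_1 = \sum_{i \in I} c_i D_i$ with $c_i > 0$, split $I = P \sqcup Q$ where $P := I \cap \{0,\ldots,a-1\}$. The preceding lemma gives $m-1 \notin I$, and combining \eqref{eqn:sings_of_lD} with $l(D_j) = 0$ for $j \geq m$ (since $\widetilde{W}$ contains $\bigoplus_{j \geq m}\R D_j$) yields $l(D_i) > 0$ for $i \in P$ and $l(D_i) = 0$ for $i \in Q$. Since $\omega_1 \in C_{\bX_1} \subset \{l > 0\}$, the identity $l(\omega_1) = \sum_{i \in P}c_i l(D_i) > 0$ forces $P \neq \emptyset$. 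For part (2), assume $|I| = r$; then $\{D_i\}_{i \in I}$ is an $\R$-basis of $\mathbb{L}_1^\vee \otimes \R$, so $\sigma_I := \sum_{i \in I}\R_{\geq 0}D_i$ is a full-dimensional simplicial cone. Any $v = \sum c_i D_i \in \sigma_I \cap \widetilde{W}$ satisfies $\sum_{i \in P}c_i l(D_i) = 0$ with $c_i \geq 0$ and $l(D_i) > 0$ for $i \in P$, so $c_i = 0$ for $i \in P$; hence $\sigma_I \cap \widetilde{W} = \sigma_Q := \sum_{i \in Q}\R_{\geq 0}D_i$, of dimension $|Q|$. Because $I \in \mathcal{A}_1$ gives $C_{\bX_1} \subset \sigma_I$, we have $\overline{C_{\bX_1}} \cap \widetilde{W} \subset \sigma_Q$; but $\overline{C_{\bX_1}} \cap \widetilde{W}$ is the common wall-facet of the adjacent chambers $C_{\bX_1}, C_{\bX_2}$ and has dimension $r - 1$. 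Therefore $|Q| \geq r-1$, forcing $|P| \leq 1$, so $|P| = 1$; call this element $i_I$.

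Continuing with $|I| = r$, I would prove the maximal case of part (1) by exhibiting a point of $C_{\bX_2} \cap \sigma_J$, where $J := Q \cup \{m-1\} = (I \cup \{m-1\}) \setminus \{i_I\}$. Since $\overline{C_{\bX_1}} \cap \widetilde{W}$ has the same dimension $r-1$ as $\sigma_Q$, it cannot lie in the proper boundary of $\sigma_Q$, so the intersection of their relative interiors is non-empty; pick $\omega'$ there, giving a representation $\omega' = \sum_{i \in Q}d_i D_i$ with $d_i > 0$. For small $\epsilon > 0$ the point $\omega_2' := \omega' + \epsilon D_{m-1}$ satisfies $l(\omega_2') < 0$, and by adjacency of $C_{\bX_1}, C_{\bX_2}$ across $\widetilde{W}$ through this facet, $\omega_2' \in C_{\bX_2}$. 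The expression $\omega_2' = \sum_{i \in Q}d_i D_i + \epsilon D_{m-1}$ is a strictly positive combination over $J$, so $\omega_2' \in \sigma_J$, and hence $J \in \mathcal{A}_2$ (since $\mathcal{A}_2$-membership depends only on the chamber $C_{\bX_2}$).

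For general $I \in \mathcal{A}_1 \setminus \mathcal{A}_2$, I would pick a maximal sub-anti-cone $I' \subset I$, which exists because the $\Sigma_1$-cone corresponding to $I$ is a face of some maximal cone. Upward closedness of $\mathcal{A}_2$ under inclusion — inherited from the fact that faces of cones in $\Sigma_2$ remain in $\Sigma_2$ — combined with $I \notin \mathcal{A}_2$ forces $I' \notin \mathcal{A}_2$. The maximal case then produces $J' := (I' \setminus \{0,\ldots,a-1\}) \cup \{m-1\} \in \mathcal{A}_2$, and since $I' \subset I$ yields $J' \subset J := (I \cup \{m-1\}) \setminus \{0,\ldots,a-1\}$, upward closedness concludes $J \in \mathcal{A}_2$. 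The main obstacle is the codimension-one rigidity used in the first paragraph: a priori $\overline{C_{\bX_1}} \cap \widetilde{W}$ could be contained in a proper face of $\sigma_Q$ of strictly lower dimension, in which case $|P|$ could exceed $1$; what rules this out is precisely the assumption that $C_{\bX_1}$ and $C_{\bX_2}$ are adjacent across the single wall $\widetilde{W}$, which pins the dimension of the common wall-facet at exactly $r-1$. The rest of the argument is then the geometric picture of ``flipping'' the $P$-generator of $\sigma_I$ into the single generator $D_{m-1}$ across the wall.
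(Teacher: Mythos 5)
Your proof is correct, but it is organized differently from the paper's (very terse) argument. The paper proves (1) first, directly from the sign pattern $l(D_i)\leq 0$ for $i\in (I\cup\{m-1\})\setminus\{0,\ldots,a-1\}$, and then deduces the singleton statement in (2) purely combinatorially: once $J:=(I\cup\{m-1\})\setminus\{0,\ldots,a-1\}\in\mathcal{A}_2$, the bound $|J|=|I|+1-|P|\geq \dim G_1=r$ (minimal size of an anti-cone) forces $|P|\leq 1$, and $P\neq\emptyset$ because $l(\omega_1)>0$. You instead establish the singleton claim independently, via the dimension count $\widetilde{C}_{\bX_1}(W)\subset\sigma_I\cap\widetilde{W}=\sigma_Q$ together with $\dim\widetilde{C}_{\bX_1}(W)=r-1$, then prove (1) in the case $|I|=r$ by explicitly producing a point $\omega'+\epsilon D_{m-1}$ of the adjacent chamber inside $\sum_{i\in J}\R_{>0}D_i$, and finally bootstrap to general $I$ using upward closedness of $\mathcal{A}_2$ under enlargement. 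Both routes work; the paper's derivation of (2) from (1) is shorter and avoids the wall-adjacency dimension argument, while your construction makes explicit the wall-crossing geometry that the paper's one-line proof of (1) leaves implicit, at the cost of invoking (correct but unproved here) standard secondary-fan facts: that $\mathcal{A}_2$-membership is constant on the open chamber containing $\omega_2$, and that $\mathcal{A}_2$ is closed under enlargement. Two small points to tidy: your ``maximal sub-anti-cone $I'\subset I$'' is really the anti-cone of a maximal ($n$-dimensional) cone, i.e.\ an anti-cone of \emph{minimal} size $r$ contained in $I$ (its existence uses that every cone of $\Sigma_1$ is a face of a full-dimensional cone, which holds in Setting \ref{setting:toricCY} since $\Sigma_1$ is the cone over a triangulation of $\mathcal{P}$); and the dimension bookkeeping should consistently use the extended cones $\widetilde{C}_{\bX_1}$, $\widetilde{C}_{\bX_1}(W)\subset\mathbb{L}_1^\vee\otimes\R$ rather than $C_{\bX_1}\subset H^2(\bX_1;\R)$, with $l$ extended by zero on $\bigoplus_{j\geq m}\R D_j$, as the paper implicitly does in \eqref{eqn:sings_of_lD}.
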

\begin{proof}
The statement (1) follows from the fact that $l(D_i)\leq 0$ for all $i\in \left(I\cup \{m-1\}\right)\setminus \{0,\ldots,a-1\}$. The statement (2) follows from the fact that the minimal size of an anti-cone is equal to $\textrm{dim }G_1$.
\end{proof}

Moving the K\"ahler class $\omega_1$ across the wall $W$ to $\omega_2$ induces a birational map
\begin{equation}\label{eqn:birational_map}
X_1\to X_2.
\end{equation}
between the toric varieties underlying $\bX_1$ and $\bX_2$. In the theory of toric GIT, this map is induced from the variation of GIT quotients by moving the stability parameter from $\omega_1$ to $\omega_2$.

We may describe the birational map $X_1\to X_2$ in terms of the fans. By Lemmas \ref{lem:anticone1} and \ref{lem:anticone2}, if $\sum_{i\notin I} \R_{\geq 0} \bb_i$ is a cone in $\Sigma_1$, then either this cone is also in $\Sigma_2$ (in which case $\R_{\geq 0} \bb_{m-1}$ is not a ray of this cone), or $\sum_{i\notin (I\cup \{m-1\})\setminus \{0,\ldots,a-1\}} \R_{\geq 0} \bb_i$ is a cone in $\Sigma_2$. This shows that the fan $\Sigma_1$ is an refinement of $\Sigma_2$ obtained by adding a new ray $\R_{\geq 0}\bb_{m-1}$. The birational map $X_1\to X_2$ in (\ref{eqn:birational_map}) is induced from this refinement, in a manner described more generally in e.g. \cite[Section 1.4]{Fu}.

\begin{lemma}
The birational map \eqref{eqn:birational_map} contracts the divisor $\bar{D}_{m-1}\subset X_1$ and is crepant.
\end{lemma}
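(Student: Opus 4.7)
\medskip

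\noindent\textbf{Proof plan.} The lemma follows from standard toric geometry once we exploit the Calabi--Yau condition. My plan is to prove the two statements separately: first use the geometry of refinements to establish the contraction, then use the support-function characterization of the canonical divisor together with the CY condition to verify crepancy.

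\emph{Contraction.} The discussion immediately preceding the lemma shows that $\Sigma_1$ is a refinement of $\Sigma_2$ obtained by introducing the single new ray $\R_{\geq 0}\bb_{m-1}$, and every other cone of $\Sigma_1$ is either a cone of $\Sigma_2$ or obtained by subdividing a cone of $\Sigma_2$ along this new ray. Since a refinement of fans induces a proper birational toric morphism $f: X_1 \to X_2$ that restricts to the identity on the common open torus, it suffices to compute the image of the torus orbit associated to $\R_{\geq 0}\bb_{m-1}$. Under the orbit--cone correspondence, the closure of this orbit is precisely $\bar{D}_{m-1}$, and $f$ sends it to the closure of the orbit corresponding to the minimal cone $\sigma \in \Sigma_2$ whose relative interior contains $\bb_{m-1}$. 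Because $\R_{\geq 0}\bb_{m-1}$ is not a ray of $\Sigma_2$, the cone $\sigma$ has dimension at least $2$, so $f(\bar{D}_{m-1})$ has strictly smaller dimension than $\bar{D}_{m-1}$. Hence $f$ contracts $\bar{D}_{m-1}$.

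\emph{Crepancy.} The canonical divisor of a simplicial toric variety is encoded by the PL support function taking value $1$ on each primitive ray generator, and the pullback under a toric refinement is computed by evaluating this support function on the new primitive generator. Concretely, with $\sigma \in \Sigma_2$ the minimal cone containing $\bb_{m-1}$ in its relative interior, write
\[
\bb_{m-1} \;=\; \sum_{i\in\sigma(1)} c_i\, \bb_i, \qquad c_i \in \Q_{>0},
\]
where $\sigma(1)$ denotes the set of rays of $\sigma$. Then the discrepancy of $\bar{D}_{m-1}$ in $K_{X_1} - f^* K_{X_2}$ equals $\left(\sum_{i\in\sigma(1)} c_i\right) - 1$. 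By the Calabi--Yau condition built into Setting~\ref{setting:toricCY}, the dual vector $\unu \in M$ satisfies $(\unu,\bb_i) = 1$ for every primitive ray generator $\bb_i$ of $\Sigma_1$, and in particular for every $\bb_i$ with $i\in\sigma(1)$ and for $\bb_{m-1}$ itself. Pairing the displayed relation with $\unu$ gives
\[
1 \;=\; (\unu,\bb_{m-1}) \;=\; \sum_{i\in\sigma(1)} c_i\, (\unu,\bb_i) \;=\; \sum_{i\in\sigma(1)} c_i.
\]
Hence the discrepancy vanishes and $K_{X_1} = f^* K_{X_2}$, i.e.\ $f$ is crepant.

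There is no real obstacle in this argument: the only non-routine input is the CY condition, which makes the computation of the discrepancy a one-line consequence of pairing with $\unu$. The two statements of the lemma correspond exactly to the two standard aspects of toric refinements (contraction of orbits associated to new cones, and the discrepancy formula), and the CY hypothesis converts the discrepancy formula into a vanishing statement automatically.
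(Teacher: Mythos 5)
Your proof is correct and follows essentially the same route as the paper: the paper also deduces the contraction from the fan refinement structure and verifies crepancy by applying the standard toric criterion with the support function $(\unu,\cdot)$ coming from the CY condition, which is exactly your discrepancy computation $\sum_i c_i - 1 = 0$. Your write-up just makes the discrepancy formula and the orbit--cone argument explicit where the paper cites the criterion.
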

\begin{proof}
The fan sequence implies that \eqref{eqn:birational_map} contracts the divisor $\bar{D}_{m-1}$. Since $\bX_1$ is toric CY, there exists $\underline{\nu}\in N^\vee$ such that $(\underline{\nu}, \bb_i)=1$ for $i=0,...,m-1$. We conclude that $X_1\to X_2$ is crepant by applying the criterion for being crepant (see e.g. \cite[Section 3.4]{Fu} and \cite[Remark 7.2]{BCS}) with the support function $(\underline{\nu}, -)$.
\end{proof}

\subsection{Analytic continuations}
Recall that
\begin{equation*}
\begin{split}
\mathbb{K}_1 := \left\{d\in \mathbb{L}_1\otimes \Q \mid \{ i \mid \langle D_i, d\rangle\in \Z \}\in \mathcal{A}_1 \right\},\\
\mathbb{K}_2 := \left\{d\in \mathbb{L}_1\otimes \Q \mid \{ i \mid \langle D_i, d\rangle\in \Z \}\in \mathcal{A}_2 \right\}.
\end{split}
\end{equation*}
As defined in \eqref{eqn:reduction_func}, there are reduction functions
\begin{equation*}
\begin{split}
\nu: \mathbb{K}_1\to \mathrm{Box}(\Sigma_1), \quad
\nu: \mathbb{K}_2\to \mathrm{Box}(\Sigma_2),
\end{split}
\end{equation*}
which are surjective and have kernels $\mathbb{L}_1$. This gives the identifications
\begin{equation}\label{eqn:identification_box}
\begin{split}
\mathbb{K}_1/\mathbb{L}_1= \mathrm{Box}(\Sigma_1),\quad
\mathbb{K}_2/\mathbb{L}_1= \mathrm{Box}(\Sigma_2).
\end{split}
\end{equation}

We now discuss the toric mirror map. By \eqref{eqn:toric_mirror_map_X}, the toric mirror map of $\bX_1$ is given by
\begin{equation}\label{eqn:toric_mirror_map_app}
\begin{split}
\log q_a = & \log y_a + \sum_{j=0}^{m-1} Q_{ja}A^\bX_j(y), \quad a = 1,\ldots,r',\\
\tau_{\bb_j} = & A^\bX_j(y), \quad j=m,\ldots,m'-1,
\end{split}
\end{equation}

Some explanations are in order. Fix an integral basis $\{p_1,\ldots,p_r\}\subset \mathbb{L}_1^\vee$, where $r = m'-n$. For $d\in \mathbb{L}_1\otimes \Q$, we write $q^d=\prod_{a=1}^{r'} q_a^{\langle \bar{p}_a, d\rangle}$, $y^d=\prod_{a=1}^r y_a^{\langle p_a, d\rangle}$
which define $q_a$ and $y_a$, where $r' = m-n$ and $\{\bar{p}_1,\ldots,\bar{p}_{r'}\}$ are images of $\{p_1,\ldots,p_{r'}\}$ under the quotient map $\mathbb{L}_1^\vee\otimes \Q\to H^2(\bX_1; \Q)$ and they give a nef basis for $H^2(\bX_1;\Q)$. Also, $Q_{ia}$ are chosen so that
\begin{equation}\label{eqn:div_basis}
D_i=\sum_{a=1}^r Q_{ia}p_a, \quad i=0,\ldots,m-1.
\end{equation}

For $j=0,1,\ldots,m-1$, we have
\begin{equation*}
\begin{split}
\Omega^{\bX_1}_j & = \{d\in (\mathbb{K}_1)_\mathrm{eff} \mid \nu(d)=0, \langle D_j,d\rangle \in \Z_{<0}\textrm{ and } \langle D_i,d\rangle \geq 0 \in \Z_{\geq 0}\ \forall i\neq j\},\\
A^{\bX_1}_j(y) & = \sum_{d\in \Omega^{\bX_1}_j}y^d \frac{(-1)^{-\langle D_j,d\rangle-1}(-\langle D_j,d\rangle-1)!}{\prod_{i\neq j}\langle D_i,d\rangle!}.
\end{split}
\end{equation*}
For $j=m,\ldots,m'-1$, we have
\begin{equation*}
\begin{split}
\Omega^{\bX_1}_j & = \{d\in (\mathbb{K}_1)_\mathrm{eff} \mid \nu(d)=\bb_j\textrm{ and }\langle D_i,d\rangle \notin \Z_{<0}\ \forall i\},\\
A^{\bX_1}_j(y) & = \sum_{d\in \Omega^{\bX_1}_j}y^d \prod_{i=0}^{m'-1}
\frac{\prod_{k=\lceil\langle D_i,d\rangle\rceil}^\infty(\langle D_i,d\rangle-k)}{\prod_{k=0}^\infty(\langle D_i,d\rangle-k)}.
\end{split}
\end{equation*}

To study the analytic continuation of \eqref{eqn:toric_mirror_map_app}, we first need to be more precise about the variables involved. We pick $p_1,\ldots, p_r$ such that $p_1$ is contained in the closure of $\widetilde{C}_{\bX_1}$ and $p_2,\ldots,p_r\in \widetilde{C}_{\bX_1}(W)$. Applying the linear functional $l\oplus 0$ to \eqref{eqn:div_basis} gives
$$l(D_i)=Q_{i1}l(p_1)+\sum_{a\geq 2} Q_{ia} l(p_a).$$
By the choice of $p_1,\ldots,p_r$, we have $l(p_1)>0$ and $l(p_a)=0$ for $a\geq 2$. The signs of $l(D_j)$ are given in \eqref{eqn:sings_of_lD}. This implies that
\begin{equation*}
\left\{
\begin{array}{lll}
Q_{i1}>0 & \mbox{ for } 0\leq i\leq a-1,\\
Q_{i1}=0 &  \mbox{ for } a\leq i\leq m-2,\\
Q_{m-1,1}<0 &
\end{array}\right.
\end{equation*}
Since $0=\sum_{i=0}^{m'-1}D_i=\sum_{i=0}^{m'-1}\sum_{a=1}^r Q_{ia}p_a$, we have $\sum_{i=0}^{m'-1} Q_{ia}=0$ for all $a = 1,\ldots,r$. Also note that $Q_{ia} = 0$ for $1\leq a\leq r'$ and $m\leq i\leq m'-1$.

We now proceed to construct an analytic continuation of $A_j(y)$ where $j\in \{0,\ldots,m'-1\}$. We do this in details only for $j\in \{m,\ldots,m'-1\}$ because the case when $j\in \{0,\ldots, m-1\}$ is similar.

Let $j\in \{m,\ldots, m'-1\}$. The element $\bb_j\in \mathrm{Box}(\Sigma_1)^{\mathrm{age}=1}$ corresponds to a component $\bX_{1,\bb_j}$ of the inertia orbifold $I\bX_1$. According to \cite[Lemma 4.6]{BCS}, $\bX_{1,\bb_j}$ is the toric Deligne-Mumford stack associated to the quotient stacky fan $\Sigma_1/\sigma(\bb_j)$, where $\sigma(\bb_j)$ is the minimal cone in $\Sigma_1$ that contains $\bb_j$. Let $d_{\bb_j}\in \mathbb{K}_1$ be the unique element such that $\nu(d_{\bb_j}) = \bb_j$ and $\langle p_a, d_{\bb_j}\rangle\in [0,1)$. Then by the identification of $\text{Box}$ in (\ref{eqn:identification_box}), every $d\in \mathbb{K}_1$ with $\nu(d) = \bb_j$ can be written as
$$d=d_{\bb_j}+d_0 \text{ with $d_0\in \mathbb{L}_1$}.$$

We consider $A^{\bX_1}_j(y)$. Put
\begin{equation*}
\mathcal{A}_{1,\bb_j} := \left\{I\subset \{0,\ldots,m'-1\} \mid \sum_{i\notin I} \R_{\geq 0} \bb_i \textrm{ is a cone in } \Sigma_1, \langle D_i, d_{\bb_j}\rangle \in \Z \textrm{ for } i \in I \right\}\subset \mathcal{A}_1,
\end{equation*}
and define
$\widetilde{C}_{\bX_{1,\bb_j}} := \bigcap_{I\in \mathcal{A}_{1,\bb_j}}\left(\sum_{i\in I}\mathbb{R}_{>0}D_i \right) = C_{\bX_{1,\bb_j}}+\sum_{i=m}^{m'-1}\R_{\geq 0} D_i$.
Clearly $\widetilde{C}_{\bX_1}\subset \widetilde{C}_{\bX_{1,\bb_j}}$. Taking duals gives
$$\overline{NE}(\bX_{1,\bb_j}) := \widetilde{C}_{\bX_{1,\bb_j}}^\vee\subset \widetilde{C}_{\bX_1}^\vee =: \overline{NE}(\bX_1).$$
By definition, $A_j(y)$ is a series in $y$ whose exponents are contained in $\Omega_j$. It is straightforward to check that $\Omega_j\subset \overline{NE}(\bX_{1,\bb_j})$. In this way we interpret $A_j(y)$ as a function on $\widetilde{C}_{\bX_{1,\bb_j}}$ and a function on $\widetilde{C}_{\bX_1}$ by restriction.

If we also have $\widetilde{C}_{\bX_2}\subset \widetilde{C}_{\bX_{1,\bb_j}}$, then $A_j(y)$ can also be interpreted as a function on $\widetilde{C}_{\bX_2}$ by restriction. So in this case no analytic continuation is needed.

It remains to consider those $\bb_j$ such that $\widetilde{C}_{\bX_2}$ is not contained in $\widetilde{C}_{\bX_{1,\bb_j}}$. First observe that $A_j(y)$ can be rewritten as follows:
\begin{equation*}
A_j(y)=\sum_{d_0\in \mathbb{L}_1} y^{d_{\bb_j}}y^{d_0} \prod_{i=0}^{m'-1}\frac{\Gamma(\{\langle D_i, d_{\bb_j}+d_0\rangle\} +1)}{\Gamma(\langle D_i, d_{\bb_j}+d_0\rangle +1)}.
\end{equation*}
We put $\Gamma_{\bb_j}:= \prod_{i=0}^{m'-1}\Gamma(\{\langle D_i, d_{\bb_j}+d_0\rangle\} +1)$ so that we can write
\begin{equation*}
A_j(y)=\sum_{d_0\in \mathbb{L}_1}y^{d_{\bb_j}}y^{d_0}\Gamma_{\bb_j}\frac{1}{\Gamma(\langle D_{m-1}, d_{\bb_j}+d_0\rangle +1)}\frac{1}{\prod_{i\neq m-1}\Gamma(\langle D_i, d_{\bb_j}+d_0\rangle +1)}.
\end{equation*}
Since $\Gamma(s)\Gamma(1-s)=\pi/\sin (\pi s)$, we have
\begin{equation*}
\frac{1}{\Gamma(\langle D_{m-1}, d_{\bb_j}+d_0\rangle +1)}=-\frac{\sin (\pi\langle D_{m-1}, d_{\bb_j}+d_0\rangle)}{\pi}\Gamma(-\langle D_{m-1}, d_{\bb_j}+d_0\rangle),
\end{equation*}
and
\begin{equation*}
A_j(y)=\sum_{d_0\in \mathbb{L}_1}y^{d_{\bb_j}}y^{d_0}\frac{\Gamma_{\bb_j}}{\pi} \sin (\pi\langle D_{m-1}, d_{\bb_j}+d_0\rangle)\frac{-\Gamma(-\langle D_{m-1}, d_{\bb_j}+d_0\rangle)}{\prod_{i\neq m-1}\Gamma(\langle D_i, d_{\bb_j}+d_0\rangle +1)}.
\end{equation*}

We put $d_{0a}:=\langle p_a, d_0\rangle$. In view of \eqref{eqn:div_basis}, we have
\begin{equation*}
\frac{-\Gamma(-\langle D_{m-1}, d_{\bb_j}+d_0\rangle)}{\prod_{i\neq m-1}\Gamma(\langle D_i, d_{\bb_j}+d_0\rangle +1)}=\frac{-\Gamma(-\langle D_{m-1}, d_{\bb_j}\rangle-Q_{m-1,1}d_{01}-\sum_{a\neq 1} Q_{m-1a}d_{0a})}{\prod_{i\neq m-1}\Gamma(\langle D_i, d_{\bb_j}\rangle +1+Q_{m-1,1}d_{01}+\sum_{a\neq 1} Q_{m-1a}d_{0a})}.
\end{equation*}
Since $y^{d_0} = \prod_{a=1}^r y_a^{\langle p_a, d_0\rangle} = \prod_{a=1}^{r} y_a^{d_{0a}}$, we have
\begin{equation*}
\begin{split}
A_j(y)
= & \frac{\Gamma_{\bb_j}}{\pi}\sum_{d_{01},\ldots,d_{0r}\geq 0}y^{d_{\bb_j}}\left(\prod_{a\geq 2} y_a^{d_{0a}} \right) \sin (\pi\langle D_{m-1}, d_{\bb_j}+d_0\rangle)\\
& \quad \quad \quad \quad \quad \quad \times \frac{-\Gamma(-\langle D_{m-1}, d_{\bb_j}\rangle-Q_{m-1,1}d_{01}-\sum_{a\neq 1} Q_{m-1a}d_{0a})}{\prod_{i\neq m-1}\Gamma(\langle D_i, d_{\bb_j}\rangle +1+Q_{m-1,1}d_{01}+\sum_{a\neq 1} Q_{m-1a}d_{0a})}\\
= & \frac{\Gamma_{\bb_j}}{\pi}\sum_{d_{02},\ldots,d_{0r}\geq 0}y^{d_{\bb_j}}\left(\prod_{a\geq 2} y_a^{d_{0a}} \right) \sin \left(\pi\langle D_{m-1}, d_{\bb_j}\rangle+\sum_{a \neq 1} Q_{m-1, a} d_{0a}\right)\\
& \quad \quad \quad \quad \quad \quad \times\left( \sum_{d_{01}\geq 0}\left((-1)^{Q_{m-1,1}}y_1 \right)^{d_{01}} \frac{-\Gamma(-\langle D_{m-1}, d_{\bb_j}\rangle-Q_{m-1,1}d_{01}-\sum_{a\neq 1} Q_{m-1a}d_{0a})}{\prod_{i\neq m-1}\Gamma(\langle D_i, d_{\bb_j}\rangle +1+Q_{m-1,1}d_{01}+\sum_{a\neq 1} Q_{m-1a}d_{0a})}\right).
\end{split}
\end{equation*}

Now observe that
\begin{equation}\label{eqn:residue_1}
\begin{split}
& \sum_{d_{01}\geq 0}\left((-1)^{Q_{m-1,1}}y_1 \right)^{d_{01}} \frac{-\Gamma(-\langle D_{m-1}, d_{\bb_j}\rangle-Q_{m-1,1}d_{01}-\sum_{a\neq 1} Q_{m-1a}d_{0a})}{\prod_{i\neq m-1}\Gamma(\langle D_i, d_{\bb_j}\rangle +1+Q_{m-1,1}d_{01}+\sum_{a\neq 1} Q_{m-1a}d_{0a})}\\
= & \mathrm{Res}_{s\in \mathbb{N} \cup \{0\}} ds \frac{-\Gamma(-s)((-1)^{Q_{m-1,1}}y_1)^s \Gamma(-\langle D_{m-1}, d_{\bb_j}\rangle-Q_{m-1,1}s-\sum_{a\neq 1} Q_{m-1a}d_{0a})}{\prod_{i\neq m-1}\Gamma(\langle D_i, d_{\bb_j}\rangle +1+Q_{m-1,1}s+\sum_{a\neq 1} Q_{m-1a}d_{0a})}.
\end{split}
\end{equation}
Fix a sign of $y_1$ so that $(-1)^{Q_{m-1,1}}y_1\in \R_{>0}$. By using the Mellin-Barnes integral technique (see e.g. \cite[Section 4]{BH06} and \cite[Lemma A.6]{BH06}), we have that the right-hand side of \eqref{eqn:residue_1} is
\begin{equation}\label{eqn:residue_2}
\begin{split}
%&\mathrm{Res}_{s\in \mathbb{N} \cup \{0\}} ds \frac{-\Gamma(-s)((-1)^{Q_{m-1,1}}y_1)^s \Gamma(-\langle D_{m-1}, d_{\bb_j}\rangle-Q_{m-1,1}s-\sum_{a\neq 1} Q_{m-1a}d_{0a})}{\prod_{i\neq m-1}\Gamma(\langle D_i, d_{\bb_j}\rangle +1+Q_{m-1,1}s+\sum_{a\neq 1} Q_{m-1a}d_{0a})}\\
%=&
 \oint_{C_{d_{02},\ldots,d_{0r}}} ds \frac{-\Gamma(-s)((-1)^{Q_{m-1,1}}y_1)^s \Gamma(-\langle D_{m-1}, d_{\bb_j}\rangle-Q_{m-1,1}s-\sum_{a\neq 1} Q_{m-1a}d_{0a})}{\prod_{i\neq m-1}\Gamma(\langle D_i, d_{\bb_j}\rangle +1+Q_{m-1,1}s+\sum_{a\neq 1} Q_{m-1a}d_{0a})},
\end{split}
\end{equation}
where $C_{d_{02},\ldots,d_{0r}}$ is a contour on the plane with (complex) coordinate $s$ that runs from $s=-\sqrt{-1}\infty$ to $s=+\sqrt{-1}\infty$, dividing the plane into two parts so that
\begin{equation}\label{eqn:pole_left}
\mathrm{Pole}_L := \left\{\frac{\langle D_{m-1}, d_{\bb_j}\rangle+\sum_{a\neq 1} Q_{m-1a}d_{0a}-l}{-Q_{m-1,1}} \mid l=0,1,\ldots\right\}
\end{equation}
lies on one part and $\{0,1,\ldots\}$ lies on the other part. Note that $-Q_{m-1,1}>0$.

To analytically continue to the region where $|y_1|$ is large, we close the contour $C_{d_{02},\ldots,d_{0r}}$ to the left to enclose all poles in $\mathrm{Pole}_L$. This shows that \eqref{eqn:residue_2} is
\begin{equation*}
\begin{split}
%& \oint_{C_{d_{02},\ldots,d_{0r}}} ds \frac{-\Gamma(-s)((-1)^{Q_{m-1,1}}y_1)^s \Gamma(-\langle D_{m-1}, d_{\bb_j}\rangle-Q_{m-1,1}s-\sum_{a\neq 1} Q_{m-1a}d_{0a})}{\prod_{i\neq m-1}\Gamma(\langle D_i, d_{\bb_j}\rangle +1+Q_{m-1,1}s+\sum_{a\neq 1} Q_{m-1a}d_{0a})}\\
%= &
\mathrm{Res}_{s\in \mathrm{Pole}_L} ds \frac{-\Gamma(-s)((-1)^{Q_{m-1,1}}y_1)^s \Gamma(-\langle D_{m-1}, d_{\bb_j}\rangle-Q_{m-1,1}s-\sum_{a\neq 1} Q_{m-1a}d_{0a})}{\prod_{i\neq m-1}\Gamma(\langle D_i, d_{\bb_j}\rangle +1+Q_{m-1,1}s+\sum_{a\neq 1} Q_{m-1a}d_{0a})},
\end{split}
\end{equation*}
which is equal to
\begin{equation*}
\begin{split}
& \sum_{l\geq 0}\frac{(-1)^l}{l!}\frac{\Gamma\left(\frac{\langle D_{m-1}, d_{\bb_j}\rangle+\sum_{a\neq 1} Q_{m-1a}d_{0a}-l}{Q_{m-1,1}}\right)\left((-1)^{Q_{m-1,1}}y_1\right)^{\frac{\langle D_{m-1}, d_{\bb_j}\rangle+\sum_{a\neq 1} Q_{m-1a}d_{0a}-l}{-Q_{m-1,1}}}}{\prod_{i\neq m-1}\Gamma\left(\langle D_i, d_{\bb_j}\rangle +1+Q_{m-1,1}\times \frac{\langle D_{m-1}, d_{\bb_j}\rangle+\sum_{a\neq 1} Q_{m-1a}d_{0a}-l}{-Q_{m-1,1}} +\sum_{a\neq 1} Q_{m-1a}d_{0a}\right)}\\
= & \sum_{l\geq 0}\frac{(-1)^l}{l!}\frac{\left((-1)^{Q_{m-1,1}}y_1\right)^{\frac{\langle D_{m-1}, d_{\bb_j}\rangle+\sum_{a\neq 1} Q_{m-1a}d_{0a}-l}{-Q_{m-1,1}}}\frac{\pi}{-Q_{m-1,1}\sin \pi \left(\frac{\langle D_{m-1}, d_{\bb_j}\rangle+\sum_{a\neq 1} Q_{m-1a}d_{0a}-l}{-Q_{m-1,1}} \right)}}{\prod_{i\neq m-1}\Gamma\left(\langle D_i, d_{\bb_j}\rangle +1+Q_{m-1,1}\times \frac{\langle D_{m-1}, d_{\bb_j}\rangle+\sum_{a\neq 1} Q_{m-1a}d_{0a}-l}{-Q_{m-1,1}} +\sum_{a\neq 1} Q_{m-1a}d_{0a}\right)}\times\\
& \quad \quad \quad \times \frac{1}{\Gamma\left(1 - \frac{\langle D_{m-1}, d_{\bb_j}\rangle+\sum_{a\neq 1} Q_{m-1a}d_{0a}-l}{Q_{m-1,1}}\right)},
\end{split}
\end{equation*}
where we again use $\Gamma(s)\Gamma(1-s)=\pi/\sin (\pi s)$.

This gives an analytic continuation of $A_j(y)$:
\begin{equation}\label{eqn:ana_conti_A_j}
\begin{split}
 A_j(y)
= & \frac{\Gamma_{\bb_j}}{\pi}\sum_{d_{02},\ldots,d_{0r}\geq 0}y^{d_{\bb_j}}\left(\prod_{a\geq 2} y_a^{d_{0a}} \right) \sin \left(\pi\langle D_{m-1}, d_{\bb_j}\rangle+\pi\sum_{a \neq 1} Q_{m-1, a} d_{0a}\right)\\
&\times \sum_{l\geq 0}\frac{(-1)^l}{l!}\frac{\left((-1)^{Q_{m-1,1}}y_1\right)^{\frac{\langle D_{m-1}, d_{\bb_j}\rangle+\sum_{a\neq 1} Q_{m-1a}d_{0a}-l}{-Q_{m-1,1}}}\frac{\pi}{-Q_{m-1,1}\sin \pi \left(\frac{\langle D_{m-1}, d_{\bb_j}\rangle+\sum_{a\neq 1} Q_{m-1a}d_{0a}-l}{-Q_{m-1,1}} \right)}}{\prod_{i\neq m-1}\Gamma\left(\langle D_i, d_{\bb_j}\rangle +1+Q_{m-1,1}\times \frac{\langle D_{m-1}, d_{\bb_j}\rangle+\sum_{a\neq 1} Q_{m-1a}d_{0a}-l}{-Q_{m-1,1}} +\sum_{a\neq 1} Q_{m-1a}d_{0a}\right)}\\
& \quad \quad \quad \times \frac{1}{\Gamma\left(1 - \frac{\langle D_{m-1}, d_{\bb_j}\rangle+\sum_{a\neq 1} Q_{m-1a}d_{0a}-l}{Q_{m-1,1}}\right)}.
\end{split}
\end{equation}

It remains to show that the expression in \eqref{eqn:ana_conti_A_j} can be interpreted as a function on $\widetilde{C}_{\bX_2}$. To do this, we need a new set of variables. Pick another integral basis of $\{\hat{p}_1,\ldots,\hat{p}_r\}\subset \mathbb{L}_1^\vee\otimes \Q$ such that
$\hat{p}_1:=D_{m-1}$ and $\hat{p}_a:=p_a$ for $a=2,\ldots,r$.
Introduce the corresponding variables $\hat{y}_1,\ldots,\hat{y}_r$, namely $y^d=\hat{y}^d=\prod_{a=1}^r \hat{y}_a^{\langle \hat{p}_a,d \rangle}$. From this it is easy to see that
$\hat{y}_1=y_1^{1/Q_{m-1,1}}$ and $\hat{y}_a= y_1^{-Q_{m-1,a}/Q_{m-1,1}}y_a$ for $a=2,\ldots,r$.
We may express $D_i$ in terms of $\hat{p}_1,\ldots,\hat{p}_r$ as follows:
\begin{equation*}
\begin{split}
D_i=\sum_{a=1}^r Q_{ia}p_a=Q_{i1}p_1+\sum_{a\geq 2} Q_{ia}p_a
%&=\frac{Q_{i1}}{Q_{m-1,1}}\left(\hat{p}_1-\sum_{a\geq 2} Q_{m-1,a}\hat{p}_a \right) + \sum_{a\geq 2} Q_{ia}\hat{p}_a\\
= \frac{Q_{i1}}{Q_{m-1,1}}\hat{p}_1+\sum_{a\geq 2}\left(Q_{ia}-\frac{Q_{i1}Q_{m-1,a}}{Q_{m-1,1}} \right)\hat{p}_a.
\end{split}
\end{equation*}

Next we interpret the expression in \eqref{eqn:ana_conti_A_j} as a series in $\hat{y}$ whose exponents are contained in $\overline{NE}(\bX_{2})=\widehat{C}_{\bX_2}^\vee$. Define $\hat{d}_{\bb_j}\in \mathbb{L}_1\otimes \Q$ to be the unique class such that
\begin{equation}\label{eqn:hat_d_b_j}
\langle \hat{p}_1, \hat{d}_{\bb_j}\rangle =0, \quad \langle \hat{p}_a, \hat{d}_{\bb_j}\rangle =\langle p_a, d_{\bb_j}\rangle, \textrm{ for } a=2,\ldots,r.
\end{equation}
Given $l, d_{02},\ldots,d_{0r}\geq 0$, define $\hat{d}_0 \in \mathbb{L}_1\otimes \Q $ to be the unique class such that
\begin{equation}\label{eqn:hat_d_0}
\langle \hat{p}_1, \hat{d}_0\rangle =l, \quad \langle \hat{p}_a, \hat{d}_0\rangle =d_{0a}, \text{ for } a=2,\ldots,r.
\end{equation}

\begin{lemma}
Given $l, d_{02},\ldots,d_{0r}\geq 0$. Then $\hat{d}:=\hat{d}_{\bb_j}+\hat{d}_0$ is contained in $\mathbb{K}_2$.
\end{lemma}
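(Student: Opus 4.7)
The plan is to verify directly that $I_{\hat d}:=\{i:\langle D_i,\hat d\rangle\in\Z\}$ contains an anticone of $\mathcal{A}_2$, after which $I_{\hat d}\in\mathcal{A}_2$ will follow from the upward closure of $\mathcal{A}_2$ (any superset of an anticone inside $\{0,\ldots,m'-1\}$ is again an anticone, because its complement is then a face of the original cone of $\Sigma_2$, hence a cone of $\Sigma_2$).

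First I would translate the abstract conditions \eqref{eqn:hat_d_b_j}--\eqref{eqn:hat_d_0} into an explicit formula for $\hat d$ in the dual basis $\{\gamma_1,\ldots,\gamma_r\}\subset\mathbb{L}_1\otimes\Q$ of $\{p_1,\ldots,p_r\}$. Using $\hat p_1=D_{m-1}=\sum_c Q_{m-1,c}p_c$ and $\hat p_a=p_a$ for $a\geq2$, a straightforward linear-algebraic manipulation yields
\[
\hat d=d_{\bb_j}+s\gamma_1+\sum_{a\geq2}d_{0a}\gamma_a,\qquad s:=\frac{l-\langle D_{m-1},d_{\bb_j}\rangle-\sum_{a\geq2}Q_{m-1,a}d_{0a}}{Q_{m-1,1}},
\]
so that $\langle D_i,\hat d\rangle=\langle D_i,d_{\bb_j}\rangle+sQ_{i1}+\sum_{a\geq2}Q_{ia}d_{0a}$ for every $i$. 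The sign structure imposed by $p_1\in\widetilde C_{\bX_1}$, $p_a\in\widetilde C_{\bX_1}(W)$ for $a\geq2$ and \eqref{eqn:sings_of_lD} gives $Q_{i1}>0$ for $0\leq i\leq a-1$, $Q_{i1}=0$ for $i\in\{a,\ldots,m-2\}\cup\{m,\ldots,m'-1\}$, and $Q_{m-1,1}<0$. Reading off integrality from the displayed formula, one obtains $\langle D_{m-1},\hat d\rangle=l\in\Z$, and for each $i\in\{a,\ldots,m-2\}\cup\{m,\ldots,m'-1\}$ (where $Q_{i1}=0$) one has $\langle D_i,\hat d\rangle\in\Z$ if and only if $i\in I_{d_{\bb_j}}$. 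This gives the inclusion
\[
I_{\hat d}\supseteq\{m-1\}\cup\bigl(I_{d_{\bb_j}}\cap(\{a,\ldots,m-2\}\cup\{m,\ldots,m'-1\})\bigr)=\bigl(I_{d_{\bb_j}}\cup\{m-1\}\bigr)\setminus\{0,\ldots,a-1\}.
\]
No control over the indices $i\in\{0,\ldots,a-1\}$ is needed, because it suffices for $I_{\hat d}$ to dominate \emph{some} anticone of $\mathcal{A}_2$.

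Finally, I would invoke the hypothesis that the analytic continuation is only being carried out in the regime $\widetilde C_{\bX_2}\not\subset\widetilde C_{\bX_{1,\bb_j}}$. Unpacking $\widetilde C_{\bX_{1,\bb_j}}=\bigcap_{I\in\mathcal{A}_{1,\bb_j}}\sum_{i\in I}\R_{>0}D_i$ together with Lemma~\ref{lem:anticone1} shows that this hypothesis is equivalent to the existence of some $I'\in\mathcal{A}_1$ with $I'\subseteq I_{d_{\bb_j}}$ and $m-1\notin I'$. Lemma~\ref{lem:anticone2}(1) then produces $(I'\cup\{m-1\})\setminus\{0,\ldots,a-1\}\in\mathcal{A}_2$, and this anticone is contained in $(I_{d_{\bb_j}}\cup\{m-1\})\setminus\{0,\ldots,a-1\}\subseteq I_{\hat d}$. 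Upward closure of $\mathcal{A}_2$ forces $I_{\hat d}\in\mathcal{A}_2$, i.e., $\hat d\in\mathbb{K}_2$. The main obstacle is purely book-keeping: converting the geometric condition $\widetilde C_{\bX_2}\not\subset\widetilde C_{\bX_{1,\bb_j}}$ into the combinatorial data fed into Lemma~\ref{lem:anticone2}(1); beyond that, the proof is a direct integrality calculation driven by the sign pattern of the $Q_{i1}$.
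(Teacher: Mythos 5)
Your proof is correct, and its skeleton matches the paper's: you compute $\langle D_{m-1},\hat d\rangle=l\in\Z$, observe that for the wall indices (those $i$ with $Q_{i1}=0$) integrality of $\langle D_i,\hat d\rangle$ reduces to integrality of $\langle D_i,d_{\bb_j}\rangle$, and then convert the hypothesis $\widetilde{C}_{\bX_2}\not\subset\widetilde{C}_{\bX_{1,\bb_j}}$ into membership of a suitable index set in $\mathcal{A}_2$. Where you diverge is in how that last step is executed. The paper argues geometrically and directly: from the non-containment it asserts that $\sum\R_{>0}D_i$, summed over $i\in\{a,\ldots,m-2\}$ with $\langle D_i,d_{\bb_j}\rangle\in\Z$, contains the wall face $\overline{C}_{\bX_1}\cap W$, so that adding $\R_{>0}D_{m-1}$ reaches $\omega_2$ and the set $\{m-1\}\cup\{i\in\{a,\ldots,m-2\}\mid\langle D_i,d_{\bb_j}\rangle\in\Z\}$ lies in $\mathcal{A}_2'$; it then concludes $\hat d\in\mathbb{K}_2$ from the definition. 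You instead reformulate the hypothesis combinatorially (every element of $\mathcal{A}_{1,\bb_j}$ containing $m-1$ would force $\widetilde{C}_{\bX_2}\subset\widetilde{C}_{\bX_{1,\bb_j}}$ via Lemma \ref{lem:anticone1}), extract an $I'\in\mathcal{A}_{1,\bb_j}$ with $m-1\notin I'$, feed it into Lemma \ref{lem:anticone2}(1), and finish with upward closure of $\mathcal{A}_2$. Your route leans on the two anticone lemmas rather than on the wall-face geometry, and it has the merit of making explicit two points the paper leaves implicit: that $\langle D_i,\hat d\rangle\in\Z$ also at the extra-vector indices $i\in\{m,\ldots,m'-1\}$ (needed since every element of $\mathcal{A}_2$ contains them, and guaranteed by $Q_{i1}=0$ there), and that $\mathcal{A}_2$ is closed under supersets, which is what actually lets one pass from "contains an anticone" to "$I_{\hat d}\in\mathcal{A}_2$". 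The paper's version, in exchange, identifies the relevant anticone concretely in terms of the wall-crossing geometry rather than through a chain of reductions.
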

\begin{proof}
First note that $\langle D_{m-1}, \hat{d}\rangle=\langle \hat{p}_1, \hat{d}_{\bb_j}+\hat{d}_0\rangle =l\in \Z$.
Let $i\in \{a, \ldots, m-2\}$. We consider $\langle D_i, \hat{d}\rangle$. Let $\hat{p}_1^\vee, \ldots, \hat{p}_r^\vee$ be such that $\langle \hat{p}_a, \hat{p}_b^\vee\rangle =\delta_{ab}$. We calculate $\langle \hat{p}_1, d_0\rangle=\sum_{a\geq 1} Q_{m-1,a}d_{0a}$ and $\langle \hat{p}_a, d_0\rangle=d_{0a}$ for $a\geq 2$. So
$$d_0=\left(\sum_{a\geq 1} Q_{m-1,a}d_{0a}\right)\hat{p}_1^\vee+\sum_{a\geq 2}d_{0a} \hat{p}_a^\vee.$$
By \eqref{eqn:hat_d_b_j} and \eqref{eqn:hat_d_0}, we have
\begin{equation*}
\begin{split}
\hat{d}=\hat{d}_{\bb_j}+\hat{d}_0 & = d_{\bb_j}-\langle p_a, d_{\bb_j}\rangle \hat{p}_1^\vee+ d_0+ \left(l- \sum_{a\geq 1}Q_{m-1,a}d_{0a}\right) \hat{p}_1^\vee\\
& = d_{\bb_j}+d_0+\left(l-\langle p_a, d_{\bb_j}\rangle- \sum_{a\geq 1}Q_{m-1,a}d_{0a} \right) \hat{p}_1^\vee.
\end{split}
\end{equation*}
Since $i\in \{a, \ldots, m-2\}$, we have $D_i\in \widetilde{C}_{\bX_1}(W)$. So $D_i$ is a linear combination of $\hat{p}_2,\ldots,\hat{p}_r$. This implies that $\langle D_i, \hat{p}_1^\vee\rangle=0$, and hence
$\langle D_i, \hat{d}\rangle =\langle D_i, d_{\bb_j}+d_0\rangle$.
We know that $\langle D_i, d_0\rangle=\sum_{a=1}^rQ_{ia}\langle p_a, d_0\rangle=\sum_{a=1}^r Q_{ia}d_{0a}\in \Z$. So $\langle D_i, \hat{d}\rangle =\langle D_i, d_{\bb_j}+d_0\rangle \in \Z$ if and only if $\langle D_i, d_{\bb_j}\rangle\in \Z$.

By assumption, $\widetilde{C}_{\bX_2}$ is not contained in  $\widetilde{C}_{\bX_{1,\bb_j}}$. It follows easily that
$\sum_{\substack{i\in \{a,\ldots,m-2\}\\ \langle D_i, d_{\bb_j}\rangle \in \Z}} \R_{> 0}D_i$
must contain $\overline{C}_{\bX_1}\cap W$. Thus
$\R_{>0}D_{m-1}+ \sum_{\substack{i\in \{a,\ldots,m-2\}\\ \langle D_i, d_{\bb_j}\rangle \in \Z}} \R_{\geq 0}D_i$
contains the K\"ahler class $\omega_2$, and $\{m-1\}\cup \{i\in \{a,\ldots,m-2\} \mid \langle D_i, d_{\bb_j}\rangle \in \Z\}$ is in $\mathcal{A}_2'$.
Since $\langle D_i, \hat{d}\rangle \in \Z$ for all $i\in \{m-1\}\cup \{i\in \{a,\ldots,m-2\} \mid \langle D_i, d_{\bb_j}\rangle \in \Z\}$, we conclude that $\hat{d}\in \mathbb{K}_2$ by the definition of $\mathbb{K}_2$.
\end{proof}

We calculate
\begin{equation*}
\begin{split}
& \langle D_i, d_{\bb_j}\rangle +1+Q_{m-1,1}\times \frac{\langle D_{m-1}, d_{\bb_j}\rangle+\sum_{a\neq 1} Q_{m-1a}d_{0a}-l}{-Q_{m-1,1}} +\sum_{a\neq 1} Q_{m-1a}d_{0a}\\
= & \frac{Q_{i1}}{Q_{m-1,1}}l+\sum_{a\neq 1} \left(Q_{ia}-\frac{Q_{i1}Q_{m-1,a}}{Q_{m-1,1}} \right)d_{0a}-\frac{Q_{i1}}{Q_{m-1,1}}\langle D_{m-1}, d_{\bb_j}\rangle + \langle D_i, d_{\bb_j} \rangle\\
= & \langle D_i, \hat{d}_0\rangle +\langle D_i-\frac{Q_{i1}}{Q_{m-1,1}}D_{m-1}, \hat{d}_{b_j}\rangle.
\end{split}
\end{equation*}
Also,
\begin{equation*}
\begin{split}
 \left((-1)^{Q_{m-1,1}}y_1\right)^{\frac{\langle D_{m-1}, d_{\bb_j}\rangle+\sum_{a\neq 1} Q_{m-1a}d_{0a}-l}{-Q_{m-1,1}}}
=  (-1)^{(\langle D_{m-1}, d_{\bb_j}\rangle+\sum_{a\neq 1} Q_{m-1a}d_{0a}-l)}\hat{y}_1^{-(\langle D_{m-1}, d_{\bb_j}\rangle+\sum_{a\neq 1} Q_{m-1a}d_{0a}-l)},
\end{split}
\end{equation*}
\begin{equation*}
\begin{split}
y_a^{d_{0a}}=\hat{y}_a^{d_{0a}}\hat{y}_1^{Q_{m-1,a}d_{0a}} \textrm{ for } a\geq 2,
\end{split}
\end{equation*}
which gives
\begin{equation*}
\begin{split}
 y^{d_{\bb_j}}\left(\prod_{a\geq 2} y_a^{d_{0a}}\right)\left((-1)^{Q_{m-1,1}}y_1\right)^{\frac{\langle D_{m-1}, d_{\bb_j}\rangle+\sum_{a\neq 1} Q_{m-1a}d_{0a}-l}{-Q_{m-1,1}}}
=(-1)^{Q_{m-1,1}\times\frac{\langle D_{m-1}, d_{\bb_j}\rangle+\sum_{a\neq 1} Q_{m-1a}d_{0a}-l}{Q_{m-1,1}}}\hat{y}^{\hat{d}_{\bb_j}}\hat{y}^{\hat{d}_0}.
\end{split}
\end{equation*}
Also
\begin{equation*}
\begin{split}
\frac{\langle D_{m-1}, d_{\bb_j}\rangle+\sum_{a\neq 1} Q_{m-1a}d_{0a}-l}{Q_{m-1,1}}=\langle \frac{D_{m-1}}{Q_{m-1,1}}, d_{\bb_j}\rangle +\langle \frac{\hat{p}_1-\sum_{a\neq 1} Q_{m-1,a}\hat{p}_a}{Q_{m-1,1}}, \hat{d}_0\rangle.
\end{split}
\end{equation*}
From these calculations it is easy to see that the expression in \eqref{eqn:ana_conti_A_j} can be interpreted as a series in $\hat{y}$ whose exponents are contained in $\overline{NE}(\bX_{2})=\widehat{C}_{\bX_2}^\vee$. This completes the construction of the analytic continuation.

\bibliographystyle{amsplain}
\bibliography{geometry}

\providecommand{\bysame}{\leavevmode\hbox to3em{\hrulefill}\thinspace}
\providecommand{\MR}{\relax\ifhmode\unskip\space\fi MR }
% \MRhref is called by the amsart/book/proc definition of \MR.
\providecommand{\MRhref}[2]{%
  \href{http://www.ams.org/mathscinet-getitem?mr=#1}{#2}
}
\providecommand{\href}[2]{#2}
\begin{thebibliography}{10}

\bibitem{AAK12}
M.~Abouzaid, D.~Auroux, and L.~Katzarkov, \emph{Lagrangian fibrations on
  blowups of toric varieties and mirror symmetry for hypersurfacese}, preprint,
  \href{http://arxiv.org/abs/1205.0053}{arXiv:1205.0053}.

\bibitem{AGV08}
D.~Abramovich, T.~Graber, and A.~Vistoli, \emph{Gromov-{W}itten theory of
  {D}eligne-{M}umford stacks}, Amer. J. Math. \textbf{130} (2008), no.~5,
  1337--1398. \MR{2450211, Zbl 1193.14070}

\bibitem{auroux07}
D.~Auroux, \emph{Mirror symmetry and {$T$}-duality in the complement of an
  anticanonical divisor}, J. G\"okova Geom. Topol. GGT \textbf{1} (2007),
  51--91. \MR{2386535, Zbl 1181.53076}

\bibitem{auroux09}
\bysame, \emph{Special {L}agrangian fibrations, wall-crossing, and mirror
  symmetry}, Surveys in differential geometry. {V}ol. {XIII}. {G}eometry,
  analysis, and algebraic geometry: forty years of the {J}ournal of
  {D}ifferential {G}eometry, Surv. Differ. Geom., vol.~13, Int. Press,
  Somerville, MA, 2009, pp.~1--47. \MR{2537081, Zbl 1184.53085}

\bibitem{B93}
V.~Batyrev, \emph{Variations of the mixed {H}odge structure of affine
  hypersurfaces in algebraic tori}, Duke Math. J. \textbf{69} (1993), no.~2,
  349--409. \MR{1203231, Zbl 0812.14035}

\bibitem{BCS}
L.~Borisov, L.~Chen, and G.~Smith, \emph{The orbifold {C}how ring of toric
  {D}eligne-{M}umford stacks}, J. Amer. Math. Soc. \textbf{18} (2005), no.~1,
  193--215 (electronic). \MR{2114820, Zbl 1178.14057}

\bibitem{BH06}
Lev~A. Borisov and R.~Paul Horja, \emph{Mellin-{B}arnes integrals as
  {F}ourier-{M}ukai transforms}, Adv. Math. \textbf{207} (2006), no.~2,
  876--927. \MR{2271990, Zbl 1137.14314}

\bibitem{brini-cavalieri11}
A.~Brini and R.~Cavalieri, \emph{Open orbifold {G}romov-{W}itten invariants of
  {$[\Bbb C^3/\Bbb Z_n]$}: localization and mirror symmetry}, Selecta Math.
  (N.S.) \textbf{17} (2011), no.~4, 879--933. \MR{2861610, Zbl 1236.14046}

\bibitem{BCR13-2}
A.~Brini, R.~Cavalieri, and D.~Ross, \emph{Crepant resolutions and open
  strings}, preprint, \href{http://arxiv.org/abs/1309.4438}{arXiv:1309.4438}.

\bibitem{Bryan-Graber09}
J.~Bryan and T.~Graber, \emph{The crepant resolution conjecture}, Algebraic
  geometry---{S}eattle 2005. {P}art 1, Proc. Sympos. Pure Math., vol.~80, Amer.
  Math. Soc., Providence, RI, 2009, pp.~23--42. \MR{2483931, Zbl 1198.14053}

\bibitem{candelas91}
P.~Candelas, X.~de~la Ossa, P.~Green, and L.~Parkes, \emph{A pair of
  {C}alabi-{Y}au manifolds as an exactly soluble superconformal theory},
  Nuclear Phys. B \textbf{359} (1991), no.~1, 21--74. \MR{1115626, Zbl
  1098.32506}

\bibitem{Cavalieri-Ross11}
R.~Cavalieri and D.~Ross, \emph{Open {G}romov-{W}itten theory and the crepant
  resolution conjecture}, Michigan Math. J. \textbf{61} (2012), no.~4,
  807--837. \MR{3049291, Zbl 1273.14111}

\bibitem{Chan10}
K.~Chan, \emph{A formula equating open and closed {G}romov-{W}itten invariants
  and its applications to mirror symmetry}, Pacific J. Math. \textbf{254}
  (2011), no.~2, 275--293. \MR{2900016, Zbl 1246.53116}

\bibitem{CCLT12}
K.~Chan, C.-H. Cho, S.-C. Lau, and H.-H. Tseng, \emph{Lagrangian {F}loer
  superpotentials and crepant resolutions for toric orbifolds}, Comm. Math.
  Phys. \textbf{328} (2014), no.~1, 83--130. \MR{3196981, Zbl 1109.53079}

\bibitem{chan-lau}
K.~Chan and S.-C. Lau, \emph{Open {G}romov-{W}itten invariants and
  superpotentials for semi-{F}ano toric surfaces}, Int. Math. Res. Not. IMRN
  (2014), no.~14, 3759--3789. \MR{3239088, Zbl 06369457}

\bibitem{CLL}
K.~Chan, S.-C. Lau, and N.~C. Leung, \emph{S{YZ} mirror symmetry for toric
  {C}alabi-{Y}au manifolds}, J. Differential Geom. \textbf{90} (2012), no.~2,
  177--250. \MR{2899874, Zbl 1297.53061}

\bibitem{CLLT12}
K.~Chan, S.-C. Lau, N.~C. Leung, and H.-H. Tseng, \emph{Open {G}romov-{W}itten
  invariants, mirror maps, and {S}eidel representations for toric manifolds},
  preprint, \href{http://arxiv.org/abs/1209.6119}{arXiv: 1209.6119}.

\bibitem{CLT11}
K.~Chan, S.-C. Lau, and H.-H Tseng, \emph{Enumerative meaning of mirror maps
  for toric {C}alabi-{Y}au manifolds}, Adv. Math. \textbf{244} (2013),
  605--625. \MR{3077883, Zbl 1286.14056}

\bibitem{Chan-Leung}
K.~Chan and N.~C. Leung, \emph{Mirror symmetry for toric {F}ano manifolds via
  {SYZ} transformations}, Adv. Math. \textbf{223} (2010), no.~3, 797--839.
  \MR{2565550, Zbl 1201.14029}

\bibitem{Chan-Leung2}
\bysame, \emph{On {SYZ} mirror transformations}, New developments in algebraic
  geometry, integrable systems and mirror symmetry ({RIMS}, {K}yoto, 2008),
  Adv. Stud. Pure Math., vol.~59, Math. Soc. Japan, Tokyo, 2010, pp.~1--30.
  \MR{2683205, Zbl 1213.14073}

\bibitem{CR01}
W.~Chen and Y.~Ruan, \emph{Orbifold {G}romov-{W}itten theory}, Orbifolds in
  mathematics and physics ({M}adison, {WI}, 2001), Contemp. Math., vol. 310,
  Amer. Math. Soc., Providence, RI, 2002, pp.~25--85. \MR{1950941, Zbl
  1063.53091}

\bibitem{CR04}
\bysame, \emph{A new cohomology theory of orbifold}, Comm. Math. Phys.
  \textbf{248} (2004), no.~1, 1--31. \MR{2104605, Zbl 1063.53091}

\bibitem{cho06}
C.-H. Cho and Y.-G. Oh, \emph{Floer cohomology and disc instantons of
  {L}agrangian torus fibers in {F}ano toric manifolds}, Asian J. Math.
  \textbf{10} (2006), no.~4, 773--814. \MR{2282365, Zbl 1130.53055}

\bibitem{CP}
C.-H. Cho and M.~Poddar, \emph{Holomorphic orbidiscs and {L}agrangian {F}loer
  cohomology of symplectic toric orbifolds}, J. Differential Geom. \textbf{98}
  (2014), no.~1, 21--116. \MR{3263515, Zbl 1300.53077}

\bibitem{CS}
C.-H. Cho and H.-S. Shin, \emph{{C}hern-{W}eil {M}aslov index and its orbifold
  analogue}, preprint, \href{http://arxiv.org/abs/1202.0556}{arXiv: 1202.0556}.

\bibitem{Coates09}
T.~Coates, \emph{On the crepant resolution conjecture in the local case}, Comm.
  Math. Phys. \textbf{287} (2009), no.~3, 1071--1108. \MR{2486673, Zbl
  1200.53081}

\bibitem{CCIT_toricDM}
T.~Coates, A.~Corti, H.~Iritani, and H.-H. Tseng, \emph{A mirror theorem for
  toric stacks}, to appear in Compos. Math.,
  \href{http://arxiv.org/abs/1310.4163}{arXiv:1310.4163}.

\bibitem{CCIT09}
\bysame, \emph{Computing genus-zero twisted {G}romov-{W}itten invariants}, Duke
  Math. J. \textbf{147} (2009), no.~3, 377--438. \MR{2510741, Zbl 1176.14009}

\bibitem{CIT09}
T.~Coates, H.~Iritani, and H.-H. Tseng, \emph{Wall-crossings in toric
  {G}romov-{W}itten theory. {I}. {C}repant examples}, Geom. Topol. \textbf{13}
  (2009), no.~5, 2675--2744. \MR{2529944, Zbl 1184.53086}

\bibitem{Coates-Ruan}
T.~Coates and Y.~Ruan, \emph{Quantum cohomology and crepant resolutions: a
  conjecture}, Ann. Inst. Fourier (Grenoble) \textbf{63} (2013), no.~2,
  431--478. \MR{3112518, Zbl 1275.53083}

\bibitem{Cox-Katz}
D.~Cox and S.~Katz, \emph{Mirror symmetry and algebraic geometry}, Mathematical
  Surveys and Monographs, vol.~68, American Mathematical Society, Providence,
  RI, 1999. \MR{1677117, Zbl 0951.14026}

\bibitem{CLS_toricbook}
D.~Cox, J.~Little, and H.~Schenck, \emph{Toric varieties}, Graduate Studies in
  Mathematics, vol. 124, American Mathematical Society, Providence, RI, 2011.
  \MR{2810322, Zbl 1223.14001}

\bibitem{Doran-Kerr11}
C.~Doran and M.~Kerr, \emph{Algebraic {$K$}-theory of toric hypersurfaces},
  Commun. Number Theory Phys. \textbf{5} (2011), no.~2, 397--600. \MR{2851155,
  Zbl 1274.19003}

\bibitem{Doran-Kerr13}
\bysame, \emph{Algebraic cycles and local quantum cohomology}, Commun. Number
  Theory Phys. \textbf{8} (2014), no.~4, 703--727.

\bibitem{Efimov}
A.~Efimov, \emph{Homological mirror symmetry for curves of higher genus}, Adv.
  Math. \textbf{230} (2012), no.~2, 493--530. \MR{2914956, Zbl 1242.14039}

\bibitem{fang-liu}
B.~Fang and C.-C.~M. Liu, \emph{Open {G}romov-{W}itten invariants of toric
  {C}alabi-{Y}au 3-folds}, Comm. Math. Phys. \textbf{323} (2013), no.~1,
  285--328. \MR{3085667, Zbl 1284.14074}

\bibitem{fang-liu-tseng}
B.~Fang, C.-C.~M. Liu, and H.-H. Tseng, \emph{Open-closed {G}romov-{W}itten
  invariants of 3-dimensional {C}alabi-{Y}au smooth toric {D}{M} stacks},
  preprint, \href{http://arxiv.org/abs/1212.6073}{arXiv:1212.6073}.

\bibitem{FOOO10b}
K.~Fukaya, Y.-G. Oh, H.~Ohta, and K.~Ono, \emph{Lagrangian {F}loer theory and
  mirror symmetry on compact toric manifolds}, preprint,
  \href{http://arxiv.org/abs/1009.1648}{arXiv:1009.1648}.

\bibitem{FOOO12}
\bysame, \emph{Technical details on {K}uranishi structure and virtual
  fundamental chain}, preprint,
  \href{http://arxiv.org/abs/1209.4410}{arXiv:1209.4410}.

\bibitem{FOOO_book}
\bysame, \emph{Lagrangian intersection {F}loer theory: anomaly and
  obstruction}, AMS/IP Studies in Advanced Mathematics, vol.~46, American
  Mathematical Society, Providence, RI, 2009. \MR{2553465, Zbl 1181.53002}

\bibitem{FOOO1}
\bysame, \emph{Lagrangian {F}loer theory on compact toric manifolds. {I}}, Duke
  Math. J. \textbf{151} (2010), no.~1, 23--174. \MR{2573826, Zbl 1190.53078}

\bibitem{FOOO2}
\bysame, \emph{Lagrangian {F}loer theory on compact toric manifolds {II}: bulk
  deformations}, Selecta Math. (N.S.) \textbf{17} (2011), no.~3, 609--711.
  \MR{2827178, Zbl 1234.53023}

\bibitem{Fu}
W.~Fulton, \emph{Introduction to toric varieties}, Annals of Mathematics
  Studies, vol. 131, Princeton University Press, Princeton, NJ, 1993, The
  William H. Roever Lectures in Geometry. \MR{1234037, Zbl 0885.14025}

\bibitem{GKZ89}
I.~M. Gel'fand, M.~M. Kapranov, and A.~V. Zelevinsky, \emph{Hypergeometric
  functions and toric varieties}, Funktsional. Anal. i Prilozhen. \textbf{23}
  (1989), no.~2, 12--26. \MR{1011353, Zbl 0994.33501}

\bibitem{GKZ90}
\bysame, \emph{Generalized {E}uler integrals and {$A$}-hypergeometric
  functions}, Adv. Math. \textbf{84} (1990), no.~2, 255--271. \MR{1080980, Zbl
  0741.33011}

\bibitem{givental_imrn96}
A.~Givental, \emph{Equivariant {G}romov-{W}itten invariants}, Internat. Math.
  Res. Notices (1996), no.~13, 613--663. \MR{1408320, Zbl 0881.55006}

\bibitem{goldstein}
E.~Goldstein, \emph{Calibrated fibrations on noncompact manifolds via group
  actions}, Duke Math. J. \textbf{110} (2001), no.~2, 309--343. \MR{1865243,
  Zbl 1021.53030}

\bibitem{G-I11}
E.~Gonz{\'a}lez and H.~Iritani, \emph{Seidel elements and mirror
  transformations}, Selecta Math. (N.S.) \textbf{18} (2012), no.~3, 557--590.
  \MR{2960027, Zbl 1263.14054}

\bibitem{graber_pand}
T.~Graber and R.~Pandharipande, \emph{Localization of virtual classes}, Invent.
  Math. \textbf{135} (1999), no.~2, 487--518. \MR{1666787, Zbl 0953.14035}

\bibitem{graber-zaslow01}
T.~Graber and E.~Zaslow, \emph{Open-string {G}romov-{W}itten invariants:
  calculations and a mirror ``theorem''}, Orbifolds in mathematics and physics
  ({M}adison, {WI}, 2001), Contemp. Math., vol. 310, Amer. Math. Soc.,
  Providence, RI, 2002, pp.~107--121. \MR{1950943, Zbl 1085.14518}

\bibitem{gross_examples}
M.~Gross, \emph{Examples of special {L}agrangian fibrations}, Symplectic
  geometry and mirror symmetry ({S}eoul, 2000), World Sci. Publ., River Edge,
  NJ, 2001, pp.~81--109. \MR{1882328, Zbl 1034.53054}

\bibitem{Gross-Siebert03}
M.~Gross and B.~Siebert, \emph{Affine manifolds, log structures, and mirror
  symmetry}, Turkish J. Math. \textbf{27} (2003), no.~1, 33--60. \MR{1975331,
  Zbl 1063.14048}

\bibitem{Gross-Siebert06}
\bysame, \emph{Mirror symmetry via logarithmic degeneration data. {I}}, J.
  Differential Geom. \textbf{72} (2006), no.~2, 169--338. \MR{2213573, Zbl
  1107.14029}

\bibitem{Gross-Siebert10}
\bysame, \emph{Mirror symmetry via logarithmic degeneration data, {II}}, J.
  Algebraic Geom. \textbf{19} (2010), no.~4, 679--780. \MR{2669728, Zbl
  1209.14033}

\bibitem{gross07}
\bysame, \emph{From real affine geometry to complex geometry}, Ann. of Math.
  (2) \textbf{174} (2011), no.~3, 1301--1428. \MR{2846484, Zbl 1266.53074}

\bibitem{gross08}
\bysame, \emph{An invitation to toric degenerations}, Surveys in differential
  geometry. {V}olume {XVI}. {G}eometry of special holonomy and related topics,
  Surv. Differ. Geom., vol.~16, Int. Press, Somerville, MA, 2011, pp.~43--78.
  \MR{2893676, Zbl 1276.14057}

\bibitem{hosono06}
S.~Hosono, \emph{Central charges, symplectic forms, and hypergeometric series
  in local mirror symmetry}, Mirror symmetry. {V}, AMS/IP Stud. Adv. Math.,
  vol.~38, Amer. Math. Soc., Providence, RI, 2006, pp.~405--439. \MR{2282969,
  Zbl 1114.14025}

\bibitem{iritani09}
H.~Iritani, \emph{An integral structure in quantum cohomology and mirror
  symmetry for toric orbifolds}, Adv. Math. \textbf{222} (2009), no.~3,
  1016--1079. \MR{2553377, Zbl 1190.14054}

\bibitem{Jarvis-Kimura_BG}
T.~Jarvis and T.~Kimura, \emph{Orbifold quantum cohomology of the classifying
  space of a finite group}, Orbifolds in mathematics and physics ({M}adison,
  {WI}, 2001), Contemp. Math., vol. 310, Amer. Math. Soc., Providence, RI,
  2002, pp.~123--134. \MR{1950944, Zbl 1065.14069}

\bibitem{Jiang08}
Y.~Jiang, \emph{The orbifold cohomology ring of simplicial toric stack
  bundles}, Illinois J. Math. \textbf{52} (2008), no.~2, 493--514. \MR{2524648,
  Zbl 1231.14002}

\bibitem{KKOY09}
A.~Kapustin, L.~Katzarkov, D.~Orlov, and M.~Yotov, \emph{Homological mirror
  symmetry for manifolds of general type}, Cent. Eur. J. Math. \textbf{7}
  (2009), no.~4, 571--605. \MR{2563433, Zbl 1200.53079}

\bibitem{Ke-Zhou}
H.-Z. Ke and J.~Zhou, \emph{Quantum {M}c{K}ay correspondence for disc
  invariants of toric {C}alabi-{Y}au 3-orbifolds}, preprint,
  \href{http://arxiv.org/abs/1410.4376}{arXiv:1410.4376}.

\bibitem{konishi09}
Y.~Konishi and S.~Minabe, \emph{Local {B}-model and mixed {H}odge structure},
  Adv. Theor. Math. Phys. \textbf{14} (2010), no.~4, 1089--1145. \MR{2821394,
  Zbl 1229.81243}

\bibitem{kontsevich00}
M.~Kontsevich and Y.~Soibelman, \emph{Homological mirror symmetry and torus
  fibrations}, Symplectic geometry and mirror symmetry ({S}eoul, 2000), World
  Sci. Publ., River Edge, NJ, 2001, pp.~203--263. \MR{1882331, Zbl 1072.14046}

\bibitem{kontsevich-soibelman04}
\bysame, \emph{Affine structures and non-{A}rchimedean analytic spaces}, The
  unity of mathematics, Progr. Math., vol. 244, Birkh\"auser Boston, Boston,
  MA, 2006, pp.~321--385. \MR{2181810, Zbl 1114.14027}

\bibitem{Lau14}
S.-C. Lau, \emph{{G}ross-{S}iebert's slab functions and open {GW} invariants
  for toric {C}alabi-{Y}au manifolds}, to appear in Math. Res. Lett. (2014),
  \href{http://arxiv.org/abs/1405.3863}{arXiv:1405.3863}.

\bibitem{LLW10}
S.-C. Lau, N.~C. Leung, and B.~Wu, \emph{A relation for {G}romov-{W}itten
  invariants of local {C}alabi-{Y}au threefolds}, Math. Res. Lett. \textbf{18}
  (2011), no.~5, 943--956. \MR{2875867, Zbl 1239.14049}

\bibitem{LLW_surfaces}
\bysame, \emph{Mirror maps equal {SYZ} maps for toric {C}alabi-{Y}au surfaces},
  Bull. Lond. Math. Soc. \textbf{44} (2012), no.~2, 255--270. \MR{2914605, Zbl
  1239.14033}

\bibitem{leung01}
N.~C. Leung, \emph{Mirror symmetry without corrections}, Comm. Anal. Geom.
  \textbf{13} (2005), no.~2, 287--331. \MR{2154821 (2006c:32028), Zbl
  1086.32022}

\bibitem{LYZ}
N.~C. Leung, S.-T. Yau, and E.~Zaslow, \emph{From special {L}agrangian to
  {H}ermitian-{Y}ang-{M}ills via {F}ourier-{M}ukai transform}, Adv. Theor.
  Math. Phys. \textbf{4} (2000), no.~6, 1319--1341. \MR{1894858, Zbl
  1033.53044}

\bibitem{liu_HIM}
C.-C.~M. Liu, \emph{Localization in {G}romov-{W}itten {T}heory and {O}rbifold
  {G}romov-{W}itten {T}heory}, {H}andbook of {M}oduli, {V}olume {II}, Adv.
  Lect. Math. (ALM), vol.~25, International {P}ress and {H}igher {E}ducation
  {P}ress, 2013, pp.~353--425. \MR{3184181, Zbl 1260.14002}

\bibitem{McDuff-Wehrheim12}
D.~McDuff and K.~Wehrheim, \emph{Smooth {K}uranishi atlases with trivial
  isotropy}, preprint, \href{http://arxiv.org/abs/1208.1340}{arXiv:1208.1340}.

\bibitem{Ruan06}
Y.~Ruan, \emph{The cohomology ring of crepant resolutions of orbifolds},
  Gromov-{W}itten theory of spin curves and orbifolds, Contemp. Math., vol.
  403, Amer. Math. Soc., Providence, RI, 2006, pp.~117--126. \MR{2234886, Zbl
  1105.14078}

\bibitem{Ruddat-Siebert14}
H.~Ruddat and B.~Siebert, \emph{Canonical coordinates in toric degenerations},
  preprint, \href{http://arxiv.org/abs/1409.4750}{arXiv:1409.4750}.

\bibitem{syz96}
A.~Strominger, S.-T. Yau, and E.~Zaslow, \emph{Mirror symmetry is
  {$T$}-duality}, Nuclear Phys. B \textbf{479} (1996), no.~1-2, 243--259.
  \MR{1429831, Zbl 0896.14024}

\bibitem{orbQRR}
H.-H. Tseng, \emph{Orbifold quantum {R}iemann-{R}och, {L}efschetz and {S}erre},
  Geom. Topol. \textbf{14} (2010), no.~1, 1--81. \MR{2578300, Zbl 1178.14058}

\end{thebibliography}

\end{document}